\newcommand{\retrait}{\hspace{1.7cm}}
\newcommand{\unp}{\mathbf{\mathrm{1 \kern-0.25em I}}}
\newcommand{\un}{\mathbf{1}}
\newcommand{\X}{\mathbf X}
\newcommand{\Abf}{\mathbf A}
\newcommand{\Sb}{\mathbb S}
\newcommand{\R}{\mathbb R}
\newcommand{\N}{\mathbb N}
\newcommand{\C}{\mathbb C}
\newcommand{\T}{\mathbb T}
\newcommand{\Z}{\mathbb Z}
\newcommand{\di}{{\mathrm d}}
\newcommand{\G}{\mathbf G}
\newcommand{\Hb}{\mathbf H}
\newcommand{\e}{\mathbf e}
\newcommand{\V}{\mathbb V}
\newcommand{\U}{\mathbb U}
\newcommand{\lib}{[} 
\newcommand{\rib}{]} 
\newcommand{\supp}{\mathrm{supp}\,}
\newcommand{\prob}{\mathbb P}
\newcommand{\esp}{\mathbb E}
\newcommand{\Msig}{[\sigma]_{_M}}
\newcommand{\Isig}{[\sigma]_{\infty}}
\newenvironment{miniabstract}{%
\begin{center}\begin{minipage}{0.8\linewidth} %
}{%
\end{minipage} \end{center}\medskip %
}
\newcommand{\convetoile}{\stackrel{ \ast }{ \rightharpoonup }}
\newcommand{\cal}{\mathcal}
\newcommand{\trace}{\mathrm{tr}}
\newtheorem{theorem}{Theorem}[section]
\newtheorem*{theorem*}{Theorem}
\newtheorem*{lemma*}{Lemma}
\newtheorem*{corollary*}{Corollary}
\newtheorem{lemma}[theorem]{Lemma}
\newtheorem{proposition}[theorem]{Proposition}
\newtheorem*{proposition*}{Proposition}
\newtheorem{corollary}[theorem]{Corollary}
\newtheorem{theorem_annexe}{Th\'eor\`eme}[section]
\newtheorem{lemma_annexe}[theorem_annexe]{Lemme}
\newtheorem{corollary_annexe}[theorem_annexe]{Corollaire}
\newtheorem*{corollary_annexe*}{Corollaire}
\theoremstyle{definition}
\newtheorem{definition}[theorem]{Definition}
\newtheorem*{definition*}{Definition}
\newtheorem{definition_annexe}[theorem_annexe]{D\'efinition}
\newtheorem{example}[theorem]{Example}
\newtheorem*{notations*}{Notations}
\newtheorem*{notation*}{Notation}
\theoremstyle{remark}
\newtheorem{remark}[theorem]{Remark}
\newtheorem{remark_annexe}[theorem_annexe]{Remarque}
\numberwithin{equation}{section}
\title{The rate of convergence for the renewal theorem in $\mathbb{R}^{\lowercase{d}}$}
\author{Jean-Baptiste Boyer}
\email{jeaboyer@math.cnrs.fr}
\address{IMB, Université de Bordeaux / MODAL'X, Université Paris-Ouest Nanterre}
\subjclass[2010]{Primary : 60K05}
\keywords{Renewal theorem, perturbation of operators, diophantine properties}
\date{\today}
\begin{document}
\pagestyle{headings}
\thispagestyle{plain}

\begin{abstract}
Let $\rho$ be a borelian probability measure on $\mathrm{SL}_d(\mathbb{R})$. Consider the random walk $(X_n)$ on $\mathbb{R}^d\setminus\{0\}$ defined by $\rho$ : for any $x\in \mathbb{R}^d\setminus\{0\}$, we set $X_0 =x$ and $X_{n+1} = g_{n+1} X_n$ where $(g_n)$ is an iid sequence of $\mathrm{SL}_d(\mathbb{R})-$valued random variables of law $\rho$. Guivarc'h and Raugi proved that under an assumption on the subgroup generated by the support of $\rho$ (strong irreducibility and proximality), this walk is transient.

In particular, this proves that if $f$ is a compactly supported continuous function on $\mathbb{R}^d$, then the function $Gf(x) :=\mathbb{E}_x \sum_{n=0}^{+\infty} f(X_n)$ is well defined for any $x\in \mathbb{R}^d \setminus\{0\}$.

Guivarc'h and Le Page proved the renewal theorem in this situation : they study the possible limits of $Gf$ at $0$ and in this article, we study the rate of convergence in their renewal theorem.

To do so, we consider the family of operators $(P(it))_{t\in \mathbb{R}}$ defined for any continuous function $f$ on the sphere $\mathbb{S}^{d-1}$ and any $x\in \mathbb{S}^{d-1}$ by
\[
P(it) f(x) = \int_{\mathrm{SL}_d(\mathbb{R})} e^{-it \ln \frac{ \|gx\|}{\|x\|}} f\left(\frac{gx}{\|gx\|}\right) \mathrm{d}\rho(g)
\]
And we prove that, for some $L\in \mathbb{R}$ and any $t_0 \in \mathbb{R}_+^\ast$,
\[
\sup_{\substack{t\in \mathbb{R}\\ |t| \geqslant t_0}} \frac{ 1 }{|t|^L} \left\| (I_d-P(it))^{-1} \right\|  \text{ is finite}
\]
where the norm is taken in some space of hölder-continuous functions on the sphere.
\end{abstract}

\maketitle

\tableofcontents

\section{Introduction} \label{section:introduction_renouvellement_Rd}

\subsection{Main results}

Let $\rho$ be a borelian probability measure on $\mathrm{SL}_d(\R)$ ($2 \leqslant d\in \N$) and let $x\in \R^d \setminus\{0\}$. We define a random walk on $\R^d \setminus \{0\}$ starting at $x$ by
\[
\left\{\begin{array}{ccc} X_0 & =& x \\ X_{n+1} &=& g_{n+1} X_n \end{array} \right.
\]
where $(g_n)\in \mathrm{SL}_d(\R)^\N$ is an iid sequence of $\mathrm{SL}_d(\R)-$valued random variables of law $\rho$.

\medskip
In the sequel, we will say that a closed subgroup of $\mathrm{SL}_d(\R)$ acts strongly irreducibly and proximally on $\R^d$ if it doesn't fix any finite union of non trivial subspaces of $\R^d$ and if it contains an element which has an eigenvalue that is strictly larger than the other ones and whose eigenspace has dimension $1$. We will also that a borelian probability measure on $\mathrm{SL}_d(\R)$ is strongly irreducible and proximal it it's support generates a group that has these two properties.

\medskip
If $\rho$ is strongly irreducible, proximal and has a moment of order $1$\footnote{i.e. $\int_\G |\ln\|g\| |\di\rho(g)$ is finite.}, then a result by Furstenberg and Kesten (see~\cite{Fur63} and~\cite{GuRa85}) shows that, if $\|\,.\,\|$ is a norm on $\R^d$, then, for any $x\in \R^d \setminus\{0\}$,
\begin{equation}\label{equation:lambda_rho}
\frac 1 n \ln \| g_n \dots g_1 x \| \xrightarrow\, \lambda_\rho := \int_\G \int_{\prob(\R^d)} \ln \|gx\| \di\rho(g) \di \nu(x)>0\;\; \rho^{\otimes \N}-\text{a.e.}
 \end{equation}
where $\nu$ is a stationary measure on $\prob(\R^d)$ (which is unique as we will see in proposition~\ref{proposition:unique_mesure_invariante_contraction}).

In particular, this implies that the walk on $\R^d\setminus\{0\}$ is transient.

\medskip
Given a continuous function $f$ on $\R^d$, such that for some $\alpha \in \R_+^\ast$,
\[
\sup_{x\in \R^d} \frac{ |f(x)|}{\|x\|^\alpha} <+\infty \text{ and }\sup_{x\in \R^d} \|x\|^\alpha |f(x)| <+\infty,
\]
we study the function
\begin{equation} \label{equation:G}
\left(x\mapsto Gf(x) := \sum_{n=0}^{+\infty} \esp_x f(X_n) \right)
\end{equation}
Since the walk is transient and our assumptions on $f$, this function is well defined and even continuous on $\R^d \setminus\{0\}$ (the series converges uniformly on every compact subset of $\R^d \setminus\{0\}$) and we would like to study it's behaviour at $0$. This is what we call the renewal theorem in $\R^d$ by analogy to the situation in $\R$ (see for instance~\cite{Bla48}).

\bigskip
Guivarc'h and Le Page proved in~\cite{GuLe15} that if $T_\rho$, the sub-semigroup generated by the support of $\rho$, fixes a non trivial convex cone in $\R^d$ then, there are two stationary borelian probability measures $\nu_1$ and $\nu_2$ on the sphere $\Sb^{d-1}$ and the space of $P-$invariant continuous functions on the sphere has dimension $2$, a basis being given by two non negative functions $p_1,p_2$ such that $p_1+ p_2=1$ and ${p_i}{|_{\supp \nu_j}} = \delta_{i,j}$ where $\delta$ is Kronecker's symbol; on the other hand, if $T_\rho$ doesn't fix any non trivial convex cone in $\R^d$, then, there is a unique stationary borelian probability measure $\nu_1$ on $\Sb^{d-1}$ and we note $p_1$ the constant function taking the value $1$ on the sphere.

In both cases, we define an operator on the space of continuous functions on $\R^d$ that decay with a polynomial rate at infinity\footnote{There is $\alpha \in \R_+^\ast$ such that $\sup_{x\in \R^d} \|x\|^\alpha |f(x)|$ is finite.} setting, for such a function $f$ and $x\in \R^d \setminus\{0\}$,
\begin{equation} \label{equation:definition_Pi_0}
\Pi_0 f(x) =  \sum_{i=1}^r p_i\left(\frac x{\|x\|}\right) \int_{\Sb^{d-1}} \int_{\|x\|}^{+\infty} f(u y) \frac{\di u}{u} \di\nu_i(y) 
\end{equation}
where, $r\in\{1,2\}$ is the number of $T_\rho-$invariant closed subsets on the sphere.

\medskip
They also proved the following
\begin{theorem_annexe}[Guivarc'h - Le Page in~\cite{GuLe15}] \label{theorem:guivarch_lepage}
Let $\rho$ be a strongly irreducible and proximal borelian probability measure on $\mathrm{SL}_d(\R)$.

Then, for any $\gamma \in \R_+^\ast$ and any continuous function $f$ on $\R^d$ such that
\[
\sup_{v\in \R^d\setminus\{0\}}\frac{|f(v)|}{\|v\|^\gamma} \text{ and }\sup_{v\in \R^d} \|v\|^\gamma |f(v)| \text{ are finite}
\]
we have that
\[
\lim_{x\to 0} \left(G- \frac 1 {\lambda_\rho}\Pi_0\right)f(x) = 0
\]
Where $\lambda_\rho$, $G$ and $\Pi_0$ are defined in equations~\eqref{equation:lambda_rho}, \eqref{equation:G} and~\eqref{equation:definition_Pi_0}.
\end{theorem_annexe}

In particular, this theorem shows that if $f$ is a compactly supported hölder-continuous function on $\R^d$ such that $f(0)=0$, then the function $(G-\frac 1 {\lambda_\rho}\Pi_0)f$ can be extended at $0$ to a continuous function on $\R^d$.

So, the continuity of $Gf$ at $0$ is equivalent to the one of $ \Pi_0 f$.

Thus, in the case of a unique invariant closed subset on the sphere, we have that
\[
\lim_{x\to 0} \sum_{n=0}^{+\infty} P^n f(x) = \frac 1 {\lambda_\rho} \int_{0}^{+\infty} \int_{\Sb^{d-1}} f(uy) \di \nu(y) \frac{\di u}u
\]
and in the other case, we only have a ``directional limit'' : for any $x\in \R^d\setminus\{0\}$,
\[
\lim_{t\to 0^+} \sum_{n=0}^{+\infty} P^n f(tx) = \frac 1 {\lambda_\rho}\sum_{i=1}^2 p_i\left( \frac x {\|x\|} \right)\int_{0}^{+\infty} \int_{\Sb^{d-1}} f(uy) \di\nu_i (y) \frac{\di u} u
\]
And, in particular, the function $Gf$ cannot be extended to a continuous function at $0$ in general.

\begin{example}
If $T_\rho$ contains only matrices having positive coefficients, then it fixes the cone $\mathcal{C}$ of the vectors having only positive coefficients and it's opposite. Therefore, taking a positive regular function $f$ supported in $\mathcal{C}$, we see that $G f=0$ on $-\mathcal{C}$ whereas $Gf(x)$ will eventually be non negative on $\mathcal{C}$. Thus, we won't be able to extend it to a continuous function at $0$.
\end{example}

We would like to compute the modulus of continuity of $Gf$ at $0$ and, to do so, we want to study the rate of convergence in Guivarc'h and Le Page's result. To simplify our study, we will only consider $(G-\frac 1 {\lambda_\rho}\Pi_0)f$ and that will allow us to make no distinction between the number of closed invariant subsets on the sphere (and we will see in proposition~\ref{proposition:representation_renouvellement_fonctions_regulieres} that it is more than a computational trick). Thus, we will only have to study the modulus of continuity of $\Pi_0 f$ to get the one of $Gf$ and, as we have an easy formula for $\Pi_0 f$, it will be easy to find conditions that guarantee that $Gf$ can be extended to a continuous function at $0$ and to get it's modulus of continuity.

\smallskip
In~\cite{BDP15}, Buraczewski, Damek and Przebinda considered the case where $T_\rho$ is (conjuguated to) a subgroup of $\R^\ast_+ \times \cal O(d)$ and a diophantine condition is satisfied by the projection of $\rho$ on $\R_+^\ast$. They prove their result by going back to the $1-$dimensional case (this is why they need this diophantine condition that is necessary in this case (see for instance~\cite{Car83}) ; this hypothesis will always be satisfied in our case as wee will see in section~\ref{section:proprietes_diophantiennes_SL_d}).

Our study (and the one of Guivarc'h and Le Page) takes place in the opposite case where the subgroup generated by the support of $\rho$ contains an element having a strictly dominant eigenvalue (this is our proximality assumption).

\medskip
More specifically, we will prove the following
\begin{theorem_annexe}\label{theorem:renouvellement_0}
Let $\rho$ be a strongly irreducible and proximal borelian probability measure on $\mathrm{SL}_d(\R)$ having an exponential moment\footnote{There is $\varepsilon \in \R_+^\ast$ such that $\int_\G \|g\|^{\varepsilon} \di\rho(g)$ is finite.}.

Then, for any $\gamma>0$ small enough and any compact subset $K$ of $\R^d$, there are $C,\alpha\in \R$ such that for any continuous function $f\in \cal C^{0,\gamma}(\R^d)$ supported in $K$ and such that $f(0)=0$ and for any $x \in \R^d$,
\[
\left|\left(G-\frac 1 {\lambda_\rho}\Pi_0\right) f(x)\right| \leqslant \frac C {1+|\ln\|x\||^\alpha} \|f\|_\gamma
\]
Where $\lambda_\rho$, $G$ and $\Pi_0$ are defined in equations~\eqref{equation:lambda_rho}, \eqref{equation:G} and~\eqref{equation:definition_Pi_0}.
\end{theorem_annexe}

\label{discussion:A}
If one studies the linear random walk on the torus $\T^d:=\R^d/\Z^d$ defined by a probability measure on $\mathrm{SL}_d(\Z)$ (see for instance~\cite{BFLM11}), it appears that there are finite invariant subsets (e.g. the set $\{0\}$). If $\Abf$ is one of them that is also minimal, then one can identify a neighbourhood of $\Abf$ in the torus to a neighbourhood of  $\{0\} \times \Abf $ in $ \R^d \times \Abf$.

\medskip
This is why, from now on, noting $\Gamma_\rho$ the subgroup of $\mathrm{SL}_d(\R)$ generated by the support of $\rho$, we study the renewal theorem on the product of $\R^d$ and a finite $\Gamma_\rho-$set $\Abf$ on which the walk defined by $\rho$ is irreducible and aperiodic and we consider hölder continuous functions $f$ on $\R^d\times \Abf$.

\medskip
Remark that if $Gf(x,a) = \sum_{n=0}^{+\infty} P^nf(x,a)$ has a limit $g(a)$ when $x$ goes to $0$, then $(I_d-P)g(a) = f(0,a)$ and so $g$ is a solution to Poisson's equation for $f$ restricted to $\Abf$ (in particular, this implies that $\sum_{a\in \Abf} f(0,a) = 0$).

\medskip
Remark also that for any $f \in \cal C^0(\R^d\times \Abf)$ such that $\sum_{a\in \Abf} f(0,a)=0$ and any $a\in \Abf$, $ \sum_{n=0}^{+\infty} P^n f(0,a) $ is well defined since the random walk on $\Abf$ is irreducible and aperiodic.

\medskip
We modify our operator $\Pi_0$ to account of the dependence in $\Abf$ and we note, for any continuous function $f$ on $\R^d \times \Abf$ that decays at polynomial rate at infinity\footnote{There is some $\alpha \in \R_+^\ast$ such that $\sup_{(x,a) \in \R^d \times \Abf} \|x\|^\alpha |f(x)|$ is finite.}, any $x\in \R^d\setminus\{0\}$ and any $a\in \Abf$,
\begin{equation} \label{equation:definition_Pi_0_2}
\Pi_0 f(x,a) = \frac 1 {|\Abf|} \sum_{a'\in \Abf} \sum_{i=1}^r p_i\left(\frac x{\|x\|}\right) \int_{\Sb^{d-1}} \int_{\|x\|}^{+\infty} f(u y,a') \frac{\di u}{u} \di\nu_i(y) 
\end{equation}
Remark that if $\sum_{a\in \Abf} f(0,a) =0$ and for any $a\in \Abf$, the function $f(.,a)$ is hölder-continuous at $0$ then the limit of $\Pi_0 f(x,a)$ at $0$ is well defined (only radially if $r=2$).

\medskip
The main result of this article is the following
\begin{theorem_annexe} \label{theorem:renouvellement_Rd}
Let $\rho$ be a strongly irreducible and proximal borelian probability measure on $\mathrm{SL}_d(\R)$ having an exponential moment.

Let $\Abf$ be a finite $\Gamma_\rho-$set such that the random walk on $\Abf$ defined by $\rho$ is irreducible and aperiodic.

\medskip
Then, for any $\gamma>0$ small enough, there are constants $C \in \R$ and $\alpha \in \R_+^\ast$ such that for any function $f$ on $\R^d \times \Abf$ such that
\[
\|f\|_\gamma := \sup_{\substack{x,y\in \R^d \setminus\{0\} \\x\not=y \\a\in \Abf}} (1+\|x\|)^\gamma(1+\|y\|)^\gamma \frac{|f(x,a) - f(y,a)|}{\|x-y\|^\gamma} <+\infty,
\]
and such that for any $a\in \Abf$,
\[
\lim_{x\to \infty} f(x,a) = 0 \text{ and }\sum_{a\in \Abf} f(0,a) = 0
\]
We have that for any $a\in \Abf$,
\[
\lim_{x\to 0} \left(G- \frac 1 {\lambda_\rho} \Pi_0\right) f(x,a) = \sum_{n=0}^{+\infty} P^n f(0,a)
\]
Moreover, for any $x,y \in \R^d \setminus\{0\}$ and any $a\in \Abf$,
\[
\left|\left(G-\frac 1 {\lambda_\rho}\Pi_0\right)f(x,a) - \left(G-\frac 1 {\lambda_\rho}\Pi_0\right) f(y,a) \right| \leqslant C \omega_0(x,y)^\alpha \|f\|_\gamma
\]
where $\Pi_0$ is the operator defined in equation~\eqref{equation:definition_Pi_0_2} and where we noted, for any $x,y \in \R^d \setminus\{0\}$,
\[
\omega_0(x,y) =  \frac{\sqrt{\left|\ln \|x\| - \ln\|y\|\right|^2 + \left\|\frac x{\|x\|} - \frac y {\| y\|} \right\|^2}}{(1+|\ln\|x\||)(1+ |\ln \|y\||)}  
\]
\end{theorem_annexe}

\begin{remark_annexe}
The definition of the function $\omega_0$ may seem complicate but we will see that it is a kind of conical distance on $\R^d$. We will give more details about this function in section~\ref{section:renouvellement_holder}.
\end{remark_annexe}

\begin{remark_annexe}
The assumption on $f$ guarantees that $\lim_{\infty} f(.,a)=0$ and that there is a constant $C$ such that for any $x,y \in \R^d$ and any $a\in \Abf$,
\[
|f(x,a) - f(y,a)| \leqslant C \left(\frac{\|x-y\|}{(1+\|x\|)(1+\|y\|)} \right)^\gamma
\]
In particular, compactly supported hölder-continuous functions on $\R^d\times \Abf$ satisfy this assumption. Moreover, letting $y$ go to infinity, the equation shows that for any $x\in \R^d$,
 \[
 |f(x)| \leqslant \frac{ C}{(1+\|x\|)^\gamma}
 \]
So these functions vanish at polynomial speed at infinity.

We will not only consider compactly supported functions because our assumption will become very natural after identifying $\R^d\setminus\{0\}$ and $\Sb^{d-1} \times \R$ in chapter~\ref{section:renouvellement}.
\end{remark_annexe}

\begin{remark_annexe}
As we already said, it is the continuity of $G f$ that interests us but it is very easy to have the one of $\Pi_0 f$.
\end{remark_annexe}

To prove this theorem, we will study an analytic family of operators (see section~\ref{section:renouvellement}) defined on $\cal C^{0,\gamma}(\Sb^{d-1} \times \Abf)$ for $z\in \C$ with $|\Re(z)|$ small enough, a function $f\in \cal C^{0,\gamma}(\Sb^{d-1} \times \Abf)$ and some point $(x,a)$ of $\Sb^{d-1} \times \Abf$ by
\[
P(z) f(x,a) = \int_\G e^{-z \ln \frac{\|gx\|}{\|x\|}} f(gx,ga) \di \rho(g)
\]
Indeed, we will prove in section~\ref{section:renouvellement} that the rate of convergence in the renewal theorem is linked to the growth of the norm of $(I_d-P(z))^{-1}$ along the imaginary axis.

\bigskip
To get a control of $\|(I_d-P(it))^{-1}\|_{\cal C^{0,\gamma}(\Sb^{d-1} \times \Abf)}$ for large values of $t$, we will adapt in~\ref{section:Dolgopyat_Markov} the arguments developed by Dolgopyat in~\cite{Dol98} for Ruelle operators and we will prove proposition~\ref{proposition:Dolgopyat_markov} which links the norm of $\|(I_d-P(it))^{-1}\|$ to the diophantine properties of the logarithms of the spectral radii of elements of $\Gamma_\rho$.

\medskip
Then, we will prove that in a strongly irreducible and proximal subgroup of $\mathrm{SL}_d(\R)$, we can construct elements for which the logarithm of the spectral radius is very well controlled. This is what we will do in section~\ref{section:proprietes_diophantiennes_SL_d} and more specifically in proposition~\ref{proposition:rayons_spectraux_diophantiens}.

\subsection{Proofs}

\begin{miniabstract}
In this paragraph, we prove the results that we stated in this introduction from the ones we will prove in more general settings in the following ones.
\end{miniabstract}

\begin{proof}[Proof of theorem~\ref{theorem:renouvellement_0} from theorem~\ref{theorem:renouvellement_Rd}]~

Let $\gamma \in ]0,1]$ and $K$ a compact subset of $\R^d$. Then, there is a constant $C_0$ such that for any $\gamma-$hölder continuous function $f$ on $\R^d$ such that $\supp f\subset K$,
\[
\sup_{\substack{x,y \in \R^d\setminus\{0\}\\x\not=y}} (1+\|x\|)^\gamma(1+\|y\|)^\gamma\frac{|f(x)-f(y)|}{\|x-y\|^\gamma} \leqslant C_0 \|f\|_\gamma
\]
We can now apply theorem~\ref{theorem:renouvellement_Rd} to find constants $C,\alpha$ such that for any $\gamma-$hölder-continuous function $f$ with $\supp f\subset K$ and any $x,y\in \R^d\setminus\{0\}$,
\[
\left|\left(G-\frac 1 {\lambda_\rho}\Pi_0\right)f(x) - \left(G-\frac 1 {\lambda_\rho}\Pi_0\right)f(y) \right| \leqslant C \|f\|_\gamma \omega_0(x,y)^\alpha
\]
and
\[
\lim_{y\to 0} \left(G-\frac 1 {\lambda_\rho}\Pi_0 \right)f(y)=0
\]
But, we also have that
\[
\lim_{y\to 0} \omega_0(x,y) = \frac 1 {1+|\ln \|x\||}
\]
and this proves theorem~\ref{theorem:renouvellement_0}.
\end{proof}

\begin{proof}[Proof of theorem~\ref{theorem:renouvellement_Rd}]~

This is a direct application of our theorem~\ref{theorem:renouvellement_general}.

Indeed, noting $\X = \Sb^{d-1} \times \Abf$ and $\Hb= \{I_d,\vartheta\}$ where $\vartheta$ is the antipodal application on the sphere and identity on $\Abf$, we have that $\Hb$ acts by isometries on $\X\times \Abf$ and $(\X\times \Abf) / \Hb $, that we identify with the product of the projective space and $\Abf$ is well $(\rho,\gamma ,M,N)-$contracted over $\Abf$ (see example~\ref{exemple:espace_proj_contracte} and lemma~\ref{lemma:contraction_indeed}). Moreover, in section~\ref{section:proprietes_diophantiennes_SL_d}, we saw that the cocycle $\sigma$ defined on $\G\times\prob(\R^d)$ by $\sigma(g,X) = \ln \frac{\|gx\|}{\|x\|}$ for $x\in \X\setminus\{0\}$ also belong to $\cal Z^M(\prob(\R^d))$ and the result by Furstenberg that we already gave in this introduction proves that $\sigma_\rho >0$. 

Moreover, we saw in theorem~\ref{theorem:controle_resolvante_SL_d} that for any $t_0\in \R_+^\ast$ there are constants $C,L$ such that for any $t\in \R$ with $|t|\geqslant t_0$,
\[
\|(I_d-P(it))^{-1} \|\leqslant C|t|^L
\]

This proves that we can actually apply theorem~\ref{theorem:renouvellement_general} to any function $f$ that satisfies the assumption of theorem~\ref{theorem:renouvellement_Rd} since such a function can be identified to a function $\tilde f$ in $\mathcal C^{\gamma}_\omega(\X\times \R)$ such that $\sum_{a\in \cal A} \lim_{x\to -\infty} \tilde f(x,a) = 0$ and $\lim_{x\to +\infty }\tilde f(x,a) = 0$ by the map $(x,t) \mapsto e^t x$ from $\Sb^{d-1} \times \R $ to $ \R^d \setminus\{0\}$.
\end{proof}

\subsection{Notations and conventions}

For any continuous function $f$ on a topological space $\X$, we note $\supp f$ the support of $f$. In the same way, if $\nu$ is a borelian measure on $\X$, we note $\supp \nu$ it's support.

Moreover, we note
\[
\|f\|_\infty = \sup_{x\in \X} |f(x)|
\]

For any complex-valued function $f$ on a metric space $(X,d)$ and any $\gamma\in \mathopen{]}0,1\mathclose{]}$, we note
\[
m_\gamma(f) = \sup_{x\not=y} \frac{ |f(x)-f(y)|}{d(x,y)^\gamma} \text{ and } \|f\|_\gamma = \|f\|_\infty + m_\gamma(f)
\]
Moreover, we note $\cal C^{0,\gamma}(\X)$ the space of $\gamma-$hölder-continuous functions on $\X$ that we endow with the norm $\|\,.\,\|_\gamma$.

For $\eta \in \R_+^\ast$, we note
\[
\C_\eta = \{z\in \C| |\Re(z)|<\eta \}\text{ and }\overline{\C_\eta} =  \{z\in \C| |\Re(z)|\leqslant\eta \}\]

For any $A,B\subset \R$, we note $A\oplus iB = \{a+ib| a\in A,\; b\in B\}$ and in particular, if $A\subset \R$, then $A \oplus i\R = \{z\in \C| \Re(z) \in A\}$

\medskip
For $f\in \mathrm{L}^1(\R)$, we note $\widehat f$ the Fourier-Laplace transform of $f$ that is defined for any $z\in \C$ such that the integral is absolutely convergent by
\[
\widehat f(z) = \int_\R f(x) e^{-zx} \di x
\]
\section{Unitary perturbations of Markov operators}\label{section:Dolgopyat_Markov}

\begin{miniabstract}
In this section, we study the perturbation of Markov operators coming from group actions by kernels of modulus one given by cocycles. The aim is to prove proposition~\ref{proposition:Dolgopyat_markov} that shows that if the perturbated operator has an eigenvalue close to $1$, then the cocycle is close to a coboundary.
\end{miniabstract}

Let $\rho$ be a borelian probability measure on $\R$ having an exponential moment and a drift $\lambda = \int_\R y\di\rho(y)>0$.

In \cite{Car83}, Carlsson proved that to obtain the rate of convergence for the renewal theorem, we have to find some constant $l\in \R_+$ such that
\[
\liminf_{t\to \pm \infty}{|t|^l} \left| 1 - \int_\R e^{it y} \di \rho(y) \right| >0.
\]
This condition is linked to the diophantine properties of the $\rho-$generic elements (see for instance~\cite{Bre05} where a slightly different but similar condition is studied).

More specifically, is such a parameter $l$ exists, then the rate of convergence in the renewal theorem is polynomial and if we can even take $l=0$ (which is always the case if $\rho$ is spread-out as proved by Riemann-Lebesgue's lemma) then we can obtain an exponential rate of convergence (see~\cite{BG07}).

\bigskip
In this section, $\G$ will be a second countable locally compact group acting continuously on a compact metric space $(\X,d)$. We will fix a function $\sigma: \G\times \X\to \R$ (that will be a cocycle) and we will study the family of operators $(P(it))_{t\in \R}$ defined for any continuous function $f$ on $\X$ and any $x\in \X$ by
\[
P(it)f(x) = \int_\G e^{-it\sigma(g,x)} f(gx) \di \rho(g)
\]
To simplify notations, we simply note $P$ (or sometimes $P_\rho$ to insist on the measure $\rho$) the operator $P(0)$.  It is clear that if $\G$ acts continuously on $\X$, then $P$ preserves the space of continuous functions on $\X$.

\medskip
What corresponds to the diophantine condition for measures on $\R$ will be the existence of a constant $l\in \R_+$ such that
\[
\limsup_{t\to \pm\infty} \frac{1}{|t|^l}\|(I_d-P(it))^{-1}\| \text{ is finite.}
\]
Where the norm is taken in some Banach space (the space of hölder-continuous functions in our study).

\bigskip
To obtain this kind of control, we adapt a theorem proved for Ruelle operators by Dolgopyat in~\cite{Dol98} : this will be our proposition~\ref{proposition:Dolgopyat_markov} which is the aim of this section.

\subsection{Preliminaries}

\begin{miniabstract}
Before we state proposition~\ref{proposition:Dolgopyat_markov} properly, we introduce in this section a few technical notions.
\end{miniabstract}

\subsubsection{Contracting actions}
From now on, we assume that $\X$ fibers $\G-$equivariently over a finite $\G-$set $\Abf$. This means that we have a continuous map $\pi_{\Abf}: \X\to \Abf$ that is $\G-$equivariant : for any $x$ in $\X$ and any $g$ in $\G$,
\[
\pi_\Abf(gx) = g\pi_\Abf(x)
\]

\begin{definition}[Contracting action] Let $\G$ be a second countable locally compact group, $N:\G\to [1,+ \infty[$ a submultiplicative function on $\G$ and let $(\X,d)$ be a compact metric space endowed with a continuous action of $\G$.

We assume that $\X$ fibers $\G-$equivariantly over a finite {$\G-$set} $\Abf$.

Let $\rho$ be a borelian probability measure on $\G$ and $\gamma,M\in \R_+^\star$.

We say that $\X$ is $(\rho,\gamma,M,N)-$contracted over $\Abf$ if
\begin{enumerate}
\item For any $g\in \G$ and any $x,y\in \X$,
\begin{equation*}
d(g x,g y ) \leqslant MN(g)^M d(x,y)
\end{equation*}
\item
\begin{equation}\label{equation:exponential_moment}
\int_G N(g)^{M\gamma} \di \rho(g) \text{ is finite}
\end{equation}
\item For some $n_0\in \N^\star$ we have that
\[
\sup_{\substack{x,y\in \X \\ x\not=y\\ \pi_\Abf(x) = \pi_\Abf(y)}} \int_\G \frac{d(gx,gy)^\gamma}{d(x,y)^\gamma} \di\rho^{\star n_0}(g) <1
\]
where $\pi_\Abf: \X\to \Abf$ is the $\G-$equivariant projection.
\end{enumerate}
\end{definition}

\begin{remark}If $\X$ is $(\rho,\gamma, M,N)-$contracted over $\Abf$, then $P$ preserves the space $\cal C^{0,\gamma}(\X)$ of $\gamma-$Holder-continuous functions on $\X$.
\end{remark}

\begin{remark} \label{remark:contraction}
This notion is used for instance by Bougerol and Lacroix in~\cite{BL85} to study random walks on the projective space but the definition with such a generality is given in~\cite{BQred} where the reader will find more details.

We could have defined $N(g)$ as the maximum $d(gx,gy)/d(x,y)$ (assuming that it is finite) since this is a submultiplicative function on $\G$ ; however, in our applications, there will be a natural function $N$ associated to $\G$.
\end{remark}

\begin{example}\label{exemple:espace_proj_contracte}
Our main example will be the case where $\G$ is a strongly irreducible and proximal subgroup of $\mathrm{SL}_d(\R)$, $\rho$ is a borelian probability measure on $\G$ having an exponential moment and whose support generates $\G$ and $\X$ will be the product of the projective space $\prob^d(\R)$ (which is contracted according to the theorem 2.3 in chapter $\MakeUppercase{\romannumeral 5}$ in~\cite{BL85}) and of a finite $\G-$set $\Abf$ endowed with the discrete distance (for any $s,s'\in \Abf$, $d(s,s')=0$ if $s=s'$ and $1$ otherwise).
\end{example}

Remark that the sequence $(u_n)$ defined for any $n\in \N$ by
\[
u_n = \sup_{\substack{x,y\in \X \\ x\not=y\\ \pi_\Abf(x) = \pi_\Abf(y)}} \int_\G \frac{d(gx,gy)^\gamma}{d(x,y)^\gamma} \di\rho^{\star n}(g)
\]
is submultiplicative. Therefore, if $\X$ is $(\rho,\gamma,M,N)-$contracted over $\Abf$, then there are constants $C_1,\delta\in \R_+^\star$ such that for any $n\in \N$ and any $x,y\in \X$ such that $\pi_\Abf(x) = \pi_\Abf(y)$,
\begin{equation}\label{equation:contraction}
\int_\G d(gx,gy)^\gamma \di\rho^{\star n}(g) \leqslant C_1 e^{-\delta n} d(x,y)^\gamma
\end{equation}

Remark also that if $\gamma'\in\left]0,\gamma\right]$ then the function $t\mapsto t^{\gamma'/\gamma}$ is concave on $[0, \mathrm{Diam}(\X)]$ so if the space $\X$ is $(\rho,\gamma,M,N)-$contracted, it is also $(\rho,\gamma',M,N)-$contracted.

Let $\X$ be a compact metric space and $P$ a positive operator\footnote{For any non negative continuous function $f$ on $\X$, $Pf$ is non negative.} on $\cal C^0(\X)$. We say that the operator $P$ is \emph{equicontinuous} if it is power-bounded and if for any $f\in \cal C^0(\X)$, the sequence $(P^n f)_{n\in \N}$ is equicontinuous. We refer to~\cite{Rau92} for the properties of equicontinuous operators.

\begin{proposition} \label{proposition:unique_mesure_invariante_contraction}
Let $\G$ be a second countable locally compact group, $N:\G\to [1,+\infty[$ a submultiplicative function on $\G$ and $\rho$ a borelian probability measure on $\G$.

Let $(\X,d)$ be a compact metric space endowed with a continuous action of $\G$ and which is $(\rho,\gamma,M,N)-$contracted over a finite $\G-$set $\Abf$. 

Then, the operator $P$ associated to $\rho$ is equicontinuous on $\cal C^0(\X)$.

Moreover, if the random walk defined by $\rho$ on $\Abf$ is irreducible and aperiodic then there is a unique probability measure $\nu$ on $\X$ which is $P_\rho-$invariant.

Finally $1$ is the unique eigenvalue of $P$ having modulus $1$ and the associated eigenspace has dimension $1$.
\end{proposition}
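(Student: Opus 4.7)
The plan is to deduce all three assertions from the exponential contraction estimate~\eqref{equation:contraction} combined with the observation that $\Abf$ carries the discrete topology, so that two sufficiently close points of $\X$ necessarily share a common fiber of $\pi_\Abf$. For $f \in \mathcal{C}^{0,\gamma}(\X)$ and $x,y$ with $\pi_\Abf(x) = \pi_\Abf(y)$, the $\G$-equivariance $\pi_\Abf(gx) = g\pi_\Abf(x)$ keeps $gx,gy$ in a common fiber, and~\eqref{equation:contraction} yields
\[
|P^n f(x) - P^n f(y)| \leq m_\gamma(f) \int_\G d(gx,gy)^\gamma \di\rho^{\star n}(g) \leq C_1 m_\gamma(f) e^{-\delta n} d(x,y)^\gamma.
\]
This provides uniform local Hölder control on $(P^n f)_n$, hence equicontinuity. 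I would then extend equicontinuity from $\mathcal{C}^{0,\gamma}(\X)$ to all of $\mathcal{C}^0(\X)$ by density (approximating in sup-norm by Hölder functions) together with the contraction $\|Pg\|_\infty \leq \|g\|_\infty$: a sup-norm $\varepsilon$-perturbation of an equicontinuous family remains $2\varepsilon$-equicontinuous.

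For the structure of $P$-invariant continuous functions, if $Ph = h$, then $|h(x)-h(y)| = |P^n h(x) - P^n h(y)|$ for every $n$. Combining the uniform continuity of $h$ with a Markov-inequality bound on $\rho^{\star n}\{g : d(gx,gy) > \eta\}$ extracted from~\eqref{equation:contraction}, I would deduce $h(x) = h(y)$ whenever $\pi_\Abf(x)=\pi_\Abf(y)$. Thus $h = \tilde h \circ \pi_\Abf$ and $\tilde h$ is invariant under the induced Markov chain on $\Abf$; by irreducibility and aperiodicity of that chain, $\tilde h$, and hence $h$, is constant. For uniqueness of the invariant probability, the general theory of equicontinuous Markov operators (see~\cite{Rau92}) provides uniform convergence of the Cesaro averages $\frac{1}{n}\sum_{k=0}^{n-1} P^k f$ to a $P$-invariant continuous function, which by the above is a constant $c(f)$. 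The functional $f \mapsto c(f)$ is positive of norm one and defines a $P$-invariant probability measure $\nu$; any other invariant probability $\nu'$ satisfies $\int f \di\nu' = \lim_n \frac{1}{n}\sum_{k=0}^{n-1} \int P^k f \di\nu' = c(f)$, giving $\nu' = \nu$.

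For the peripheral spectrum, assume $Pf = \lambda f$ with $|\lambda|=1$ and $f$ continuous. The same Markov-inequality argument applied to $|f(x) - f(y)| = |P^n f(x) - P^n f(y)|$ shows $f$ is constant on fibers of $\pi_\Abf$, so $f = \tilde f \circ \pi_\Abf$ with $\tilde f$ a unit-modulus eigenvector of the induced chain on the finite set $\Abf$. Classical Perron--Frobenius for irreducible aperiodic stochastic matrices then forces $\lambda = 1$ and $\tilde f$ constant, proving that the only peripheral eigenvalue of $P$ is $1$ and its eigenspace is one-dimensional.

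The main technical subtlety is that~\eqref{equation:contraction} only controls $d(gx,gy)$ when $x,y$ lie in a common fiber; every step of the argument must exploit the $\G$-equivariance of $\pi_\Abf$ to propagate this same-fiber condition through the iterates of the walk, and the discreteness of $\Abf$ to deduce global equicontinuity and global constancy from within-fiber estimates.
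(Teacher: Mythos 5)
Your argument is correct and follows essentially the same route as the paper's proof: contraction on fibers plus finiteness of $\Abf$ gives equicontinuity, every unit-modulus eigenfunction is shown to be constant on fibers and hence descends to $\Abf$, and Perron--Frobenius on $\Abf$ together with Raugi's theory of equicontinuous operators finishes uniqueness of $\nu$ and the peripheral spectrum. You merely spell out two steps the paper delegates to references (\cite{BQproj} for equicontinuity, the Markov-inequality/uniform-continuity argument showing a peripheral eigenfunction is fiber-constant), which is a welcome expansion but not a different method.
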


Before we prove this result, we state a lemma about Markov chains defined by group actions on finite sets.

\begin{lemma}\label{lemma:markov_chains_finite_space_state}
Let $\G$ be a second countable locally compact group acting on a finite set $\Abf$ and let $\rho$ be a borelian probability measure on $\G$ such that the random walk on $\Abf$ defined by $\rho$ is irreducible and aperiodic.

Then, $\nu_\Abf$, the uniform measure on $\Abf$, is the unique $P_\rho-$stationary probability measure on $\Abf$ and $P_\rho$ has a spectral radius strictly smaller than $1$ in the orthogonal of constant functions in $\mathrm{L}^2(\Abf,\nu_\Abf)$.
\end{lemma}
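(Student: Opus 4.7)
The plan is to verify both statements using only the fact that $\G$ acts on $\Abf$ by permutations (automatic since every group element is invertible) together with classical Perron--Frobenius theory for finite-state Markov chains.

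First, I would check that $\nu_\Abf$ is $P_\rho$-stationary. For every $g\in\G$ the map $a\mapsto ga$ is a bijection of $\Abf$, hence it preserves the counting measure, so $\int_\Abf f(ga)\,\di\nu_\Abf(a)=\int_\Abf f(a)\,\di\nu_\Abf(a)$ for every function $f:\Abf\to\C$. Integrating against $\rho$ and applying Fubini gives $\int_\Abf P_\rho f\,\di\nu_\Abf=\int_\Abf f\,\di\nu_\Abf$, which is the stationarity statement. In particular the $\mathrm L^2(\Abf,\nu_\Abf)$-adjoint of $P_\rho$ fixes the constant function $\mathbf 1$, and therefore the orthogonal complement of the constants is $P_\rho$-invariant; this is what lets us even speak of the spectral radius of the restriction.

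Second, I would identify $P_\rho$ acting on $\mathrm L^2(\Abf,\nu_\Abf)\simeq\C^{|\Abf|}$ with the stochastic matrix $\bigl(p(a,a')\bigr)_{a,a'\in\Abf}$, where $p(a,a')=\rho(\{g\in\G\mid ga=a'\})$. By hypothesis the associated Markov chain on the finite set $\Abf$ is irreducible and aperiodic, so the Perron--Frobenius theorem for finite-state chains yields: $1$ is a simple eigenvalue whose eigenspace is exactly the line of constants, a stationary distribution is unique (and must therefore coincide with the $\nu_\Abf$ obtained in the first step), and every other eigenvalue of $P_\rho$ has modulus strictly less than $1$. Since the underlying space is finite dimensional, the spectrum of $P_\rho$ is exactly its set of eigenvalues, and on the orthogonal complement of constants only the non-unit eigenvalues survive; hence the spectral radius there is strictly smaller than $1$.

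There is no real obstacle here: once one observes that each $g\in\G$ acts by a bijection on $\Abf$ and therefore preserves the uniform measure, the whole statement reduces to Perron--Frobenius on a finite state space, the only subtlety being the passage from the matrix spectrum to the $\mathrm L^2$ spectrum, which is automatic in finite dimension.
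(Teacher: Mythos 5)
Your proof is correct and follows exactly the route the paper sketches: the paper's proof simply notes that it suffices to observe $\nu_\Abf$ is stationary and then cites Perron--Frobenius for finite-state chains, and you fill in precisely those two steps (stationarity via each $g$ acting by a bijection, then the standard Perron--Frobenius conclusions).
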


\begin{proof}
According to the theory of Markov chains on finite state spaces (or more specifically Perron-Frobénius's theorem), we only have to remark that the measure $\nu_\Abf$ is stationary.
\end{proof}

\begin{proof}[Proof of proposition~\ref{proposition:unique_mesure_invariante_contraction}]
The equicontinuity of $P$ in the space $\cal C^0(\X)$ can be proved as in the case of a subgroup of $\mathrm{SL}_d(\R)$ acting on $\prob(\R^d)$ given in~\cite{BQproj}. We will give more details in the proof of proposition~\ref{proposition:quasi_compact_localement_contracte} where the space is only locally contracted.

\medskip
Let $f\in \cal C^0(\X)$ and $\lambda$ a complex number of modulus $1$. Assume that $Pf=\lambda f$.

For any $x,y \in \X$ such that $\pi_\Abf(x) = \pi_\Abf(y)$, we have that
\[
\lambda^n (f(x)-f(y)) = P^n f(x) - P^n f(y) = \int_\G f(gx) - f(gy) \di\rho^{\star n}(g)
\]
But, the space is contracted over $\Abf$ and $|\lambda |=1$ so, we get that for any $x,y\in \X$ with $\pi_\Abf(x) = \pi_\Abf(y)$, $f(x)=f(y)$.

Thus, eigenvectors of $P$ in $\cal C^0(\X)$ associated to eigenvalues of modulus $1$ can be identified to functions on $\Abf$. As we assumed that the Markov chain defined by $\rho$ on $\Abf$ is irreducible and aperiodic, we have that the only eigenvectors of $P$ associated to eigenvalues of modulus $1$ are constants (cf lemma~\ref{lemma:markov_chains_finite_space_state}). Using proposition~3.2 and 3.3 in~\cite{Rau92}, this proves that the measure $\nu$ is unique, that $1$ is a simple eigenvalue and that there is no other eigenvalue of modulus $1$.
\end{proof}

We can now extend to our context the theorem 2.5 of chapter $\MakeUppercase{\romannumeral 5}$ in~\cite{BL85} that proves that, when the space is contracted, the operator $P$ has a spectral gap in the space of hölder-continuous functions. This is the following

\begin{proposition}\label{proposition:spectral_gap_P0}
Let $\G$ be a second countable locally compact group, $N:\G\to [1,+\infty[$ a submultiplicative function on $\G$ and $\rho$ a borelian probability measure on $\G$.

Let $(\X,d)$ be a compact metric space endowed with an action of $\G$ and which is $(\rho,\gamma,M,N)-$contracted over a finite $\G-$set $\Abf$ on which the random walk defined by $\rho$ is irreducible and aperiodic.

Note $e^{-\kappa_\Abf}\in \left]0,1\right[$ and $C_\Abf\in\left[1,+\infty\right[$ such that for any function $f$ on $\Abf$ and any $n\in \N$,
\[
\left\|P^n f -\int f\di\nu_\Abf\right\|_\infty \leqslant C_\Abf e^{-\kappa_\Abf n} \|f\|_\infty
\]
where $\nu_\Abf$ is the uniform measure on $\Abf$ (see lemma~\ref{lemma:markov_chains_finite_space_state} for the existence of $\kappa_\Abf, C_\Abf$).

Let $\nu$ be the unique $P_\rho-$invariant borelian probability measure on $\X$ (given by proposition~\ref{proposition:unique_mesure_invariante_contraction}).

Then, there are constants $\kappa,C_0 \in \R_+^\star$ that don't depend on $C_\Abf$ and such that for any $n\in \N$,
\[
\left\| P^n_\rho -\Pi_\nu \right\|_{\cal C^{0,\gamma}(\X)} \leqslant C_0C_\Abf e^{-\kappa n}
\]
where we noted $\Pi_\nu$ the operator of integration against the measure $\nu$.
\end{proposition}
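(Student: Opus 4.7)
The plan is to follow the classical Ionescu--Tulcea--Marinescu (Hennion) strategy: first establish a Doeblin--Fortet inequality for $P$ on $\cal C^{0,\gamma}(\X)$, then use the finite factor $\Abf$ to obtain a sup-norm decay $\|P^n f - \Pi_\nu f\|_\infty \to 0$ with $C_\Abf$ appearing only as a multiplicative prefactor, and finally feed that decay back into the Doeblin--Fortet estimate to upgrade to the Hölder norm.

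For the Doeblin--Fortet step, I would split pairs $(x,y)\in\X^2$ according to whether $\pi_\Abf(x)=\pi_\Abf(y)$. When the fibers agree, the $\G$-equivariance of $\pi_\Abf$ gives $\pi_\Abf(gx)=\pi_\Abf(gy)$ for every $g$, so \eqref{equation:contraction} applies and $|P^n f(x)-P^n f(y)|\leqslant C_1 e^{-\delta n} m_\gamma(f)\,d(x,y)^\gamma$. When the fibers differ, the sets $\pi_\Abf^{-1}(a)$ are compact and pairwise disjoint (as $\Abf$ is finite and $\pi_\Abf$ continuous), hence separated by some $\delta_0>0$, giving $|P^n f(x)-P^n f(y)|/d(x,y)^\gamma\leqslant 2\delta_0^{-\gamma}\|f\|_\infty$. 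The two cases combine into
\[
m_\gamma(P^n f)\leqslant C_1 e^{-\delta n} m_\gamma(f) + 2\delta_0^{-\gamma}\|f\|_\infty.
\]
Next, picking representatives $x_a\in\pi_\Abf^{-1}(a)$ and setting $h_n(a)=P^n f(x_a)$, the fiberwise estimate yields $\|P^n f - h_n\circ\pi_\Abf\|_\infty\leqslant C_1 e^{-\delta n}\mathrm{diam}(\X)^\gamma m_\gamma(f)$. Since $\G$-equivariance gives $P^k(h\circ\pi_\Abf)=(P_\Abf^k h)\circ\pi_\Abf$, applying $P^k$ and invoking the hypothesis on $\Abf$ produces
\[
\bigl\|P^{n+k} f - \tfrac{1}{|\Abf|}\textstyle\sum_{a\in\Abf} h_n(a)\bigr\|_\infty \leqslant C_1 e^{-\delta n}\mathrm{diam}(\X)^\gamma m_\gamma(f) + C_\Abf e^{-\kappa_\Abf k}\|f\|_\infty.
\]
The pushforward $(\pi_\Abf)_\ast\nu$ is $P_\Abf$-invariant, hence equals $\nu_\Abf$ by lemma~\ref{lemma:markov_chains_finite_space_state}, and the invariance of $\nu$ together with the fiberwise estimate forces $|\int h_n\,\di\nu_\Abf - \int f\,\di\nu|\leqslant C_1 e^{-\delta n}\mathrm{diam}(\X)^\gamma m_\gamma(f)$. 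Choosing $n=k\simeq m/2$ delivers
\[
\|P^m f - \Pi_\nu f\|_\infty \leqslant C_2 C_\Abf e^{-\kappa_1 m}\|f\|_\gamma,
\]
with $\kappa_1=\min(\delta,\kappa_\Abf)/2$ and $C_2$ independent of $C_\Abf$.

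Finally, applying the Doeblin--Fortet inequality to $P^m f - \Pi_\nu f$ (whose Hölder seminorm equals $m_\gamma(P^m f)$) gives
\[
m_\gamma(P^{2m} f - \Pi_\nu f) \leqslant C_1 e^{-\delta m} m_\gamma(P^m f) + 2\delta_0^{-\gamma}\|P^m f - \Pi_\nu f\|_\infty,
\]
and both summands decay exponentially in $m$ with $C_\Abf$ entering only as a linear prefactor. Combined with the sup-norm bound this yields $\|P^n - \Pi_\nu\|_{\cal C^{0,\gamma}(\X)}\leqslant C_0 C_\Abf e^{-\kappa n}$ after adjusting constants for odd $n$. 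The main obstacle is the bookkeeping in the middle step: one has to ensure that $C_\Abf$ appears only as a multiplicative factor and that the final rate $\kappa$ can be expressed using $\delta$ and $\kappa_\Abf$ alone, so that neither $C_0$ nor $\kappa$ depends on $C_\Abf$ itself.
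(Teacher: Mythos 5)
Your proof is correct and follows essentially the paper's strategy: use the fiberwise contraction to show $P^n f$ is nearly constant on each fiber, apply the $\Abf$-spectral gap for the cross-fiber part, and bootstrap the sup-norm decay to the Hölder norm, tracking that $C_\Abf$ enters only as a linear prefactor. The paper works with the conditional measure $\nu_x$ rather than fixed representatives $x_a$, which makes $\int f_1^n\,\di\nu_\Abf = \int f\,\di\nu$ hold exactly instead of up to an exponentially small error, but otherwise the steps correspond; your explicit two-term Doeblin--Fortet inequality, handling pairs in distinct fibers via the separation $\delta_0$, is in fact a bit more careful than the paper's terse assertion $m_\gamma(P^n f) \leqslant C_1 e^{-\delta n} m_\gamma(f)$, which only addresses same-fiber pairs.
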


\begin{remark}
We quantify the spectral gap assumption in $\mathrm{L}^\infty(\Abf,\nu_{\Abf})$ since this will allow us to take a family $(\Abf_i,\nu_i)$ of $\G-$finite $\G-$sets on which $P$ has a uniform spectral gap.
\end{remark}

\begin{remark}
This proposition can be seen has a corollary of the quasicompacity of $P$ in $\cal C^{0,\gamma}(\X)$ that we will prove in proposition~\ref{proposition:quasi_compact_localement_contracte} and of the fact that, in $\cal C^0(\X)$, $1$ is the only eigenvalue of modulus $1$ and it's associated eigenspace has dimension $1$. However, what interests us is the dependence between the spectral gap in $\mathrm{L}^{\infty} (\Abf,\nu_\Abf)$ and the one in $\cal C^{0,\gamma}(\X)$. 
\end{remark}

\begin{proof}
Let $f\in \cal C^{0,\gamma}(\X)$, $x,y\in \X$ such that $\pi_\Abf(x) = \pi_\Abf(y)$ and $n\in \N$. Then, for any $n\in \N$, we can compute
\[
\left|P^nf(x) - P^nf(y) \right|  \leqslant m_\gamma(f)\int_\G d(gx,gy)^\gamma
\di\rho^{\star n} (g) \leqslant m_\gamma(f) C_1e^{-\delta n} d(x,y)^\gamma
\]
where we noted $C_1,\delta$ the constants given by equation~\eqref{equation:contraction}.

This proves that for any $n\in \N$,
\[
m_\gamma(P^n f) \leqslant  C_1e^{-\delta n} m_\gamma(f)
\]
We recall that we noted $\nu$ the unique $P-$invariant borelian probability measure on $\X$ (given by proposition~\ref{proposition:unique_mesure_invariante_contraction}).

Moreover, for any $x\in \X$ and any non zero integer $n$, we note $\nu_x$ the measure defined by
\[
\int \varphi(y)\di\nu_x(y) = |\Abf|\int_{\X} \un_{\pi_\Abf(x)=\pi_\Abf(y)} \varphi(y)\di\nu(y)
\]
Then, for any function $f\in \cal C^{0,\gamma}(\X)$, we note
\[
f_1^n(x)  = \int_\X P^nf(y) \di\nu_x(y) \text{ and }f_2^n(x) = P^nf(x) - f_1^n(x)
\]
Thus, for any $x,y\in \X$, we have that
\[
f_2^n(y) - m_\gamma(f_2^n) \mathrm{Diam}(\X) \leqslant f_2^n (x) \leqslant f_2^n (y) + m_\gamma(f_2^n) \mathrm{Diam}(\X)
\]
where we noted $\mathrm{Diam}(\X)$ the diameter of $\X$.

Therefore, integrating in the $y$ variable and using the fact that $\int_{\X} f_2^n(y)\di\nu_x(y) = 0$, we get that
\[
\|f_2^n\|_\infty \leqslant \mathrm{Diam}(\X)^\gamma m_\gamma(f_2^n) = \mathrm{Diam}(\X)^\gamma m_\gamma(P^nf)
\]
And as,
\[
P^{2n} f(x) = P^n( P^n f)(x) = P^n (f_2^n + f_1^n)(x) = P^n f_2^n(x) + P^n f_1^n(x)
\]
we also get that
\begin{align*}
\left|P^{2n} f(x) - \int_\Abf f_1^n(a) \di\nu_\Abf(a)\right| &\leqslant \|f_2^n\|_\infty + \left|P^n f_1^n(x) - \int_\Abf f_1^n (a) \di\nu_\Abf(a) \right| \\
& \leqslant \mathrm{Diam}(\X)^\gamma C_1e^{-\delta n} m_\gamma(f) + C_\Abf e^{-\kappa_\Abf n} \|P^nf_1^n \|_\infty \\
& \leqslant \left(\mathrm{Diam}(\X)^\gamma C_1 e^{-\delta n} + C_\Abf e^{-\kappa_\Abf n} \right) \|f\|_\gamma
\end{align*}

Finally, using Fubini's lemma, we obtain that
\[
\int_\Abf f_1^n(a) \di \nu_\Abf(a)= \int_{\X} f(y)\di\nu(y)
\]

This last equality ends the proof of the lemma since we also have that
\[
m_\gamma(P^n f) \leqslant C_1 e^{-\delta n} m_\gamma(f)
\]
And so,
\[
\left\|P^{2n} f- \int f\di\nu \right\|_\gamma \leqslant \left(CC_1 e^{-\delta n} + C_1 e^{-2\delta n}+ C_\Abf e^{-\kappa_\Abf n} \right) \|f\|_\gamma
\]
So we note $\kappa  = \frac 1 2\min(\delta,\kappa_\Abf)$ and $C_0 = (1+C)C_1 + 1$.
\end{proof}

\subsubsection{Fibered contracting actions}

\begin{miniabstract}
In this paragraph, we study the case where the space is only locally contracted and we recover some results of the previous paragraph.
\end{miniabstract}

To study the action of $\mathrm{SL}_d(\R)$ on the sphere and not only on the projective space, the notion of contractivity of the action is not enough (since the sphere isn't contracted as $x$ and $-x$ stays at fixed distance). However, this is the only obstruction since if we note $\theta$ the application on the sphere that sends any point $x$ onto $-x$, then it commutes to the action of $\G$ and so, noting $\Hb=\{I_d,\theta\}$, we have the identification $\Sb^{d-1}/\Hb \sim \prob^d$ and the projective space is $(\rho,\gamma,M,N)-$contracted (if $\rho$ has an exponential moment and is strongly irreducible and proximal) as we already noted in example~\ref{exemple:espace_proj_contracte}.

\medskip
This is why, from now on, we will consider compact metric $\G-$spaces $(\X,d)$ endowed with an action of a finite group $\Hb$ that commutes to the action of $\G$ and such that the quotient (endowed with it's quotient metric) is contracted. To simplify the lecture, the reader may keep $\G=\mathrm{SL}_d(\R)$, $\X=\Sb^{d-1}$, $\Hb=\{I_d,\theta\}$ and $\X/\Hb = \prob^d$.

\medskip
Our first step is then to recover an equivalent of lemma~\ref{proposition:unique_mesure_invariante_contraction} and proposition~\ref{proposition:spectral_gap_P0}.

To do so, we will use the following
\begin{theorem}[Ionescu-Tulcea and Marinescu in \cite{ITM50}]\label{theorem:ITM}
Let $(\cal B, \|\,.\,\|_{\cal B})$ be a Banach space and $P$ a continuous operator on $\cal B$.

Assume there is a norm $\|\,.\,\|$ on $\cal B$ such that the identity map between the spaces $(\cal B,\|\,.\,\|_{\cal B} )$ and $ (\cal B,\|\,.\,\|)$ is compact and that there are two constants $r,R \in \R_+$ such that for any $f\in \cal B$,
\[
\|Pf\|_{\cal B} \leqslant r\|f\|_{\cal B} + R \|f\| 
\]
Then, the essential spectral radius of $P$ in $(\cal B,\|\,.\,\|_{\cal B})$ is bounded by $r$. 
\end{theorem}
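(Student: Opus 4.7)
The plan is to establish this classical Doeblin--Fortet (or Hennion--Lasota--Yorke) type inequality via the Kuratowski measure of non-compactness together with Nussbaum's formula $r_{ess}(P) = \lim_{n\to\infty} \alpha(P^n(B))^{1/n}$, where $B$ is the closed unit ball of $(\cal B, \|\cdot\|_{\cal B})$ and $\alpha$ denotes the Kuratowski measure of non-compactness with respect to $\|\cdot\|_{\cal B}$. As a preliminary observation, compactness of the identity $(\cal B, \|\cdot\|_{\cal B}) \to (\cal B, \|\cdot\|)$ forces $\|\cdot\| \leqslant C_0 \|\cdot\|_{\cal B}$ for some constant $C_0$, which in particular ensures that bounded sets in $\|\cdot\|_{\cal B}$ are relatively compact in $\|\cdot\|$.

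The heart of the argument is the single estimate $\alpha(P(A)) \leqslant r\, \alpha(A)$ for every bounded set $A \subset (\cal B, \|\cdot\|_{\cal B})$. To prove this, I fix $\varepsilon, \delta > 0$ and cover $A$ by finitely many subsets $A_1, \dots, A_N$ of $\|\cdot\|_{\cal B}$-diameter at most $\alpha(A) + \varepsilon$. Since each $A_i$ is bounded in $\|\cdot\|_{\cal B}$, the compactness hypothesis makes it relatively compact in $\|\cdot\|$, so I further partition each $A_i$ into finitely many pieces $A_{ij}$ of $\|\cdot\|$-diameter at most $\delta$. For any $f, g \in A_{ij}$ the hypothesis of the theorem yields
\[
\|Pf - Pg\|_{\cal B} \leqslant r \|f - g\|_{\cal B} + R \|f - g\| \leqslant r(\alpha(A) + \varepsilon) + R\delta,
\]
so $P(A) = \bigcup_{i,j} P(A_{ij})$ is a finite union of sets of $\|\cdot\|_{\cal B}$-diameter at most $r(\alpha(A) + \varepsilon) + R\delta$. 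Letting $\delta \to 0$ and then $\varepsilon \to 0$ gives the claim.

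Iterating the estimate with $A = P^{n-1}(B)$ yields $\alpha(P^n(B)) \leqslant r^n \alpha(B)$, and since $\alpha(B)$ is finite (bounded by the diameter of $B$), Nussbaum's formula produces
\[
r_{ess}(P) = \lim_{n\to\infty} \alpha(P^n(B))^{1/n} \leqslant \lim_{n\to\infty} r \cdot \alpha(B)^{1/n} = r,
\]
which is the desired conclusion. The main obstacle is not the substance of the argument but rather setting up the abstract framework cleanly: one must invoke (or briefly verify) the behavior of $\alpha$ under linear maps and finite unions, and one must appeal to Nussbaum's characterization of the essential spectral radius. These are standard facts about the essential norm on Banach spaces, and once they are in place the whole proof reduces to a single application of the Doeblin--Fortet inequality within a two-layer covering argument.
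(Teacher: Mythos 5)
The paper states this result with a citation to \cite{ITM50} and does not supply a proof, so there is no argument in the text against which to compare yours. Your proof is correct, and it is essentially Hennion's standard argument for this form of the theorem: establish $\alpha(P(A)) \leqslant r\,\alpha(A)$ for the Kuratowski measure of non-compactness of $\|\cdot\|_{\cal B}$-bounded sets via the two-level covering (first into pieces of small $\|\cdot\|_{\cal B}$-diameter, then each such piece into pieces of small $\|\cdot\|$-diameter, using compactness of the identity map), iterate, and then invoke Nussbaum's characterization of the essential spectral radius. Two small remarks that do not affect correctness: first, the fact that $\|\cdot\|_{\cal B}$-bounded sets are $\|\cdot\|$-relatively compact is the definition of compactness of the identity map, not a consequence of the derived continuity bound $\|\cdot\| \leqslant C_0\|\cdot\|_{\cal B}$, so the phrase ``in particular ensures'' inverts the logical dependence. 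Second, the statement as given in the paper (bounding $r_{\mathrm{ess}}$ by $r$ with no smallness assumption on $r$) is really Hennion's 1993 refinement rather than the original Ionescu-Tulcea--Marinescu quasi-compactness theorem, which is why Nussbaum's formula is precisely the right tool; your plan identifies that correctly.
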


\begin{example}
In our examples, $(\cal B, \|\,.\,\|_{\cal B})$ will be a space of hölder-continuous functions endowed with it's Banach-space norm and $\|\,.\,\|$ will be the uniform norm.
\end{example}

\begin{proposition}\label{proposition:quasi_compact_localement_contracte}
Let $\G$ be a second countable locally compact group, $N:\G\to [1,+\infty[$ a submultiplicative function on $\G$ and $\rho$ a borelian probability measure on $\G$.

Let $(\X,d)$ be a compact metric $\G-$space endowed with an action of a finite group $\Hb$ that commutes to the one of $\G$ and such that $\X/\Hb$ is $(\rho,\gamma,M,N)-$contracted over a finite $\G-$set $\Abf$.

Then, there are $C',\delta' \in \R_+^\star$ such that for any $f\in \cal C^{0,\gamma}(\X)$ and any $n\in \N$,
\[
m_\gamma(P^n f) \leqslant C' \left( e^{-\delta' n} m_\gamma(f) + \|f\|_\infty \right)
\]
In particular, $P$ est is equicontinuous on $\cal C^0(\X)$ and it's spectral radius in $\cal C^{0,\gamma}(\X)$ is strictly smaller than $1$.
\end{proposition}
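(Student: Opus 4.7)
The plan is to establish a Doeblin--Fortet-type inequality at some large enough iterate $n_0$, iterate it to recover the exponentially decaying bound for all $n$, and then deduce quasi-compactness of $P$ in $\cal C^{0,\gamma}(\X)$ from theorem~\ref{theorem:ITM} together with equicontinuity on $\cal C^{0}(\X)$ by a density and Arzelà--Ascoli argument.

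The geometric input that transfers the contraction on $\X/\Hb$ to pointwise distances on $\X$ is as follows. Since $\Hb$ is finite and acts freely by isometries on the compact space $\X$, there is $\eta>0$ such that $d(hu,u)\geq 2\eta$ for every $u\in\X$ and every $h\in\Hb\setminus\{I_d\}$; shrinking $\eta$ if necessary, we may also assume that $d(\bar x,\bar y)<\eta$ forces $\pi_\Abf(\bar x)=\pi_\Abf(\bar y)$. The key observation is that whenever $u,v\in\X$ satisfy $d(u,v)<\eta$, the unique element of the $\Hb$-orbit of $u$ closest to $v$ is $u$ itself (for $h\neq I_d$, $d(hu,v)\geq d(hu,u)-d(u,v)\geq 2\eta-\eta>d(u,v)$), and therefore $d(\bar u,\bar v)=d(u,v)$.

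Choose $n_0$ so that $C_1 e^{-\delta n_0}\leq 1/2$, with $C_1,\delta$ from~\eqref{equation:contraction} applied to $\X/\Hb$. Fix $f\in\cal C^{0,\gamma}(\X)$ and $x,y\in\X$. If $d(x,y)\geq\eta$, trivially $|P^{n_0}f(x)-P^{n_0}f(y)|/d(x,y)^\gamma\leq 2\eta^{-\gamma}\|f\|_\infty$. If $d(x,y)<\eta$, then $\pi_\Abf(\bar x)=\pi_\Abf(\bar y)$ and $d(\bar x,\bar y)=d(x,y)$; split the integral $\int_\G (f(gx)-f(gy))\,\di\rho^{\star n_0}(g)$ according to whether $d(gx,gy)<\eta$ or not. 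On $\{d(gx,gy)<\eta\}$, applying the key observation to $(gx,gy)$ gives $d(gx,gy)=d(\overline{gx},\overline{gy})$, hence $|f(gx)-f(gy)|\leq m_\gamma(f)\,d(\overline{gx},\overline{gy})^\gamma$ and~\eqref{equation:contraction} bounds the contribution by $\tfrac12 m_\gamma(f)\,d(x,y)^\gamma$. On the complement, the Lipschitz bound $d(gx,gy)\leq MN(g)^M d(x,y)$ forces $N(g)^M\geq\eta/(M d(x,y))$; Markov's inequality, submultiplicativity of $N$, and~\eqref{equation:exponential_moment} bound the $\rho^{\star n_0}$-measure of this set by $R_0\,d(x,y)^\gamma$ for some constant $R_0$ depending on $n_0$, and the crude estimate $|f(gx)-f(gy)|\leq 2\|f\|_\infty$ gives a contribution at most $2R_0\|f\|_\infty d(x,y)^\gamma$. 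All in all, $m_\gamma(P^{n_0}f)\leq\tfrac12 m_\gamma(f)+R\|f\|_\infty$ for some constant $R$.

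Iterating, and using $\|P^n f\|_\infty\leq\|f\|_\infty$, yields $m_\gamma(P^{kn_0}f)\leq 2^{-k}m_\gamma(f)+2R\|f\|_\infty$; combined with the elementary bound $m_\gamma(P^j f)\leq C_0^{n_0} m_\gamma(f)$ for $0\leq j<n_0$ (from Lipschitz condition (1) and~\eqref{equation:exponential_moment}), writing $n=kn_0+j$ gives the claimed estimate $m_\gamma(P^n f)\leq C'(e^{-\delta' n}m_\gamma(f)+\|f\|_\infty)$ with $\delta'=(\ln 2)/n_0$. Applying theorem~\ref{theorem:ITM} to $P^{n_0}$ on $\cal C^{0,\gamma}(\X)$ with auxiliary norm $\|\cdot\|_\infty$ (the inclusion being compact by Arzelà--Ascoli) shows that the essential spectral radius of $P^{n_0}$ is at most $1/2$, so $P$ is quasi-compact on $\cal C^{0,\gamma}(\X)$. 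Equicontinuity of $(P^n f)_n$ for $f\in\cal C^0(\X)$ then follows by density: approximate $f$ by $f_\varepsilon\in\cal C^{0,\gamma}(\X)$ with $\|f-f_\varepsilon\|_\infty<\varepsilon$; the uniformly Hölder orbit $(P^n f_\varepsilon)_n$ is equicontinuous, and $\|P^n(f-f_\varepsilon)\|_\infty\leq\varepsilon$ transfers this to $(P^n f)_n$. The main obstacle is precisely the geometric reduction of the second paragraph, which transfers the contraction on the quotient to pointwise distances on $\X$ only on a ``good event'' whose complement must be controlled via Markov and the exponential moment of $N$.
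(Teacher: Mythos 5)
Your proposal is correct and follows essentially the same route as the paper, which defers this proposition to the proof of lemma~\ref{lemme:controle_P_z} where the same estimates appear for the perturbed operator $P(z)$: pick a scale $d_0$ below which the metric on $\X$ agrees with the quotient metric on $\X/\Hb$ (the paper asserts this directly; you derive it from freeness and isometry of the $\Hb$-action), split the integral for a large iterate $n_0$ according to whether $d(gx,gy)$ stays below that scale, control the small part via the quotient contraction~\eqref{equation:contraction}, control the large part via the Lipschitz bound together with the exponential moment~\eqref{equation:exponential_moment}, iterate the resulting Doeblin--Fortet inequality, and invoke Ionescu-Tulcea--Marinescu (theorem~\ref{theorem:ITM}). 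The only minor divergence is that on the large part you pay with $\|f\|_\infty$ via Markov's inequality, whereas the paper keeps the factor $m_\gamma(f)$ and instead tunes a threshold $\varepsilon$ so that the remaining integral is small; both yield the same Doeblin--Fortet inequality and hence the stated bound.
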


\begin{proof}
We do not prove this result here but later, in lemma~\ref{lemme:controle_P_z} when the operator is perturbated by a cocycle.
\end{proof}

Finally, we study the eigenvalues of $P$ in $\cal C^{0}(\X)$ having modulus $1$. To do so, we begin by studying the $P-$invariant borelian probability measures and then, we will see that, contrary to what happened when the space was contracted, there can be eigenvalues of modulus $1$ and different from $1$ and even non constant $P-$invariant functions.

This study will allow us to understand why we have to make an assumption about a cone being fixed or not in the renewal theorem.

\begin{lemma}\label{lemma:loi_grands_nombres_localement_contracte}
Let $\G$ be a second countable locally compact group, $N:\G\to [1,+\infty[$ a submultiplicative function on $\G$ and $\rho$ a borelian probability measure on $\G$.

Let $(\X,d)$ be a compact metric $\G-$space endowed with an action of a finite group $\Hb$ that commutes to the $\G-$action and such that $\X/\Hb$ is $(\rho,\gamma,M,N)-$contracted over a finite $\G-$set $\Abf$ on which the random walk defined by $\rho$ is irreducible and aperiodic.

Then, there are at most $|\Hb|$ minimal closed invariant subsets (for the action of $T_\rho$ the subsemigroup generated by the support of $\rho$) that we note $\Lambda_1, \dots ,\Lambda_r$. Each one is associated to a $P-$invariant $P-$ergodic borelian probability measure $\nu_i$ with $\supp\nu_i=\Lambda_i$.

Moreover, for any $x\in \X$ and $\rho^{\otimes \N}-$a.e. $(g_n) \in \G^\N$, the sequence
\[
\frac 1 n \sum_{k=0}^{n-1} \delta_{g_k \dots g_1 x}
\]
converges to one of the $\nu_i$ and if we note, for $i\in \lib 1,r\rib$,
\[
p_i(x) = \rho^{\otimes \N} \left(\left\{ (g_n) \middle| \frac 1 n \sum_{k=0}^{n-1} \delta_{g_k \dots g_1 x} \convetoile \nu_i \right\}\right)
\]
we have that the function $p_i$ is continuous, $P-$invariant, $\sum_i p_i =1$, $p_i  = \delta_{i,j}$ on $\Lambda_j$ (where $\delta_{i,j}$ Kronecker's symbol).

Finally, for any continuous function $f$ on $\X$ and any $x\in \X$,
\[
\frac 1 n \sum_{k=0}^{n-1} P^kf(x) \xrightarrow[n\to +\infty]\, \sum_{i=1}^r p_i(x) \int_\X f\di\nu_i
\]
\end{lemma}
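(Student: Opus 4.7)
The plan is to piece the statement together from three ingredients: (i) the uniqueness of the stationary measure on the contracted quotient $\X/\Hb$ given by proposition~\ref{proposition:unique_mesure_invariante_contraction}, (ii) the equicontinuity and spectral gap of $P$ on $\X$ itself from proposition~\ref{proposition:quasi_compact_localement_contracte}, and (iii) the general theory of equicontinuous Markov operators from~\cite{Rau92}. Throughout, I write $\pi_\Hb\colon\X\to\X/\Hb$ for the quotient map and $\bar\nu$ for the unique $P$-invariant probability measure on $\X/\Hb$.

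First I would use the equicontinuity of $P$ on $\cal C^0(\X)$ (proposition~\ref{proposition:quasi_compact_localement_contracte}) together with the Kakutani--Yosida mean ergodic theorem to deduce that, for every $f\in\cal C^0(\X)$, the Cesàro averages $\frac1n\sum_{k=0}^{n-1}P^kf$ converge uniformly to a continuous $P$-invariant function $\Pi f$. Applying this to indicator approximations and pushing measures through $\pi_\Hb$, any $P$-invariant probability measure $\nu$ on $\X$ projects to $\bar\nu$, so the convex compact set of $P$-invariant probability measures on $\X$ lies above the single point $\bar\nu$; in particular every ergodic component $\nu_i$ projects to $\bar\nu$. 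Since $\Hb$ commutes with $\G$, the group $\Hb$ acts on the set of ergodic invariant measures, and since all of them project to the same $\bar\nu$ whose fibers are $\Hb$-orbits, the $\Hb$-action is transitive on this set. This forces the number $r$ of ergodic measures to be at most $|\Hb|$, and the supports $\Lambda_i:=\supp\nu_i$ are then the minimal closed $T_\rho$-invariant subsets (minimality from ergodicity by the usual argument that a proper closed $T_\rho$-invariant subset of $\Lambda_i$ would carry a strictly smaller invariant measure, contradicting ergodicity of $\nu_i$).

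Next I would construct the functions $p_i$. Applying the uniform Cesàro convergence above to a partition of unity $(\chi_i)$ subordinate to disjoint neighbourhoods of the $\Lambda_i$, the limits
\[
p_i(x):=\lim_{n\to\infty}\frac1n\sum_{k=0}^{n-1}P^k\chi_i(x)\Big/\!\int\chi_i\,\di\nu_i
\]
exist uniformly in $x$, are continuous, nonnegative, $P$-invariant, sum to $1$, and take the value $\delta_{i,j}$ on $\Lambda_j$ (since $P^k\chi_i\equiv\chi_i\equiv\delta_{ij}$ on $\Lambda_j$ after normalisation). Uniqueness of the $\nu_i$ as ergodic invariant measures implies that $\Pi f=\sum_i p_i\int f\,\di\nu_i$ for every $f\in\cal C^0(\X)$, which is the last displayed formula of the lemma.

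Finally, for the almost sure convergence of the empirical measures I would combine the Chacon--Ornstein or Breiman strong law for Markov chains on compact spaces with equicontinuous transition operator: $\rho^{\otimes\N}$-almost surely every subsequential weak-$\ast$ limit of $\frac1n\sum_{k=0}^{n-1}\delta_{g_k\cdots g_1 x}$ is $P$-invariant, and triviality of the tail $\sigma$-algebra on each ergodic component forces the limit to be one of the $\nu_i$ almost surely. The probability that the limit equals $\nu_i$ defines a harmonic ($P$-invariant) function coinciding with the $p_i$ produced above, since both have the same integral against every $f\in\cal C^0(\X)$ by Fubini and the uniform Cesàro convergence. The main obstacle I expect is the bound $r\leqslant|\Hb|$: it needs the structural fact that the action of $\Hb$ on ergodic measures above $\bar\nu$ is transitive, which requires combining the uniqueness of $\bar\nu$ with the commutation of $\G$ and $\Hb$ and a disintegration argument over the finite fibers of $\pi_\Hb$.
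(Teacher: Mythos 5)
Your plan lines up closely with the paper's proof: both arguments hinge on the same two ingredients, namely (a) the uniqueness of the stationary measure/minimal closed invariant subset on the contracted quotient $\X/\Hb$ from proposition~\ref{proposition:unique_mesure_invariante_contraction}, and (b) the fact that the $\Hb$-action commutes with the $\G$-action, so $\Hb$ permutes the invariant objects on $\X$; the bound $r\leqslant|\Hb|$ then comes from transitivity of this $\Hb$-action, and the remaining conclusions follow from equicontinuity of $P$ (proposition~\ref{proposition:quasi_compact_localement_contracte}) and the structure theory of equicontinuous Markov operators from~\cite{Rau92}.

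Where the two routes differ: the paper runs the transitivity argument at the level of \emph{minimal closed $T_\rho$-invariant subsets} $\Lambda$. It observes that $\pi_\Hb(\Lambda)$ is a minimal closed invariant set in $\X/\Hb$, hence equals the support of the unique stationary measure $\bar\nu$, hence all the $\Lambda$'s project to the same set; transitivity of $\Hb$ on the $\Lambda$'s follows immediately because two minimal sets are either disjoint or equal, and any point of $\Lambda'$ lies in some $h\Lambda$. You instead argue at the level of \emph{ergodic measures}, claiming transitivity of $\Hb$ there, and you correctly flag that this needs a further step (a disintegration of $\frac1{|\Hb|}\sum_{h}h_*\nu_1$ over the fibers of $\pi_\Hb$ and an ergodic-decomposition argument to show any other ergodic lift of $\bar\nu$ is some $h_*\nu_1$). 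That step can be carried out, but the paper's set-level argument is shorter and bypasses it. The other difference is organizational: you reconstruct the existence of the continuous $P$-invariant basis $(p_i)$ via Kakutani--Yosida plus a partition of unity, whereas the paper simply invokes propositions~3.2 and 3.3 of~\cite{Rau92} to assert that the space of $P$-invariant continuous functions has dimension exactly $r$ with a basis dual to the $\Lambda_i$'s; similarly for the a.s.\ convergence of the empirical measures, the paper leans on Raugi's propositions (and equality~2.11 in~\cite{BQproj}) rather than Chacon--Ornstein/Breiman. If you keep your construction of $p_i$ you should still verify that the limits are independent of the chosen partition of unity and that they span the full space of $P$-invariant continuous functions; Raugi's propositions give you both of these for free.
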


\begin{proof}
Let $\Lambda$ be a minimal closed invariant subset (there is at least one since $\X$ is compact) and let $h\in \Hb$. Then, $h\Lambda$ is still a closed invariant subset since the actions of $\G$ and $\Hb$ commute. Moreover, it is also minimal since $h$ is invertible. 

This proves that, $\Hb\Lambda$ is again a closed $P-$invariant subset. But this time, it is also $\Hb-$invariant and so $\pi_\Hb(\Hb\Lambda)$ is an invariant subset of $P$ seen as an operator on $\cal C^{0,\gamma}(\X/\Hb)$. But this closed invariant subset is unique since $P$ is contracting on $\X/\Hb$ and the random walk on $\Abf$ defined by $\rho$ is irreducible and aperiodic (see proposition~\ref{proposition:unique_mesure_invariante_contraction}). This proves that $\Hb\Lambda$ is unique and that there are at most $|\Hb|$ minimal closed invariant subsets and that $\Hb$ acts transitively on them. We note them $\Lambda_1,\dots,\Lambda_r$ and we note ${\Lambda}$ their union. 

We saw in proposition~\ref{proposition:quasi_compact_localement_contracte} that $P$ is equicontinuous and using the propositions 3.2 and 3.3 of~\cite{Rau92}, we get that there are at most $r$ continuous $P-$invariant functions $p_1, \dots p_r$ forming a free family, and as each one is constant on $\Lambda_i$, we can always assume that $p_j=\delta_{i,j}$ on $\Lambda_i$. Thus, noting $\nu_i$ the $P-$invariant measure on $\Lambda_i$, we have that for any continuous function $f\in \cal C^{0}(\X)$,
\[
\lim_{n\to +\infty} \frac 1 n \sum_{k=0}^{n-1} P^kf(x) = \sum_{i=1}^r p_i(x) \int f\di\nu_i
\]
To conclude, we only need to check that $p_i$ really is the function we defined.

\medskip
First of all, the fact that for any $x\in \X$, $\frac 1 n \sum_{k=0}^{n-1} \delta_{g_k \dots g_1 x}$ converges a.e. to an ergodic measure depending continuously on $x$ is a consequence of the equicontinuity of $P$ and of the propositions of Raugi that we already used.

The fact that the function $p_i$ that we defined is $P-$invariant also comes from these propositions (see also equality $2.11$ in~\cite{BQproj}). So we can conclude using the unicity of these functions $p_1, \dots, p_r$.
\end{proof}

\subsubsection{Lazy random walk} \label{subsubsection:marche_ralentie}

Let $\G$ be a topological group. If $\rho$ is a borelian probability measure on $\G$, we will have to introduce the lazy random walk associated to $\rho$ : this is the walk associated to the measure 
\begin{equation}
\rho_\e = \frac 1 2 \delta_{\e} + \frac 1 2 \rho
\end{equation}
The main interest of this measure is that the sequence $(\supp \rho_{\e}^{\star n})_{n\in \N}$ is non decreasing. Moreover, for any $\lambda \in \C$,
\[
\lambda I_d-P_{\rho_{\e}} = \frac 1 2 ((2\lambda-1)I_d - P_\rho)
\] 
and so the spectral values of $P_{\rho_{\e}}$ and the ones of $P_\rho$ are linked (in particular, for $\lambda =1$, we get that $I_d-P_{\rho_{\e}} = \frac 1 2(I_d-P_\rho)$).

The following lemma proves that this measure keeps other properties of $\rho$.

\begin{lemma}\label{lemma:contraction_marche_ralentie}
Let $\G$ be a second countable locally compact group and $\rho$ a borelian probability measure on $\G$.

Let $(\X,d)$ be a compact metric $\G-$space endowed with an action of a finite group $\Hb$ that commutes to the $\G-$action and such that $\X/\Hb$ is $(\rho,\gamma,M,N)-$contracted over a finite $\G-$set $\Abf$.

Then, $\X/\Hb$ is also $(\rho_{\e},\gamma,M,N)-$contracted over $\Abf$.
\end{lemma}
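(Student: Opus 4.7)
The plan is to verify the three clauses of the contraction definition for $\rho_\e$, noting that clause (1) depends only on the action (not on $\rho$) and so transfers for free, while clauses (2) and (3) require short elementary computations exploiting the binomial expansion of $\rho_\e^{\star n}$.

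For the moment condition (2), since $\rho_\e = \tfrac12 \delta_e + \tfrac12 \rho$ and $N(e) = 1$ (as $N$ is submultiplicative with values in $[1,+\infty[$), one simply writes
\[
\int_\G N(g)^{M\gamma}\,\di\rho_\e(g) = \tfrac12 + \tfrac12 \int_\G N(g)^{M\gamma}\,\di\rho(g),
\]
which is finite by the assumption on $\rho$.

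For the contraction condition (3), I would use the binomial expansion
\[
\rho_\e^{\star n} = \sum_{k=0}^{n} \binom{n}{k} 2^{-n}\, \rho^{\star k}.
\]
By the contraction of $\X/\Hb$ under $\rho$, equation~\eqref{equation:contraction} provides constants $C_1, \delta \in \R_+^\star$ such that for every $k \in \N$ and every $x, y \in \X/\Hb$ with $\pi_\Abf(x) = \pi_\Abf(y)$,
\[
\int_\G \frac{d(gx,gy)^\gamma}{d(x,y)^\gamma}\,\di\rho^{\star k}(g) \leqslant C_1 e^{-\delta k}
\]
(and for $k=0$ this integral equals $1$, consistent with the bound once $C_1 \geqslant 1$). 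Summing against the binomial weights yields
\[
\int_\G \frac{d(gx,gy)^\gamma}{d(x,y)^\gamma}\,\di\rho_\e^{\star n}(g) \leqslant C_1 \sum_{k=0}^n \binom{n}{k} 2^{-n} e^{-\delta k} = C_1 \left(\frac{1+e^{-\delta}}{2}\right)^n.
\]
Since $e^{-\delta} < 1$, the base $(1+e^{-\delta})/2$ lies strictly between $0$ and $1$, so the right-hand side tends to $0$. Hence one can pick $n_0 \in \N^\star$ large enough to make this supremum strictly less than $1$, establishing clause (3) for $\rho_\e$.

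I do not anticipate any real obstacle: the only mild subtlety is that one must invoke the consequence~\eqref{equation:contraction} of contraction (valid for every $k$) rather than only the hypothesis at the fixed integer $n_0$, because the contraction rate must be made small enough to beat the extra mass that $\rho_\e^{\star n}$ puts on $\rho^{\star k}$ for small $k$. Once this is noted, the binomial computation closes the argument immediately.
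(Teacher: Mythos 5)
Your proof is correct and follows the same route as the paper's: after observing that conditions (1) and (2) transfer trivially, both arguments expand $\rho_\e^{\star n}$ binomially, apply the uniform exponential bound~\eqref{equation:contraction} term by term, and sum the resulting series to get the rate $C_1\bigl((1+e^{-\delta})/2\bigr)^n$. The subtlety you flag — needing~\eqref{equation:contraction} for all $k$ rather than only at a single $n_0$ — is exactly the point the paper's computation relies on as well.
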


\begin{proof}
It is clear that the first two properties are satisfied by $\rho_\e$.

Moreover, for any $n\in \N$, we have that
\[
\rho_{\e}^{\star n} = \frac 1 {2^n} \sum_{k=0}^{n} \binom n k \rho^{\star k}
\]
And so, for any $x,y \in \X$ such that $x\not=y$ and $\pi_\Abf(x) = \pi_\Abf(y)$ and any $n\in \N$, we have that
\begin{align*}
\int_\G \frac{d(gx,gy)^\gamma}{d(x,y)^\gamma} \di\rho_{\e}^{\star n}(g) &= \frac 1 {2^n} \sum_{k=0}^n \binom  n k \int_\G \frac{d(gx,gy)^\gamma}{d(x,y)^\gamma} \di\rho^{\star k}(g) \\&\leqslant \frac 1 {2^n} \sum_{k=0}^{n} \binom n k C_1 e^{-\delta k} \leqslant C_1 \left( \frac {1+e^{-\delta}} 2 \right)^n \qedhere
\end{align*}
\end{proof}

In the same way, we prove the following
\begin{lemma}
Let $\G$ be a second countable locally compact group and $\rho$ a borelian probability measure on $\G$.

Let $(\cal B,\|\,.\,\|_{\cal B})$ be a Banach space and $r:\G\to \mathrm{GL}(\cal B)$ a representation of $\G$ such that $\left\{\begin{array}{ccc}\G\times \cal B &\to& \cal B\\ (g,b) & \mapsto & r(g)b \end{array} \right.$ is continuous and $\int_\G \|r(g)\| \di\rho(g)$ is finite.

We note $P_\rho$ the operator $b\mapsto \int_\G r(g)(b)\di\rho(g)$.

We assume that there is an operator $N_0$ on $\cal{B}$ and $C,\kappa \in \R$ such that for any $n\in \N$, $\|P_\rho^n - N_0 \|_{\cal B} \leqslant Ce^{-\kappa n}$.

Then, for any $n\in \N$,
\[
\|P_{\rho_{\e}}^n -N_0 \|_{\cal B} \leqslant  C \left(\frac{1+e^{-\kappa}} 2 \right)^n
\]
where $P_{\rho_{\e}}$ is the operator associated to $\rho_{\e} = \frac 1 2 \delta_{\e} + \frac 1 2 \rho$.
\end{lemma}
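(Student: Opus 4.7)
The plan is to mimic the argument used in Lemma~\ref{lemma:contraction_marche_ralentie}, namely to exploit the binomial expansion of the convolution $\rho_\e^{\star n}$. The continuity and integrability assumptions on $r$ ensure that $P_{\mu\star\nu} = P_\mu P_\nu$ for any pair of measures satisfying the hypothesis, so from $\rho_\e = \frac 1 2 \delta_\e + \frac 1 2 \rho$ and the fact that $r(\e)$ is the identity operator on $\cal B$, I would obtain by induction
\[
P_{\rho_\e}^n = \frac 1 {2^n} \sum_{k=0}^n \binom n k P_\rho^k.
\]

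Next, since $\frac 1 {2^n}\sum_{k=0}^n \binom n k = 1$, I can rewrite $N_0$ as $\frac 1 {2^n}\sum_{k=0}^n\binom n k N_0$, giving
\[
P_{\rho_\e}^n - N_0 = \frac 1 {2^n}\sum_{k=0}^n \binom n k \left(P_\rho^k - N_0\right).
\]
Taking operator norms, applying the triangle inequality and inserting the hypothesis $\|P_\rho^k - N_0\|_{\cal B} \leqslant C e^{-\kappa k}$ (which we use for every $k \in \N$, in particular for $k = 0$, where it reads $\|I - N_0\|_{\cal B} \leqslant C$) then yields
\[
\|P_{\rho_\e}^n - N_0\|_{\cal B} \leqslant \frac C {2^n} \sum_{k=0}^n \binom n k e^{-\kappa k} = C\left(\frac{1+e^{-\kappa}}{2}\right)^n
\]
by the binomial theorem, which is exactly the announced bound.

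I do not foresee any serious obstacle: the proof is entirely formal once the convolution identity $P_{\rho_\e}^n = 2^{-n}\sum_k \binom n k P_\rho^k$ is written down, and the improvement on the convergence rate comes for free from averaging the geometric sequence $(e^{-\kappa k})$ against the binomial weights $2^{-n}\binom n k$. The same scheme works uniformly in the Banach space $\cal B$ and in the representation $r$, showing abstractly that the ``lazy'' modification $\rho_\e$ inherits any exponential approximation rate of $\rho$, with $e^{-\kappa}$ replaced by $(1+e^{-\kappa})/2$; in particular, Lemma~\ref{lemma:contraction_marche_ralentie} can be seen as the special case of this statement where $\cal B = \cal C^{0,\gamma}(\X/\Hb)$ and $N_0 = 0$ is used to control the semi-norm $m_\gamma$.
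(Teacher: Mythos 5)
Your proof is correct and is exactly the argument the paper has in mind: the lemma is stated immediately after Lemma~\ref{lemma:contraction_marche_ralentie} with the remark that it is proved ``in the same way,'' i.e.\ via the binomial identity $P_{\rho_\e}^n=2^{-n}\sum_{k=0}^n\binom{n}{k}P_\rho^k$ (valid since $r(\e)=I$), the rewriting $N_0=2^{-n}\sum_{k=0}^n\binom{n}{k}N_0$, the triangle inequality, and the binomial theorem. The step where you invoke the hypothesis at $k=0$, giving $\|I-N_0\|_{\cal B}\leqslant C$, is legitimate since the hypothesis is asserted for all $n\in\N$ including $n=0$.
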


\subsubsection{Perturbations of Markov operators by cocycles} \label{subsubsection:pertubation_cocycles}

In this paragraph, $\G$ still is a second countable locally compact group acting on a compact metric $\G-$space $(\X,d)$ that fibers $\G-$equivariantly over a finite $\G-$set $\Abf$ and $\rho$ a borelian probability measure on $\G$.

We are going to study a kind of perturbation of the Markov operator associated to $\rho$. To do so, we make the following

\begin{definition}[Cocycles] \label{definition:cocycle}
Let $\G$ be a topological group and $\X$ a topological space endowed with a continuous action of $\G$.

We say that a continuous function $\sigma : \G\times \X \to \R$ is a  \emph{cocycle} if for any $g_1,g_2\in \G$ and any $x\in \X$,
\[
\sigma(g_2 g_1, x) = \sigma(g_2, g_1 x) + \sigma(g_1,x) 
\]
Among the cocycles, we call \emph{coboundaries} the ones given by $\sigma(g,x) = \varphi(gx) - \varphi(x)$ where $\varphi:\X\to \R$ is a continuous function.
\end{definition}

\begin{remark}
Let $\sigma$ be a cocycle. Then, the operator defined for any $f\in \cal C^{0}(\X)$ and any $x\in \X$ by
\[
P(it)f(x) = \int_\G e^{-it\sigma(g,x)} f(gx) \di\rho(g)
\]
is continuous on $\cal C^{0}(\X)$ and for any $f\in \cal C^0(\X)$, any $x\in \X$ and any $n\in \N$, we have that
\[
P^n(it) f(x) = \int_\G e^{-it\sigma(g,x)} f(gx) \di\rho^{\star n}(g) \text{ and }\|P(it)^n f\|_\infty \leqslant \|f\|_\infty
\]
It is to have this equation that we only study cocycles and not more general functions on $\G\times \X$.
\end{remark}

As wa are going to study contracting actions (and thus holder-continuous functions) we want conditions that guarantee that $P(it)$ preserves the space of hölder-continuous functions on $\X$.

For a cocycle $\sigma$ and $g\in \G$, we note
\[
\sigma_{\mathrm{sup}}(g) = \sup_{x\in \X} |\sigma(g,x)| \text{ and }\sigma_{\mathrm{Lip}}(g) = \sup_{\substack{x,y\in \X\\ \pi_\Abf(x) = \pi_\Abf(y) \\ x\not=y}} \frac{|\sigma(g,x) - \sigma(g,y)|}{d(x,y)}
\] 

Then, for any $x,y\in \X$ with $x\not=y$ and $\pi_\Abf(x) = \pi_\Abf(y)$,
\begin{align*}
2^{\gamma-1}\left|e^{-it\sigma(g,x)}-e^{-it\sigma(g,y)}\right| &\leqslant\left|e^{-it\sigma(g,x)}-e^{-it\sigma(g,y)}\right|^\gamma \leqslant |t|^\gamma |\sigma(g,x) - \sigma(g,y)|^\gamma \\&\leqslant |t|^\gamma \sigma_{\mathrm{Lip}}^\gamma(g) d(x,y)^\gamma
\end{align*}
So, for any $g\in \G$, if $\sigma_{\mathrm{Lip}}(g)$ is finite, then the function $(x\mapsto e^{-it\sigma(g,x)})$ is hölder-continuous. 

We note, for $M\in \R_+$,
\[
\mathcal{Z}_N^M(\X) = \left\{\sigma\text{ is a cocycle}
\middle|  \sup_{g\in \G}\frac{\sigma_{\mathrm{Lip}}(g)}{N(g)^M } \text{ and }\sup_{g\in \G}\frac{e^{\sigma_{\mathrm{sup}}(g)}}{N(g)^M } \text{ are finite}\right\}
\]
and, for $\sigma \in \cal Z^{M}_N(\X)$, we note
\begin{equation} \label{equation:Msig_I_sig}
\Msig = \sup_{g\in \G} \frac{\sigma_{\mathrm{Lip}}(g)}{N(g)^M}  \text{ and }\Isig = \sup_{g\in \G}\frac{e^{\sigma_{\mathrm{sup}}(g)}}{N(g)^M } 
\end{equation}

The following proposition is an extension to our context of corollary~3.21 of Guivarc'h and Le~Page in~\cite{GuLe15}.

\begin{proposition}\label{proposition:essential_spectral_radius}
Let $\G$ be a second countable locally compact group, $N:\G\to [1,+\infty[$ a submultiplicative function on $\G$ and $\rho$ a borelian probability measure on $\G$.

Let $(\X,d)$ be a compact metric $\G-$space endowed with an action of a finite group $\Hb$ that commutes to the one of $\G$ and such that $\X/\Hb$ is $(\rho,\gamma_0,M,N)-$contracted over a finite $\G-$set $\Abf$ on which the random walk defined by $\rho$ is irreducible and aperiodic.

Let $\sigma \in \cal Z^{M}_N(\X/\Hb)$. Then there are $C_2, \delta_2 \in \R_+^\star$ such that for any $t\in \R$, any $n\in \N$ and any function $f\in \cal C^{0,\gamma}(\X)$, we have that
\[
m_\gamma(P^n(it) f) \leqslant C_2 \left( \|f\|_\infty (1+|t|) + e^{-\delta_2 n} m_\gamma(f) \right)
\]
In particular, the essential spectral radius of $P(it)$ is smaller than $e^{-\delta_2}$.
\end{proposition}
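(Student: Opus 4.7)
The plan is to establish a Doeblin--Fortet type inequality on $\cal C^{0,\gamma}(\X)$ for $P^n(it)$, with an exponentially decaying coefficient on $m_\gamma(f)$ and a coefficient on $\|f\|_\infty$ that depends polynomially on $t$, and then to apply theorem~\ref{theorem:ITM} to read off the essential spectral radius bound. Since $\Hb$ is finite and acts by isometries on $\X$, one has some $\varepsilon_0>0$ such that $d(x,hx)\geqslant \varepsilon_0$ for every $x\in\X$ and $h\in \Hb\setminus\{e\}$; consequently, if $\pi_\Abf(x)\neq \pi_\Abf(y)$ or if $d(x,y)\geqslant \varepsilon_0/2$, then the trivial bound $|P^n(it)f(x)-P^n(it)f(y)|\leqslant 2\|f\|_\infty \leqslant C'\|f\|_\infty d(x,y)^\gamma$ already suffices. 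It remains to treat pairs in the same $\Abf$-fiber with $d(x,y)<\varepsilon_0/2$, for which the quotient distance on $\X/\Hb$ coincides with $d(x,y)$ and the contraction estimate~\eqref{equation:contraction} lifts faithfully to $\X$.

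For such pairs I would introduce the iterated cocycle $\sigma_n(g,x) = \sum_{k=1}^n \sigma(g_k, g_{k-1}\cdots g_1 x)$ for $g=g_n\cdots g_1$ and decompose
\begin{align*}
P^n(it)f(x) - P^n(it)f(y) = {} & \int_\G e^{-it\sigma_n(g,x)} \bigl(f(gx) - f(gy)\bigr) \di\rho^{\star n}(g) \\
& + \int_\G f(gy) \bigl(e^{-it\sigma_n(g,x)} - e^{-it\sigma_n(g,y)}\bigr) \di\rho^{\star n}(g).
\end{align*}
The first term is bounded by $m_\gamma(f)\int d(gx,gy)^\gamma \di\rho^{\star n}(g) \leqslant C_1 e^{-\delta n} m_\gamma(f) d(x,y)^\gamma$ by~\eqref{equation:contraction}. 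For the second I would use $|e^{ia}-e^{ib}|\leqslant 2^{1-\gamma}|a-b|^\gamma$ together with the subadditivity $(a+b)^\gamma \leqslant a^\gamma + b^\gamma$ valid for $\gamma\in\mathopen{]}0,1\mathclose{]}$, applied to the telescoping cocycle, to majorate the integrand by $\sum_{k=1}^n \sigma_{\mathrm{Lip}}(g_k)^\gamma d(g_{k-1}\cdots g_1 x, g_{k-1}\cdots g_1 y)^\gamma$. Using $\sigma_{\mathrm{Lip}}(g)\leqslant \Msig N(g)^M$, the independence of the $g_k$'s under $\rho^{\otimes n}$, the moment bound~\eqref{equation:exponential_moment}, and one more application of~\eqref{equation:contraction} on the factor involving $g_1,\dots,g_{k-1}$, the resulting sum is a geometric series in $e^{-\delta(k-1)}$ and is dominated by some $C_3 d(x,y)^\gamma$. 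Combining the two terms and using $|t|^\gamma \leqslant 1+|t|$ yields
\[
m_\gamma(P^n(it)f) \leqslant C_2 \bigl(e^{-\delta_2 n} m_\gamma(f) + (1+|t|) \|f\|_\infty\bigr)
\]
for constants $C_2,\delta_2>0$ depending on $\rho,\sigma,\gamma$ but not on $t$ or $n$.

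For the essential spectral radius statement I would apply theorem~\ref{theorem:ITM} with $\cal B = \cal C^{0,\gamma}(\X)$ and the auxiliary norm $\|\cdot\|_\infty$; the inclusion is compact by Arzelà–Ascoli. Combined with $\|P^n(it)f\|_\infty\leqslant\|f\|_\infty$, the previous estimate gives $\|P^n(it)f\|_\gamma\leqslant C_2 e^{-\delta_2 n}\|f\|_\gamma + R(n,t)\|f\|_\infty$, so the essential spectral radius of $P(it)^n$ in $\cal C^{0,\gamma}(\X)$ is at most $C_2 e^{-\delta_2 n}$. Using that the essential spectral radius is multiplicative under powers and letting $n\to \infty$ one gets $\rho_{\mathrm{ess}}(P(it))\leqslant e^{-\delta_2}$.

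The main obstacle I expect is not any single step but the careful bookkeeping needed to ensure that all estimates are uniform in $t\in\R$ (so that the oscillation term is absorbed as $\|f\|_\infty(1+|t|)$ rather than blowing up the Hölder seminorm of $f$) and that the contraction, which is only given on $\X/\Hb$, genuinely controls $d(gx,gy)$ on $\X$ for the relevant pairs; the reduction to small same-fiber pairs via the isometric $\Hb$-action is what makes this work. The combinatorial part of the cocycle estimate, once the subadditivity trick and the exponential moment~\eqref{equation:exponential_moment} are used, is essentially a geometric-sum computation.
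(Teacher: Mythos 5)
Your overall plan — establish a Doeblin--Fortet (Lasota--Yorke) inequality and feed it into theorem~\ref{theorem:ITM} — is exactly the paper's, and your reduction to same-$\Abf$-fiber pairs at small distance via the isometric $\Hb$-action matches the paper's device with its constant $d_0$. Your telescoping treatment of the oscillation term is a genuinely different route from the paper's: the paper directly uses $\sigma_{\mathrm{Lip}}(g)\leqslant\Msig N(g)^M$ for the full product $g=g_n\cdots g_1$ and then submultiplicativity of $N$, which gives a coefficient on $\|f\|_\infty$ of size $|t|^\gamma(\int N^{\gamma M}\di\rho)^n$ (harmless because $n$ will be frozen), whereas your telescoping plus $(a+b)^\gamma\leqslant a^\gamma+b^\gamma$ and one application of~\eqref{equation:contraction} on $\X/\Hb$ gives a geometric series and hence a bound uniform in $n$; both are correct, and since $\sigma\in\cal Z^M_N(\X/\Hb)$ the internal distance there is already a quotient distance, so~\eqref{equation:contraction} applies directly.

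The gap is in your first term. You write that for same-fiber pairs with $d(x,y)<\varepsilon_0/2$ ``the contraction estimate~\eqref{equation:contraction} lifts faithfully to $\X$'', and conclude $\int_\G d(gx,gy)^\gamma\di\rho^{\star n}(g)\leqslant C_1 e^{-\delta n}d(x,y)^\gamma$. This is not true: the quotient metric on $\X/\Hb$ satisfies $\bar d\leqslant d$, so $\bar d(g\pi_\Hb x,g\pi_\Hb y)\leqslant d(gx,gy)$ with the inequality in the \emph{wrong} direction, and equality only holds when $d(gx,gy)$ is itself small. For a single $g$ with $N(g)$ large one can have $d(gx,gy)$ as large as $MN(g)^M d(x,y)>d_0$, and the $\rho^{\star n}$-mass of such $g$ does \emph{not} decay in $n$ (it typically increases), so the claimed exponential decay in $n$ fails. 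The paper fixes this by splitting the integral into $\{d(gx,gy)\leqslant d_0\}$, where the quotient and original distances coincide and~\eqref{equation:contraction} applies, and $\{d(gx,gy)>d_0\}$, where the integrand is controlled by $M^\gamma N(g)^{M\gamma}\un_{MN(g)^M\geqslant 1/\varepsilon}$ and made small by the moment condition~\eqref{equation:exponential_moment} \emph{at a single fixed iterate} $n_0$, with $\varepsilon$ chosen after $n_0$. This forces one to first obtain a contraction of the form $m_\gamma(P^{n_0}(it)f)\leqslant\frac12 m_\gamma(f)+C(1+|t|)\|f\|_\infty$ for that one $n_0$, and then \emph{iterate} to get the exponential decay $C_2 e^{-\delta_2 n}m_\gamma(f)$ for general $n$; the decay cannot be read off directly from~\eqref{equation:contraction} as your writeup asserts.
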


\begin{proof}
Let $f\in \cal C^{0,\gamma}(\X)$ and $x,y\in \X$ such that $x\not=y$ and $\pi_\Abf \circ \pi_{\Hb}(x) = \pi_\Abf \circ\pi_\Hb(y)$.

For any $n\in \N^\star$, we have that
\begin{align*}
\left|P^n(it) f(x) \right.&\left.- P^n(it) f(y)\right| = \left| \int_{\G} e^{-it\sigma(g,x)} f(gx) - e^{-it\sigma(g,y)} f(gy) \di\rho^{\star n} (g) \right| \\
& \leqslant \int_{\G} \left| f(gx)-f(gy)\right| \di\rho^{\star n}(g) + \|f\|_\infty \int_{\G} \left| e^{-it\sigma(g,x)} - e^{-it\sigma(g,y)} \right| \di\rho^{\star n}(g) \\
& \leqslant d(x,y)^\gamma m_\gamma(f)\int_{\G}  \frac{d(gx,gy)^{\gamma}}{d(x,y)^{\gamma}} \di\rho^{\star n}(g) \\
& \retrait + \|f\|_\infty 2^{1-\gamma}|t|^\gamma \Msig d(x,y)^\gamma\int_{\G} N^{\gamma M}(g)\di\rho^{\star n}(g)
\end{align*}
First of all, we note that as $N$ is submultiplicative, we have that
\[
\int_\G N(g)^{\gamma M} \di\rho^{\star n}(g)\leqslant \left( \int_{\G} N^{\gamma M}(g)\di\rho(g)\right)^n
\]

Moreover, since $\Hb$ is a finite group, there is $d_0 \in \R_+^\star$ such that for any $x,y\in \X$, is $d(x,y)\leqslant d_0$, then $d(x,y) = d(\pi_{\Hb}(x), \pi_{\Hb}(y))$.

Thus, for any $\varepsilon\in\left]0,1\right]$ and any $x,y \in \X$ such that $0<d(x,y) \leqslant \varepsilon d_0 $ and $\pi_\Abf \circ \pi_{\Hb}(x) = \pi_\Abf \circ\pi_\Hb(y)$, we have
\begin{align*}
I_n(x,y):&=\int_\G {d(gx,gy)^\gamma} \di\rho^{\star n}(g) \\
&= \int_\G \un_{d(gx,gy) \leqslant d_0} d(gx,gy)^\gamma + \un_{d(gx,gy) > d_0} d(gx,gy)^\gamma\di\rho^{\star n}(g) \\
&= \int_\G \un_{d(gx,gy) \leqslant d_0} d(g\pi_\Hb x,g\pi_\Hb y)^\gamma + \un_{d(gx,gy) > d_0} d(gx,gy)^\gamma\di\rho^{\star n}(g) \\
& \leqslant C_1 e^{-\delta n} d(x,y)^\gamma + d(x,y)^\gamma \int_\G \un_{d(gx,gy) > d_0} M^\gamma N(g)^{M\gamma} \di\rho^{\star n}(g) \\
& \leqslant  \left( C_1 e^{-\delta n} +M^\gamma \int_\G \un_{MN(g)^M \geqslant 1/\varepsilon} N(g)^{\gamma M} \di\rho^{\star n}(g) \right) d(x,y)^\gamma
\end{align*}
Thus, if $n_0$ is such that $C_1 e^{-\delta n_0} \leqslant 1/4$, as $\int_\G N(g)^{\gamma M} \di\rho^{\star n_0}(g)$ is finite, we can choose $\varepsilon$ such that
\[
\int_\G \un_{N(g)^M \geqslant 1/\varepsilon}M^\gamma N(g)^{\gamma M} \di\rho^{\star n_0}(g) \leqslant 1/4
\]
And so, for this choice of $\varepsilon$ and $n_0$, we have that for any $x,y \in \X$ such that $0<d(x,y) \leqslant \varepsilon d_0$ and $\pi_\Abf\circ\pi_\Hb(x) = \pi_\Abf \circ \pi_\Hb (y)$,
\[
\int_\G d(gx,gy)^\gamma \di\rho^{\star n_0}(g) \leqslant \frac 1 2 d(x,y)^\gamma
\]
This proves that for any $x,y\in \X$ with $\pi_\Abf \circ \pi_\Hb(x) = \pi_\Abf \circ \pi_\Hb(y)$ and $d(x,y) \leqslant \varepsilon d_0$ and any function $f\in \cal C^{0,\gamma}(\X)$,
\[
\frac{|P^{n_0}(it)f(x) - P^{n_0}(it)f(y)|}{d(x,y)^\gamma} \leqslant \frac 1 2 m_\gamma(f) + \|f\|_\infty 2^{1-\gamma}|t|^\gamma \Msig \left( \int_{\G} N^{\gamma_0 M}(g)\di\rho(g)\right)^n
\]
But, as we also have, for $x,y$ such that $\pi_\Abf\circ\pi_\Hb (x) = \pi_\Abf \circ \pi_\Hb (y)$ and $d(x,y) \geqslant \varepsilon d_0$, that
\[
\frac{|P^{n_0}(it)f(x) - P^{n_0}(it)f(y)|}{d(x,y)^\gamma} \leqslant \frac{2\|f\|_\infty}{(\varepsilon d_0)^\gamma}
\]
we get that for any function $f\in \cal C^{0,\gamma}(\X)$,
\[
m_\gamma(P^{n_0}(it) f) \leqslant \frac 1 2 m_\gamma(f) + \left( \frac{2}{(\varepsilon d_0)^\gamma} + 2^{1-\gamma} |t|^\gamma\Msig \left( \int_{\G} N^{\gamma_0 M}(g)\di\rho(g)\right)^{n_0}\right) \|f\|_\infty
\]
If we simplify the notations, what we just proves is that there is $n_0\in \N^\star$ and a constant $C\in \R_+$ (depending on $n_0$) such that for any $f\in \cal C^{0,\gamma}(\X)$,
\[
m_\gamma(P^{n_0}(it) f) \leqslant \frac 1 2 m_\gamma(f) + C(1+|t|) \|f\|_\infty
\]
Iterating this inequality, we find that there are constants $C_2,\delta_2 \in \R_+^\star$ such that for any $n\in \N$ and any $f\in \cal C^{0,\gamma}(\X)$,
\[
m_\gamma(P^n(it) f) \leqslant C_2 \left( e^{-\delta_2 n} m_\gamma(f) +(1+|t| )\|f\|_\infty \right)
\]
This proves, using theorem~\ref{theorem:ITM} that $P(it)$ has an essential spectral radius smaller than $e^{-\delta_2}$ and that it is equicontinuous.
\end{proof}

\subsubsection{Lower regularity of measures on metric spaces}

Guivarc'h proved (cf. \cite{BL85}) that if $\rho$ is a strongly irreducible and proximal borelian probability measure on $\mathrm{SL}_d(\R)$ having an exponential moment, then there is a unique $P-$invariant probability measure $\nu$ on $\prob(\R^d)$. Moreover, there are $\Delta^+, C\in\R_+^\star$ such that for any $x\in \prob(\R^d)$ and any $r\in\R_+$,
\[
\nu(B(x,r)) \leqslant Cr^{\Delta^+}
\]
(we refer to the chapter 12 in~\cite{BQred} for a proof of this result).

This property of upper regularity of the measure means that $\nu$ is not two much concentrated at neighbourhood of points in the projective space : if, for instance, it had an atom $x_0$ we would have that for any $\Delta^+\in \R_+^\star$, $\lim_{r\to 0^+} \nu(B(x_0,r))/r^{\Delta_+}=+\infty$.

\medskip
Here, we will have to use the lower regularity of the measure $\nu$ : at many steps we will have to use the fact that the measure of a ball of radius $r$ is polynomial in $r$. To study this, we make the following

\begin{definition}\label{definition:mesures_regulieres}
Let $(\X,d)$ be a compact metric space and $\nu$ a borelian probability measure on $\X$.

Let $\Delta \in \R_+$ and $t,r\in \R_+^\star$. 

We say that a point $x \in \X$ is \emph{$(\Delta,t)-\nu-$regular at scale $r$} if
\[
\nu(B(x,r)) \geqslant t r^\Delta
\]
In the same way, we will say that a point is $(\Delta,t)-\nu-$regular at any scale if
\[
\inf_{r\in \left]0,1\right]} \frac{ \nu(B(x,r))}{r^\Delta} \geqslant t
\]
Finally, we say that a point of $\X$ is $\Delta-\nu-$regular at scale $r$ if it is $(\Delta,1)-\nu$-regular at scale $r$.
\end{definition}

\begin{remark}
If $\X$ has an Hausdorff dimension smaller than $\Delta$ then we have (cf.~\cite{Rud87}) that
\[
\nu\left(\bigcup_{t\in \R_+^\star}\left\{x\in \X \middle| x\text{ is }(\Delta,t)-\nu-\text{regular at any scale} \right\}\right) =1
\]
\end{remark}

Sometimes, if $\pi_0:\X\to \X_0$ is a covering and $\nu$ is a probability measure on $\X_0$, we will say that $x\in \X$ is $\Delta-\nu$-regular at scale $r$ if $\pi_0(x)$ is.

\subsubsection{Isotypic decomposition}

\begin{miniabstract}
In this paragraph, we recall the isotypic decomposition that generalizes the decomposition of function on $\R$ between even and odd parts.
\end{miniabstract}

Let $\Hb$ be a finite group. For an irreducible unitary representation $\xi=(\rho,\V)$ of $\Hb$, we endow $\mathrm{End}(\V)$ of the Hilbert-Schmidt inner product defined for any $A,B\in\mathrm{End}(\V)$ by
\[
\langle A,B\rangle_{HS} := \trace{A^\star B}
\]
We note $|\,.\,|_{HS}$ the corresponding norm.

Let $(\X,d)$ be a compact metric space endowed with an action by isometries of $\Hb$ (this implies in particular that $\Hb$ preserves the space of hölder-continuous functions on $\X$).

The action of $\Hb$ on $\X$ gives a representation of $\Hb$ in $\cal C^{0}(\X)$ defined for any $h\in \Hb$,  $f\in \cal C^{0}(\X)$ and $x\in \X$ by
\[
\rho_0(h) f(x) = f(h^{-1} x)
\]
We note $\widehat{\Hb}$ a set of representatives of unitary irreducible representations of $\Hb$ up to isomorphism. This is a finite set.

For $\xi = (\rho,\V) \in \widehat \Hb$, $f\in \cal C^0(\X)$ and $x\in \X$, we note
\begin{equation} \label{equation:type_projection}
\widehat f(x,\xi) = \frac {\dim \V}{|\Hb|} \sum_{h\in \Hb}f(h^{-1} x) \rho(h)^\star
\end{equation}
Then, we have (see theorem 8 in~\cite{Ser78}) that for any $x\in \X$,
\begin{equation} \label{equation:type_projection_reciproque}
f(x) = \sum_{\xi \in \widehat{\Hb}} \trace\widehat f(x,\xi)
\end{equation}
However, wee will need the following relation : for any $x\in \X$ and any $h\in \Hb$,
\[
\widehat f(hx,\xi) = \frac {\dim \V}{|\Hb|} \sum_{h'\in \Hb} f((h^{-1}h')^{-1} x) \rho(h')^\star =  \widehat f(x,\xi) \rho(h)^\star
\]
Thus, for any $f\in \cal C^0(\X)$, any $x\in \X$, any $\xi \in \widehat \Hb$ and any $h \in \Hb$, we have that
\[
|\widehat f(hx,\xi)|_{HS} = |\widehat f(x,\xi)|_{HS}
\]
and the function $(x\mapsto |\widehat f(x,\xi)|_{HS})$ can be identified to a continuous function on $\X/\Hb$.

The norm $|\,.\,|_{HS}$ allows us to define a uniform norm on bounded functions on $\X$ taking their values in $\mathrm{End}(\V)$ : we note, for such a function $f$,
\[
\|f\|_\infty = \sup_{x\in \X} |f(x)|_{HS}
\]
In the same way, we can define hölder-continuous functions from $\X$ to $\mathrm{End}(\V)$ and note
\[
\mathcal{C}^{0,\gamma}_\xi(\X,\mathrm{End}(\V)) =\left\{ f\in \cal C^{0,\gamma}(\X,\mathrm{End}(\V_\xi)) \middle| \forall x\in \X\, \forall h\in \Hb \; f(hx)  =f(x) \rho(h)^\star \right\}
\]
(To simplify notations, we will simply note this space $\cal C^{0,\gamma}_\xi(\X)$ and sometimes $\cal C^{0,\gamma}_\xi$).

\medskip
We now have the following
\begin{lemma}\label{lemme:decomposition_paire_impaire}
Let $\Hb$ be a group acting by isometries on a compact metric space $(\X,d)$.

Then, the space $\cal C^{0,\gamma}(\X)$ injects into
$\prod_{\xi \in \widehat \Hb} \cal C^{0,\gamma}_\xi(\X, \mathrm{End}(\V))$.
Moreover, for any $\xi \in \widehat \Hb$, the projection onto $\cal C_\xi^{0,\gamma}(\X)$ is given by equation~\eqref{equation:type_projection} and the reciproqual application is given by equation~\eqref{equation:type_projection_reciproque}.
\end{lemma}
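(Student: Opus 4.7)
The plan is to deduce the lemma from Peter-Weyl's theorem for the finite group $\Hb$, combined with the observation that averaging over $\Hb$ preserves Hölder regularity since $\Hb$ acts by isometries.

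First, I would verify that for each $\xi=(\rho,\V)\in\widehat\Hb$ and each $f\in\cal C^{0,\gamma}(\X)$, the function $\widehat f(\cdot,\xi)$ defined by~\eqref{equation:type_projection} belongs to $\cal C^{0,\gamma}_\xi(\X)$. The equivariance $\widehat f(hx,\xi)=\widehat f(x,\xi)\rho(h)^\star$ has already been established in the paragraph preceding the lemma, by the change of variables $h'\mapsto hh'$ in the defining sum. For the Hölder regularity in the Hilbert-Schmidt norm, I would use that $\Hb$ acts by isometries on $\X$: each map $x\mapsto f(h^{-1}x)$ has the same supremum and the same $\gamma$-Hölder seminorm as $f$, and $|\rho(h)^\star|_{HS}=\sqrt{\dim\V}$ since $\rho(h)$ is unitary. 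As the sum in~\eqref{equation:type_projection} has $|\Hb|$ terms, the triangle inequality yields
\[
\|\widehat f(\cdot,\xi)\|_\gamma \leqslant \dim(\V)^{3/2}\|f\|_\gamma,
\]
which proves the first assertion.

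Second, I would establish the reconstruction formula~\eqref{equation:type_projection_reciproque}, from which the injectivity of $f\mapsto(\widehat f(\cdot,\xi))_{\xi\in\widehat\Hb}$ follows immediately. Fixing $x\in\X$ and considering the function $\varphi_x: h\mapsto f(h^{-1}x)$ on $\Hb$, the right-hand side of~\eqref{equation:type_projection_reciproque} is precisely the value at the identity of the Peter-Weyl decomposition of $\varphi_x$ in the regular representation of $\Hb$. This is exactly the content of theorem~8 in Serre's book cited in the text, i.e.\ the orthogonality relation
\[
\sum_{\xi\in\widehat\Hb}\frac{\dim\V}{|\Hb|}\trace\!\left(\sum_{h\in\Hb}\varphi(h)\rho(h)^\star\right) = \varphi(\e)
\]
applied to $\varphi=\varphi_x$.

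The map $f\mapsto\widehat f(\cdot,\xi)$ thus realizes the $\xi$-component of the product injection, while~\eqref{equation:type_projection_reciproque} provides a left inverse on the image, which establishes the third part of the statement. The argument is essentially a bookkeeping with the Peter-Weyl decomposition of the regular representation of $\Hb$; the only slightly non-trivial point is checking that the averaging preserves the $\gamma$-Hölder norm, which is direct because $\Hb$ acts by isometries. There is no substantial obstacle.
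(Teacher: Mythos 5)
Your proof is correct and follows exactly the argument the paper leaves implicit: the paper states the lemma with no separate proof, relying on the equivariance relation $\widehat f(hx,\xi)=\widehat f(x,\xi)\rho(h)^\star$ and Fourier inversion on $\Hb$ (Serre's theorem~8) established just before it, plus the observation that averaging over an isometric action preserves the Hölder norm. Your explicit version, including the bound $\|\widehat f(\cdot,\xi)\|_\gamma\leqslant\dim(\V)^{3/2}\|f\|_\gamma$ and the left inverse from~\eqref{equation:type_projection_reciproque}, fills in precisely those details.
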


\subsection{Control of the resolvant of the perturbated operator} \subsectionmark{Control of the resolvant}

\subsubsection{Statement of the proposition}

\begin{miniabstract}
We are now ready to state proposition~\ref{proposition:Dolgopyat_markov}, aim of this section.
\end{miniabstract}

We keep the notations of the previous part.

Let $\sigma : \G\times \X\to \R$ be an $\Hb-$invariant cocycle. We note, for $t\in \R$, $P(it)$ (or sometimes $P_{\rho}(it)$ to insist on the measure $\rho$) the operator defined for any continuous function $f$ on $\X$ and any $x$ in $\X$, by
\[
P(it) f(x) = \int_\G e^{-it\sigma(g,x)} f(gx) \di\rho(g)
\]
If $\rho_\e= \frac 1 2 \delta_\e + \frac 1 2 \rho$, we simply note $P_\e(it)= P_{\rho_\e}(it)$ the operator associated to the lazy random walk.

Finally, we define
\[
\Msig = \sup_{g\in \G} \sup_{\substack{ x,y\in \X \\ x\not=y\\ \pi_\Abf(x) = \pi_\Abf(y)}} \frac{\left|\sigma(g,x)-\sigma(g,y) \right|}{N(g)^M d(x,y)}
\]
We saw in paragraph~\ref{subsubsection:pertubation_cocycles} that if $\Msig$ is finite and if $(g\mapsto N(g)^{M\gamma})$ is $\rho-$integrable then $P(it)$ preserves the space of $\gamma-$hölder-continuous functions on $\X$.

\medskip
The aim of this section is to study the properties of $P(it)$ and to connect them to the ones of $P$. A first example is when $e^{-it\sigma}$ is a coboundary : $e^{-it\sigma(g,x)} = \varphi(gx) \varphi(x)^{-1}$ where $\varphi$ is a $\gamma-$Holder continuous function taking it's values in $\U$, the set of complex numbers of modulus $1$. Indeed, in this case, $P(it)$ is conjugated to $P$ by the operator of multiplication by $\varphi$ and so, these two operators have the same spectral properties. In particular, the operator $P(it)$ has the eigenvalue $1$ and an associated eigenvector is $\varphi^{-1}$.

\medskip
We are going to see that we can have a partial reciprocal statement : the proposition proves that if $I_d-P(it)$ is not well invertible (the norm of it's inverse is large) then $e^{-it\sigma}$ is close to a coboundary.

\begin{proposition}\label{proposition:Dolgopyat_markov}
Let $\G$ be a second countable locally compact group, $N:\G\to [1,+\infty[$ a submultiplicative function on $\G$ and $\rho$ a borelian probability measure on $\G$.

Let $(\X,d)$ be a compact metric $\G-$space endowed with an action by isometries of a finite group $\Hb$ that commutes to the $\G-$action and such that the space $\X/\Hb$ is $(\rho,\gamma,M,N)-$contracted over a finite $\G-$set $\Abf$ on which the random walk defined by $\rho$ is irreducible and aperiodic. We fix a class of representatives of unitary irreducible representations of $\Hb$ up to isomorphism.

Let $\nu$ be the unique $P-$invariant borelian probability measure on $\X/\Hb$ (see proposition~\ref{proposition:unique_mesure_invariante_contraction}).

Then, for any $\gamma>0$ small enough, any $\alpha_1,\beta\in \R_+^\star$, there is $\alpha_2 \in \R_+$, such that for any $\Delta \in \R_+$ such that there is a point $x \in \X/\Hb$ that is $\Delta-\nu$-regular at scale $2^{-\alpha_2}$ (see definition~\ref{definition:mesures_regulieres}) we have that there are $L, C \in \R_+$ such that for any $t\in \R$ with $|t|\geqslant 2$, we have that if 
\[
\|(I_d-P(it))^{-1} \|_{\cal C^{0,\gamma}(\X)} \geqslant C |t|^L
\]
then there is $\xi \in \widehat \Hb$ and a function $f\in \cal C_\xi^{0,\gamma}(\X)$ with $\|f\|_\infty \leqslant 1$ and $m_\gamma(f) \leqslant C|t|$ such that for any point $x$ in $\X$ whose projection on $\X/\Hb$ is $\Delta-\nu$-regular at scale $|t|^{-\alpha_2}$, we have that
\[
|f(x)|_{HS} \geqslant \frac 1 2
\]
and
\[
\int_\G  \left| e^{-it\sigma(g,x)}f(gx) - f(x) \right|^2_{HS} \di \rho_{\e}^{\star n(\beta,t)} (g)\leqslant \frac {1} {|t|^{\alpha_1}}
\]
where we noted
\[
n(\beta,t) = \lfloor \beta \ln |t| \rfloor
\]
and $\rho_{\e}$ is the measure associated to the lazy random walk (see paragraph~\ref{subsubsection:marche_ralentie}).
\end{proposition}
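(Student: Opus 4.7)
\medskip
\textbf{Proof proposal.}

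The overall plan is the classical Dolgopyat argument adapted to the Markov/contracting-action setting, augmented by the $\Hb$-isotypic decomposition. First I would extract an approximate eigenfunction: if $\|(I_d-P(it))^{-1}\|_{\cal C^{0,\gamma}} \geqslant C|t|^L$, then by definition of the operator norm there exists $F_0 \in \cal C^{0,\gamma}(\X)$ with $\|F_0\|_\gamma=1$ such that $G:=(I_d-P(it))^{-1}F_0$ has Hölder norm $\geqslant \tfrac12 C|t|^L$. Writing $G = P(it)G + F_0$ and iterating the Lasota--Yorke inequality from proposition~\ref{proposition:essential_spectral_radius}, one controls $m_\gamma(G)$ by $\|G\|_\infty$ up to an additive $(1+|t|)$ term. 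For $L$ chosen large enough this forces $\|G\|_\infty \geqslant c|t|^{L-1}$. Normalizing $f_0 := G/\|G\|_\infty$ gives $\|f_0\|_\infty=1$, $(I_d-P(it))f_0$ has sup norm $\leqslant |t|^{-L+1}$, and $m_\gamma(f_0) \leqslant C(1+|t|)$.

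Next I would pass to an $\Hb$-isotypic component. Since $\sigma$ is $\Hb$-invariant, the operator $P(it)$ commutes with the $\Hb$-representation on $\cal C^0(\X)$, so each $\widehat{f_0}(\,\cdot\,,\xi)$ defined by~\eqref{equation:type_projection} is an approximate eigenvector with the same quality, and lies in $\cal C^{0,\gamma}_\xi(\X)$. By~\eqref{equation:type_projection_reciproque} and Cauchy--Schwarz there exists $\xi\in\widehat{\Hb}$ and a point $x_0\in\X$ (which we may take with $\Delta$-regular projection on $\X/\Hb$) such that $|\widehat{f_0}(x_0,\xi)|_{HS}\geqslant c_\Hb>0$. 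Call $f$ this component, suitably renormalized so that $\|f\|_\infty \leqslant 1$ while $|f(x_0)|_{HS}\geqslant c_\Hb$, keeping $m_\gamma(f) \leqslant C|t|$.

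Now comes the $L^2$ identity, which is the algebraic heart of the argument. Since the integrand has modulus controlled by the cocycle, expanding the Hilbert--Schmidt square gives
\begin{align*}
\int_\G &\bigl|e^{-it\sigma(g,x)} f(gx) - f(x)\bigr|_{HS}^2 \di\rho_\e^{\star n}(g) \\
&= P_\e^{\,n}(|f|_{HS}^2)(x) - 2\,\Re\,\trace\bigl(f(x)^\ast\,P_\e(it)^n f(x)\bigr) + |f(x)|_{HS}^2,
\end{align*}
where the cocycle identity cancels in the first term because $|e^{-it\sigma}|=1$. Iteration of $(I_d-P(it))f=O(|t|^{-L+1})$ together with $\|P(it)\|_\infty\leqslant 1$ gives $P_\e(it)^n f = f + O(n|t|^{-L+1})$ uniformly. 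For $n = n(\beta,t)=\lfloor\beta\ln|t|\rfloor$, choosing $L$ large enough against $\alpha_1$ and $\beta$, the error term is at most $|t|^{-\alpha_1}/2$. The middle term becomes $-2|f(x)|_{HS}^2$ up to this error, so the identity reduces to controlling $P_\e^{\,n}(|f|_{HS}^2)(x) - |f(x)|_{HS}^2$.

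The function $|f|_{HS}^2$ descends to $\X/\Hb$ since $f\in\cal C^{0,\gamma}_\xi$. By the spectral gap of $P$ on $\cal C^{0,\gamma}(\X/\Hb)$ (proposition~\ref{proposition:spectral_gap_P0}, pushed to $\rho_\e$ via the last lemma of subsection~\ref{subsubsection:marche_ralentie}), for $n\geqslant \beta\ln|t|$ with $\beta$ large enough,
\[
\bigl|P_\e^{\,n}(|f|_{HS}^2)(x) - \textstyle\int_{\X/\Hb} |f|_{HS}^2 \di\nu\bigr|
\leqslant C_0\,e^{-\kappa \beta\ln|t|}\,\|\,|f|_{HS}^2\,\|_\gamma \leqslant |t|^{-\alpha_1}/4
\]
for $L, \beta$ suitably chosen (using $m_\gamma(|f|_{HS}^2)\leqslant C|t|$). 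So the quantity we need to bound is essentially $\int|f|_{HS}^2\di\nu - |f(x)|_{HS}^2$.

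The final step uses $\Delta$-regularity at scale $r=|t|^{-\alpha_2}$. At any $x$ whose projection is $\Delta$-regular at this scale, $f$ is almost constant on the ball $B(x,r)$ because $m_\gamma(f)\cdot r^\gamma \leqslant C|t|\cdot|t|^{-\alpha_2\gamma}$, which is $\leqslant |t|^{-\alpha_1/2}$ provided $\alpha_2 > (1+\alpha_1)/\gamma$; hence $|f(x)|_{HS}^2$ agrees up to $|t|^{-\alpha_1}/4$ with the $\nu$-average of $|f|_{HS}^2$ over $B(x,r)$, which by $\Delta$-regularity carries $\nu$-mass at least $r^\Delta$. Comparing the two integrals (over $B(x,r)$ and over $\X/\Hb$), combined with the global bound $\||f|_{HS}^2\|_\infty\leqslant 1$, forces $|f(x)|_{HS}^2$ to be within $|t|^{-\alpha_1}/2$ of $\int |f|_{HS}^2 \di\nu$. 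The initial lower bound $|f(x_0)|_{HS}^2\geqslant c_\Hb^2$ at one regular point $x_0$ transfers through this inequality to all regular points, producing $|f(x)|_{HS}^2 \geqslant \frac14$ after renormalizing $f$ by $2c_\Hb^{-1}$ (absorbed in the constant $C$ on $m_\gamma$).

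The main obstacle I anticipate is the bookkeeping between the four parameters $L,\alpha_1,\alpha_2,\beta$: one has to arrange them so that each of the three error sources (iteration error $n|t|^{-L+1}$, spectral gap error $e^{-\kappa\beta\ln|t|}$, and Hölder oscillation $|t|^{1-\alpha_2\gamma}$) is at most $|t|^{-\alpha_1}$ while the regularity ball still has enough $\nu$-mass to conclude; the Lasota--Yorke iteration in step one requires $L$ to dominate $\alpha_1$ plus some universal constant depending on $\beta$. A secondary subtlety is ensuring that replacing $f_0$ by an isotypic component does not destroy the sup-norm normalization, which is why the choice of $x_0$ and $\xi$ must be made before the normalization.
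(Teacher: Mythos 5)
Your approach is genuinely different from the paper's. The paper argues by contraposition via the auxiliary hypothesis $\mathbf{H}(\alpha_1,\beta,\xi)$: Lemma~\ref{lemme:premier_lemme_dolgopyat} shows that if $\mathbf{H}$ holds for all $\xi$ the resolvent is polynomially bounded, while Lemma~\ref{lemme:second_lemme_dolgopyat} extracts, from the failure of $\mathbf{H}$, a function $f$ with $|P_\e^n(it)f(x)|_{HS}\geqslant 1-|t|^{-\alpha_1'}$ at every regular $x$ and every $n\leqslant n(\beta,t)$, from which Lemma~\ref{lemma:controle_fonctions_proches_1} concludes via the atom $\rho_\e(\e)\geqslant\tfrac12$. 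You instead build a near-eigenvector directly from a large resolvent and try to propagate its modulus to all regular points. Both are Dolgopyat-style arguments, but there are concrete gaps in yours.

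The most serious gap is in the penultimate step, where you invoke the spectral gap at iteration $n(\beta,t)=\lfloor\beta\ln|t|\rfloor$ to get $|P_\e^{n}(|f|_{HS}^2)(x)-\int|f|^2_{HS}\di\nu|\leqslant C(1+|t|)e^{-\kappa\beta\ln|t|}\leqslant|t|^{-\alpha_1}/4$, and add \emph{``for $\beta$ large enough.''} But the quantifier order in the statement is ``for any $\alpha_1,\beta\in\R_+^\star$, there is $\alpha_2$, \dots'' --- $\beta$ is prescribed, and for $\kappa\beta<1+\alpha_1$ this estimate simply fails. Notice how the paper sidesteps the issue: in Lemma~\ref{lemma:controle_fonctions_proches_1} the control comes from $\rho_\e^{\star L}(\e)\geqslant 2^{-L}$, producing a factor $2^{L+3}=O(|t|^{\beta\ln 2})$ which grows with $|t|$ but at a \emph{fixed} polynomial rate, so it is absorbed by taking $\alpha'_1>\alpha_1+\beta\ln 2+3$. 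Your $L^2$ identity is sound, but in the final estimate you should not compare $P_\e^{n}|f|^2(x)$ to $\int|f|^2\di\nu$ by spectral gap at iteration $n$; just use $P_\e^{n}|f|^2(x)\leqslant 1$ and show $|f(x)|^2\geqslant 1-|t|^{-\alpha_1}/2$. The spectral gap is still needed to show $\int|f|_{HS}\di\nu$ is close to $1$, but that step can use an \emph{auxiliary} iteration count $m=K\ln|t|$ with $K$ chosen after $\Delta$ and decoupled from $\beta$.

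A second gap is in the regularity step. You write that comparing the ball average to the global average ``forces $|f(x)|^2_{HS}$ to be within $|t|^{-\alpha_1}/2$ of $\int|f|^2_{HS}\di\nu$.'' This does not follow: the ball average can differ arbitrarily from the global average. What the argument actually gives is one-sided: since $|f|_{HS}^2\leqslant 1$ and $f$ oscillates by $O(m_\gamma(f)r^\gamma)$ on $B(x,r)$,
\[
1-|f(x)|_{HS}^2 \leqslant 2m_\gamma(f)r^\gamma + \frac{1-\int|f|_{HS}^2\di\nu}{\nu(B(x,r))} \leqslant 2m_\gamma(f)r^\gamma + \frac{1-\int|f|_{HS}^2\di\nu}{r^\Delta}.
\]
To make the right-hand side small you need $1-\int|f|^2\ll r^\Delta=|t|^{-\alpha_2\Delta}$, not merely $\leqslant|t|^{-\alpha_1}$. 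This forces $L$ to exceed roughly $1+\alpha_1+\alpha_2\Delta$ (plus log corrections), a growth in $\Delta$ that your parameter discussion at the end does not record. Finally, a small point: the sup-attaining point $x_0$ for the isotypic component is not necessarily $\Delta$-regular, so ``which we may take with $\Delta$-regular projection'' is unjustified --- but this is harmless once you propagate via the $\nu$-average as above rather than via a single regular point.
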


\begin{remark}
Note that in the conclusion of the proposition, it really is the measure $\rho_\e$ that is used and not the measure $\rho$ itself. This won't change anything in our study study since their supports generate the same subgroup of $\G$
\end{remark}

\begin{remark}
We would want to take $\varphi(x)= \frac 1 {\sqrt{\dim \V}} \trace f(x)$ since this will give the following inequality :
\[
\int_\G \left| e^{-it\sigma(g,x)}\varphi(gx) - \varphi(x) \right|^2 \di \rho_{\e}^{\star n(\beta,t)} (g)\leqslant \frac {1} {|t|^{\alpha_1}}
\]
And so, if $\varphi$ didn't vanish on $\X$, the theorem would imply that $e^{-it\sigma}$ were close to the coboundary $\varphi(x) \varphi(gx)^{-1}$.
However, our control on $|f(x)|_{HS}$ doesn't give any control on $|\varphi(x)|$.
\end{remark}

\subsubsection{Proof of the proposition}

The proof relies on lemmas that are adapted from the ones of Dolgopyat for Ruelle operators.

\medskip
The first difficulty comes from the fact that the space is only locally contracted over $\Abf$. To solve it, we are going to use the isotypic decomposition that we saw in lemma~\ref{lemme:decomposition_paire_impaire} and the fact that $\G$ preserves this decomposition since it's action and the $\Hb-$one commutes. Moreover, we also assumed that $\sigma$ is $\Hb-$invariant and so, we can study the operator $P(it)$ in each $\cal C^{0,\gamma}_\xi(\X)$.

One also has to remark that there is an equivalent to proposition~\ref{proposition:essential_spectral_radius} in the space $\cal C^{0,\gamma}_\xi(\X)$.

\medskip
Proposition~\ref{proposition:essential_spectral_radius} suggests that we should renormalise the norm in $\cal C^{0,\gamma}(\X)$ to study $P(it)$. This is why, we make the following
\begin{notation*}
Under the assumption of proposition~\ref{proposition:Dolgopyat_markov}, we note $C_2$ the constant given by proposition~\ref{proposition:essential_spectral_radius} (we can assume without any loss of generality that $C_2\geqslant 1$).

\medskip
Let $t \in \R$ with $|t|\geqslant 2$. Then, for any $f\in \cal C^{0,\gamma}(\X)$, we note
\[
\|\,.\,\|_{(t)}= \max\left( \|f\|_\infty, \frac{m_\gamma(f)}{2C_2 |t|} \right)
\] 
\end{notation*}
In the same way, we define the norm $\|\,.\,\|_{(t)}$ for functions in $\cal C^{0,\gamma}_\xi(\X)$.

\medskip
Remark that for any $f\in \cal C^{0,\gamma}(\X)$,
\[
\|f\|_{(t)} \leqslant \|f\|_\gamma \leqslant \left(1+ 2C_2|t|\right) \|f\|_{(t)}
\]
So $(\cal C^{0,\gamma}(\X), \|\,.\,\|_\gamma)$ and $(\cal C^{0,\gamma}(\X), \|\,.\,\|_{(t)})$ are isomorphic as Banach spaces. Moreover, $P(it)$ is better controlled with $\|\,.\,\|_{(t)}$ as shown by next
\begin{lemma}\label{lemma:control_norm_P_sigma}
Under the assumptions of proposition~\ref{proposition:Dolgopyat_markov}, for any $t\in \R$ with $|t|\geqslant 2$ and any $n\in \N$,
\[
\|P(it)^n \|_{(t)} \leqslant 2C_2
\]
\end{lemma}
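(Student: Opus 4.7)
The strategy is to combine the two already-available estimates on $P(it)^n$: the trivial contraction on the uniform norm and the Ionescu-Tulcea-Marinescu-type inequality given by proposition~\ref{proposition:essential_spectral_radius}. The definition of $\|\,.\,\|_{(t)}$ is precisely engineered to make these two bounds close under iteration, and the constant $2C_2$ in the denominator is chosen so that the spectral-gap term absorbs cleanly.

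More concretely, I would first record that for every $n$ and every $f\in\cal C^{0,\gamma}(\X)$,
\[
\|P(it)^n f\|_\infty \;\leqslant\; \|f\|_\infty \;\leqslant\; \|f\|_{(t)},
\]
which follows from the pointwise estimate $|P(it)^n f(x)|\leqslant \int_\G |f(gx)|\,\di\rho^{\star n}(g)$ already noted after Definition~\ref{definition:cocycle}. This takes care of the first entry in the $\max$ defining $\|\,.\,\|_{(t)}$.

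Next, for the Hölder part, I would invoke proposition~\ref{proposition:essential_spectral_radius} (applied in $\cal C^{0,\gamma}(\X)$, or rather isotypic components thereof) to write
\[
m_\gamma(P(it)^n f) \;\leqslant\; C_2\bigl((1+|t|)\|f\|_\infty + e^{-\delta_2 n} m_\gamma(f)\bigr).
\]
Dividing by $2C_2|t|$ and using $|t|\geqslant 2$ so that $(1+|t|)/(2|t|)\leqslant 1$, the first term is bounded by $\|f\|_\infty\leqslant\|f\|_{(t)}$; the second term equals $e^{-\delta_2 n}\,m_\gamma(f)/(2|t|)$, which, after reinserting the factor $C_2$ to recognise $m_\gamma(f)/(2C_2|t|)$, is at most $C_2\,e^{-\delta_2 n}\|f\|_{(t)}\leqslant C_2\|f\|_{(t)}$. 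Summing gives $m_\gamma(P(it)^n f)/(2C_2|t|)\leqslant (1+C_2)\|f\|_{(t)}\leqslant 2C_2\|f\|_{(t)}$ since $C_2\geqslant 1$.

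Taking the maximum of the two estimates yields $\|P(it)^n f\|_{(t)}\leqslant 2C_2\|f\|_{(t)}$ uniformly in $n\in\N$ and $|t|\geqslant 2$, which is the claim. There is no real obstacle here: the lemma is essentially a bookkeeping exercise that repackages proposition~\ref{proposition:essential_spectral_radius} into a form stable under iteration, and the only minor point to check is that the constants $C_2$ and $\delta_2$ supplied by that proposition are independent of $t$ and $n$, which is exactly what the statement of proposition~\ref{proposition:essential_spectral_radius} provides.
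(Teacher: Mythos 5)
Your proof is correct and follows essentially the same route as the paper's: bound the uniform norm by the trivial contraction, bound the Hölder seminorm by proposition~\ref{proposition:essential_spectral_radius}, divide by $2C_2|t|$, use $|t|\geqslant 2$ and $C_2\geqslant 1$ to collapse the two contributions into $2C_2$, and take the $\max$.
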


\begin{remark}
This lemma still holds in $\cal C^{0,\gamma}_\xi(\X)$.
\end{remark}

\begin{proof}
Let $f\in \cal C^{0,\gamma}$ such that $\|f\|_{(t)} \leqslant 1$. According to proposition~\ref{proposition:essential_spectral_radius}, for any $n\in \N$, we have that
\[
\frac{m_\gamma(P^n(it) f)}{2C_2 |t|} \leqslant \frac 1 {2|t|} \left(1+|t| + 2C_2 |t| e^{-\delta_2 n}  \right) \leqslant 1+C_2 e^{-2\delta n} \leqslant 2C_2
\]
Moreover, we still have that
\[
\|P(it)^nf\|_\infty \leqslant \|f\|_\infty \leqslant 1
\]
and so,
\[
\|P^n(it)\|_\sigma \leqslant 2C_2
\]
Which is what we intended to prove.
\end{proof}

For $\alpha_2,\Delta, \sigma$ and $\xi$ fixed, we are going to study the assumption

\medskip
\begin{center}
$\mathbf{H}(\alpha_1, \beta,\xi):$~\begin{minipage}{0.8\textwidth} \label{hypothese:sigma}For any $f\in \cal C^{0,\gamma}_\xi(\X)$ with $\|f\|_{(t)} \leqslant 1$, there are $x_0 \in \X$ which is $\Delta-\nu$-regular at scale $|t|^{-\alpha_2}$ and $n\in [0, \lfloor \beta \ln|t| \rfloor]$ such that
\[
|P_{\e}(it)^n f(x_0)|_{HS} \leqslant 1- \frac {1}{|t|^{\alpha_1}}
\]
\end{minipage}
\end{center} 

\begin{lemma}\label{lemme:premier_lemme_dolgopyat}
Under the assumptions of proposition~\ref{proposition:Dolgopyat_markov}, for any $\alpha_1,\beta\in\R_+^\star$ there is $\alpha_2 \in \R_+^\star$ such that for any $\Delta \in \R_+^\star$, there are $\alpha'_1,C$, such that for any $t\in \R$ with $|t|\geqslant 2$ and any $\xi\in \widehat \Hb$, we have that if the assumption $\mathbf{H}(\alpha_1,\beta,\xi)$ is true, then
\[
\|(I_d-P(it))^{-1}\|_{\cal C^{0,\gamma}(\X)} \leqslant C |t|^{\alpha'_1}
\]
\end{lemma}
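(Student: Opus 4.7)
The plan is to turn the pointwise decay of hypothesis $\mathbf{H}(\alpha_1,\beta,\xi)$ into a $\|\cdot\|_{(t)}$-contraction of $P_\e(it)^N$ with $N=O(\ln|t|)$ and contraction factor $1-c|t|^{-L}$, and then to sum a Neumann series. Since the $\Hb$-action commutes with the $\G$-action and $\sigma$ is $\Hb$-invariant, both $P(it)$ and $P_\e(it)$ preserve each isotypic subspace $\cal C^{0,\gamma}_\xi(\X)$, so I will argue componentwise and combine the finitely many $\xi\in\widehat\Hb$ at the end.

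Fix $f\in\cal C^{0,\gamma}_\xi(\X)$ with $\|f\|_{(t)}\leqslant 1$. Hypothesis $\mathbf{H}$ provides $n_f\leqslant\lfloor\beta\ln|t|\rfloor$ and a point $x_0$ whose image in $\X/\Hb$ is $\Delta$-$\nu$-regular at scale $|t|^{-\alpha_2}$, with $|P_\e(it)^{n_f}f(x_0)|_{HS}\leqslant 1-|t|^{-\alpha_1}$. Lemma~\ref{lemma:control_norm_P_sigma} gives $\|P_\e(it)^{n_f}f\|_{(t)}\leqslant 2C_2$, hence $m_\gamma(P_\e(it)^{n_f}f)\leqslant 4C_2^2|t|$. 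Choosing $\alpha_2$ large enough (at least $(1+\alpha_1)/\gamma$ plus constants) forces the Hölder oscillation over $B(x_0,|t|^{-\alpha_2})$ to be smaller than $\tfrac12|t|^{-\alpha_1}$, so on that ball $|P_\e(it)^{n_f}f|_{HS}^2\leqslant 1-\tfrac12|t|^{-\alpha_1}$. The function $\varphi:=|P_\e(it)^{n_f}f|_{HS}^2$ is $\Hb$-invariant (since $f(hx)=f(x)\rho(h)^{\star}$ preserves $|\cdot|_{HS}$) and descends to $\X/\Hb$ with $\|\varphi\|_\gamma\leqslant C|t|$.

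I now apply the real Markov operator. The Cauchy--Schwarz inequality for the probability measure $\rho_\e^{\star k}$ yields $|P_\e(it)^{n_f+k}f|_{HS}^2\leqslant P_\e^k\varphi$. By lemma~\ref{lemma:contraction_marche_ralentie} the space $\X/\Hb$ is also $(\rho_\e,\gamma,M,N)$-contracted, so proposition~\ref{proposition:spectral_gap_P0} applied to $\rho_\e$ gives $\|P_\e^k\varphi-\int\varphi\,\di\nu\|_\infty\leqslant C'|t|\,e^{-\kappa k}$. The $\Delta$-regularity of $x_0$ gives $\nu(B(x_0,|t|^{-\alpha_2}))\geqslant|t|^{-\alpha_2\Delta}$, hence $\int\varphi\,\di\nu\leqslant 1-\tfrac12|t|^{-\alpha_1-\alpha_2\Delta}$. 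Setting $L:=\alpha_1+\alpha_2\Delta$ and choosing $k=O(\ln|t|)$ so that $C'|t|\,e^{-\kappa k}\leqslant\tfrac14|t|^{-L}$ produces, uniformly in $y\in\X$,
\[
|P_\e(it)^{n_f+k}f(y)|_{HS}\leqslant 1-c|t|^{-L}.
\]
Since $n_f$ depends on $f$ but $\|P_\e(it)\|_{\infty\to\infty}\leqslant 1$, padding by trivial iterates of $P_\e(it)$ absorbs that dependence into a single $N=\lfloor\beta\ln|t|\rfloor+k=O(\ln|t|)$ for which $\|P_\e(it)^Nf\|_\infty\leqslant 1-c|t|^{-L}$ holds for every $f$ with $\|f\|_{(t)}\leqslant 1$.

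It remains to upgrade this sup-norm bound to $\|\cdot\|_{(t)}$. Applying proposition~\ref{proposition:essential_spectral_radius} (in its $P_\e(it)$-version, valid by the same proof via lemma~\ref{lemma:contraction_marche_ralentie}) to $g:=P_\e(it)^Nf$, and running a further $N'=O(\ln|t|)$ steps chosen so that $e^{-\delta_2 N'}\leqslant|t|^{-L}$, the inequality $(1+|t|)/(2|t|)\leqslant 3/4$ for $|t|\geqslant 2$ together with $\|g\|_\infty\leqslant 1-c|t|^{-L}$ and $m_\gamma(g)\leqslant 4C_2^2|t|$ gives $m_\gamma(P_\e(it)^{N+N'}f)/(2C_2|t|)\leqslant 3/4$, hence $\|P_\e(it)^{N+N'}f\|_{(t)}\leqslant 1-c'|t|^{-L}$ uniformly in $f$. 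A geometric series then bounds $\|(I_d-P_\e(it))^{-1}\|_{(t)}$ by $O(|t|^L\ln|t|)$, and the identities $P_\e(it)=\tfrac12(I_d+P(it))$ (giving $I_d-P(it)=2(I_d-P_\e(it))$) and $\|\cdot\|_\gamma\leqslant(1+2C_2|t|)\|\cdot\|_{(t)}$ yield $\|(I_d-P(it))^{-1}\|_{\cal C^{0,\gamma}_\xi(\X)}\leqslant C|t|^{\alpha'_1}$ for any $\alpha'_1>L+1$. The main obstacle is the balancing in the middle step: the Hölder smoothing scale $|t|^{-(1+\alpha_1)/\gamma}$ must lie below the regularity scale $|t|^{-\alpha_2}$ so that the favourable ball has $\nu$-mass at least an inverse polynomial in $|t|$, and simultaneously the spectral-gap error of $P_\e^k$ must be brought down to that same inverse polynomial within only $O(\ln|t|)$ steps; all three exponents $\alpha_1,\alpha_2,\Delta$ have to be tuned consistently so that the final exponent $\alpha'_1$ is finite.
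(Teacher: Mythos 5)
Your proposal is correct and follows essentially the same approach as the paper's own proof. The overall architecture is identical: convert the pointwise defect at $x_0$ provided by $\mathbf{H}(\alpha_1,\beta,\xi)$ into a defect on a ball around $\pi_\Hb(x_0)$ using the a priori Hölder bound $m_\gamma(P_\e(it)^{n_f}f)\lesssim|t|$, integrate against $\nu$ using the $\Delta$-regularity of $x_0$ to obtain $\int|P_\e(it)^{n_f}f|\,\di\nu\leqslant 1-\tfrac12|t|^{-\alpha_1-\alpha_2\Delta}$, propagate this to a uniform sup-norm contraction via the spectral gap of $P_\e$ on $\X/\Hb$ (valid by lemma~\ref{lemma:contraction_marche_ralentie}) over $O(\ln|t|)$ additional steps, iterate further with proposition~\ref{proposition:essential_spectral_radius} to downgrade the Hölder seminorm and get a $\|\cdot\|_{(t)}$-contraction with factor $1-c|t|^{-L}$, and finally sum a Neumann series and undo the lazy-walk and norm renormalisations. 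The only substantive deviation is cosmetic: you work with $\varphi=|P_\e(it)^{n_f}f|_{HS}^2$ and the Cauchy--Schwarz inequality $|P_\e(it)^{n_f+k}f|_{HS}^2\leqslant P_\e^k\varphi$, whereas the paper works with $|P_\e(it)^{n_f}f|_{HS}$ directly and the triangle inequality $|P_\e(it)^{m}f|_{HS}\leqslant P_\e^{m-n_f}|P_\e(it)^{n_f}f|_{HS}$; both reduce the problem to the unperturbed operator $P_\e$ acting on a function on $\X/\Hb$ with $\|\cdot\|_\gamma=O(|t|)$ and proceed identically from there. The padding-by-trivial-iterates step to absorb the $f$-dependence of $n_f$ is the same device as the paper's ``for any $m$ larger than $\beta\ln|t|$'', and your accounting of the competing exponents ($\alpha_2$ vs.\ $(1+\alpha_1)/\gamma$, the spectral-gap rate vs.\ $L=\alpha_1+\alpha_2\Delta$) matches the paper's.
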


\begin{proof}
Let $\xi \in \widehat \Hb$ and $f\in \cal C^{0,\gamma}_\xi(\X)$ with $\|f\|_{(t)} \leqslant 1$.

By assumption, there are $n\in \lib 0,n(\beta,|t|)\rib$ and a point $x_0 \in \X$ whose projection on $\X/\Hb$ is $\Delta-\nu$-regular at scale $|t|^{-\alpha_2}$ such that
\[
|P^{n}_\e(it)f(x_0)|_{HS} \leqslant 1 - \frac 1 {|t|^{\alpha_1}}
\]
We are going to prove in a first step that this control at some point $x_0$ can be extended to a control of the uniform norm of $P^m_{\e}(it) f$ for some $m$ and then that this implies the expected result.

First of all, using the triangular inequality, we have that for any $m,n\in \N$ with $m\geqslant n$ and any $x\in \X$,
\begin{align*}
|P^{m}_{\e}(it) f(x)|_{HS} &=\left|\int_\G e^{it\sigma(g,x)} P_{\e}(it)^{n} f(gx) \di\rho_\e^{\star m-n}(g) \right|_{HS}  \\
& \leqslant \int_\G |P^n_{\e}(it) f(gx)|_{HS} \di\rho_\e^{\star m-n}(g) = P_{\e}^{m-n} |P^n_{\e}(it) f|_{HS}(x)
\end{align*}
Moreover, since $\sigma$ is $\Hb-$invariant and the action of $\Hb$ and $\G$ commutes, we also have, by definition of $\cal C^{0,\gamma}_\xi(\X)$, that for any $m\in \N$, the function $|P^m_{\e}(it) f|_{HS}$ is $\Hb-$invariant (cf. lemma~\ref{lemme:decomposition_paire_impaire}). So, it passes to a function on $\X/\Hb$ and we get, using proposition~\ref{proposition:spectral_gap_P0}, that
\[
P_{\e}^{m-n} |P^n_{\e}(it) f(gx)|_{HS}(x) \leqslant \int_{\X/\Hb} |P^n_{\e}(it) f(y)|_{HS}  \di\nu(y)+ C_0 C_\Abf e^{-\kappa(m-n)}\|P^{n}_{\e}(it) f\|_{\gamma} 
\]
Moreover, using lemma~\ref{lemma:control_norm_P_sigma}, the assumption on $\|f\|_{(t)}$ and the fact that $C_2 \geqslant 1$, we can compute
\begin{align*}
\|P^n_{\e}(it) f\|_\gamma &= \|P^n_{\e}(it) f\|_\infty + m_\gamma( P^n_{\e}(it) f) \leqslant 1 + C_2 \left(  (1+|t|)\|f\|_\infty + e^{-\delta n} m_\gamma(f) \right) \\
& \leqslant 1+C_2( 2|t| + 2 e^{-\delta_2 n} C_2 |t|) \\
& \leqslant 5C_2^2|t|
\end{align*}
Then, as $n\leqslant \beta \ln |t|$, we also have that
\[
e^{\kappa n}  |t| \leqslant e^{\kappa \beta \ln |t|}  |t| \leqslant |t|^{1+ \beta \kappa}
\]
And so, we get that for any $m\in \N$ larger than $ \beta\ln |t|$ and any $x\in \X$,
\[
|P_{\e}(it)^m f(x)|_{HS} \leqslant \int_{\X/\Hb} |P^{n}_{\e}(it) f(y)|_{HS}\di\nu(y) + 5 C_0C_\Abf C_2^2e^{-\kappa m} b^{1+\beta \kappa}
\]

Moreover, if $\mathbf{Z}$ is a borelian subset of $\X/\Hb$ and $M_{\mathbf{Z}}=\sup_{x\in{\mathbf{Z}}} |P^{n}_{\e}(it) f(x)|_{HS}$, then,
\begin{align*}
\int_{\X/\Hb} |P^{n}_{\e}(it) f(y)|_{HS}\di\nu(y) &\leqslant M_{\mathbf{Z}} \nu({\mathbf{Z}}) + \nu({\mathbf{Z}}^c) \leqslant 1+ (M_{\mathbf{Z}}-1)\nu({\mathbf{Z}})
\end{align*}
Taking $\mathbf{Z}=B(\pi_\Hb(x_0),r)$ where we noted $r=(1/(10C_2^2 |t|^{\alpha_1+1}))^{1/\gamma}$, we get that
\begin{align*}
\sup_{x\in \mathbf{Z}} |P^{n}_{\e}(it)f(x)|_{HS} &\leqslant |P^{n}_{\e}(it)f(\pi_\Hb(x_0))|_{HS} + \|P^{n}_{\e}(it)f\|_{\gamma} d(x,\pi_\Hb( x_0))^\gamma \\& \leqslant 1-\frac 1 {|t|^{\alpha_1}} + 5C_2^2 |t| r^\gamma = 1-\frac 1 {2|t|^{\alpha_1}}
\end{align*}
Taking now $\alpha_2$ large enough so that $|t|^{-\alpha_2} \leqslant (1/(5C_2^2 |t|^{\alpha_1+1}))^{1/\gamma}$, we get that $|t|^{-\alpha_2} \leqslant r$ and so, as $x_0$ is $\Delta-\nu-$regular at scale $|t|^{-\alpha_2}$,
\begin{align*}
\nu(\mathbf{Z})\geqslant |t|^{-\alpha_2 \Delta}
\end{align*}
and so,
\[
\int_{\X/\Hb}|P^{n}_{\e}(it) f(y)|_{HS} \di\nu(y) \leqslant 1 - \frac 1 {2 |t|^{\alpha_1+ \alpha_2 \Delta}} 
\]
To sum-up, we found that for any $m\in \N$ larger than $\beta\ln|t|$,
\begin{equation}
\|P_{\e}(it)^m f\|_\infty \leqslant  1 - \frac 1 {2 |t|^{\alpha_1 + \alpha_2 \Delta } } + 5C_0 C_\Abf C_2^2e^{-\kappa m} |t|^{1+\beta \kappa}
\end{equation}
To simplify notations, let $\alpha_3 = \alpha_1 + \alpha_2 \Delta$.

Let $m=K n(\beta, |t|)$ for some $K$ that we will choose later, then
\[
\|P^m_{\e}(it) \|_\infty \leqslant 1 - \frac 1 {2|t|^{\alpha_3}} + 4C_0 C_2^2 C_\Abf |t|^{1+\beta \kappa-K\kappa \beta}
\]
and so, for $K$ large enough (recall that $|t|\geqslant 2$), we get that
\[
\|P^m_{\e}(it) f\|_\infty \leqslant 1- \frac 1 {4|t|^{\alpha_3}}  
\]

Moreover, for $l\in \N$ larger than $m$, using proposition~\ref{proposition:essential_spectral_radius}, we find that
\begin{align*}
\frac 1 {2C_2|t|} m_\gamma( P^l_{\e}(it)f) &\leqslant \|P^{m}_{\e}(it)  f\|_\infty + \frac 1 {2|t|} e^{-\delta (l-m)} m_\gamma(P^m_{\e}(it)  f) \\
&\leqslant 1- \frac 1 {4 |t|^{\alpha_3}}  + \frac {1} {2|t|} e^{-\delta l} |t|^{K\beta \delta} 4C_2^2|t| \\
&\leqslant  1- \frac 1 {4b^{\alpha_3}}  + 2C_2^2 e^{-\delta l} |t|^{K\beta\delta}
\end{align*}
So, taking $l=L m = KLn(\beta,|t|)$, where $L\in \N$ is large enough, we get that
\[
\frac 1 {2C_2|t|} m_\gamma( P^l_{\e}(it)f) \leqslant 1 - \frac{1}{8|t|^{\alpha_3}}
\]
But, as we also have that
\[
\|P^l_{\e}(it)f\|_\infty \leqslant \|P^m_{\e}(it)f\|_\infty \leqslant 1 - \frac{1}{4 |t|^{\alpha_3}}
\]
what we proved is that under the assumptions of the theorem, is $\mathbf{H}(\alpha_1,\beta,\xi)$ is true, then for any $f\in \cal C^{0,\gamma}_\xi(\X)$ with $\|f\|_{(t)} \leqslant 1$,
\[
\|P^l_{\e}(it) f\|_{(t)} \leqslant 1 - \frac{1}{8 |t|^{\alpha_3}}
\]
And so, in $C_\xi^{0,\gamma}(\X)$,
\[
\|(I_d-P^l_{\e}(it))^{-1} \|_{(t)} \leqslant 8 |t|^{\alpha_3}
\]
Moreover, as
\[
(I_d-P_{\e}(it))^{-1} = \sum_{k=0}^{l-1} P_{\e}(it)^k (I_d-P_{\e}(it)^l)^{-1} \text{ and } \frac 1 2 (I_d-P(it))^{-1} =2 (I_d-P_{\e}(it))^{-1},
\]
we can compute
\[
\|(I_d-P(it))^{-1} \|_{(\cal C^{0,\gamma}_\xi(\X), \|.\|_{(t)})} \leqslant 2\sum_{k=0}^{l-1} \|P^k_{\e}(it)\|_{(t)} \|(I_d-P_{\e}(it)^l)^{-1}\|_{(t)} \leqslant  2C_2 l 8b^{\alpha_3}
\]
Finally, we recall that for any $f\in \cal C^{0,\gamma}_\xi(\X)$,
\[
\|f\|_{(t)} \leqslant \|f\|_\gamma \leqslant (1+2C_2|t|) \|f\|_{(t)} 
\]
and this proves that, in $\cal C^{0,\gamma}_\xi(\X)$,
\[
\|(I_d-P(it))^{-1}\|_\gamma \leqslant (1 + 2C_2 |t|)16 C_2 |t|^{1+\alpha_3} l
\]
And as $l$ is bounded by the product of $\ln |t|$ and some constant, we get the expected result for $\alpha'>1+\alpha_3$.
\end{proof}

We saw in lemma~\ref{lemme:premier_lemme_dolgopyat} what happens if, under the assumptions of proposition~\ref{proposition:Dolgopyat_markov}, for any hölder-continuous function $f$ on $\X$ there is a point $x_0$ and an integer $n$ such that $|P^{n}_\e(it) f(x_0)|_{HS}$ is far from $1$. We are now going to study the other alternative : the case where there is a function $f$ such that for any $x\in \cal \X$, $|P^n_\e(it) f(x)|_{HS}$ stays close to $1$ for many values of $n$.

\medskip
To explain how this works, we begin with the extreme case where we have that $|f(x)|$ and $|P_\e(it) f(x)|$ are not only close to $1$ but we even have that for any $x\in \X$, $ |P_\e(it) f(x)| = |f(x)|=1$. Then, for any $x\in \X$, $P_\e(it) f(x)$ is an average of complex numbers of modulus $1$ and so, for any $x\in \X$ and $\rho_\e-$a.e. $g\in \G$,
\begin{equation} \label{equation:Dolgopyat_first}
e^{-it\sigma (g,x)} f(gx) = P_\e(it) f(x)
\end{equation}
In particular, since by definition $\rho_\e(\e)>0$, we get that for any $x\in \X$,
\[
f(x) = e^{-it\sigma(\e,x)} f( x)= P_\e(it) f(x)
\]
This proves that for any $x\in \X$ and $\rho_\e-$a.e. $g\in \G$,
\[
e^{-it\sigma(g,x)} = \frac{ f(x)}{f(gx)}
\]
and so, $e^{-it\sigma}$ is a coboundary.

\medskip
We quantify this argument in next
\begin{lemma}\label{lemma:controle_fonctions_proches_1}
Under the assumptions of proposition~\ref{proposition:Dolgopyat_markov}.

Let $\xi\in \widehat \Hb$, $f\in \cal C^{0,\gamma}_\xi(\X)$ with $\|f\|_{\infty} \leqslant 1$, $x\in\X$ and $L\in\N^\star$.

Let $s$ be such that $1 -s \leqslant|f(x)|_{HS}, |P^L_{\e}(it) f(x)|_{HS}$, then we have that
\[
\int_\G \left| e^{-it\sigma(g,x)} f(gx) - f(x)\right|^2_{HS} \di\rho_\e^{\star L}(g) \leqslant 2^{L+3} s
\]
\end{lemma}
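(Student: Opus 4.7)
The plan is to exploit the Hilbert space structure on $\mathrm{End}(\V)$ given by the inner product $\langle A,B\rangle_{HS} = \trace(A^\star B)$. Write $F(g) := e^{-it\sigma(g,x)}f(gx)$, so that $P^L_{\e}(it)f(x) = \int_\G F(g)\di\rho_{\e}^{\star L}(g)$ and $|F(g)|_{HS}\leqslant\|f\|_\infty\leqslant 1$ for every $g\in\G$. The hypothesis $|P^L_{\e}(it)f(x)|_{HS}\geqslant 1-s$ then says that the barycenter of $F$ with respect to $\rho_{\e}^{\star L}$ is a point of the closed unit ball of $\mathrm{End}(\V)$ whose norm is at least $1-s$.

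The first key ingredient is the following elementary estimate, valid in any complex Hilbert space $H$ for a probability measure $\mu$ supported in its closed unit ball: if $v:=\int w\di\mu(w)$ has $|v|\geqslant 1-s$ with $s<1$, then, letting $u:=v/|v|$,
\begin{equation*}
\int_H |w-u|^2 \di\mu(w) = \int_H |w|^2\di\mu(w) +1 - 2|v| \leqslant 2s.
\end{equation*}
Applied in our setting with $u:=P^L_{\e}(it)f(x)/|P^L_{\e}(it)f(x)|_{HS}$, this gives
\begin{equation*}
\int_\G|F(g)-u|_{HS}^2\,\di\rho_{\e}^{\star L}(g)\leqslant 2s.
\end{equation*}

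The second ingredient is the laziness of $\rho_{\e}=\frac 1 2\delta_{\e}+\frac 1 2 \rho$, which gives the lower bound $\rho_{\e}^{\star L}(\{\e\})\geqslant 2^{-L}$. The cocycle relation applied to $g_1=g_2=\e$ forces $\sigma(\e,x)=0$, so $F(\e)=f(x)$, and consequently
\begin{equation*}
|f(x)-u|_{HS}^2\leqslant 2^L\int_\G|F(g)-u|_{HS}^2\di\rho_{\e}^{\star L}(g)\leqslant 2^{L+1}s.
\end{equation*}

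Combining these two displays via the pointwise inequality $|F(g)-f(x)|_{HS}^2\leqslant 2|F(g)-u|_{HS}^2+2|u-f(x)|_{HS}^2$ and integrating over $g$ yields
\begin{equation*}
\int_\G|F(g)-f(x)|_{HS}^2\di\rho_{\e}^{\star L}(g)\leqslant 4s+2^{L+2}s\leqslant 2^{L+3}s,
\end{equation*}
which is the desired bound; the case $s\geqslant 1$ is trivial because $|F(g)-f(x)|_{HS}\leqslant 2$ pointwise. I do not expect any real obstacle here: the Hilbert-space convexity inequality and the elementary mass lower bound at $\e$ drive the whole argument, and the factor $2^L$ appearing in the conclusion is precisely the loss incurred when transferring an averaged control with respect to $\rho_{\e}^{\star L}$ to the pointwise value at $g=\e$.
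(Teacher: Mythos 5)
Your proof is correct and follows essentially the same route as the paper: compute a variance-type integral for $F(g):=e^{-it\sigma(g,x)}f(gx)$ using the Hilbert space structure, use the laziness $\rho_\e^{\star L}(\{\e\})\geqslant 2^{-L}$ to transfer the averaged bound to the pointwise value $F(\e)=f(x)$, and combine. The only cosmetic difference is that the paper centers the variance at the barycenter $v=P^L_\e(it)f(x)$ itself (rather than at $u=v/|v|$) and closes with the Minkowski inequality in $L^2(\rho_\e^{\star L})$ rather than the pointwise bound $|a-c|_{HS}^2\leqslant 2|a-b|_{HS}^2+2|b-c|_{HS}^2$; both choices land on the same $2^{L+3}s$.
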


\begin{proof}
Expending the following expression, we can compute
\begin{align*}
I(x) :&=\int_\G \left| e^{-it\sigma(g,x)} f(gx) - P^{L}_{\e}(it)f(x)\right|^2_{HS} \di\rho_\e^{\star L}(g) \\
&= P_{\e}^L |f|_{HS}^2(x) + |P^{L}_{\e}(it) f(x)|_{HS}^2 - 2 |P^{L}_{\e}(it) f(x)|^2_{HS} \\
& \leqslant 1- |P^{L}_{\e}(it) f(x)|^2_{HS} \leqslant 2s
\end{align*}
Thus,
\[
\rho_\e^{\ast L}(\e) \left| f(x) - P^L_{\e}(it) f(x)\right|^2_{HS} \leqslant \int_\G \left| e^{-it\sigma(g,x)} f(gx) - P^{L}_{\e}(it)f(x)\right|^2_{HS} \di\rho_\e^{\star L}(g) \leqslant 2s
\]
This proves, using the fact that $\rho_\e(\e) \geqslant 1/2$, that
\[
\left| f(x) - P^L_{\e}(it)f(x)\right|_{HS} \leqslant \sqrt{2^{L+1}s}
\]
And so, the triangular inequality proves that,
\begin{align*}
\left( \int_\G \left| e^{-it\sigma(g,x)} f(gx) - f(x)\right|^2_{HS} \di\rho_\e^{\star L}(g) \right)^{1/2} &\leqslant \sqrt{I(x)} + \left| f(x) - P^L_{\e}(it) f(x)\right|_{HS} \\&\leqslant \sqrt{ 2s} + \sqrt{2^{L+1} s} \leqslant \sqrt{2^{L+3} s} 
\end{align*}
And this finishes the proof of the lemma.
\end{proof}

\begin{lemma}\label{lemme:second_lemme_dolgopyat}
Under the assumptions of proposition~\ref{proposition:Dolgopyat_markov}.

For any $\alpha_1,\beta$ there is $\alpha_2$ such that for any $\Delta$ there are $\alpha'_1, \beta'$ such that for any $\sigma\in \cal Z^M$, and any $\xi\in \widehat\Hb$, is the assumption $\mathbf{H}(\alpha'_1, \beta',\xi)$ is false then there is $f \in \cal C^{0,\gamma}_\xi(\X)$ with $\|f\|_{(t)}\leqslant 1$ such that for any $x\in \X$ whose projection on $\X/\Hb$ is $\Delta-\nu$-regular at scale $|t|^{-\alpha_2}$, we have that
\[
|f(x)|_{HS} \geqslant 1 - \frac 1 {|t|^{\alpha_1}}
\]
and
\[
\int_\G \left| e^{i\sigma(g,x)} f(gx) -  f(x) \right|^2_{HS} \di\rho_\e^{n(\beta,|t|)}(g) \leqslant \frac 1 {|t|^{\alpha_1}}
\]
\end{lemma}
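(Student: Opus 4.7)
The plan is to proceed by unpacking the negation of $\mathbf{H}(\alpha'_1, \beta', \xi)$: this negation asserts precisely the existence of some $f \in \cal C^{0,\gamma}_\xi(\X)$ with $\|f\|_{(t)} \leq 1$ such that
\[
|P_\e^n(it) f(x_0)|_{HS} > 1 - |t|^{-\alpha'_1}
\]
for \emph{every} point $x_0$ whose projection on $\X/\Hb$ is $\Delta-\nu$-regular at scale $|t|^{-\alpha_2}$ and \emph{every} integer $n \in [0, \lfloor \beta' \ln|t| \rfloor]$. I would take this $f$ as the function produced by the lemma; the bound $|f(x)|_{HS} > 1 - |t|^{-\alpha'_1}$ is then obtained by setting $n = 0$, and gives the first conclusion as soon as $\alpha'_1 \geq \alpha_1$.

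To obtain the second conclusion, choose $\beta' \geq \beta$ so that $L := n(\beta, |t|) = \lfloor \beta \ln|t|\rfloor$ lies in $[0, \lfloor \beta' \ln|t|\rfloor]$. For every regular $x$, both $|f(x)|_{HS}$ and $|P^L_\e(it) f(x)|_{HS}$ exceed $1 - |t|^{-\alpha'_1}$, so applying lemma~\ref{lemma:controle_fonctions_proches_1} with $s := |t|^{-\alpha'_1}$ and this value of $L$ yields, uniformly in such $x$,
\[
\int_\G \bigl| e^{-it\sigma(g,x)} f(gx) - f(x) \bigr|^2_{HS} \di\rho_\e^{\star L}(g) \leq 2^{L+3} |t|^{-\alpha'_1} \leq 8\, |t|^{\beta \ln 2 - \alpha'_1},
\]
where I used $2^L \leq 2^{\beta \ln |t|} = |t|^{\beta \ln 2}$. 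It suffices now to pick $\alpha'_1 := \alpha_1 + \beta \ln 2 + 3$, so that the right-hand side is at most $|t|^{-\alpha_1}$ for every $|t| \geq 2$ (since $8 \leq |t|^3$). The parameter $\alpha_2$ can simply be inherited from lemma~\ref{lemme:premier_lemme_dolgopyat}, no further constraint arising at this step.

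No substantive obstacle is anticipated: the entire content of the lemma is the pointwise-to-integral transfer already performed in lemma~\ref{lemma:controle_fonctions_proches_1}, and the only real book-keeping is calibrating $\alpha'_1$ large enough to absorb the factor $2^{L+3}$ coming from the fact that $\rho_\e^{\star L}(\{\e\}) \geq 2^{-L}$ degrades exponentially in $L = n(\beta,|t|)$. The $\Hb$-equivariant isotypic refinement plays no active role in the argument beyond the fact that $P_\e(it)$ preserves $\cal C^{0,\gamma}_\xi(\X)$, so the function $f$ extracted from the negated hypothesis lies in $\cal C^{0,\gamma}_\xi(\X)$ as required.
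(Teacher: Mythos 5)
Your proposal is correct and follows essentially the same route as the paper: negate $\mathbf{H}(\alpha'_1,\beta',\xi)$, read off the first bound at $n=0$, then feed the resulting two-sided pointwise estimate into lemma~\ref{lemma:controle_fonctions_proches_1} with $L=n(\beta,|t|)$ and calibrate $\alpha'_1 > \alpha_1 + \beta\ln 2 + 3$ to absorb the $2^{L+3}$ factor. The only cosmetic difference is that you allow $\beta'\geqslant\beta$ where the paper takes $\beta'=\beta$ directly; this changes nothing.
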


\begin{proof}
Fix $\alpha_1,\alpha_2, \beta,\Delta$ and take $\alpha'_1 \in \R_+^\star$ large (we will precise this later in the proof).

If the assumption $\mathbf{H}(\alpha'_1, \beta,\xi)$ is not satisfied, then there is $f\in \cal C^{0,\gamma}_\xi(\X)$ with $\|f\|_{(t)} \leqslant 1$ such that for any $n\in \lib 0,n(\beta,t)\rib$ and any $x\in \X$ that is $\Delta-\nu$-regular at scale $|t|^{-\alpha_2}$,
\[
|P_{\e}(it)^n f(x)|_{HS} \geqslant 1 - \frac 1 {|t|^{\alpha'_1}}
\]
Using the previous lemma, we get that
\[
\int_\G \left| e^{-it\sigma(g,x)} f(gx) - f(x) \right|^2_{HS} \di\rho_\e^{\star n(\beta,t)} \leqslant \frac{2^{n(\beta,t)+3}}{|t|^{\alpha_1'}} \leqslant \frac{ 8}{|t|^{\alpha_1' - \beta \ln2}} 
\]
And this proves the lemma if we take $\alpha_1'> \beta \ln2 + \alpha_1+3$ since we have by assumption that $|t|\geqslant 2$.
\end{proof}

Lemmas~\ref{lemme:premier_lemme_dolgopyat} and~\ref{lemme:second_lemme_dolgopyat} prove proposition~\ref{proposition:Dolgopyat_markov}.
\section{Diophantine properties in linear groups} \label{section:proprietes_diophantiennes_SL_d}

\begin{miniabstract}
In this section, we prove that the logarithms of the spectral radii of elements of strongly irreducible and proximal subgroups of $\mathrm{SL}_d(\R)$ have good diophantine properties.

We also prove a property of lower regularity of the stationary measure on the projective space by showing that it is lower regular at fixed points of proximal elements.

This will allow us, in theorem~\ref{theorem:controle_resolvante_SL_d}, to control the operator $P(it)$ defined in section~\ref{section:Dolgopyat_Markov} and that appears in the study of the renewal theorem in section~\ref{section:renouvellement}.
\end{miniabstract}

In $\mathrm{SL}_d(\R)$, the application mapping a matrix to it's spectral radius is not a group morphism (for $d\geqslant 2$).

In a Zariski-dense subgroup $\Gamma$ of $ \mathrm{SL}_d(\R) $, we can construct sequences of elements $(g_n)$ and $(h_n)$ for which we have a good control of the difference between the logarithm of the spectral radius of $g_n h_n$ and the sum of the ones of $g_n$ and $h_n$ (see~\cite{Qu05}). In particular, this proves that the logarithms of the spectral radii of proximal elements of a Zariski-dense subgroup of $ \mathrm{SL}_d(\R) $ generate a dense subgroup of $\R$.

In this section, we quantify this construction to prove a technical result that will allow us to check the assumption of proposition~\ref{proposition:Dolgopyat_markov} and to prove theorem~\ref{theorem:controle_resolvante_SL_d} that gives a control of the resolvent of the pertubated operator.

\medskip
More specifically, studying the renewal in $\R$ (see~\cite{Car83} and the beginning of section~\ref{section:Dolgopyat_Markov}), we see that the rate of convergence depends on a diophantine condition on the measure and we are about to prove that it's equivalent in $\mathrm{SL}_d(\R)$ is always satisfied for measures having an exponential moment and whose support generate a strongly irreducible and proximal subgroup. This will be the
\begin{proposition*}[\ref{proposition:rayons_spectraux_diophantiens}]
Let $\rho$ be a strongly irreducible and proximal borelian probability measure on $\G:=\mathrm{SL}_d(\R)$ having an exponential moment.

Then, there are $\alpha, \beta \in \R_+^\ast$ and $p\in \N^\ast$ such that
\[
\liminf_{b\to \pm\infty} |b|^\alpha \int_\G \left| e^{ib \lambda_1(g)} - 1 \right|^2 \di\rho^{\ast pn(\beta,b)}(g) >0
\]
where we noted $\lambda_1(g)$ the logarithm of the spectral radius of $g$ and
\[
n(\beta,b) = \lfloor \beta \ln |b| \rfloor
\]
\end{proposition*}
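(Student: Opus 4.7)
The plan is to reduce the problem to a quantitative study of the characteristic-function-type integral
\[
\widehat{\rho}_N(b) := \int_\G e^{ib\lambda_1(g)}\di\rho^{\ast N}(g),
\]
since $\int_\G |e^{ib\lambda_1(g)}-1|^2 \di\rho^{\ast N}(g) = 2-2\Re\widehat{\rho}_N(b)$. The goal becomes showing $1-\Re\widehat{\rho}_N(b) \geq c|b|^{-\alpha}$ for $N=pn(\beta,b)$; equivalently, that the spectral radii of $N$-fold products cannot all cluster near a translate of $(2\pi/|b|)\Z$.

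The key input, in the spirit of Quint's construction in~\cite{Qu05}, is to exhibit two proximal elements $h_0, h_1 \in \supp(\rho^{\ast n_0})$ (for some fixed $n_0$) with transverse attracting/repelling data and with $\lambda_1(h_0) \neq \lambda_1(h_1)$; strong irreducibility and proximality of $\rho$ guarantee their existence. For any word $\omega \in \{0,1\}^k$, the product $h_\omega := h_{\omega_1}\cdots h_{\omega_k}$ remains proximal, and the quantitative contraction of successive proximal maps on $\prob(\R^d)$ yields the approximate additivity
\[
\lambda_1(h_\omega) = \sum_{j=1}^k \lambda_1(h_{\omega_j}) + O(1),
\]
with an error bounded uniformly in $k$. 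Grouping words by their number $r$ of $1$'s, one obtains at least $k+1$ values of $\lambda_1(h_\omega)$ of the form $(k-r)\lambda_1(h_0)+r\lambda_1(h_1)+O(1)$, arithmetically distributed on an interval of length of order $k$.

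Fix $b$ large and set $k := \lfloor c_1 \log|b| \rfloor$ for a small constant $c_1$. By a pigeonhole/Weyl argument applied to these $k+1$ equispaced values, a positive fraction of the $r \in \{0,\dots,k\}$ satisfies $\mathrm{dist}(b\lambda_1(h_\omega), 2\pi\Z) \geq c_0$, hence $|e^{ib\lambda_1(h_\omega)}-1|^2 \geq c_0'$, for any $\omega$ with exactly $r$ ones. Each such $\omega$ contributes mass at least $\rho^{\ast n_0}(U_0)^{k-r}\rho^{\ast n_0}(U_1)^{r} \geq e^{-c_2 k}$ under $\rho^{\ast n_0 k}$, for small neighbourhoods $U_i$ of $h_i$. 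Summing over these good words yields $1-\Re\widehat{\rho}_{n_0 k}(b) \geq c' e^{-c_2 k} \geq c''|b|^{-c_1 c_2}$, giving the statement with $\alpha := c_1 c_2$, $\beta := c_1/n_0$, and $p := n_0$.

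The main obstacle will be to make the approximate additivity in the second step truly uniform in $k$: the estimate $\lambda_1(g_1g_2) = \lambda_1(g_1)+\lambda_1(g_2)+O(1)$ requires the attracting direction of $g_2$ to be away from the repelling hyperplane of $g_1$, and iterating this along arbitrary binary words $\omega$ demands that long products $h_\omega$ do not degenerate. A careful choice of $h_0, h_1$ together with quantitative Anosov-type contraction estimates on $\prob(\R^d)$ (using the exponential moment to control rare Cartan projections, in the spirit of the Goldsheid--Margulis--Quint machinery) is what drives the proof, and balancing the exponential mass loss $e^{-c_2 k}$ against the polynomial gain $|b|^{-\alpha}$ is what constrains the admissible pairs $(\alpha, \beta)$.
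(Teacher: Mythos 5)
Your reduction to showing $1-\Re\widehat{\rho}_N(b)\geq c|b|^{-\alpha}$ is correct, and the strategy of building words from two proximal elements with different spectral radii is a natural starting point reminiscent of Quint's construction. But there is a genuine gap in the pigeonhole/Weyl step. You claim that since $\lambda_1(h_\omega) = (k-r)\lambda_1(h_0)+r\lambda_1(h_1)+O(1)$ with the error bounded uniformly in $k$, the arithmetic spacing of the nominal values forces $\mathrm{dist}(b\lambda_1(h_\omega), 2\pi\Z)\geq c_0$ for a positive fraction of $r$ and for \emph{any} $\omega$ with $r$ ones. However, the $O(1)$ error is of fixed size independent of $b$, so after multiplying by $b$ it becomes an uncontrolled $O(b)$ fluctuation modulo $2\pi$; for $|b|$ large this completely destroys the arithmetic-progression structure of the nominal values in $\R/2\pi\Z$. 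Moreover different words with the same number of $1$'s have different errors, so you certainly cannot conclude the bound for all such $\omega$ simultaneously. What is missing is a \emph{lower} bound on the additivity defect, quantified at the scale $|b|^{-1}\approx e^{-n/\beta}$.

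This two-sided control is exactly what the paper supplies, and it is the heart of its argument. In lemma~\ref{lemma:erreur_produit_fgh} the defect $\lambda_1(fgh)-\lambda_1(f)-\lambda_1(gh)$ is identified, up to exponentially small corrections, with $\ln\bigl[\delta(V_g^+,V_g^<)\delta(gV_h^+,V_h^<)/\bigl(\delta(V_g^+,V_h^<)\delta(gV_h^+,V_g^<)\bigr)\bigr]$, and this quantity is shown to lie in the two-sided window $[\varepsilon^3/(c_3\kappa_1(g)^d),\,c_3\kappa_{1,2}(g)/\varepsilon^5]$. Lemma~\ref{lemma:controle_difference_rayons_spectraux} upgrades this to a measure estimate: pairs $(g,h)$ for which $e^{-c_1 n}\leq|\lambda_1(gh)-\lambda_1(g)-\lambda_1(h)|\leq e^{-c_2 n}$ have $\rho^{\ast pn}\otimes\rho^{\ast pn}$-mass at least $e^{-c_3 n}$. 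After integration (lemma~\ref{lemma:controle_diff_rayons_spectraux_integre}), the proposition follows by a short contradiction: if $\int|e^{ib\lambda_1(g)}-1|^2\,\di\rho^{\ast pn}$ were $o(|b|^{-\alpha})$, then by the triangle inequality (comparing $\lambda_1(gh)$ with $\lambda_1(g)+\lambda_1(h)$, and noting that the $gh$-integral is the same object at length $2pn$) so would be $\int|e^{ib(\lambda_1(gh)-\lambda_1(g)-\lambda_1(h))}-1|^2$, contradicting the integrated lemma. Your exponential-mass-versus-polynomial-gain balancing concern is real and is exactly the constraint $\alpha-c_3\beta+2(1-c_1\beta)>0$, $\beta>1/c_2$ in that lemma. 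To repair your argument you would need a quantitative lower bound on the spread of $\{\lambda_1(h_\omega)\}$ over words of fixed composition at scale $|b|^{-1}$ — which, once established, essentially reproduces the paper's defect estimates by another route.
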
 

We recall that an element $g$ of $\mathrm{SL}_d(\R)$ is said to be proximal if it has a locally attractive fixed point in $\prob(\R^d)$.

\medskip
We will prove the genericness of the lower regular points for the stationary measure on the projective space, other condition that we used in the study of the perturbated operator, in the
\begin{corollary*}[\ref{corollary:regularite_inferieure_espace_projectif}] Let $\rho$ be a strongly irreducible and proximal borelian probability measure on $\mathrm{SL}_d(\R)$ having an exponential moment.

Let $\nu$ be the unique borelian probability measure on $\prob(\R^d)$ (existence and uniqueness of $\nu$ are proved in proposition~\ref{proposition:unique_mesure_invariante_contraction}).

Then, for any $M \in \R_+^\ast$, there are $n_0\in \N$, $\Delta \in \R$ and $t\in \R_+^\ast$ such that for any $n\in \N$ with $n\geqslant n_0$,
\[
\rho^{\ast n} \left( \left\{ g\in \G \middle|g\text{ is proximal and }\nu\left(B\left(V_g^+, e^{-Mn}\right)\right) \geqslant e^{-\Delta Mn} \right\}\right) \geqslant 1 - e^{-t n}\]
where $V_g^+$ is the locally attractive fixed point of $g$ in $\prob(\R^d)$ and we endowed $\prob(\R^d)$ with the usual distance (see equation~\eqref{equation:distance_projectif}).
\end{corollary*}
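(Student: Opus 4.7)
The plan is to combine Le Page's large--deviation principle for the Cartan projection with the upper Hölder regularity of $\nu$ at hyperplanes, and then to transfer the resulting concentration of $g_*\nu$ near $V_g^+$ into a lower bound on $\nu(B(V_g^+,\cdot))$ via the stationarity identity $\nu=\int h_*\nu\,\mathrm d\rho^{\ast m}(h)$. The heuristic is that for a $\rho^{\ast n}$-typical $g$ the map $g$ squeezes $\nu$ into an exponentially small ball around $V_g^+$, and applying the same mechanism to an auxiliary random $h$ is what lets one reach $V_g^+$ from inside $\nu$ and lower-bound the mass there.

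First, by Le Page's LDP for the singular value gap (available since $\rho$ has an exponential moment and is strongly irreducible and proximal, see~\cite{BL85} or the companion estimates feeding proposition~\ref{proposition:rayons_spectraux_diophantiens}), there exist $c_1,t_1>0$ such that for $n$ large, with $\rho^{\ast n}$-probability at least $1-e^{-t_1 n}$, $g$ is proximal, the ratio of its first two singular values is at most $e^{-c_1 n}$, and one has the uniform contraction $d(g\cdot x,V_g^+)\leqslant Ce^{-c_1 n}/d(x,H_g^-)$ for every $x\in\prob(\R^d)$. Guivarc'h's upper regularity of $\nu$ at hyperplanes (chapter~12 of~\cite{BQred}) yields $\alpha,C_0>0$ such that $\nu(\{x:d(x,H)\leqslant\delta\})\leqslant C_0\delta^\alpha$ uniformly in the projective hyperplane $H$; fixing $\delta_0$ with $C_0\delta_0^\alpha\leqslant 1/2$, this gives for $g$ in the good event
\[
g_*\nu\bigl(B(V_g^+,Ce^{-c_1 n}/\delta_0)\bigr)\geqslant 1/2.
\]

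Next, I would use $\nu=\int h_*\nu\,\mathrm d\rho^{\ast m}(h)$ for $m$ to be chosen. Repeating the preceding concentration for $h\sim\rho^{\ast m}$, the integrand is at least $1/2$ whenever $h$ lies in its own good event and $V_h^+\in B(V_g^+,r/2)$, provided $r\geqslant 2Ce^{-c_1 m}/\delta_0$, so
\[
\nu(B(V_g^+,r))\;\geqslant\;\tfrac12\,\rho^{\ast m}\!\bigl(\{h:V_h^+\in B(V_g^+,r/2)\}\bigr)-e^{-t_1 m}.
\]
The Hölder spectral gap of $P$ given by proposition~\ref{proposition:spectral_gap_P0}, applied to a $\gamma$-Hölder approximation of $\un_{B(V_g^+,r/2)}$ with norm $\sim r^{-\gamma}$, shows that the law of $V_h^+$ under $\rho^{\ast m}$ is $Ce^{-c_2 m}r^{-\gamma}$-close to $\nu$, so the probability above is at least $\nu(B(V_g^+,r/4))-Ce^{-c_2 m}r^{-\gamma}$. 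Taking $m$ a large multiple of $n$ makes this error negligible at the scale $r=e^{-Mn}$.

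The hard part is that the chain just described lower-bounds $\nu(B(V_g^+,r))$ in terms of $\nu(B(V_g^+,r/4))$, i.e.\ in terms of the very quantity we want to estimate. To break the circularity I would establish independently a Shannon--McMillan-type regularity for $\nu$: there exist $\Delta_*,\eta>0$ such that $\nu\bigl(\{y:\nu(B(y,r))<r^{\Delta_*}\}\bigr)\leqslant r^\eta$ for $r$ small. This is a quantitative version of the fact that the local dimension of $\nu$ is $\nu$-a.e.\ equal to the constant $h(\rho,\nu)/\lambda_\rho$, which follows from the positivity of $\lambda_\rho$ together with a large--deviation estimate for Birkhoff sums of the cocycle $\sigma(g,x)=\ln(\|gx\|/\|x\|)$ along the Markov chain driven by $P$. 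Combining this intrinsic regularity of $\nu$ with the Hölder-dual proximity of the law of $V_g^+$ to $\nu$ (again by proposition~\ref{proposition:spectral_gap_P0}), one concludes that with $\rho^{\ast n}$-probability at least $1-e^{-tn}$ the point $V_g^+$ falls in the $\Delta_*$-regular set, which gives the stated bound with $\Delta$ a constant multiple of $\Delta_*$.
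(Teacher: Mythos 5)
You correctly identify the key structural feature of the argument---stationarity of $\nu$ combined with the uniform contraction of generic $g$ toward $V_g^+$---and also correctly diagnose that the route you chose runs into a circularity: estimating $\rho^{\ast m}\{h:V_h^+\in B(V_g^+,r/2)\}$ by comparing the law of $V_h^+$ to $\nu$ reproduces the quantity $\nu(B(V_g^+,\cdot))$ you were trying to lower-bound. But the circuit-breaker you propose is where the proof has a genuine gap. The ``Shannon--McMillan'' regularity statement you invoke, namely $\nu(\{y:\nu(B(y,r))<r^{\Delta_*}\})\leqslant r^\eta$, is not established in the paper and is a substantially harder fact than Corollary~\ref{corollary:regularite_inferieure_espace_projectif} itself: it amounts to exact-dimensionality of the Furstenberg measure with a quantitative large-deviation tail, which requires control of the entropy $h(\rho,\nu)$ and is in the spirit of Ledrappier/Hochman--Solomyak rather than of the elementary estimates available here. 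Asserting it ``follows from the positivity of $\lambda_\rho$ together with a large--deviation estimate for Birkhoff sums of $\sigma$'' sweeps the entire difficulty under the rug; nothing in the cited lemmas gives a \emph{lower} bound on ball measures, only the upper regularity from chapter~12 of~\cite{BQred}.

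The paper avoids the circularity by making a different choice at the stationarity step, and this is the idea you are missing. Rather than asking when a single $h\sim\rho^{\ast m}$ has $V_h^+$ close to $V_g^+$, Proposition~\ref{proposition:regularite_inferieure_espace_projectif} writes $\nu=\int (h_1\cdots h_m)_*\nu\,\mathrm d\rho^{\ast n}(h_1)\cdots\mathrm d\rho^{\ast n}(h_m)$ and produces an \emph{explicit} event of computable $\rho^{\ast mn}$-mass: take each $h_i$ in a small ball $B(g,r)$ around the original $g$, with $r=\kappa_1(g)^{-(1+d)m}$. Lemma~\ref{lemma:contraction_proximale} and a crude perturbation estimate ($\|g^m-h_1\cdots h_m\|\leqslant m(2\|g\|)^{m-1}r$) show that such a product sends every $X$ transverse to $V_g^<$ into $B(V_g^+,(\kappa_{1,2}(g)/\varepsilon^4)^m)$; the transverse set carries $\nu$-mass at least $1-(2\varepsilon)^c$ by \emph{upper} regularity. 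This gives directly
\[
\nu\bigl(B(V_g^+,(\kappa_{1,2}(g)/\varepsilon^4)^m)\bigr)\;\geqslant\;\tfrac12\bigl(\rho^{\ast n}(B(g,r))\bigr)^m,
\]
and the right-hand side is then lower-bounded for generic $g$ by Lemma~\ref{lemma:convolution_mesures_SLd}, a covering/pigeonhole argument giving $\rho^{\ast n}(B(g,e^{-t_2 n}))\geqslant e^{-t_3 n}$ off an exponentially small set. No lower regularity of $\nu$ is needed as an input; it comes out as the conclusion. Your proposal and the paper agree on the use of the LDP (Lemma~\ref{lemma:genericite_loxodromie}) and on contraction toward $V_g^+$, but the essential lemma you lack is the quantitative diffuseness of $\rho^{\ast n}$ in the group (Lemma~\ref{lemma:convolution_mesures_SLd}), which replaces the unproven exact-dimensionality claim.
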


These two results will allow us to prove the next
\begin{theorem_annexe}\label{theorem:controle_resolvante_SL_d}
Let $\rho$ be a borelian probability measure on $\mathrm{SL}_d(\R)$ having an exponential moment and whose support generates a strongly irreducible and proximal subgroup $\Gamma$.

Let $(\Abf, \nu_\Abf)$ be a finite $\Gamma-$set endowed with the uniform probability measure and assume that the random walk on $\Abf$ defined by $\rho$ is irreducible and aperiodic.

Let $\sigma :\Gamma \times \Sb^{d-1} \times \Abf \to \R$ be the cocycle defined for any $g\in \Gamma$, $x \in \Sb^{d-1}$  and $a\in \Abf$, by
\[
\sigma(g,(x,a)) = \ln \frac{\|gx\|}{\|x\|}
\]
and let, for any $t\in \R$, $P(it)$ be the operator defined on $\cal C^0(\Sb^{d-1} \times \Abf)$ by
\[
P(it) f(x,a) = \int_\G e^{-it \sigma(g,(x,a))}f(gx,ga) \di\rho(g)
\]
Then, for any $\gamma>0$ small enough and any $t_0 \in \R_+^\ast$, there are $C,L \in \R_+$ such that for any $t\in \R$ with $|t|\geqslant t_0$,
\[
\|(I_d-P(it))^{-1} \|_{\cal C^{0,\gamma}(\Sb^{d-1} \times \Abf)} \leqslant C |t|^L
\]
\end{theorem_annexe}

\begin{proof}
This is a direct use of lemma~\ref{lemma:passage_espace_proj_rayon_spectral} and of proposition~\ref{proposition:rayons_spectraux_diophantiens} that we can apply (with the measure $\rho_\e= \frac 1 2 \delta_{\e} + \frac 1 2 \rho$) thanks to lemma~\ref{lemma:action_lipschitz_espace_proj} and lemma~\ref{lemma:contraction_zariski_dense} (remark that we can assume without any loss of generality that $p=1$ since it just changes $P(it)$ into $P^p(it)$ and a control of $(I_d-P^p(it))^{-1}$ gives a control of $(I_d-P(it))^{-1}$ as we already saw in the proof of lemma~\ref{lemme:premier_lemme_dolgopyat}).
\end{proof}

\subsection{Notations and preliminaries}
\begin{miniabstract}
We first fix the notations that we will use in this section.
\end{miniabstract}

\subsubsection{Proximal elements of \texorpdfstring{$\mathrm{SL}_d(\R)$}{G}}

Let $(\V,\|\,.\,\|)$ be a finite dimensional $\R-$vector space endowed with an euclidian norm and an orthonormal basis $(e_1, \dots, e_d)$.

\medskip
We define a distance on $\prob(\V)$ setting, for any $X= \R x,Y= \R y \in \prob(\V)$,
\begin{equation} \label{equation:distance_projectif}
d(X,Y) = \frac{\|x\wedge y\|}{\|x\| \|y\|}
\end{equation}
where we extended the scalar product of $\V$ to $\wedge^2 \V$ asking the basis $(e_i \wedge e_j)_{1\leqslant i <j\leqslant l}$ to be orthonormal.

We also define a pairing between $\prob(\V)$ and $\prob(\V^\ast)$, setting, for $X= \R x \in \V$ and $Y = \R \varphi \in \V^\ast$,
\[
\delta(X,Y) := \frac{ |\varphi(x)|}{\|\varphi\| \|x\|} = \inf_{Y' \in Y^\bot} d(X,Y')
\]
where $Y^\bot = \{ Y'=\R y'\in Y| \varphi(y')=0\}$.

We refer to the chapter $9$ in~\cite{BQred} for a proof of the next lemma that shows that we really are in the context developed in section~\ref{section:Dolgopyat_Markov}.

\begin{lemma} \label{lemma:action_lipschitz_espace_proj}
For any $g\in \G$ and any $X,Y \in \prob(\R^d)$,
\[
d(gX,gY) \leqslant \|g\|^{2d} d(X,Y)
\]
Moreover, there is $C\in \R$ such that for any $X,Y \in \prob(\R^d)$ and any $g\in \G$,
\[
\left|\sigma(g,X) - \sigma(g,Y)\right| \leqslant C \|g\|^C {d(X,Y)}
\]
where we noted, for $X=\R x \in \prob(\R^d)$ and $g\in \G$,
\[
\sigma(g,X) = \ln\frac{\|gx\|}{\|x\|}
\]
So, with the notations of the previous section, we have that $\sigma \in \cal Z^{C}_N(\prob(\R^d))$ where we noted $N(g) = \|g\|$.
\end{lemma}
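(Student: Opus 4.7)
The plan is to exploit two features: the multiplicative bounds coming from $g\in\mathrm{SL}_d(\R)$ (so $\det g=1$), and the compatibility of the distance $d$ with the exterior square.

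First, for any $g\in\mathrm{SL}_d(\R)$ with singular values $\mu_1\geqslant\cdots\geqslant\mu_d>0$, we have $\mu_1\cdots\mu_d=1$, hence $\mu_d\geqslant\mu_1^{-(d-1)}=\|g\|^{-(d-1)}$. In particular, for every nonzero $x\in\R^d$,
\[
\|gx\|\geqslant \mu_d\|x\|\geqslant \|g\|^{-(d-1)}\|x\|.
\]
This is the key lower bound that will control all denominators.

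For the first inequality, given $X=\R x$, $Y=\R y$, I would write
\[
d(gX,gY)=\frac{\|gx\wedge gy\|}{\|gx\|\,\|gy\|}=\frac{\|\wedge^2 g\,(x\wedge y)\|}{\|gx\|\,\|gy\|}.
\]
The numerator is bounded above by $\|\wedge^2 g\|\,\|x\wedge y\|\leqslant \mu_1\mu_2\,\|x\wedge y\|\leqslant \|g\|^2\,\|x\wedge y\|$, and each factor in the denominator is bounded below by $\|g\|^{-(d-1)}\|x\|$ (resp.\ $\|y\|$) by the observation above. Combining,
\[
d(gX,gY)\leqslant \|g\|^{2}\cdot \|g\|^{2(d-1)}\,\frac{\|x\wedge y\|}{\|x\|\,\|y\|}=\|g\|^{2d}d(X,Y).
\]

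For the second inequality, I would normalize representatives so that $\|x\|=\|y\|=1$ and $\langle x,y\rangle\geqslant 0$; this is harmless since $\sigma(g,\cdot)$ and $d$ descend to $\prob(\R^d)$. With such representatives, a direct computation ($1-\langle x,y\rangle=2\sin^2(\theta/2)$ against $\|x\wedge y\|=\sin\theta$) gives $\|x-y\|\leqslant \sqrt{2}\,\|x\wedge y\|=\sqrt{2}\,d(X,Y)$. Then
\[
\bigl|\|gx\|-\|gy\|\bigr|\leqslant \|g(x-y)\|\leqslant \|g\|\,\|x-y\|\leqslant \sqrt{2}\,\|g\|\,d(X,Y),
\]
while $\min(\|gx\|,\|gy\|)\geqslant \|g\|^{-(d-1)}$. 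Since $\sigma(g,X)-\sigma(g,Y)=\ln(\|gx\|/\|gy\|)$ with the chosen representatives, the elementary estimate $|\ln a-\ln b|\leqslant |a-b|/\min(a,b)$ yields
\[
|\sigma(g,X)-\sigma(g,Y)|\leqslant \sqrt{2}\,\|g\|^{d}\,d(X,Y),
\]
so any $C\geqslant \max(d,\sqrt{2})$ works.

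There is no real obstacle here; the whole content is (i) using $\det g=1$ to convert the upper norm bound into a lower bound for $\|gx\|$, and (ii) remembering that on the projective space, after a sign choice, the chord distance $\|x-y\|$ and the wedge distance $\|x\wedge y\|$ are comparable. Once $\sigma$ is controlled by some power $\|g\|^{C}$ and $|\sigma(g,X)|\leqslant \ln\|g\|+\ln\|g^{-1}\|\leqslant d\ln\|g\|$, the cocycle lies in $\mathcal Z^{C}_{N}(\prob(\R^d))$ for $N(g)=\|g\|$, which is what the statement claims.
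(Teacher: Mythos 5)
Your proof is correct and complete. The paper itself does not give a proof of this lemma; it simply refers to Chapter~9 of \cite{BQred}, so there is nothing in the paper's text to compare your argument against. Your derivation is the standard self-contained one: the two key observations are (i) that $\det g = 1$ turns the upper bound $\|g\| = \mu_1$ into the lower bound $\|gv\|\geqslant \mu_d\|v\|\geqslant\|g\|^{-(d-1)}\|v\|$, which controls all denominators, and (ii) that after choosing unit representatives with $\langle x,y\rangle\geqslant 0$, the chord $\|x-y\|$ is comparable to the wedge $\|x\wedge y\|$ (your $\sqrt{2}$ constant is sharp for $\theta=\pi/2$). Both computations are checked: $\|g\|^2\cdot\|g\|^{2(d-1)}=\|g\|^{2d}$ gives the Lipschitz bound on $\prob(\R^d)$, and $\|g\|\cdot\|g\|^{d-1}=\|g\|^d$ together with $|\ln a-\ln b|\leqslant |a-b|/\min(a,b)$ gives the bound on $\sigma$. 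Your final remark that $|\sigma(g,X)|\leqslant\max(\ln\|g\|,\ln\|g^{-1}\|)\leqslant (d-1)\ln\|g\|$ (you wrote $d\ln\|g\|$, also valid) closes the verification that both quantities defining $\mathcal Z^C_N$ are finite. The argument is correct as written.
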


An element $g$ of $\mathrm{SL}(\V)$ is said to be \emph{proximal} if it has a locally attractive fixed point in $\prob(\V)$. Equivalently, a proximal element is one having a unique eigenvalue of maximal modulus and whose eigenspace is a line. In this case, this eigenvalue is real. We note that $g$ is proximal if and only if $^t g$ is proximal in $\V^\ast$.

If $g$ is proximal, we note $V_g^+\in \prob(\V)$ the space associated to it's eigenvalue of maximal modulus and $V_g^<\in \prob(\V^\ast)$ the class defined by the $g-$invariant supplementary subspace of $V_g^+$ (or equivalently the locally attractive fixed point of $^t g$ in $\V^\ast$). In the sequel, we will always note $v_g^+$ a representative of $V_g^+$ in $\V$ and $\varphi_g^<$ a representative of $V_g^<$ in $\V^\ast$ and we will use these representatives in a way such that our formulas will not depend on their choices.

For any $g\in \G$, we note $\lambda_1(g), \dots , \lambda_d(g)$ the logarithms of the eigenvalues of $g$ sorted in decreasing order and counted with multiplicities. So, $g$ is proximal if and only if $\lambda_1(g)>\lambda_2(g)$. Moreover, if $g$ is proximal, we have by definition that $gv_g^+ = \varepsilon_1(g) e^{\lambda_1(g)} v_g^+$ for some $\varepsilon_1(g) \in \{\pm1\}$.

For an element $g \in \mathrm{SL}(\V)$, we choose a Cartan decomposition $g=k_g a_g l_g$. This means that $k_g, l_g \in \mathcal{O}(\V)$ and $a_g$ is a diagonal matrix :
\[
a_g = \left(\begin{array}{ccc} \kappa_1(g) & &0 \\ & \ddots & \\ 0& & \kappa_d(g)\end{array} \right)
\]
where the $\kappa_i(g)$ are the \emph{singular values of $g$}, and satisfy $\kappa_1(g) \geqslant \dots \geqslant\kappa_d(g)$ and are given by
\[
\kappa_i(g) =\frac{ \|\wedge^i g \|}{\|\wedge^{i-1} g\|}
\]
where we noted $\wedge^i g$ the endomorphism defined by the action of $g$ in $\bigwedge^i \V$ endowed with the scalar product induced by the one of $\V$ : this means that
\[
\wedge^i g (v_1 \wedge \dots \wedge v_i) = (gv_1) \wedge \dots \wedge (gv_i)
\]

Moreover, we note
\[
\kappa_{1,2}(g) = \frac{ \kappa_{2}(g)}{\kappa_1(g)} = \frac{ \|\wedge^2 g\|}{\|g\|^2}
\]

Finally, for an element $g\in \G$ and a choice of a Cartan decomposition $g=k_g a_g l_g$, we note
\[
x_g^M =  k_g e_1, \quad X_g^M = \R x_g^M,\quad y_g^m = \,^t l_g e_1 ^\ast  \text{ and } Y_g^m = \R y_g^m
\]

\begin{lemma} \label{lemma:controles_r_prox}
Let $\V= \R^d$ endowed with an euclidian norm, $g$ an element of $ \mathrm{SL}(\V)$, $X=\R x \in \prob(\V)$ and $Y= \R \varphi \in \prob(\V^\ast)$.

Then,
\begin{enumerate}
\item
\[
\delta(X,Y_{g}^m) \leqslant \frac{ \|gx\|}{\|g\| \|x\|} \leqslant \delta(X,Y_{g}^m) + \kappa_{1,2}(g)
\]
\item
\[
\delta(X_{ g}^M, Y) \leqslant \frac{ \|^t g \varphi\|}{\|g\| \|\varphi\|} \leqslant \delta(X_{ g}^M,Y) + \kappa_{1,2}(g)
\]
\item
\[
d(gX,X_{g}^M)\delta(X,Y_g^m) \leqslant \kappa_{1,2}(g)
\]
\end{enumerate}
\end{lemma}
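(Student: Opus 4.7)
The proof is a direct computation in the Cartan decomposition. Fix $g = k_g a_g l_g$ with $a_g = \mathrm{diag}(\kappa_1(g),\dots,\kappa_d(g))$. Since $l_g$ is orthogonal, the vectors $f_i := {}^t l_g e_i$ form an orthonormal basis of $\V$, and by definition $y_g^m = f_1$. For $X = \R x$ with $\|x\|=1$, write
\[
x = \sum_{i=1}^d c_i f_i, \qquad c_i = \langle x, f_i\rangle,
\]
so that $l_g x = \sum_i c_i e_i$, hence $g x = k_g a_g l_g x = \sum_i \kappa_i(g)\, c_i (k_g e_i)$ in the orthonormal basis $(k_g e_i)$. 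Observe that $c_1 = \langle x, y_g^m\rangle$, so $\delta(X, Y_g^m) = |c_1|/\|x\|$ (with the chosen representatives of unit norm).

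For (1), the lower bound is immediate: $\|g x\|^2 = \sum_i \kappa_i(g)^2 c_i^2 \geq \kappa_1(g)^2 c_1^2$. For the upper bound, split the sum and use $\kappa_i(g) \leq \kappa_2(g)$ for $i\geq 2$:
\[
\|g x\| \leq \kappa_1(g)|c_1| + \kappa_2(g) \Bigl(\sum_{i\geq 2} c_i^2\Bigr)^{1/2} \leq \kappa_1(g)|c_1| + \kappa_2(g)\|x\|.
\]
Dividing by $\|g\|\|x\| = \kappa_1(g)\|x\|$ yields the claimed inequality.

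Assertion (2) is the dual of (1) applied to ${}^t g$. Since $a_g$ is diagonal, a Cartan decomposition of ${}^t g$ is ${}^t l_g\, a_g\, {}^t k_g$, so $y^m_{{}^t g} = k_g e_1 = x_g^M$ (under the natural identification $\V^{**} = \V$). Applying (1) to ${}^t g$ and $\varphi$ and noting that $\|{}^t g\| = \|g\|$ and $\kappa_{1,2}({}^t g) = \kappa_{1,2}(g)$ gives the desired estimate.

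For (3), I will compute $\|gx \wedge x_g^M\|$ directly. Since the $i=1$ term is killed by wedging with $k_g e_1 = x_g^M$, only the $i\geq 2$ terms survive:
\[
\|g x \wedge x_g^M\|^2 = \sum_{i\geq 2} \kappa_i(g)^2 c_i^2 \leq \kappa_2(g)^2 \|x\|^2.
\]
Hence $d(gX, X_g^M) \leq \kappa_2(g)\|x\|/\|g x\|$. Combining with $\delta(X, Y_g^m) = |c_1|/\|x\|$ and the lower bound $\|gx\| \geq \kappa_1(g)|c_1|$ from (1) gives
\[
d(gX, X_g^M)\,\delta(X, Y_g^m) \leq \frac{\kappa_2(g)|c_1|}{\|g x\|} \leq \frac{\kappa_2(g)}{\kappa_1(g)} = \kappa_{1,2}(g).
\]
There is no serious obstacle: the whole lemma is extracting the trivial geometric content of the Cartan decomposition, namely that $l_g$ sends $y_g^m$ to the most dilated axis and $k_g$ sends that axis to $x_g^M$, while the orthogonal complement is dilated by at most $\kappa_2(g)$. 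The only mild care needed is in assertion (2), where one must check that transposition interchanges the roles of $k_g, l_g$ and hence of $x_g^M$ and $y_g^m$.
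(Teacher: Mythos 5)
Your proof is correct and follows essentially the same route as the paper's: reduce to coordinates adapted to the Cartan decomposition, split $x$ into its component along the dominant singular direction and the rest, and observe that the dominant component is stretched by $\kappa_1(g)$ while the remainder is stretched by at most $\kappa_2(g)$. The paper shortens the bookkeeping by saying "WLOG $g$ is diagonal," which is what your coordinates $c_i = \langle x, {}^t l_g e_i\rangle$ accomplish; you are somewhat more explicit than the paper about the transposition argument in (2) and the wedge computation in (3), but there is no substantive difference.
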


\begin{proof}
As the norm is supposed to be euclidian, we can assume without any loss of generality that $g$ is the diagonal matrix $(\kappa_1(g), \dots, \kappa_d(g))$. We write $x=x_1+x_2$ with $x_1 \in \mathrm{Vect}(e_1)$ and $x_2 \in \mathrm{Vect}(e_{2}, \dots, e_d)$.

Then,
\[
gx_1 = \kappa_1(g) x_1\text{ and }\|gx_2\| \leqslant \kappa_{2}(g)\|x_2\|
\]
And so, using the fact that $\kappa_1(g) = \|g\|$, we get that
\[
\frac{\|x_1\|}{\|x\|} \leqslant \frac{ \|gx\| }{\|g\| \|x\|}\leqslant \frac{\|x_1\|}{\|x\|}+ \kappa_{1,2}(g) \frac{ \|x_2\|}{\|x\|}
\]
Finally,
\[
\frac{ \|x_1\|}{\|x\|} = d(X, \mathrm{Vect}(e_{2}, \dots, e_d)) = \delta(X, Y_g^m)
\]
and this proves the first inequalities.

The second ones can be proved in the same way if we work in the dual space.

Finally, the last one comes from the fact that
\[
d(gX, X_g^M) \delta(X,Y_{ g}^m) = \frac{\|g x_2\|}{\|gx\|} \frac{\|x_1\|}{\|x\|} \leqslant \frac{ \kappa_{2}(g)}{\kappa_1(g)} = \kappa_{1,2}(g) \qedhere
\]
\end{proof}

In the sequel, we will have to control the Cartan decomposition of products of elements of $\G$. To do so, we will use the following
\begin{lemma} \label{lemma:product_SVD}
For any $p \in \N$, $p\geqslant 2$, any $\varepsilon \in \left]0,1/4\right]$, any $g_1, \dots, g_p\in \G$ with $\kappa_{1,2}(g_i) \leqslant \varepsilon^3$, $\delta(X_{g_{i+1}}^M,Y_{g_i}^m) \geqslant 2 \varepsilon$ and $\delta(X_{g_i}^M, Y_{g_{i+1}}^m) \geqslant 2\varepsilon$, we have that
\[
\kappa_1(g_p \dots g_1) \geqslant\varepsilon^{p-1} \kappa_1(g_p) \dots \kappa_1(g_1) , \quad
\kappa_{1,2}(g_p \dots g_1)\leqslant \frac{\kappa_{1,2}(g_1) \dots \kappa_{1,2}(g_p)}{\varepsilon^{2(p-1)}} 
\]
and,
\[
d(X_{g_p \dots g_1}^M,X_{g_p}^M) \leqslant \frac{ \kappa_{1,2}(g_p)} \varepsilon, \quad d(Y_{g_p \dots g_1}^m, Y_{g_1}^m) \leqslant \frac{ \kappa_{1,2}(g_1)} \varepsilon 
\]
\end{lemma}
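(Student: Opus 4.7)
The plan is to prove the four estimates simultaneously by induction on $p$, writing $h_k := g_k \cdots g_1$ for $1 \leq k \leq p$. The base case $k=1$ is trivial (with $h_1 = g_1$); for the inductive step, assume the four bounds for $h_{k-1}$ and verify them for $h_k = g_k h_{k-1}$.

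For the bound on $\kappa_1(h_k)$, test against the unit vector $v = y_{h_{k-1}}^m$, for which $h_{k-1} v = \kappa_1(h_{k-1}) x_{h_{k-1}}^M$ by the SVD, so that $\|h_k v\| = \kappa_1(h_{k-1})\|g_k x_{h_{k-1}}^M\|$. Lemma~\ref{lemma:controles_r_prox}(1) gives $\|g_k x_{h_{k-1}}^M\| \geq \kappa_1(g_k)\,\delta(X_{h_{k-1}}^M, Y_{g_k}^m)$. Using the Lipschitz dependence of $\delta(\cdot, Y)$ in its first argument with respect to $d$, the inductive bound $d(X_{h_{k-1}}^M, X_{g_{k-1}}^M) \leq \kappa_{1,2}(g_{k-1})/\varepsilon \leq \varepsilon^2$, and the hypothesis $\delta(X_{g_{k-1}}^M, Y_{g_k}^m) \geq 2\varepsilon$, one obtains $\delta(X_{h_{k-1}}^M, Y_{g_k}^m) \geq \varepsilon$ (for $\varepsilon \leq 1/4$). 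Hence $\kappa_1(h_k) \geq \varepsilon\,\kappa_1(g_k)\kappa_1(h_{k-1})$, which iterates to the claimed bound. The bound on $\kappa_{1,2}(h_k)$ then follows from $\|\wedge^2(g_k h_{k-1})\| \leq \|\wedge^2 g_k\|\|\wedge^2 h_{k-1}\| = \kappa_1(g_k)\kappa_2(g_k)\kappa_1(h_{k-1})\kappa_2(h_{k-1})$ divided by $\kappa_1(h_k)^2 \geq \varepsilon^2 \kappa_1(g_k)^2\kappa_1(h_{k-1})^2$, giving $\kappa_{1,2}(h_k) \leq \kappa_{1,2}(g_k)\kappa_{1,2}(h_{k-1})/\varepsilon^2$, which iterates to the target.

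The crucial step is the bound $d(X_{h_k}^M, X_{g_k}^M) \leq \kappa_{1,2}(g_k)/\varepsilon$. It rests on the following ellipsoid observation: the image of $g_k$ on the unit ball is an ellipsoid with semi-axes $\kappa_i(g_k)$ along the orthonormal left singular directions (the first being $x_{g_k}^M$), so for any $u \in \V$ the component of $g_k u$ perpendicular to $x_{g_k}^M$ has norm at most $\kappa_2(g_k)\|u\|$. Applying this with $u = h_{k-1} y_{h_k}^m$, so that $g_k u = h_k y_{h_k}^m$ has norm exactly $\kappa_1(h_k)$ while $\|u\| \leq \kappa_1(h_{k-1})$, one gets
\[
d(X_{h_k}^M, X_{g_k}^M) \leq \frac{\kappa_2(g_k)\kappa_1(h_{k-1})}{\kappa_1(h_k)} \leq \frac{\kappa_{1,2}(g_k)}{\varepsilon}
\]
thanks to the $\kappa_1$-lower bound just proved. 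A naive triangle-inequality argument applying Lemma~\ref{lemma:controles_r_prox}(3) to $g_k$ on $X_{h_{k-1}}^M$ and then comparing with $X_{h_k}^M$ produces a spurious $\kappa_{1,2}(h_{k-1})$ term, so this direct ellipsoid estimate is the technical heart of the lemma.

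The last bound $d(Y_{h_k}^m, Y_{g_1}^m) \leq \kappa_{1,2}(g_1)/\varepsilon$ follows from the preceding one by transposition: the product $h_p^\transp = g_1^\transp g_2^\transp \cdots g_p^\transp$ is a $p$-fold product whose factors, read as $g_p^\transp, g_{p-1}^\transp, \ldots, g_1^\transp$ from right to left, satisfy the same hypotheses, using $X_{g^\transp}^M = Y_g^m$, $Y_{g^\transp}^m = X_g^M$, $\kappa_i(g^\transp) = \kappa_i(g)$, and the symmetry of $\delta$ under swapping $\V$ and $\V^\ast$. Applying the already-established bound on $d(X_{h}^M, X_{g}^M)$ to this transposed sequence translates into the desired bound on $d(Y_{h_p}^m, Y_{g_1}^m)$ for the original. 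The main obstacle is the ellipsoid argument in the third step; the remaining pieces are disciplined bookkeeping exploiting the $\varepsilon \leq 1/4$ slack to absorb lower-order terms.
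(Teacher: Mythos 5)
Your proof is correct, and it takes a genuinely different route from the paper's. The paper argues non-inductively, treating the full product $g_p\cdots g_1$ at once: starting from a test line $X$ orthogonal to $Y_{g_1}^m$, it bootstraps a chain of estimates $\delta(g_{l-1}\cdots g_1 X, Y_{g_l}^m)\geq\frac{l+1}{l}\varepsilon$ using the attraction estimate of lemma~\ref{lemma:controles_r_prox}(3) at each step, which yields the $\kappa_1$ lower bound (with a slightly stronger prefactor $\frac{p+1}{2}$); the bound on $d(X_{g_p\cdots g_1}^M, X_{g_p}^M)$ is then obtained by a triangle inequality through the intermediate direction $g_p\cdots g_1 X$, comparing it with both $X_{g_p}^M$ and $X_{g_p\cdots g_1}^M$ via lemma~\ref{lemma:controles_r_prox}(3) applied respectively to $g_p$ and to the full product. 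Your induction replaces all of this by one clean step per factor, and — the real novelty — your ellipsoid argument bounds $d(X_{h_k}^M, X_{g_k}^M)$ directly by pushing the image unit vector $h_{k-1}y_{h_k}^m$ through $g_k$ and measuring its orthogonal component, avoiding the triangle inequality entirely and giving the sharp constant $\kappa_{1,2}(g_k)/\varepsilon$ in one line. The transposition reduction for the $Y^m$ estimate is also more economical than the paper's ``same computations in the dual space.'' Your remark about a naive triangle inequality producing a spurious $\kappa_{1,2}(h_{k-1})$ term describes a third, failed route, not the paper's — the paper's triangle inequality detours through $g_p\cdots g_1 X$ rather than through $g_kX_{h_{k-1}}^M$ — but the ellipsoid argument is indeed the cleaner mechanism. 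Your approach buys modularity and simpler bookkeeping at the minor cost of slightly weaker prefactors (which the lemma statement does not require anyway).
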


\begin{proof}
According to lemma~\ref{lemma:controles_r_prox}, we have that for any $(g_i)\in \G^p$ and any $x\in \R^d\setminus\{0\}$, noting $X= \R x$,
\begin{align*}
\| g_p \dots g_1 x \| &\geqslant \|g_p \| \|g_{p-1} \dots g_1x\| \delta(g_{p-1} \dots g_1 X, Y_{g_p}^m) \\
&\geqslant \|g_p\| \dots \|g_1\| \|x\| \delta(g_{p-1} \dots g_1 X, Y_{g_p}^m) \dots \delta(X,Y_{g_1}^m)
\end{align*}
Moreover, taking $x$ orthogonal to $Y_{g_1}^m $, we have that for any $l\in \lib 1, p \rib$,
\[
d(g_{l-1} \dots g_1 X, Y_{g_{l}}^m) \geqslant \frac{ l+1} l \varepsilon
\]
Indeed, this is true for $l=1$ by assumption and by induction, we have that for any $l\in \lib 1, p-1 \rib$,
\[
d(g_l \dots g_1 X, X_{g_l}^M) \leqslant \frac{\kappa_{1,2}(g_l)}{d(g_{l-1} \dots g_1 X, Y_{g_l}^m) } \leqslant \frac{l}{l+1} \varepsilon^2 \leqslant \frac{l}{l+1} \varepsilon
\]
and so,
\[
\delta(g_l \dots g_1 X, Y_{g_{l+1}}^m) \geqslant \varepsilon\left(2 - \frac{ l }{l+1}\right) = \frac{l+2}{l+1}\varepsilon
\]
This proves that
\[
\frac{\|g_p \dots g_1 x\|}{\|x\|} \geqslant \frac{p} 2 \|g_p\| \dots \| g_1\| \varepsilon^{p-1}
\]
and therefore,
\[
\kappa_1(g_p \dots g_1) \geqslant \frac{ p}2 \varepsilon^{p-1} \kappa_1(g_p) \dots \kappa_1(g_1) 
\]
Moreover, using the sub-multiplicativity of the function $\kappa_1 \kappa_2$ and that $\kappa_{1,2}(g_i) \leqslant \varepsilon^3$, we find that
\begin{align*}
\kappa_{1,2}(g_p \dots g_1)& = \frac{ \kappa_2(g_p \dots g_1) \kappa_1(g_p \dots g_1)}{\kappa_1(g_p \dots g_1)^2} \leqslant  \frac{4 }{ p^2\varepsilon^{2(p-1)}} \kappa_{1,2}(g_1) \dots \kappa_{1,2}(g_p)\\& \leqslant \frac 4 {p^2}\varepsilon^{p-1} \kappa_{1,2}(g_p)
\end{align*}
Finally, still using lemma~\ref{lemma:controles_r_prox}, we have that
\[
\delta(X, Y_{g_p \dots g_1}^m) + \kappa_{1,2}(g_p \dots g_1) \geqslant \frac p 2 \varepsilon^{p-1} 
\]
and so,
\[
\delta(X,Y_{g_p \dots g_1}^m) \geqslant \frac p 2 \varepsilon^{p-1} - \frac 4 {p^2} \varepsilon^{p+2} = \frac p2 \varepsilon^{p-1} \left( 1 - \frac{ 8}{p^3} \varepsilon^3 \right)
\]
This proves that
\[
d(g_p \dots g_1 X, X_{g_p \dots g_1}^M) \leqslant \frac{ \kappa_{1,2}(g_p \dots g_1)}{\delta(X,Y_{g_p \dots g_1}^m) } \leqslant \frac 2 p \frac{\kappa_{1,2}(g_p) }{  1 - 8 \varepsilon^3/p^3}
\]
And so,
\begin{align*}
d(X_{g_p \dots g_1}^M, X_{g_p}^M) & \leqslant \frac {p } {p+1}\frac {\kappa_{1,2}(g_p)} \varepsilon + \frac 2 p\frac{\kappa_{1,2}(g_p)}{ (1- 8 \varepsilon^3 / p^3)}  \\ & \leqslant \frac{ \kappa_{1,2}(g)}\varepsilon \left( \frac p {p+1} + \frac {2\varepsilon } {p(1- 8 \varepsilon^3 / p^3)} \right)
\end{align*}
This proves the third inequality since for any $\varepsilon \in \left]0,1/4\right]$ and $p\geqslant 2$, $\frac {2\varepsilon}{p(1-8 \varepsilon^3/p^3)}  \leqslant \frac 1 {p+1}$.

To get the control of $d(Y_{g_p \dots g_1}^m, Y_{g_1}^m)$, we do the same computations in the dual space.
\end{proof}

Next lemma will allow us to, knowing the Cartan decomposition of an element $g$ of $\G$, prove that it is proximal and have a control on $V_g^+$ and $V_g^<$.

\begin{lemma} \label{lemma:quantification_proximale}
For any $\varepsilon\in \left]0,1/4\right]$ and any element $g$ in $\G$, if $\kappa_{1,2}(g)\leqslant \varepsilon^3$ and $ \delta(X_{ g }^M, Y_{ g}^m) \geqslant 2\varepsilon$ then $g$ is proximal and
\[
d(V_g^+,X_{g}^M) \leqslant \frac{\kappa_{1,2}(g)}{\varepsilon} , \quad d(V_g^<, Y_g^m) \leqslant  \frac{\kappa_{1,2}(g)}{\varepsilon}
\]
Moreover,
\[
e^{\lambda_1(g)}\geqslant \kappa_1(g) \delta(X_g^M, Y_g^m) \text{ et }\|g_{|_{V_g^<}}\| \leqslant \frac{ 2\kappa_{2}(g)}{\varepsilon}
\]
\end{lemma}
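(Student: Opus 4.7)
The strategy I would use is to iterate Lemma~\ref{lemma:product_SVD} on $g$ against itself, thereby analysing the powers $(g^p)_{p\geqslant 1}$ and passing to the limit as $p\to\infty$. Observe that the two hypotheses on $g$ are tailored to exactly match the requirements of Lemma~\ref{lemma:product_SVD} applied to the $p$-tuple $(g,g,\ldots,g)$: the separation conditions $\delta(X_{g_{i+1}}^M,Y_{g_i}^m)\geqslant 2\varepsilon$ all collapse to the single assumption $\delta(X_g^M,Y_g^m)\geqslant 2\varepsilon$. That lemma then yields, for every $p\geqslant 2$, the uniform bounds $d(X_{g^p}^M,X_g^M)\leqslant \kappa_{1,2}(g)/\varepsilon$ and $d(Y_{g^p}^m,Y_g^m)\leqslant \kappa_{1,2}(g)/\varepsilon$, together with the geometric decay $\kappa_{1,2}(g^p)\leqslant \kappa_{1,2}(g)^p/\varepsilon^{2(p-1)}\to 0$, since $\kappa_{1,2}(g)/\varepsilon^2\leqslant \varepsilon<1$.

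Next, I would establish that $(X_{g^p}^M)_p$ and $(Y_{g^p}^m)_p$ are Cauchy sequences in $\prob(\R^d)$ and $\prob((\R^d)^\ast)$ respectively. This is done by reapplying Lemma~\ref{lemma:product_SVD} to $q$-tuples consisting of copies of $g^p$ for $p$ large, taking $\varepsilon_p:=\varepsilon/2$ as the new contraction parameter: the required hypotheses $\kappa_{1,2}(g^p)\leqslant\varepsilon_p^3$ and $\delta(X_{g^p}^M,Y_{g^p}^m)\geqslant 2\varepsilon_p$ follow, for $p$ large, from the geometric decay above and from the triangle inequality $\delta(X_{g^p}^M,Y_{g^p}^m)\geqslant\delta(X_g^M,Y_g^m)-d(X_{g^p}^M,X_g^M)-d(Y_{g^p}^m,Y_g^m)$. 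Passing to the limit yields $V_g^+:=\lim_p X_{g^p}^M$ and $V_g^<:=\lim_p Y_{g^p}^m$ satisfying the announced distance bounds. Invariance $gV_g^+=V_g^+$ then follows from Lemma~\ref{lemma:controles_r_prox}(3) applied at the point $X = X_{g^p}^M$, noting that $gX_{g^p}^M$ must also be close to $X_{g^{p+1}}^M$ by the same estimate; an analogous argument handles $V_g^<$ on the dual side.

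Since $V_g^+$ is a $g$-invariant line, $g$ acts on it by multiplication by a real scalar $\mu$. Lemma~\ref{lemma:controles_r_prox}(1) gives $|\mu|\geqslant\kappa_1(g)\,\delta(V_g^+,Y_g^m)$; combining the two distance estimates $d(V_g^+,X_g^M)\leqslant\kappa_{1,2}(g)/\varepsilon$ and $d(V_g^<,Y_g^m)\leqslant\kappa_{1,2}(g)/\varepsilon$ with the Lipschitz behaviour of $\delta$ allows one to replace $\delta(V_g^+,Y_g^m)$ by $\delta(X_g^M,Y_g^m)$ up to a term absorbed in the $\varepsilon^3$-smallness of $\kappa_{1,2}(g)$, which after careful bookkeeping of constants yields the claimed lower bound on $e^{\lambda_1(g)}$. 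Proximality then follows: the relation $\kappa_1(g^p)/\kappa_2(g^p)=1/\kappa_{1,2}(g^p)\to+\infty$ combined with $\kappa_1(g^p)^{1/p}\to e^{\lambda_1(g)}$ forces the top eigenvalue of $g$ to be simple and strictly dominant in modulus. Finally, the bound $\|g_{|_{V_g^<}}\|\leqslant 2\kappa_2(g)/\varepsilon$ is obtained by restricting $g$ to the $g$-invariant hyperplane annihilated by the functional defining $V_g^<$: this hyperplane lies within $\kappa_{1,2}(g)/\varepsilon$ of the span of the last $d-1$ singular directions of $g$, on which the norm of $g$ is at most $\kappa_2(g)$, and the factor $1/\varepsilon$ arises from inverting $1-\kappa_{1,2}(g)/\varepsilon\geqslant 1/2$.

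The main obstacle is the Cauchy argument in the second step: one must verify that the contraction parameter $\varepsilon$ is cleanly inherited (up to a factor of $2$) by each $g^p$, so that Lemma~\ref{lemma:product_SVD} can be reapplied uniformly in $p$, and that the resulting distance bounds are preserved in the limit. Once this bookkeeping is pinned down, the remaining assertions reduce to routine applications of Lemmas~\ref{lemma:controles_r_prox} and~\ref{lemma:product_SVD} combined with triangle-inequality and continuity arguments.
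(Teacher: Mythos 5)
Your proposed proof is genuinely different from the paper's, which disposes of the first three inequalities by citing Lemma~13.14 of \cite{BQred} and only argues the bound $\|g_{|_{V_g^<}}\|\leqslant 2\kappa_2(g)/\varepsilon$ directly from Lemma~\ref{lemma:controles_r_prox}. Your plan to rebuild everything from Lemma~\ref{lemma:product_SVD} applied to the powers $g^p$ is a reasonable and instructive alternative, and the Cauchy estimates for $(X_{g^p}^M)$ and $(Y_{g^p}^m)$ do work out along the lines you sketch. But the step establishing $g$-invariance of the limit has a genuine gap. You invoke Lemma~\ref{lemma:controles_r_prox}(3) at $X = X_{g^p}^M$ to get $d(gX_{g^p}^M, X_g^M)\leqslant \kappa_{1,2}(g)/\delta(X_{g^p}^M,Y_g^m)\lesssim \kappa_{1,2}(g)/\varepsilon$, and observe that $X_{g^{p+1}}^M$ is also within $\kappa_{1,2}(g)/\varepsilon$ of $X_g^M$. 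This only gives $d(gX_{g^p}^M, X_{g^{p+1}}^M)\leqslant 2\kappa_{1,2}(g)/\varepsilon$, a bound that is \emph{constant in} $p$ and does not tend to $0$. Hence you cannot conclude that $\lim_p gX_{g^p}^M=\lim_p X_{g^{p+1}}^M$, which is exactly the identity $gV_g^+=V_g^+$ you need. A repair is available but requires a different instantiation of Lemma~\ref{lemma:controles_r_prox}(3): apply it with the element $g^p$ at the limit point $X=V:=\lim_q X_{g^q}^M$, which yields $d(g^pV, X_{g^p}^M)\leqslant \kappa_{1,2}(g^p)/\delta(V,Y_{g^p}^m)\to 0$, hence $g^pV\to V$, and then $gV=\lim_p g(g^pV)=\lim_p g^{p+1}V=V$ by continuity. (Cleaner still would be to bypass the singular directions of the powers entirely and run a Banach fixed-point argument on the ball $B(X_g^M,\kappa_{1,2}(g)/\varepsilon)$, on which $g$ is a contraction with constant $\kappa_{1,2}(g)/\varepsilon^2<1$ mapping the ball into itself.)

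A second, smaller issue concerns the claimed inequality $e^{\lambda_1(g)}\geqslant \kappa_1(g)\,\delta(X_g^M,Y_g^m)$. Your route via $|\mu|\geqslant \kappa_1(g)\,\delta(V_g^+,Y_g^m)$ and the distance bound $d(V_g^+,X_g^M)\leqslant \kappa_{1,2}(g)/\varepsilon$ only yields $e^{\lambda_1(g)}\geqslant \kappa_1(g)\bigl(\delta(X_g^M,Y_g^m)-\kappa_{1,2}(g)/\varepsilon\bigr)$, which has an extra error term that the paper's cited lemma does not. It is not clear that the ``careful bookkeeping'' you mention can make this error term disappear; the exact form stated in the lemma appears to rely on a finer argument from \cite{BQred} that your construction does not reproduce. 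Apart from these two points, the proximality deduction from $\kappa_{1,2}(g^p)\to 0$ and the final restriction estimate are sound and match the spirit of the paper's direct argument for the fourth inequality.
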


\begin{proof}
The first three inequalities come from lemma~13.14 in~\cite{BQred}.

To prove that the norm of $g$ restricted to $V_g^<$ is controlled by $\kappa_2(g)$, we remark that, according to lemma~\ref{lemma:controles_r_prox}, for any $x\in \R^d\setminus\{0\}$, noting $X= \R x$, we have that
\[
\frac{\|gx\|}{\|x\|} \leqslant \kappa_1(g) \delta(X,Y_g^m) + \kappa_2(g)
\]
But, for any $X \in V_g^<$, we have that
\[
\delta(X, Y_g^m) \leqslant d(V_g^<, Y_g^m) \leqslant \frac{ \kappa_{1,2}(g)}{\varepsilon}
\]
This proves that for any $x \in V_g^< \setminus\{0\}$,
\[
\frac{\|gx\|}{\|x\|} \leqslant \kappa_2(g) \left( 1 + \frac 1 {\varepsilon} \right) \leqslant \frac{2 \kappa_{2}(g)}\varepsilon
\]
And thus we get the last expected inequality.
\end{proof}

From now on, we note, for any $g\in \mathrm{SL}_d(\R)$ and $X= \R x\in \prob(\R^d)$,
\begin{equation} \label{equation:sigma}
\sigma(g,X)= \ln \frac{ \|gx\|}{\|x\|}
\end{equation}

\begin{lemma}\label{lemma:contraction_proximale}
For any $\varepsilon \in \left]0,1/4\right]$, and any element $g$ in $\G$, if $\kappa_{1,2}(g)\leqslant  \varepsilon^4$ and $d(X_g^M, Y_g^m) \geqslant 2\varepsilon$, then we have that for any $X\in \prob( \R^d ) $ with $\delta(X,V_g^<) \geqslant 2\varepsilon$,
\[
\left|\sigma(g,X)- \lambda_1(g)- \ln\frac{\delta(X,V_g^<)}{\delta(V_g^+, V_g^<)} \right| \leqslant 2 \frac{ \kappa_{1,2}(g)} {\varepsilon^3}
\]
Moreover, for any $X,Y \in \prob(\R^d)$ with $\delta(X,V_g^<) , \delta(Y,V_g^<) \geqslant 2\varepsilon$, we have that
\[
d(gX,gY)  \leqslant \frac{ \kappa_{1,2}(g)}{4\varepsilon^4}
\]
\end{lemma}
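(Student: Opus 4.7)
The plan is to exploit the spectral decomposition of $g$ as a proximal element. By Lemma~\ref{lemma:quantification_proximale}, the hypotheses $\kappa_{1,2}(g)\leqslant \varepsilon^4\leqslant \varepsilon^3$ and $\delta(X_g^M,Y_g^m)\geqslant 2\varepsilon$ force $g$ to be proximal with $V_g^+$ within distance $\kappa_{1,2}(g)/\varepsilon$ of $X_g^M$ and $V_g^<$ within the same distance of $Y_g^m$; this gives in particular the key lower bound $\delta(V_g^+,V_g^<)\geqslant \varepsilon$. I write $\R^d=V_g^+\oplus H_g$ where $H_g=\ker\varphi_g^<$ is the $g$-invariant hyperplane, and decompose $x=x_++x_<$ accordingly. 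The identity $x_+=(\varphi_g^<(x)/\varphi_g^<(v_g^+))\,v_g^+$ gives immediately, from the definition of $\delta$, the clean identity
\[
\frac{\|x_+\|}{\|x\|}=\frac{\delta(X,V_g^<)}{\delta(V_g^+,V_g^<)},
\]
which is what identifies the logarithm in the first inequality.

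For the first estimate, I exploit $g$-invariance: $gx=\varepsilon_1(g)\,e^{\lambda_1(g)}x_++gx_<$, with $\|gx_<\|\leqslant (2\kappa_2(g)/\varepsilon)\|x_<\|$ by Lemma~\ref{lemma:quantification_proximale} and $e^{\lambda_1(g)}\geqslant 2\varepsilon\,\kappa_1(g)$ by the same lemma. Combined with $\|x_<\|\leqslant \|x\|(1+1/\varepsilon)$ and $\|x_+\|/\|x\|\geqslant 2\varepsilon/\delta(V_g^+,V_g^<)\cdot\delta(V_g^+,V_g^<)=2\varepsilon$ (using the hypothesis $\delta(X,V_g^<)\geqslant 2\varepsilon$), this yields
\[
\frac{\|gx_<\|}{\|gx_+\|}\leqslant \frac{\kappa_{1,2}(g)}{\varepsilon^3}\leqslant \varepsilon\leqslant \tfrac14.
\]
Writing $\|gx\|=\|gx_+\|(1+u)$ with $|u|\leqslant \|gx_<\|/\|gx_+\|$ and using $|\ln(1+u)|\leqslant 2|u|$ for $|u|\leqslant 1/2$, I obtain
\[
\Bigl|\sigma(g,X)-\lambda_1(g)-\ln\tfrac{\|x_+\|}{\|x\|}\Bigr|=\Bigl|\ln\tfrac{\|gx\|}{\|gx_+\|}\Bigr|\leqslant 2\kappa_{1,2}(g)/\varepsilon^3,
\]
which is the first claim.

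For the second estimate, the same decomposition shows $gx_+\wedge gy_+=0$ since both lie in the line $V_g^+$. Expanding the wedge,
\[
\|gx\wedge gy\|\leqslant \|gx_+\|\,\|gy_<\|+\|gx_<\|\,\|gy_+\|+\|gx_<\|\,\|gy_<\|,
\]
and dividing by $\|gx\|\|gy\|\geqslant \tfrac14\|gx_+\|\|gy_+\|$ reduces the estimate to sums of ratios $\|gx_<\|/\|gx_+\|$ and $\|gy_<\|/\|gy_+\|$, each of which was just bounded by $\kappa_{1,2}(g)/\varepsilon^3$. The resulting bound is of order $\kappa_{1,2}(g)/\varepsilon^3$, which for $\varepsilon\leqslant 1/4$ is absorbed into the looser form $\kappa_{1,2}(g)/(4\varepsilon^4)$ stated in the lemma (alternatively, the same ratios immediately bound $d(gX,V_g^+)$ and $d(gY,V_g^+)$ and the triangle inequality concludes).

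The main obstacle is purely bookkeeping: the denominator $\delta(V_g^+,V_g^<)$ must be bounded below by the perturbation argument from Lemma~\ref{lemma:quantification_proximale}, and the interplay $e^{\lambda_1(g)}\asymp_\varepsilon \kappa_1(g)$ must be used symmetrically at the right places so that the final constants match the claimed $2\kappa_{1,2}(g)/\varepsilon^3$ and $\kappa_{1,2}(g)/(4\varepsilon^4)$ rather than weaker forms; no conceptual difficulty arises beyond this.
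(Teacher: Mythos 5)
Your approach is the same as the paper's — decompose $x = x_+ + x_<$ along $V_g^+ \oplus \ker\varphi_g^<$, use Lemma~\ref{lemma:quantification_proximale} for the bounds on $\|g|_{V_g^<}\|$, $e^{\lambda_1(g)}$, and $\delta(V_g^+,V_g^<)$, and take a logarithm — but there is a genuine computational gap in the key step.

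You claim $\|gx_<\|/\|gx_+\| \leqslant \kappa_{1,2}(g)/\varepsilon^3$, but the chain of bounds you actually list does not produce this. With $\|gx_<\|\leqslant (2\kappa_2(g)/\varepsilon)\|x_<\|$, $\|x_<\|\leqslant \|x\|(1+1/\varepsilon)$, $\|gx_+\|=e^{\lambda_1(g)}\|x_+\|\geqslant 2\varepsilon\kappa_1(g)\cdot 2\varepsilon\|x\|$, one gets
\[
\frac{\|gx_<\|}{\|gx_+\|}\leqslant \frac{\kappa_{1,2}(g)(1+1/\varepsilon)}{2\varepsilon^3}\sim \frac{\kappa_{1,2}(g)}{\varepsilon^4},
\]
which is off by a factor of $1/\varepsilon$ and does \emph{not} yield the stated constant $2\kappa_{1,2}(g)/\varepsilon^3$ for the first inequality. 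The place where you lose the $\varepsilon$ is replacing $\|x_+\|$ by the lower bound $2\varepsilon\|x\|$ too early. The paper keeps the exact identity (which you yourself derived) $\|x_+\|/\|x\| = \delta(X,V_g^<)/\delta(V_g^+,V_g^<)$ together with $\|x_<\|\leqslant 2\|x\|/\delta(V_g^+,V_g^<)$; the two factors of $\delta(V_g^+,V_g^<)$ cancel in the quotient, leaving only a single $\delta(X,V_g^<)^{-1}\leqslant 1/(2\varepsilon)$. That cancellation is the whole point; without it the bound degrades by one power of $\varepsilon$.

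The second estimate has a similar issue. Even using the corrected ratio bound $\kappa_{1,2}(g)/\varepsilon^3$, expanding $gx\wedge gy$ and discarding $gx_+\wedge gy_+=0$ gives $d(gX,gY)\lesssim \kappa_{1,2}(g)/\varepsilon^3$, which for $\varepsilon$ near $1/4$ is \emph{larger} than the stated $\kappa_{1,2}(g)/(4\varepsilon^4)$, so the claim is not ``absorbed.'' The paper's route is cleaner and gives the exact constant: bound $d(gX,gY)=\|\wedge^2 g(x\wedge y)\|/(\|gx\|\|gy\|)\leqslant \kappa_1(g)\kappa_2(g)\|x\|\|y\|/(\|gx\|\|gy\|)$ using $\|\wedge^2 g\|=\kappa_1(g)\kappa_2(g)$, then use the lower bound $\|gx\|/\|x\|\geqslant \tfrac12 e^{\lambda_1(g)}\delta(X,V_g^<)/\delta(V_g^+,V_g^<)\geqslant 2\varepsilon^2\kappa_1(g)$ that the first part already gives you. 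No wedge expansion needed.
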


\begin{proof}
Note $v_g^+, \varphi_g^<$ such that $V_g^+ = \R v_g^+$ and $V_g^< = \R \varphi_g^<$.

Then, according to the previous lemma,
\[
\delta(V_g^+, V_g^<) \geqslant 2\varepsilon - 2\frac {\kappa_{1,2}(g)}{\varepsilon} \geqslant 2 \varepsilon (1-\varepsilon^2) \geqslant \frac 3 2 \varepsilon
\]

For any $x\in \R^d$, we can write
\[
x= \frac{\varphi_g^<(x)}{\varphi_g^<(v_g^+)} v_g^+ + x- \frac{\varphi_g^<(x)}{\varphi_g^<(v_g^+)} v_g^+
\]
and so, as $g v_g^+ =\varepsilon_1(g) e^{\lambda_1(g)} v_g^+$, we get
\[
g x = \varepsilon_1(g) e^{\lambda_1(g)} \frac{\varphi_g^<(x)}{\varphi_g^<(v_g^+)} v_g^+ + g\left( x- \frac{\varphi_g^<(x)}{\varphi_g^<(v_g^+)} v_g^+ \right)
\]
But, $x- \frac{\varphi_g^<(x)}{\varphi_g^<(v_g^+)} v_g^+ \in V_g^<$ and so, according to lemma~\ref{lemma:quantification_proximale},
\[
\left\| g\left( x- \frac{\varphi_g^<(x)}{\varphi_g^<(v_g^+)} v_g^+ \right) \right\| \leqslant \frac{\kappa_2(g)}{\varepsilon} \left\|x- \frac{\varphi_g^<(x)}{\varphi_g^<(v_g^+) } v_g^+ \right\|\leqslant  \frac{\kappa_2(g)}{\varepsilon} \frac{2\|x\| }{\delta(V_g^+, V_g^<)}
\]
Thus, if $x\not=0$,
\[
e^{\lambda_1(g)} \frac{\delta(X,V_g^<)}{\delta(V_g^+, V_g^<)} \left( 1 - \|u\| \right)
 \leqslant \frac{\|gx\|}{\|x\|} \leqslant e^{\lambda_1(g)} \frac{\delta(X,V_g^<)}{\delta(V_g^+, V_g^<)} \left( 1 + \|u\| \right)
\]
with
\[
u= \frac{ \delta(V_g^+, V_g^<)}{ e^{\lambda_1(g)} \delta(X,V_g^<)\|x\|} g\left( x- \frac{\varphi_g^<(x)}{\varphi_g^<(v_g^+)} v_g^+ \right)  \text{ and } \|u\|\leqslant  \frac{1}{e^{\lambda_1(g)}\delta(X,V_g^<)}\frac{4\kappa_2(g)}{\varepsilon } 
\]
But, still using lemma~\ref{lemma:quantification_proximale}, we have that $e^{\lambda_1(g)} \geqslant 2 \|g\|\varepsilon$ and so,
\[
\frac{4\kappa_2(g)}{e^{\lambda_1(g)}\delta(X,V_g^<)\varepsilon } \leqslant \frac{2 \kappa_{1,2}(g)}{\varepsilon^2 \delta(X,V_g^<)}
\]
So, for $X$ with $\delta(X,V_g^<) \geqslant 2\varepsilon$,
we have that
\[
\|u \|\leqslant \frac{ \kappa_{1,2}(g)}{\varepsilon^3} \leqslant \varepsilon
\]
and
\[
\ln(1-\|u\|) \leqslant \sigma(g,X) - \lambda_1(g) - \ln \frac {\delta(X, V_g^<)}{ \delta(V_g^+, V_g^<)}  \leqslant \ln(1+\|u\|)
\]
and we get the expected result if we use that
\[
\|u\| \leqslant \min\left( \frac{\kappa_{1,2}(g)} {\varepsilon ^3}, \frac 1 2 \right)
\]
Finally, for any $X,Y \in \prob(\R^d)$ with $\delta(X,V_g^<) , \delta(Y,V_g^<) \geqslant 2\varepsilon$,
\begin{align*}
d(gX,gY) &= \frac{ \|\wedge ^2 g (x\wedge y) \|} {\|gx\|\|gy\|} \leqslant \frac{ \kappa_1(g) \kappa_2(g) \|x\| \|y\|} {\|gx\| \|gy\|} \leqslant \frac{ \kappa_1(g) \kappa_2(g)}{e^{-2\lambda_1(g)}} \frac{ 4 \delta(V_g^+, V_g^<)^2}{\delta(X,V_g^<) \delta(Y,V_g^<)} \\
& \leqslant \frac{ \kappa_{1,2}(g)}{4\varepsilon^4} \qedhere
\end{align*}
\end{proof}

\subsubsection{Genericness of proximal elements}

First, recall that if $\rho$ is a borelian probability measure on $\mathrm{SL}_d(\R)$ having a moment of order $1$\footnote{$\int_\G \ln\|g\| \di\rho(g)$ is finite.}, then, there are $\lambda_1,\dots, \lambda_d\in \R$, called Lyapunov exponents of $\rho$, such that $\lambda_1 + \dots + \lambda_d=0$ and for any $i\in [1,d]$,
\[
\frac 1 n \ln\| \wedge^i g_n \dots g_1 \| \xrightarrow\, \lambda_1 + \dots + \lambda_i \; \rho^{\otimes \N}-\text{a.e.}
\]
Moreover, if the support of $\rho$ generates a strongly irreducible and proximal subgroup, then $\lambda_1 > \lambda_2$ (see~\cite{GuRa85}).

In the sequel, we will have to produce elements in the support of $\rho^{\ast n}$ having good properties. To do so, we will use the following result which is proved in the chapter~12 of~\cite{BQred}.

\begin{lemma} \label{lemma:genericite_loxodromie}
Let $\rho$ be a borelian probability measure on $\mathrm{SL}_d(\R)$ having an exponential moment and whose support generate a strongly irreducible and proximal subgroup. Then, for any $\varepsilon \in \R_+^\ast$, there are $t\in \R_+^\ast$ and $n_0 \in \N$ such that for any $n\in \N$ with $n\geqslant n_0$, we have that for any $x\in \prob(\R^d)$ and any $y\in \prob((\R^d)^\ast)$,
\[
\rho^{\ast n} \left( \left\{ g\in \G \middle| \forall i \in \lib 1,d\rib,\; \left| \frac 1 n \kappa_i(g) - \lambda_i \right| \leqslant \varepsilon \right\}\right) \geqslant 1 - e^{-tn}
\]
\[
\rho^{\ast n}\left(\left\{ g\in \G \middle| \delta(x,y_g^m) \geqslant 2e^{-\varepsilon n} \right\}\right) \geqslant 1 - e^{-tn }
\]
\[
\rho^{\ast n}\left(\left\{ g\in \G \middle| d(gx, x_g^M) \leqslant e^{-(\lambda_1 - \lambda_2-\varepsilon) n} \right\}\right) \geqslant 1 - e^{-tn }
\]
\[
\rho^{\ast n}\left(\left\{ g\in \G \middle| \delta(x_g^M, y) \geqslant 2e^{-\varepsilon n} \right\}\right) \geqslant 1 - e^{-tn }
\]
\[
\rho^{\ast n}\left(\left\{ g\in \G \middle| \delta(gx,y) \geqslant 2 e^{-\varepsilon n} \right\}\right) \geqslant 1 - e^{-tn }
\]
\[
\rho^{\ast n} \left( \left\{ g\in \G\middle| \delta(x_g^M, y_g^m) \geqslant 2e^{-\varepsilon n} \right\}\right) \geqslant 1 - e^{-tn}
\]
\end{lemma}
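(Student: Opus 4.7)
The lemma packages several exponential large-deviation estimates of Le Page type (see Chapter~12 of~\cite{BQred}), and my plan is to proceed in three stages. The first stage is Le Page's classical LDP for singular values: under strong irreducibility, proximality and an exponential moment the quantity $\frac{1}{n}\kappa_i(g_n\cdots g_1)$ concentrates exponentially around $\lambda_i$, uniformly in~$i$. This yields the first displayed inequality directly.

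The second stage is the heart of the argument: a uniform regularity estimate
\[
\rho^{\ast n}\bigl(\{g:\delta(x,y_g^m)\leqslant r\}\bigr)\leqslant Cr^\alpha,\qquad \rho^{\ast n}\bigl(\{g:\delta(x_g^M,y)\leqslant r\}\bigr)\leqslant Cr^\alpha,
\]
holding uniformly in $n\in\N$, in $x\in\prob(\R^d)$, in $y\in\prob((\R^d)^\ast)$ and in $r>0$ for some $C,\alpha>0$. This is the classical Hölder upper-regularity of the Furstenberg measures $\nu$ and $\nu^\ast$ (established in Chapter~12 of~\cite{BQred}) combined with an exponential contraction estimate that forces the SVD directions $y_g^m$ and $x_g^M$ to concentrate near $\nu^\ast$ and $\nu$ respectively. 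Specialising to $r=2e^{-\varepsilon n/\alpha}$ and relabelling $\varepsilon$ yields the second and fourth displayed inequalities. From these, the third and fifth follow by pure geometry: Lemma~\ref{lemma:controles_r_prox}(3) gives $d(gx,x_g^M)\leqslant\kappa_{1,2}(g)/\delta(x,y_g^m)$, which is bounded above by $e^{-(\lambda_1-\lambda_2-\varepsilon)n}$ on the intersection of the good events from stages one and two; and the $1$-Lipschitz property $\delta(gx,y)\geqslant\delta(x_g^M,y)-d(gx,x_g^M)$ together with $\lambda_1>\lambda_2$ gives the fifth inequality.

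The delicate case is the last one, $\delta(x_g^M,y_g^m)\geqslant 2e^{-\varepsilon n}$, because both unknown directions depend on the entire product. I plan to split $g=g''g'$ with $g'=g_{\lfloor n/2\rfloor}\cdots g_1$ and $g''=g_n\cdots g_{\lfloor n/2\rfloor+1}$, which are independent; Lemma~\ref{lemma:product_SVD}, applied on the high-probability event from stage one where the two halves are in general position, gives $d(x_g^M,x_{g''}^M)$ and $d(y_g^m,y_{g'}^m)$ both exponentially small. Conditioning on $g'$, the direction $y_{g'}^m$ is then frozen and $x_{g''}^M$ is distributed as a forward walk of length $\lfloor n/2\rfloor$ independent of $g'$, so the regularity estimate of stage two applied at the fixed point $y_{g'}^m$ bounds the conditional probability of $\delta(x_{g''}^M,y_{g'}^m)\leqslant 2e^{-\varepsilon n}$ by an exponentially small quantity, which integrates to the desired bound. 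This decoupling via the SVD of a product is the main obstacle and is where the geometric content of~\cite{BQred} is really invoked.
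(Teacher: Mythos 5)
The paper gives no proof for this lemma: it is stated as a direct citation to Chapter~12 of~\cite{BQred}, so there is no in-paper argument to compare against. Your sketch reconstructs the content of that citation accurately and in the right order: Le Page's large-deviation principle for singular values, Hölder upper-regularity of the Furstenberg measure and its dual giving anticoncentration of $y_g^m$ and $x_g^M$, the algebraic inequalities of Lemma~\ref{lemma:controles_r_prox} to transfer from the Cartan directions to $gx$, and the split $g=g''g'$ with Lemma~\ref{lemma:product_SVD} to decouple $x_g^M$ from $y_g^m$ in the last estimate. These are exactly the ingredients the Benoist--Quint treatment uses.

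Two small technical points are worth being precise about. First, the regularity bound in your ``second stage'' cannot hold literally uniformly in $n$: for a fixed small $n$ (or for atomic $\rho$) the law of $y_g^m$ may be supported on finitely many points, so there is no uniform power bound in $r$. The correct form, which is what \cite{BQred} actually establishes, is of the type $\rho^{\ast n}\bigl(\delta(x,y_g^m)\leqslant r\bigr)\leqslant C(r^\alpha+e^{-cn})$; this suffices here since you then take $r$ exponentially small in $n$ and only claim the estimate for $n\geqslant n_0$. Second, in the last inequality you should make explicit that Lemma~\ref{lemma:product_SVD} needs \emph{both} transversality conditions $\delta(X_{g''}^M,Y_{g'}^m)\geqslant 2\varepsilon$ and $\delta(X_{g'}^M,Y_{g''}^m)\geqslant 2\varepsilon$; the second is obtained by the symmetric conditioning on $g''$, so it is no obstruction, but the phrase ``in general position'' glosses over it. With these adjustments the sketch is a faithful outline of the cited argument.
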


Moreover, we have, with lemma~\ref{lemma:action_lipschitz_espace_proj}, the
\begin{lemma}\label{lemma:contraction_zariski_dense} \label{lemma:contraction_indeed}
Let $\rho$ be a borelian probability measure on $\G$ having an exponential moment and whose support generates a strongly irreducible and proximal subgroup of $\G$. Note, for $g\in \G$, $N(g) := \|g\|$.

Then, there is $\gamma \in \R_+^\ast$ such that $\prob(\R^d)$ is $(\rho,\gamma,2d, N)-$contracted.
\end{lemma}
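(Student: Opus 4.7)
The plan is to verify the three defining properties of $(\rho,\gamma,2d,N)$-contractivity for the action of $\G=\mathrm{SL}_d(\R)$ on $\prob(\R^d)$ with $N(g)=\|g\|$. As explained in example~\ref{exemple:espace_proj_contracte}, the result is essentially Theorem~V.2.3 of~\cite{BL85}, and the work here reduces to checking that the hypotheses of that theorem translate into the formalism introduced in this section.

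The first two axioms are essentially free. The Lipschitz bound of item (1) is exactly Lemma~\ref{lemma:action_lipschitz_espace_proj}: $d(gX,gY)\leqslant \|g\|^{2d} d(X,Y)\leqslant 2d\, N(g)^{2d}d(X,Y)$. For item (2), the assumption that $\rho$ has an exponential moment provides $\varepsilon_0 \in \R_+^\ast$ with $\int_\G \|g\|^{\varepsilon_0}\di\rho(g)<+\infty$; any $\gamma \in \mathopen{]}0,\varepsilon_0/(2d)\mathclose{]}$ then satisfies $\int_\G N(g)^{2d\gamma}\di\rho(g)<+\infty$.

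The core of the proof is item (3). Starting from $\|gx\wedge gy\|=\|\wedge^2 g(x\wedge y)\|\leqslant \kappa_1(g)\kappa_2(g)\|x\wedge y\|$ and $\kappa_1(g)=\|g\|$, one obtains the pointwise bound
\[
\frac{d(gX,gY)}{d(X,Y)}\leqslant \kappa_{1,2}(g)\cdot \frac{\|g\|^2 \|x\|\|y\|}{\|gx\|\|gy\|}=\kappa_{1,2}(g)\,e^{2\ln\|g\|-\sigma(g,X)-\sigma(g,Y)},
\]
where $\sigma$ is defined in~\eqref{equation:sigma}. By the Furstenberg--Kesten theorem and the strong irreducibility and proximality assumption, $\lambda_1>\lambda_2$; Lemma~\ref{lemma:genericite_loxodromie} upgrades this large-deviation statement into an explicit exponentially small exceptional set: outside a set of $\rho^{\ast n}$-mass at most $e^{-tn}$, we have simultaneously $\kappa_{1,2}(g)\leqslant e^{-(\lambda_1-\lambda_2-\varepsilon)n}$, $\delta(X,Y_g^m)\geqslant 2e^{-\varepsilon n}$, $\delta(Y,Y_g^m)\geqslant 2e^{-\varepsilon n}$, and $\|g\|\leqslant e^{(\lambda_1+\varepsilon)n}$. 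On this good set, Lemma~\ref{lemma:controles_r_prox}(1) gives $\|gx\|/\|x\|\geqslant \|g\|\delta(X,Y_g^m)$, so the above pointwise ratio is dominated by $e^{-(\lambda_1-\lambda_2-C\varepsilon)n}$ for a constant $C$ depending only on $d$.

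To get the uniform integrated bound, I would split the integral $\int_\G (d(gX,gY)/d(X,Y))^\gamma\di\rho^{\ast n}(g)$ over the good set and its complement. On the good set, the integrand is bounded by $e^{-\gamma(\lambda_1-\lambda_2-C\varepsilon)n}$. On the complement of $\rho^{\ast n}$-mass $e^{-tn}$, the integrand is controlled via the trivial bound $d(gX,gY)/d(X,Y)\leqslant \|g\|^{2d}$ and Hölder's inequality against the exponential moment of $\rho$, giving a contribution of order $e^{-tn/2}$ provided $\gamma$ is taken small enough (this is where one uses submultiplicativity of $\|\cdot\|$ together with the moment condition). The main obstacle is precisely controlling this exceptional set: one must ensure the blow-up of $\|g\|^{2d\gamma}$ on $\{g:\delta(X,Y_g^m)<2e^{-\varepsilon n}\}$ is beaten by the exponential decay of that set's $\rho^{\ast n}$-probability, which forces a quantitative relation between $\gamma$, $\varepsilon$, and $t$. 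Choosing $\varepsilon$ small so that $\lambda_1-\lambda_2-C\varepsilon>0$, then $\gamma$ small enough that $2d\gamma$ is dominated by the exponential rate $t$ in Lemma~\ref{lemma:genericite_loxodromie}, and finally $n_0$ large, yields
\[
\sup_{\substack{X,Y\in\prob(\R^d)\\X\neq Y}}\int_\G \frac{d(gX,gY)^\gamma}{d(X,Y)^\gamma}\di\rho^{\ast n_0}(g)<1,
\]
which is item (3) and completes the verification.
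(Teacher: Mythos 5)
The paper's own proof of this lemma is a one-line pointer to \cite{BL85} (Theorem~2.5 of Chapter~V and its proof), so there is no in-text argument to compare yours against; what you have written is a correct self-contained reconstruction of the classical Bougerol--Lacroix contraction estimate, phrased in the paper's $(\rho,\gamma,M,N)$-vocabulary. Items (1) and (2) are indeed mechanical from Lemma~\ref{lemma:action_lipschitz_espace_proj} and the exponential moment. For item (3), the decomposition into a ``good set'' where $\kappa_{1,2}(g)$ is exponentially small and $X,Y$ stay $2e^{-\varepsilon n}$-transverse to $Y_g^m$, plus a ``bad set'' of $\rho^{\ast n}$-mass $O(e^{-tn})$ controlled by Cauchy--Schwarz against the $\rho^{\ast n}$-moment of $\|g\|^{4d\gamma}$, is exactly right. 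Two cosmetic corrections to get the constants clean: the good-set contraction rate works out to $\frac14 e^{-(\lambda_1-\lambda_2-4\varepsilon)n}$, so the constant in front of $\varepsilon$ is $4$ and does not actually depend on $d$; and on the bad set, after Cauchy--Schwarz the bound is $\bigl(e^{-t}\int_\G\|g\|^{4d\gamma}\di\rho(g)\bigr)^{n/2}$, so the precise smallness condition on $\gamma$ is $\int_\G\|g\|^{4d\gamma}\di\rho(g)<e^{t}$ (which holds for $\gamma$ small since the integral tends to $1$ as $\gamma\to0^+$), rather than a direct comparison between $2d\gamma$ and $t$. Neither point affects the validity of your argument.
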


\begin{proof}
We refer to~\cite{BL85} for a proof of this result.
\end{proof}

\subsubsection{A regularity lemma for convolution powers of measures}

In this paragraph, we prove a technical lemma about regularity of powers of convolution of measures on $\mathrm{SL}_d(\R)$. With the language we used in the previous section, this means that for a measure on $\mathrm{SL}_d(\R)$ having an exponential moment, there is a parameter $\Delta$ such that for any $n$ large enough, any point of $\supp\rho^{\ast n}$ is $\Delta-$regular at scale $e^{-t_2 n}$ except on a set of exponentially small measure.

\begin{lemma} \label{lemma:convolution_mesures_SLd}
Let $\rho$ be a borelian probability measure on $\mathrm{SL}_d(\R)$ having an exponential moment.

Then, for any $t_1, t_2 \in \R_+^\ast$, there are $n_0\in \N$ and $t_3 \in \R_+^\ast$ such that for any $n\in \N$ with $n\geqslant n_0$, we have that
\[
\rho^{\ast n} \left( \left\{ g\in \G \middle| \rho^{\ast n} \left(B(g, e^{-t_2 n})\right) \geqslant e^{-t_3 n} \right\}\right) \geqslant 1-e^{-t_1 n}
\]
\end{lemma}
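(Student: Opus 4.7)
The plan is to prove this by a volumetric covering argument combined with Markov's inequality applied to the exponential moment, in the spirit of classical regularity estimates for random walks.

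First I would use the exponential moment to truncate $\rho^{\ast n}$ to a ball of subexponential radius. Since there is $\varepsilon\in \R_+^\ast$ with $\int_\G \|g\|^\varepsilon \di\rho(g)<+\infty$, the sub-multiplicativity of $\|\,.\,\|$ yields $\int_\G \|g\|^\varepsilon \di\rho^{\ast n}(g) \leqslant C^n$ for some $C\in \R_+^\ast$, and hence by Markov's inequality
\[
\rho^{\ast n}\left(\{g\in \G \,|\, \|g\|\geqslant e^{Mn}\}\right) \leqslant C^n e^{-M\varepsilon n}
\]
Choosing $M$ large enough relative to $\varepsilon$ and $C$, this is bounded by $e^{-2t_1 n}$.

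Next I would exploit the fact that $\mathrm{SL}_d(\R)$ embeds as a submanifold of $M_d(\R)\simeq \R^{d^2}$, so that the closed ball $\{g\in \G \,|\, \|g\|\leqslant e^{Mn}\}$, viewed in operator norm, admits a covering by at most $N_n \leqslant A (e^{Mn}/e^{-t_2 n})^{d^2} = A e^{d^2(M+t_2) n}$ Euclidean balls $B(g_i, e^{-t_2 n}/2)$ (with $A$ depending only on $d$). Set $K:=d^2(M+t_2)$.

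Then for fixed $t_3$ (to be chosen) I would introduce the bad set
\[
A_n := \left\{g\in \G \,\middle|\, \rho^{\ast n}\!\left(B(g,e^{-t_2 n})\right) <e^{-t_3 n}\right\}
\]
and estimate $\rho^{\ast n}\!\left(A_n \cap \{\|g\|\leqslant e^{Mn}\}\right)$ as follows: if a cover ball $B(g_i,e^{-t_2 n}/2)$ intersects $A_n$ at some point $g$, then $B(g_i,e^{-t_2 n}/2)\subset B(g,e^{-t_2 n})$, so $\rho^{\ast n}(B(g_i,e^{-t_2 n}/2)) <e^{-t_3 n}$. Summing over the at most $N_n$ relevant cover balls gives
\[
\rho^{\ast n}\!\left(A_n \cap \{\|g\|\leqslant e^{Mn}\}\right) \leqslant A e^{(K-t_3) n}
\]
Choosing $t_3 > K+t_1$ makes this bounded by $e^{-2t_1 n}$ for $n$ large, and combining with the Markov estimate from the first step gives $\rho^{\ast n}(A_n)\leqslant e^{-t_1 n}$ for $n\geqslant n_0$, which is the desired conclusion.

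The main obstacle is purely bookkeeping of constants: one must pick $M$ large enough (using $\varepsilon$ and the moment integral) so that the Markov tail is negligible, and then pick $t_3$ large enough to absorb the covering number $e^{Kn}$ while still leaving a margin of $t_1$. There is no dynamical input required here beyond the exponential moment, since neither irreducibility nor proximality is invoked; the statement is a purely metric/combinatorial consequence of the fact that $\mathrm{SL}_d(\R)$ has finite dimension and $\rho$ has exponential tails.
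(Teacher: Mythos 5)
Your proof is correct and follows essentially the same route as the paper's: Markov's inequality via the exponential moment to truncate $\rho^{\ast n}$ to a ball of radius $e^{Mn}$, then a covering of that ball by $O(e^{d^2(M+t_2)n})$ balls of radius $e^{-t_2 n}/2$, the key inclusion $B(g_i, e^{-t_2 n}/2) \subset B(g, e^{-t_2 n})$ whenever $g$ lies in the cover ball, and finally choosing $M$ and then $t_3$ large enough to make both error terms exponentially small. The only cosmetic difference is that you bound the measure of the bad set directly, while the paper decomposes $1$ into tail $+$ good set $+$ remainder; these are the same argument.
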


\begin{proof}
Let $\varepsilon \in \R_+^\ast$ be such that $\int_\G \|g\|^\varepsilon \di\rho(g)$ is finite and fix a $n\in \N$. Using Markov's inequality we have that for any $M\in \R_+$,
\[
\rho^{\ast n}\left(\left\{ g\in \G \middle| \|g\| \geqslant e^{Mn} \right\}\right) \leqslant e^{-\varepsilon Mn} \int_\G \|g\|^\varepsilon \di\rho^{\ast n}(g) \leqslant \left(e^{-\varepsilon M}\int_\G \|g\|^\varepsilon \di\rho(g) \right)^n
\]
So, noting $\widetilde\Omega_n = \left\{ g\in \G \middle| \|g\| \leqslant e^{Mn} \right\}$, we have that
\begin{align*}
\rho^{\ast n}\left(\widetilde\Omega_n^c\right) \leqslant  \left(e^{-\varepsilon M}  \int_\G \|g\|^\varepsilon \di\rho(g) \right)^n
\end{align*}
Moreover, there is a constant $C(d)$ depending only on the dimension $d$ and $g_1, \dots g_L \in \widetilde{\Omega}_n$ such that
\[
\widetilde{\Omega}_n \subset \bigcup_{i=1}^L B(g_i,e^{-t_2 n}/2)
\]
and $L \leqslant C(d) e^{(M+t_2)d^2 n}$.
Now, we note, for $K\in \R_+^\ast$,
\[
\G_n = \left\{ g \in \{g_1, \dots, g_L\}\middle|\rho^{\ast n}(B(g,e^{-t_2n}/2)) \geqslant e^{-Kn} \right\} \text{ and }\Omega_n = \bigcup_{g \in G_n } B(g,e^{-t_2 n}/2)
\]
Then, for $h\in \Omega_n$, there is $g\in \G_n$ such that $d(g,h) \leqslant e^{-t_2 n}/2$ and so,
\[
B(h,e^{-t_2 n}) \supset B(g,e^{-t_2 n}/2)
\]
and so,
\[
\rho^{\ast n}(B(h,e^{-t_2 n})) \geqslant \rho^{\ast n} ( B(g,e^{-t_2 n}/2))\geqslant e^{-K n}
\]
Finally, as $\rho$ is a probability measure we have that
\[
1= \rho^{\ast n} \left( \widetilde\Omega_n^c \right) + \rho^{\ast n} \left( \Omega_n\right) + \rho^{\ast n} \left(\widetilde \Omega_n \setminus \Omega_n \right)
\]
But, by definition,
\[
\rho^{\ast n} \left( \widetilde\Omega_n \setminus \Omega_n \right) \leqslant L e^{-Kn}
\]
And so, for any $n\in \N$,
\begin{align*}
\rho^{\ast n}( \Omega_n) &\geqslant 1- \left(e^{-\varepsilon M} \int_\G \|g\|^\varepsilon \di\rho(g) \right)^n -Le^{-K n} \\
&\geqslant  1- \left(e^{-\varepsilon M} \int_\G \|g\|^\varepsilon \di\rho(g) \right)^n - C(d)e^{-K n}e^{(M+t_2)d^2 n}
\end{align*}
And this proves the lemma since we can choose $K$ and $M$ as large as we want.
\end{proof}

\subsection{Regular points in the projective space}

\begin{miniabstract}
In this paragraph, we study the lower regularity of the stationary measure on the projective space at fixed points of generic proximal elements.
\end{miniabstract}

In our study of the perturbation of Markov operators on compact spaces (section~\ref{section:Dolgopyat_Markov}), we used an assumption of lower regularity on the points we considered and it is time to prove that this assumption holds for the walk on the projective space.

\medskip
We remind that given a compact metric space $(\X,d)$ endowed with a borelian probability measure $\nu$, we say that a point $x\in \X$ is $\Delta-\nu$-regular at scale $r$ where $r\in \R_+^\ast$ and $\Delta \in \R_+$, if
\[
\nu(B(x,r)) \geqslant r^\Delta
\]
We are going to prove that for a strongly irreducible and proximal subgroup of $\mathrm{SL}_d(\R)$, the fixed points of proximal elements are regular.

\medskip
First of all, we have the following
\begin{lemma}
Let $\rho$ be a borelian probability measure on $\G$ whose support generates a strongly irreducible and proximal subsemigroup $T_\rho$.

Then, there is a unique borelian $P-$invariant probability measure $\nu$ on $\prob(\R^d)$.

Moreover, for any proximal $g \in T_\rho$, we have that $V_g^+ \in \supp\nu$.
\end{lemma}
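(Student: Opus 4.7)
Uniqueness of the $P$-stationary measure $\nu$ on $\prob(\R^d)$ under strong irreducibility and proximality is classical (Furstenberg--Guivarc'h) and, when $\rho$ has an exponential moment, follows directly from Proposition~\ref{proposition:unique_mesure_invariante_contraction} via Lemma~\ref{lemma:contraction_zariski_dense}. The substantive content of the lemma is the second assertion, and I would establish it in three steps.

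First, I would check that $\supp\nu$ is $T_\rho$-invariant. Starting from the stationarity identity $\nu = \int g_\ast\nu\,\di\rho(g)$ and applying it to the open set $U := \prob(\R^d)\setminus\supp\nu$ yields $\nu(g^{-1}U)=0$ for $\rho$-a.e.\ $g$; since $g^{-1}U$ is open, this forces $g^{-1}U\cap\supp\nu=\emptyset$, i.e.\ $g(\supp\nu)\subset\supp\nu$ for $\rho$-a.e.\ $g$. By continuity of the $\G$-action on $\prob(\R^d)$ the set of such $g$ is closed, hence contains $\supp\rho$, and being closed under composition it contains all of $T_\rho$.

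Second, and here strong irreducibility enters, I would argue that $\supp\nu$ cannot be contained in the projective hyperplane corresponding to $V_g^<$. Let $H_g^<\subset\R^d$ be the $g$-invariant hyperplane whose projectivization is $V_g^<$. The linear span $W := \mathrm{Vect}\{x\in\R^d : \R x\in\supp\nu\}$ is $T_\rho$-invariant by Step~1, so strong irreducibility (which forbids any proper invariant subspace, a fortiori any finite union) forces $W=\R^d$. Hence $\supp\nu\not\subset\prob(H_g^<)$, and there exists $X_0=\R x_0\in\supp\nu$ with $\varphi_g^<(x_0)\neq 0$, i.e.\ $\delta(X_0,V_g^<)>0$.

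Third, I would conclude by the elementary dynamics of the proximal element $g$: writing $x_0=c\,v_g^++w$ with $c\neq 0$ and $w\in H_g^<$, the bound $\|g^n w\|\leqslant C\,e^{n\lambda_2(g)}\|w\|$ with $\lambda_2(g)<\lambda_1(g)$ gives $g^n X_0\to V_g^+$ in $\prob(\R^d)$. Since $g^n X_0\in\supp\nu$ for every $n$ by Step~1 and $\supp\nu$ is closed, $V_g^+\in\supp\nu$. The main obstacle is Step~2, where the algebraic strong irreducibility hypothesis must be converted into the geometric non-containment of $\supp\nu$ in a projective hyperplane; Steps~1 and~3 are bookkeeping about $P$-stationary supports and the contracting dynamics of a proximal matrix.
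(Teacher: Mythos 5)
Your proof is correct and follows essentially the same route as the paper's: uniqueness via the classical Furstenberg--Guivarc'h result, non-containment of $\supp\nu$ in the projective hyperplane $\ker\varphi_g^<$ via strong irreducibility, and then $g^nX_0\to V_g^+$ inside the closed $T_\rho$-invariant set $\supp\nu$. The only difference is that you spell out the $T_\rho$-invariance of $\supp\nu$ and the span argument explicitly, whereas the paper leaves both implicit.
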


\begin{proof}
Existence and unicity of $\nu$ come from~\cite{GuRa85}.

To prove the end of the lemma, note that for any proximal $g\in T_\rho$, there is $X\in \supp\nu$ such that $X\not\in V_g^<$. Indeed, if not, we would have some proximal $g\in T_\rho$ such that $\supp\nu \subset V_g^<$ but this is impossible since we assumed that $T_\rho$ is strongly irreducible.

Moreover, for any proximal $g\in T_\rho$ and any $X\not\in V_g^<$, we have that
\[
g^n X \xrightarrow\, V_g^+
\]
and, as $\supp\nu$ is closed and $T_\rho-$invariant, this proves that $V_g^+ \in \supp\nu$.
\end{proof}

We are now ready to prove the regularity of the stationary measure in the

\begin{proposition} \label{proposition:regularite_inferieure_espace_projectif}
Let $\rho$ be a strongly irreducible and proximal borelian probability measure on $\mathrm{SL}_d(\R)$, having an exponential moment.

Let $\nu$ be the unique stationary borelian probability measure on $\prob(\R^d)$.

Then, for any $m\in \N^\ast$ and any $\varepsilon \in \R_+^\ast$ small enough we have that for any $g\in \G$ with $\kappa_{1,2}(g) \leqslant \varepsilon^4$ and $\delta(X_g^M, Y_g^m) \geqslant 2\varepsilon$, $g$ is proximal and
\[
\nu(B(V_g^+, (\kappa_{1,2}(g)/ \varepsilon^4)^m) \geqslant \frac 1 2 \left( \rho^{\ast n }\left( B(g,r)\right) \right) ^m
\]
where we noted
\[
r=\kappa_1(g)^{-(1+d)m}
\]
\end{proposition}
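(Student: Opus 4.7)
The plan is to exploit the stationarity of $\nu$ under $P_\rho$: for any $n \in \N^\ast$ we have $\nu = \rho^{\ast mn} \ast \nu$, and since $\rho^{\ast mn}$ is the pushforward of $(\rho^{\ast n})^{\otimes m}$ under the map $(h_1,\dots,h_m)\mapsto h_m\cdots h_1$, one has
\begin{equation*}
\nu(B(V_g^+,\delta)) = \int_{\prob(\R^d)}\int_{\G^m} \un_{h_m\cdots h_1\cdot x\in B(V_g^+,\delta)}\, \di(\rho^{\ast n})^{\otimes m}(h_1,\dots,h_m)\, \di\nu(x)
\end{equation*}
with $\delta := (\kappa_{1,2}(g)/\varepsilon^4)^m$ and $r := \kappa_1(g)^{-(1+d)m}$. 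It suffices to exhibit a set $A \subset \prob(\R^d)$ with $\nu(A) \geqslant 1/2$ such that for every $x\in A$ and every tuple $(h_1,\dots,h_m)$ with each $h_i \in B(g,r)$, the product $h := h_m\cdots h_1$ satisfies $h\cdot x \in B(V_g^+,\delta)$; by independence of the $h_i$'s this yields the desired lower bound $\tfrac12(\rho^{\ast n}(B(g,r)))^m$.

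To analyze $h$, first note that $\det g = 1$ combined with $\kappa_{1,2}(g) \leqslant \varepsilon^4$ forces $\kappa_1(g) \geqslant \kappa_{1,2}(g)^{-(d-1)/d}$, so $r$ is extremely small. Weyl's inequality and Davis--Kahan then ensure that for $h_i \in B(g,r)$ the data $(\kappa_{1,2}(h_i), X_{h_i}^M, Y_{h_i}^m)$ stays negligibly close to its $g$-analogue, so that the hypotheses of lemma~\ref{lemma:product_SVD} hold up to replacing $\varepsilon$ by $\varepsilon/2$. That lemma, together with lemma~\ref{lemma:quantification_proximale} applied to $h$, gives $\kappa_{1,2}(h) \leqslant (2\kappa_{1,2}(g))^m/\varepsilon^{2(m-1)}$, the proximality of $h$, and the fact that $V_h^<$ is very close to $V_g^<$. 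Taking $A := \{x \in \prob(\R^d) : \delta(x, V_g^<) \geqslant 3\varepsilon\}$ ensures $\delta(x, V_h^<) \geqslant 2\varepsilon$, and the triangle inequality
\begin{equation*}
d(hx, V_g^+) \leqslant d(hx, h V_g^+) + d(h V_g^+, V_g^+)
\end{equation*}
splits the estimate in two. Lemma~\ref{lemma:contraction_proximale} applied to $h$ (with $V_g^+$ also far from $V_h^<$) controls the first term by $\kappa_{1,2}(h)/(4\varepsilon^4) \leqslant \delta/2$. For the second term, expand $h - g^m$ as a sum over non-empty subsets of $\{1,\dots,m\}$ to get $\|h - g^m\| \leqslant m \, 2^{m-1} r \|g\|^{m-1}$; since $g^m V_g^+ = V_g^+$ with eigenvalue of modulus at least $(2\varepsilon\|g\|)^m$ (lemma~\ref{lemma:quantification_proximale}), dividing yields a bound on $d(hV_g^+, V_g^+)$ which, after invoking once more $\|g\| \geqslant \kappa_{1,2}(g)^{-(d-1)/d}$, becomes a positive power of $\kappa_{1,2}(g)$ times a negative power of $\varepsilon$, itself $\leqslant \delta/2$ for small $\varepsilon$.

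The remaining bound $\nu(A) \geqslant 1/2$ follows from the Guivarc'h--Le Page regularity of the stationary measure, which supplies uniform constants $C, \alpha > 0$ with $\nu(\{x : \delta(x, V) < s\}) \leqslant C s^\alpha$ for every hyperplane $V \in \prob((\R^d)^\ast)$; choosing $\varepsilon$ small enough pushes the $\nu$-mass of the bad set below $1/2$. The main obstacle is the second step above: squeezing $d(h V_g^+, V_g^+)$ under the very tight radius $(\kappa_{1,2}(g)/\varepsilon^4)^m$ crucially uses the unimodularity relation $\kappa_1(g) \geqslant \kappa_{1,2}(g)^{-(d-1)/d}$, which is what makes $r$ small enough for $h$ to be a tight operator-norm perturbation of $g^m$.
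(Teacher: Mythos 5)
Your proof is correct and follows the same strategy as the paper: use $\nu = \rho^{\ast mn} \ast \nu$, restrict the $\nu$-integral to a set $A$ of mass $\geqslant 1/2$ of points far from the repelling hyperplane (via upper regularity of $\nu$), and show that for $x\in A$ and $h_1,\dots,h_m\in B(g,r)$ the product $h=h_m\cdots h_1$ carries $x$ into the target ball, whence the lower bound $(\rho^{\ast n}(B(g,r)))^m$ by independence. The only difference is in the final geometric estimate: the paper splits $d(hx,V_g^+)\leqslant d(hx,g^m x)+d(g^m x, V_g^+)$, iterating the contraction on $g$ and then absorbing the operator-norm error $\|h-g^m\|$, whereas you split $d(hx,V_g^+)\leqslant d(hx,hV_g^+)+d(hV_g^+,V_g^+)$ and therefore first need to verify (via the product and perturbation lemmas) that $h$ itself is proximal with controlled $\kappa_{1,2}(h)$ and with $V_h^<$ close to $V_g^<$ — a modest technical detour that rests on the same unimodularity bound $\kappa_1(g)\geqslant\kappa_{1,2}(g)^{-(d-1)/d}$ to make the radius $r=\kappa_1(g)^{-(1+d)m}$ small enough.
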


\begin{proof}
Let $\varepsilon \in \left]0,1/4\right]$ and $g\in \G$ such that $\kappa_{1,2}(g) \leqslant \varepsilon^3$ and $\delta(X_g^M, Y_g^m) \geqslant 2 \varepsilon$.

According to lemma~\ref{lemma:quantification_proximale}, $g$ is proximal.
Moreover, using the $\rho-$stationarity of the measure $\nu$ we have that for any $m\in \N$,
\begin{align*}
\nu(B(V_g^+, \kappa_{1,2}(g)^m )) &= \int_{\X} \un_{B(V_g^+,\kappa_{1,2}(g)^m)} (X) \di\nu(X) \\ &= \int_\X \int_\G \un_{B(V_g^+, \kappa_{1,2}(g)^m)} (hX) \di\rho^{\ast mn}(h) \di\nu(X) \\
& = \int_\X \rho^{\ast mn}\left(\left\{ h \middle| hX \in B(V_g^+,\kappa_{1,2}(g)^m) \right\}\right) \di\nu(X)\\
& \geqslant \int_\X \un_{\delta(X,Y_g^m) \geqslant 2 \varepsilon} \rho^{\ast mn}\left(\left\{ h \middle| hX\in B(V_g^+,\kappa_{1,2}(g)^m) \right\}\right) \di\nu(X)
\end{align*}
But, we saw in lemma~\ref{lemma:contraction_proximale} that if $X \in \prob(\R^d)$ is such that $\delta(X,Y_g^m) \geqslant 2 \varepsilon$, then
\[
d(g^m X,V_g^+) =d(g^m X,g^m V_g^+) \leqslant \frac{  \kappa_{1,2}(g)}{4 \varepsilon^4} d(g^{m-1} X, V_g^+) \leqslant \left( \frac{  \kappa_{1,2}(g)}{4 \varepsilon^4}\right)^m
\]
where we used that $gV_g^+ = V_g^+$.

Moreover, if $r\in \left]0,1\right]$ then for any $h_1, \dots, h_m \in B(g, r)$ we have that
\[
\|g^m - h_1 \dots h_m\| \leqslant \|g-h_1\|\|g\|^{m-1} + \|h_1 \| \|g^{m-1} - h_2 \dots h_m\| \leqslant m(2\|g\|)^{m-1} r
\]
and so,
\[
d(h_1 \dots h_m X, V_g^+) \leqslant \|g^m - h_1 \dots h_m\| + d(g^m X,V_g^+) \leqslant m(2\|g\|)^{m-1} r + \left( \frac{  \kappa_{1,2}(g)}{4 \varepsilon^4}\right)^m
\]
Therefore, taking $r= \kappa_1(g)^{-(1+d)m}$ and using that $\kappa_1(g) = \|g\|$, $\kappa_2(g) \geqslant \kappa_1(g)^{1-d}$ (since $\kappa_1(g) \dots \kappa_d(g) = 1$ and $\kappa_1(g) \geqslant \kappa_2(g) \geqslant \dots \geqslant \kappa_d(g)$), $\kappa_{1,2} (g) = \kappa_2(g) / \kappa_1(g)$, we get that
\[
\frac 1 m \left( \frac{  \kappa_{1,2}(g)}{8 \|g\| \varepsilon^4}\right)^m \geqslant \kappa_1(g)^{-(1+d)m} \frac 1 m \left(\frac 1{8\varepsilon^4} \right)^m
\]
and we can assume that $\varepsilon$ is small enough so that for any $m\in \N^\ast$, $\frac 1 m (8 \varepsilon^4)^{-m} \geqslant 1$.

In this way, we get that for any $h_1, \dots, h_m \in B(g,r)$,
\[
d(h_m \dots h_1 X,V_g^+) \leqslant 2 \left( \frac{  \kappa_{1,2}(g)}{4 \varepsilon^4}\right)^m \leqslant \left( \frac{  \kappa_{1,2}(g)}{\varepsilon^4}\right)^m
\]
Thus, for any $X\in \prob(\R^d)$ such that $\delta(X,Y_g^m) \geqslant 2 \varepsilon$,
\[
\rho^{\ast mn}\left(\left\{ h \middle| hX \in B(V_g^+,(\kappa_{1,2}(g)/\varepsilon^4)^m) \right\}\right) \geqslant \left(\rho^{\ast n}\left(B\left(g, r \right)\right)\right)^m
\]
This proves that for any $g\in \G$ with $\delta(X_g^M, Y_g^m) \geqslant 2\varepsilon$ and $\kappa_{1,2}(g) \leqslant \varepsilon^3$, we have that
\begin{align*}
\nu(B(V_g^+, (\kappa_{1,2}(g)/ \varepsilon^4)^m)) &\geqslant \left(\rho^{\ast n} B(g,r) \right)^m \nu\left(\left\{ X\in \prob(\R^d) \middle| \delta(X,Y_g^m) \geqslant 2 \varepsilon \right\} \right) \\
& \geqslant \left(\rho^{\ast n} B(g,r) \right)^m \left(1 - (2 \varepsilon)^c\right)
\end{align*}
where we used the upper regularity of the measure $\nu$ (see chapter 12 in~\cite{BQred}) to have that for some constant $c$ and $\varepsilon$ small enough,
\[
\nu(X| \delta(X,Y_g^m) \geqslant 2\varepsilon)\geqslant 1-(2\varepsilon)^c. \qedhere
\]
\end{proof}

\begin{corollary}  \label{corollary:regularite_inferieure_espace_projectif}
Let $\rho$ be a strongly irreducible and proximal borelian probability measure on $\mathrm{SL}_d(\R)$ having an exponential.

Let $\nu$ be the unique $P-$invariant borelian probability measure on $\prob(\R^d)$.

Then, for any $M\in \N^\ast$, there are $n_0\in \N$, $\Delta \in \R$ and $t\in \R_+^\ast$ such that for any $n\in \N$ with $n\geqslant n_0$,
\[
\rho^{\ast n} \left( \left\{ g\in \G \middle|g\text{ is proximal and }\nu\left(B\left(V_g^+, e^{-Mn}\right)\right) \geqslant e^{-\Delta Mn} \right\}\right) \geqslant 1 - e^{-t n}\]
\end{corollary}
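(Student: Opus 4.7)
The plan is to combine Proposition \ref{proposition:regularite_inferieure_espace_projectif}, the large deviation estimates of Lemma \ref{lemma:genericite_loxodromie}, and the convolution regularity Lemma \ref{lemma:convolution_mesures_SLd}. Fix $M\in \N^\ast$. I would first choose a small constant $\varepsilon_0 \in \left]0,(\lambda_1-\lambda_2)/10\right[$ and couple $\varepsilon$ to $n$ by setting $\varepsilon:=e^{-\varepsilon_0 n}$; since this tends to $0$ as $n\to\infty$, it eventually satisfies the ``small enough'' requirement of the proposition. The exponent $m$ is then fixed once and for all as a constant depending only on $M$, chosen large enough so that $m\bigl((\lambda_1-\lambda_2)-6\varepsilon_0\bigr)\geqslant M$; e.g. $m=\lceil 5M/(2(\lambda_1-\lambda_2))\rceil$ works.

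Applying Lemma \ref{lemma:genericite_loxodromie} with the parameter $\varepsilon_0$ produces a subset $A_n\subset \G$ with $\rho^{\ast n}(A_n)\geqslant 1-e^{-t_A n}$ on which every $g$ simultaneously satisfies $\kappa_{1,2}(g)\leqslant e^{-((\lambda_1-\lambda_2)-2\varepsilon_0)n}\leqslant \varepsilon^4$, $\kappa_1(g)\leqslant e^{(\lambda_1+\varepsilon_0)n}$, and $\delta(X_g^M,Y_g^m)\geqslant 2\varepsilon$. For such $g$ the hypotheses of Proposition \ref{proposition:regularite_inferieure_espace_projectif} hold, so $g$ is proximal, and the effective contraction estimate yields
\[
\bigl(\kappa_{1,2}(g)/\varepsilon^4\bigr)^m \leqslant e^{-m((\lambda_1-\lambda_2)-6\varepsilon_0)n} \leqslant e^{-Mn},
\]
so that $B(V_g^+,(\kappa_{1,2}(g)/\varepsilon^4)^m)\subset B(V_g^+,e^{-Mn})$.

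It remains to control $\rho^{\ast n}(B(g,r))^m$ from below, where $r=\kappa_1(g)^{-(1+d)m}$. On $A_n$ one has $r\geqslant e^{-t_2 n}$ with $t_2:=(1+d)m(\lambda_1+\varepsilon_0)$, a constant depending only on $M$. I would now apply Lemma \ref{lemma:convolution_mesures_SLd} with this $t_2$ and a sufficiently large $t_1$ to produce a subset $B_n\subset\G$ with $\rho^{\ast n}(B_n)\geqslant 1-e^{-t_B n}$ and a constant $t_3>0$ such that $\rho^{\ast n}(B(g,e^{-t_2 n}))\geqslant e^{-t_3 n}$ for every $g\in B_n$. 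Choosing $\Delta$ larger than $mt_3/M+1$, Proposition \ref{proposition:regularite_inferieure_espace_projectif} then gives on $A_n\cap B_n$
\[
\nu\bigl(B(V_g^+,e^{-Mn})\bigr) \;\geqslant\; \tfrac{1}{2}\bigl(\rho^{\ast n}(B(g,r))\bigr)^m \;\geqslant\; \tfrac{1}{2}e^{-m t_3 n} \;\geqslant\; e^{-\Delta M n}.
\]
Since $\rho^{\ast n}(A_n\cap B_n)\geqslant 1-e^{-tn}$ for $t:=\tfrac{1}{2}\min(t_A,t_B)$ and $n$ large, the conclusion follows.

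There is no serious obstacle: the corollary is essentially exponential bookkeeping combining the three cited results. The only subtle point is the coupling $\varepsilon=e^{-\varepsilon_0 n}$, forced by the fact that Lemma \ref{lemma:genericite_loxodromie} only supplies the lower bound $\delta(X_g^M,Y_g^m)\geqslant 2e^{-\varepsilon_0 n}$, not a uniform one; this in turn requires $\varepsilon_0$ to be strictly less than $(\lambda_1-\lambda_2)/6$ so that $\kappa_{1,2}(g)\leqslant\varepsilon^4$ still holds generically, while keeping $m$ and the derived constant $\Delta$ independent of $n$.
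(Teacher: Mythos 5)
Your proof is correct and follows essentially the same route as the paper: apply Proposition~\ref{proposition:regularite_inferieure_espace_projectif} with the $n$-dependent scale $\varepsilon=e^{-\varepsilon_0 n}$, use Lemma~\ref{lemma:genericite_loxodromie} to make the hypotheses on $\kappa_{1,2}(g)$, $\kappa_1(g)$ and $\delta(X_g^M,Y_g^m)$ generic, and Lemma~\ref{lemma:convolution_mesures_SLd} to make the lower bound on $\rho^{\ast n}(B(g,r))$ generic. Your bookkeeping of the constants (in particular the choice $m((\lambda_1-\lambda_2)-6\varepsilon_0)\geqslant M$ in place of the paper's $m\geqslant M/\varepsilon$) is a cosmetic variant and, if anything, slightly more careful than the paper's own sketch.
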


\begin{proof}
Let $m\in \N$, $0<\varepsilon < (\lambda_1-\lambda_2)/4$ and $\lambda= \lambda_{1} - \lambda_2 - 3\varepsilon$. According to proposition~\ref{proposition:regularite_inferieure_espace_projectif}, we have that for any $n$ large enough and any $g\in \G$ such that $\kappa_{1,2}(g) \leqslant e^{- \lambda n}$ and $\delta(X_g^M, Y_g^m) \geqslant 2e^{-\varepsilon n}$,
\[
\nu(B(g, e^{-m \varepsilon n})) \geqslant \frac 1 2 \left( \rho^{\ast n} (B(g, e^{-(\lambda_1+ \varepsilon)(1+d) mn})) \right)^m
\]
In particular, taking $m \geqslant M/\varepsilon$, we have that
\[
\nu(B(g,e^{-Mn})) \geqslant \frac 1 2 \left( \rho^{\ast n} (B(g, e^{-(\lambda_1+ \varepsilon)(1+d) mn})) \right)^m
\]
To conclude, we use the fact that such elements $g$ with $\delta(X_g^M, Y_g^m)\geqslant 2 e^{-\varepsilon n}$ and $\kappa_{1,2}(g) \leqslant e^{-\lambda n}$ are generic according to lemma~\ref{lemma:genericite_loxodromie} and so are the elements for which we have a good lower bound on the measure of $B(g,e^{-(\lambda_1+ \varepsilon)(1+d) mn})$ according to lemma~\ref{lemma:convolution_mesures_SLd}.
\end{proof}

Now, we use this regularity property to pass from a condition in proposition~\ref{proposition:Dolgopyat_markov} where the action of $\G$ on $\X$ plays a role to a condition on $\G$.
\begin{lemma} \label{lemma:passage_espace_proj_rayon_spectral}
Let $\rho$ be a borelian probability measure on $\G$ having an exponential moment and whose support generates a strongly irreducible and proximal subgroup.

Let $\X = \Sb^{d-1} \times \Abf$ and $\sigma : \G\times \prob(\R^d) \to \R$ the cocycle defined for any $g\in \G$ and $X=\R x \in \prob(\R^d)$ by
\[
\sigma(g,X) = \ln \frac{ \|gx\|}{\|x\|}
\]
that we extend to $\X$ noting for any $(x,a) \in \X$, $\sigma(g,(x,a)) = \sigma(g,\R x)$.

Let $\nu$ be the unique $P_\rho-$invariant borelian probability measure on $\X/\Hb$ where we noted $\Hb = \{I_d,\vartheta\}$ where $\vartheta$ is the involution map on $\X$ that is the antipodal application on $\Sb^{d-1}$ and the identity on $\Abf$ (existence and unicity of $\nu$ are given in proposition~\ref{proposition:unique_mesure_invariante_contraction}).

\medskip
For any $\alpha, t_0,t_2,t_3, \beta, \Delta \in \R_+^\ast$, there are $\alpha_2,C,L \in \R_+^\ast$ such that for any $t\in \R$ with $|t| \geqslant t_0$, if
\[
\| (I_d-P(it))^{-1} \|_{\cal C^{0,\gamma}(\Sb^{d-1}\times \Abf)}\geqslant C|t|^L
\]
Then,
\[
\int_\G \un_{\left\{\substack{g\text{ is proximal and }\\V_g^+\text{ is }\Delta-\nu\text{-regular at scale }|t|^{-\alpha_2}}\right\}} \left| e^{-2i|\Abf|t\lambda_1(g)} - 1 \right|^2 \di\rho^{\ast n(\beta,t)}(g) \leqslant \frac 1 {|t|^\alpha}
\]
where we noted $n(\beta,t) = \lfloor \beta \ln|t| \rfloor$.
\end{lemma}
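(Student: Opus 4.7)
The strategy is to apply Proposition~\ref{proposition:Dolgopyat_markov} to $P(it)$, transfer the resulting integral bound from the lazy-walk measure $\rho_\e^{\ast N}$ to $\rho^{\ast n(\beta,t)}$, upgrade it to a pointwise bound via the lower regularity of $\rho^{\ast n}$ furnished by Lemma~\ref{lemma:convolution_mesures_SLd}, and finally specialize to $x=(v_g^+,a)$ where $v_g^+$ is a lift of the attracting fixed point $V_g^+$ of a proximal $g$, so that $\sigma$ evaluates to $\lambda_1(g)$.

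Under the hypothesis $\|(I_d-P(it))^{-1}\|\geqslant C|t|^L$, Proposition~\ref{proposition:Dolgopyat_markov} supplies some $\xi\in\widehat{\Hb}$ (with $\Hb=\{I_d,\vartheta\}$) and a function $f\in\mathcal C^{0,\gamma}_\xi(\X)$ satisfying $\|f\|_\infty\leqslant 1$, $m_\gamma(f)\leqslant C|t|$, $|f(x)|_{HS}\geqslant 1/2$ at every $x$ whose projection to $\X/\Hb$ is $\Delta$-$\nu$-regular at scale $|t|^{-\alpha_2}$, and the integral estimate
\[
\int_\G\bigl|e^{-it\sigma(h,x)}f(hx)-f(x)\bigr|_{HS}^2\,d\rho_\e^{\ast n(\beta',t)}(h)\leqslant|t|^{-\alpha_1'}
\]
for each such $x$; the exponents $\alpha_1',\beta'$ can be made as large as needed by enlarging $L$. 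Using $\rho_\e^{\ast N}\geqslant\binom{N}{n}2^{-N}\rho^{\ast n}$ with $N=n(\beta',t)$ and $\beta'=2\beta$, the ratio $2^N/\binom{N}{n}\leqslant 2N+1$ is polynomial in $|t|$, giving the analogous bound with $\rho^{\ast n(\beta,t)}$ in place of $\rho_\e^{\ast n(\beta',t)}$ at the cost of a slightly smaller exponent $\alpha_1''$.

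For $\rho^{\ast n}$-generic $g$, Lemma~\ref{lemma:convolution_mesures_SLd} yields $\rho^{\ast n}(B(g,e^{-t_2n}))\geqslant|t|^{-t_3\beta}$, while the integrand is Lipschitz in $h$ with constant controlled by $|t|\cdot\|g\|^{2d}$ (via Lemma~\ref{lemma:action_lipschitz_espace_proj} and the bound on $m_\gamma(f)$); choosing $t_2$ large enough, the integral estimate restricted to this ball forces $|e^{-it\sigma(g,x)}f(gx)-f(x)|_{HS}\leqslant|t|^{-\alpha_3}$ pointwise for each regular $x$ and each generic $g$. By Lemma~\ref{lemma:genericite_loxodromie} and Corollary~\ref{corollary:regularite_inferieure_espace_projectif}, $\rho^{\ast n}$-most $g$ is also proximal with $V_g^+$ being $\Delta$-$\nu$-regular at scale $|t|^{-\alpha_2}$, so we may evaluate this pointwise bound at $x=(v_g^+,a)$ for any fixed $a\in\Abf$. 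Since $\sigma(g,(v_g^+,a))=\lambda_1(g)$ and $g\cdot(v_g^+,a)=(\varepsilon_1(g)v_g^+,ga)$, iterating along $g^k$ gives $|e^{-ikt\lambda_1(g)}f(\varepsilon_1(g)^kv_g^+,g^ka)-f(v_g^+,a)|_{HS}\leqslant k|t|^{-\alpha_3}$; choosing $k\leqslant 2|\Abf|$ to be both even and a multiple of the $g$-orbit size of $a$ in $\Abf$ (so that $g^k(v_g^+,a)=(v_g^+,a)$), using $|f(v_g^+,a)|_{HS}\geqslant 1/2$, and performing the integer-power adjustment to pass from $e^{-ikt\lambda_1(g)}$ to $e^{-2i|\Abf|t\lambda_1(g)}$, yields $|e^{-2i|\Abf|t\lambda_1(g)}-1|\leqslant C|\Abf|^2|t|^{-\alpha_3}$. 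Squaring and integrating over the good set of $g$'s completes the proof, provided $\alpha_3>\alpha$, which can be arranged by choosing $\alpha_1'$ and hence $L$ large enough in terms of $\alpha,\beta,\Delta,t_2,t_3$.

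The main obstacle is the integral-to-pointwise conversion of the third paragraph: one must simultaneously ensure that the Lipschitz variation of $E(h,x):=e^{-it\sigma(h,x)}f(hx)-f(x)$ over $B(g,e^{-t_2n})$ is dominated by the target error, and that the mass loss $|t|^{-t_3\beta}$ from restricting to that ball is beaten by the Dolgopyat exponent $\alpha_1'$. This is precisely why Proposition~\ref{proposition:Dolgopyat_markov} is stated with freely choosable parameters $\alpha_1,\beta$, and why the constant $L$ in the present lemma must be taken very large relative to the fixed input parameters $\alpha,\beta,\Delta,t_2,t_3$.
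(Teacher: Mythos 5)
Your first two steps — applying Proposition~\ref{proposition:Dolgopyat_markov}, converting from $\rho_\e^{\ast N}$ to $\rho^{\ast n(\beta,t)}$, and upgrading the integral estimate to a pointwise one over $B(g,e^{-t_2 n})$ using Lemma~\ref{lemma:convolution_mesures_SLd} and the Lipschitz control — are exactly the paper's strategy, and they are fine. Where you diverge is the last step, and there is a real gap there.

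The paper extracts the quantity $e^{-2i|\Abf|t\lambda_1(g)}$ by introducing the scalar function $\varphi(u,a)=\prod_{a'\in\Abf}f(u,a')f(-u,a')$: this has exactly $2|\Abf|$ factors of $f$, is $\Hb$-invariant, and (being a product over \emph{all} of $\Abf$) is invariant under the permutation of $\Abf$ by $g$, so a single telescoping of the pointwise bound over these $2|\Abf|$ factors gives $\bigl|e^{-2i|\Abf|t\lambda_1(g)}-1\bigr|\,|\varphi(V_g^+)|\leqslant 2|\Abf|\cdot(\text{error})$ with $|\varphi(V_g^+)|\geqslant 2^{-2|\Abf|}$. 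You instead iterate $g^j$ on $(v_g^+,a)$, taking $k$ even and a multiple of the $\langle g\rangle$-orbit size $m$ of $a$ so that $g^k(v_g^+,a)=(v_g^+,a)$, obtaining $\bigl|e^{-ikt\lambda_1(g)}-1\bigr|\lesssim k\cdot(\text{error})$, and then claim you can pass to $e^{-2i|\Abf|t\lambda_1(g)}$ by an ``integer-power adjustment''. That adjustment requires $k\mid 2|\Abf|$, since the only elementary implication available is $|z^N-1|\leqslant N|z-1|$ for a positive integer $N$; one cannot go in the other direction. But $m$, the orbit size of $a$ under the cyclic group $\langle g\rangle$, need \emph{not} divide $|\Abf|$: take $|\Abf|=5$ and $g$ acting as a $2$-cycle times a $3$-cycle, say. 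Then $m=3$ forces $k=\operatorname{lcm}(2,3)=6$, which does not divide $2|\Abf|=10$, and the step breaks down. Replacing $k$ by something like $2|\Abf|!$ fixes the divisibility but then proves a bound on $|e^{-2i|\Abf|!\,t\lambda_1(g)}-1|$ rather than the stated one, and there is no way to descend. The clean fix is to adopt the paper's product trick, which sidesteps the orbit structure of $a$ under a single $g$ by using a function that is manifestly independent of the $\Abf$-coordinate.

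One smaller point: the Lipschitz control you invoke via Lemma~\ref{lemma:action_lipschitz_espace_proj} is a control in the $X$-variable; what the argument actually needs is a Lipschitz bound in $h$ over the ball $B(g,e^{-t_2 n})$, i.e.\ bounds on $|\sigma(h,x)-\sigma(g,x)|$ and $d(hx,gx)$ in terms of $\|h-g\|$. These are easy to obtain for $\mathrm{SL}_d(\R)$ but should be stated explicitly rather than attributed to that lemma.
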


\begin{proof}
We can assume without any loss of generality that $t_0=2$ (otherwise we set $\sigma' =t_0 \sigma/2$ and $t'=2t/t_0$).

Note that as $\Hb$ is isomorphic to $\Z/2\Z$, it's irreducible unitary representations are $1-$dimensional and the decomposition of lemma~\ref{lemme:decomposition_paire_impaire} is just the decomposition between even and odd parts.

Applying proposition~\ref{proposition:Dolgopyat_markov}, for any $\alpha_1, \beta,\Delta \in \R_+^\ast$, there are $\alpha_2,C,L$ such that if
\[
\| (I_d-P(it))^{-1} \|_{\cal C^{0,\gamma}(\Sb^{d-1}\times \Abf)} \geqslant C|t|^L
\]
then there are $\xi\in \widehat \Hb$ and a function $f\in \cal C_\xi^{0,\gamma}(\X)$ with $\|f\|_\infty \leqslant C$ and $m_\gamma(f)\leqslant C|t|$ such that for any point $x$ in $\X$ whose projection on $\X/\Hb$ is $\Delta-\nu$-regular at scale $|t|^{-\alpha_2}$, we have
\[
|f(x)| \geqslant \frac 1 2
\]
and
\[
\int_\G  \left| e^{-it\sigma(g,x)}f(gx) - f(x) \right|^2 \di \rho_{\e}^{\star n(\beta,t)} (g)\leqslant \frac {1} {|t|^{\alpha_1}}
\]
where we set $n(\beta,t) = \lfloor \beta \ln |t| \rfloor
$ and $\rho_{\e}$ is the measure associated to the lazy random walk (see paragraph~\ref{subsubsection:marche_ralentie}).

To prove the proposition, we are going to make the point $x$ in the integral depend on the point $g$ chosen with the measure $\rho_\e^{\ast n(\beta, t)}$ to take $x= V_g^+$ since this in this way we will have that $gx=x$ and $\sigma(g,x) = \lambda_1(g)$.

To do so, we will first get a control for any point $x$ and any $g$ except on a set of exponentially small measure, choose the point $x$ we want and then integrate the result. The cost of this operation is passing from $\alpha$ to $\alpha_1$.

\medskip
We note $n=n(\beta, t)$ and for $t_2,t_3 \in \R_+^\ast$ whose value will be fixed later, we note
\[
G_n = \left\{\begin{array}{c|l}\multirow{2}*{$g\in \G$} & g\text{ is proximal, }V_g^+ \text{ is }\Delta-\nu\text{-regular at scale }|t|^{-\alpha_2} \text{ and }\\& \rho^{\ast n} (B(g,e^{-t_2 n})) \geqslant e^{-t_3 n} \end{array}\right\}
\]
Then, for any $g\in G_n$ and any $x$ that is $\Delta-\nu$-regular at scale $|t|^{-\alpha_2}$,
\[
 \int_\G \un_{h\in B(g,e^{-t_2 n})} \left| e^{-it\sigma(h,x)} f(hx) - f(x) \right|^2 \di\rho^{\ast n}(h) \leqslant \frac{1}{|t|^{\alpha_1}}
\]
Using the triangular inequality, we get that
\begin{align*}
I_b(x) :&=\sqrt{\rho^{\ast n}(B(g,e^{-t_2 n}))}\left|e^{-it\sigma(g,x)} f(gx) - f(x)\right| \\&\leqslant \left( \int_{ B(g,e^{-t_2 n})} \left| e^{-it\sigma(h,x)} f(hx) - f(x) \right|^2 \di\rho^{\ast n}(h) \right)^{1/2} \\
&\retrait+ \left(\int_{B(g,e^{-t_2 n})} \left| e^{-it\sigma(g,x)} f(gx) - e^{-it\sigma(h,x)}f(hx) \right|^2\di\rho^{\ast n}(h)  \right)^{1/2} \\
& \leqslant \frac{1}{|t|^{\alpha_1/2}} + \sqrt{ C' |t| e^{-t_2 \gamma n}\rho^{\ast n}(B(g,e^{-t_2 n}))}
\end{align*}
where we use that for any $h\in B(g,e^{-t_2 n})$, we have that
\begin{align*}
\left|e^{-it\sigma(g,x)} f(gx) - e^{-it\sigma(h,x)} f(hx)\right| &\leqslant \left| e^{-it\sigma(h,x)} - e^{-it\sigma(g,x)}\right| |f(gx)|+  \left|f(hx) - f(gx)\right| \\
& \leqslant 2|t|^\gamma C \left|\sigma(g,x) - \sigma(h,x) \right|^{\gamma} + m_\gamma(f) d(hx,gx)^\gamma \\
& \leqslant C'|t| e^{-t_2 \gamma n}
\end{align*}
For some constant $C'$ depending on $\sigma$ (for more details, we refer to the computations in the proof of proposition~\ref{proposition:essential_spectral_radius}).

Thus, we get that for any $g\in G_n$ and any $x$ that is $\Delta-\nu$-regular at scale $|t|^{-\alpha_2}$,
\[
\left| e^{-it\sigma(g,x)}f(gx) -f(x) \right| \leqslant \frac{e^{t_3 n/2}}{|t|^{\alpha_1/2}} + \sqrt{ C'|t| e^{-t_2 \gamma n}}
\]
Note now, for any $x=(u,a)\in \Sb^{d-1}\times \Abf$,
\[
\varphi(x)= \prod_{a\in \Abf} f(u,a) f(-u,a)
\]
This function is $\Hb-$invariant and moreover, we have that for any $x$ whose projection on the projective space is $\Delta-\nu$-regular at scale $|t|^{-\alpha_2}$,
\begin{align*}
\left|e^{-2it|\Abf|\sigma(g,x)} \varphi(gx) - \varphi(x) \right| &\leqslant \sum_{a\in \Abf} \sum_{h\in \Hb} \left| e^{-it\sigma(g,x)} \varphi(g hx) - \varphi(hx) \right|
\\& \leqslant 2 |\Abf| \left( \frac{e^{t_3 n/2}}{|t|^{\alpha_1/2}} + \sqrt{ C'|t| e^{-t_2 \gamma n}}\right)
\end{align*}
But, we can now use that, by definition of $G_n$, $V_g^+$ is $\Delta-\nu$-regular at scale $|t|^{-\alpha_2}$ and so, we get that for any $g\in G_n$,
\begin{align*}
\left| e^{-2it|\Abf|\lambda_1(g)} - 1 \right| |\varphi(V_g^+)| &=\left|e^{-2it|\Abf| \lambda_1(g)} \varphi_1(V_g^+) - \varphi_1(V_g^+) \right| \\&\leqslant 2 |\Abf| \left( \frac{e^{t_3 n/2}}{|t|^{\alpha_1/2}} + \sqrt{ C'|t| e^{-t_2 \gamma n}}\right)
\end{align*}
where we identified $\varphi$ with a function on the projective space.

This proves, using that $|\varphi(V_g^+)| \geqslant 2^{-2|\Abf|}$, that for any $g\in G_n$,
\[
\left| e^{-2it|\Abf| \lambda_1(g)} - 1 \right| \leqslant 2^{1+2|\Abf|} |\Abf|\left( \frac{e^{t_3 n/2}}{|t|^{\alpha_1/2}} + \sqrt{ C'|t| e^{-t_2 \gamma n}}\right)
\]
Thus, we get that
\[
\int_\G \un_{G_n}(g) \left|e^{-2i|\Abf|t \lambda_1(g)} -1 \right|^2 \di\rho^{\ast n(\beta,t)}_\e(g) \leqslant  2^{1+2|\Abf|} |\Abf|\left( \frac{e^{t_3 n/2}}{|t|^{\alpha_1/2}} + \sqrt{ C'|t| e^{-t_2 \gamma n}}\right)
\]
Moreover, we note $C_\Abf =2^{1+2|\Abf|} |\Abf|$ and
\[
G_n^1 := \left\{g\in \G \middle| g\text{ is proximal, }V_g^+ \text{ is }\Delta-\nu\text{-regular at scale }|t|^{-\alpha_2} \right\}
\]
We now have that
\[
\int_{G_n^1} \left|e^{-2i|\Abf|t \lambda_1(g)} -1 \right|^2 \di\rho^{\ast n(\beta,t)}_\e(g) \leqslant \int_{G_n} \left|e^{-2i|\Abf|t \lambda_1(g)} -1 \right|^2 \di\rho^{\ast n(\beta,t)}_\e(g) + 2 \rho^{\ast n}(G_n^1 \setminus G_n)
\]
And so, using lemma~\ref{lemma:convolution_mesures_SLd}, we get that for any $t_1,t_2$, there is $t_3$ such that for any $n$ large enough,
\[
\int_{G_n^1} \left|e^{-2i|\Abf|t \lambda_1(g)} -1 \right|^2 \di\rho^{\ast n(\beta,t)}_\e(g) \leqslant C_\Abf \left( \frac{e^{t_3 n/2}}{|t|^{\alpha_1/2}} + \sqrt{ C'|t| e^{-t_2 \gamma n}}\right) +2 e^{-t_1 n}
\]
So taking $t_1$ and $t_2$ large enough (this imposes a value for $t_3$), then taking $\alpha_1$ large enough and using the fact that $|t|\geqslant 2$, we get that
\[
\int_{G_n^1} \left|e^{-2i|\Abf|t \lambda_1(g)} -1 \right|^2 \di\rho^{\ast n(\beta,t)}_\e(g) \leqslant \frac 1 {|t|^\alpha}
\]
and this is what we intended to prove.
\end{proof}

\subsection{Diophantine properties of the lengths of translations}

\begin{miniabstract}
In this paragraph, we prove that for strongly irreducible and proximal measures on $\mathrm{SL}_d(\R)$, the logarithms of the spectral radii of generic elements satisfy a diophantine condition of the same kind of the one used by Carlsson in~\cite{Car83} in the study of the walk on $\R$.
\end{miniabstract}

Let $\rho$ be a borelian probability measure on $\G=\mathrm{SL}_d(\R)$. For $g \in \G$, we note $\lambda_1(g)$ the logarithm of the spectral radius of $g$. As in the study of the renewal theorem on $\R$, the study of the renewal theorem on $\R^d$ requires a diophantine assumption. However, we are going to prove that it is always satisfied for measures whose support generate a strongly irreducible and proximal subgroup.

First of all, we are going to get an estimation of the difference between $\lambda_1(gh) $ and $\lambda_1(g)  + \lambda_1(h)$ when $g$ and $h$ are proximal elements in generic position.

\begin{lemma} \label{lemma:erreur_rayon_spectral}
There are $c_1, c_2 \in \R_+^\ast$ such that for any $\varepsilon \in \left]0,c_1\right]$ and any $g,h\in \G$ with $\kappa_{1,2}(g), \kappa_{1,2}(h) \leqslant \varepsilon^4$, $\delta(X_g^M, Y_g^m) \geqslant 2 \varepsilon$, $ \delta(X_h^M, Y_h^m) \geqslant 2 \varepsilon$, $\delta(X_g^M,Y_h^m) \geqslant 2 \varepsilon$ and $\delta(X_h^M, Y_g^m) \geqslant 2\varepsilon$, we have that
\[
\left|\lambda_1(g) + \lambda_1(h) - \lambda_1(gh) - \ln \frac{\delta(V_h^+, V_h^<) \delta(V_g^+, V_g^<)}{\delta(V_g^+, V_h^<) \delta(V_h^+, V_g^<)} \right| \leqslant c_2 \left( \frac{\kappa_{1,2}(g)}{\varepsilon^3} + \frac{ \kappa_{1,2}(h)}{\varepsilon^3} \right)
\]
\end{lemma}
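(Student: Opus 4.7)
My plan is to compute $\sigma(gh, V_h^+)$ in two different ways and compare them, then replace the fixed points of $gh$ by those of $g$ and $h$ using the localization given by lemma~\ref{lemma:product_SVD}.

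First, I would check that under the hypotheses, $gh$ is itself proximal and that its attracting/repelling objects lie close to those of $g$ and $h$. Applying lemma~\ref{lemma:product_SVD} to the pair $(h,g)$ gives $\kappa_{1,2}(gh)\leqslant \kappa_{1,2}(g)\kappa_{1,2}(h)/\varepsilon^{2}$, $d(X_{gh}^M,X_g^M)\leqslant \kappa_{1,2}(g)/\varepsilon$ and $d(Y_{gh}^m,Y_h^m)\leqslant \kappa_{1,2}(h)/\varepsilon$, so for $\varepsilon$ small, lemma~\ref{lemma:quantification_proximale} guarantees that $gh$ is proximal and that $d(V_{gh}^+,V_g^+)$ and $d(V_{gh}^<,V_h^<)$ are both $O(\kappa_{1,2}(g)/\varepsilon+\kappa_{1,2}(h)/\varepsilon)$. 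In particular, using the hypothesis $\delta(X_h^M,Y_g^m)\geqslant 2\varepsilon$ and the triangular inequality, $\delta(V_h^+,V_g^<)$, $\delta(V_h^+,V_{gh}^<)$ and $\delta(V_{gh}^+,V_{gh}^<)$ are all bounded below by $\varepsilon$ up to small corrections, which justifies the use of lemma~\ref{lemma:contraction_proximale} at these points.

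Next, since $hv_h^+=\pm e^{\lambda_1(h)}v_h^+$, the cocycle identity reads $\sigma(gh,V_h^+)=\sigma(g,V_h^+)+\lambda_1(h)$. Applying lemma~\ref{lemma:contraction_proximale} to $g$ at $V_h^+$ yields
\[
\sigma(g,V_h^+)=\lambda_1(g)+\ln\frac{\delta(V_h^+,V_g^<)}{\delta(V_g^+,V_g^<)}+O\!\left(\frac{\kappa_{1,2}(g)}{\varepsilon^{3}}\right),
\]
while applying the same lemma to $gh$ at $V_h^+$ (using that $V_h^+$ is itself $g$-fixed up to scaling, or simply that $V_h^+$ lies away from $V_{gh}^<$) gives
\[
\sigma(gh,V_h^+)=\lambda_1(gh)+\ln\frac{\delta(V_h^+,V_{gh}^<)}{\delta(V_{gh}^+,V_{gh}^<)}+O\!\left(\frac{\kappa_{1,2}(gh)}{\varepsilon^{3}}\right).
\]
Equating the two expressions for $\sigma(gh,V_h^+)$ isolates $\lambda_1(g)+\lambda_1(h)-\lambda_1(gh)$ in terms of $\ln\bigl[\delta(V_h^+,V_{gh}^<)\delta(V_g^+,V_g^<)/(\delta(V_h^+,V_g^<)\delta(V_{gh}^+,V_{gh}^<))\bigr]$ modulo the above errors.

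Finally, I would replace $V_{gh}^+$ by $V_g^+$ and $V_{gh}^<$ by $V_h^<$ inside the two remaining $\delta$'s. The map $(X,Y)\mapsto\delta(X,Y)$ is $1$-Lipschitz in each variable, so
\[
\bigl|\delta(V_h^+,V_{gh}^<)-\delta(V_h^+,V_h^<)\bigr|\leqslant d(V_{gh}^<,V_h^<)\lesssim \kappa_{1,2}(h)/\varepsilon,
\]
and similarly for $\delta(V_{gh}^+,V_{gh}^<)$ versus $\delta(V_g^+,V_h^<)$. Since both quantities are bounded below by a positive multiple of $\varepsilon$ (by the assumptions and the closeness established in step one), dividing and taking logarithms produces errors of order $\kappa_{1,2}(g)/\varepsilon^{2}+\kappa_{1,2}(h)/\varepsilon^{2}$, which are dominated by the $O(\kappa_{1,2}(g)/\varepsilon^{3}+\kappa_{1,2}(h)/\varepsilon^{3})$ term already present; choosing $c_1$ small enough and $c_2$ large enough absorbs all constants. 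The main technical care is to make sure that each denominator appearing in a logarithm stays uniformly away from zero; this is where I expect the bookkeeping with the hypotheses $\delta(X_g^M,Y_h^m),\delta(X_h^M,Y_g^m)\geqslant 2\varepsilon$ to be delicate, but no new idea is required beyond the Lipschitz estimates just described.
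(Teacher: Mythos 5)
Your proposal follows essentially the same route as the paper's own proof: apply lemma~\ref{lemma:product_SVD} to localize the Cartan data of $gh$, use lemma~\ref{lemma:quantification_proximale} to conclude $gh$ is proximal with $V_{gh}^+$ near $V_g^+$ and $V_{gh}^<$ near $V_h^<$, then compare $\sigma(gh,V_h^+)$ computed via lemma~\ref{lemma:contraction_proximale} applied to $gh$ against the cocycle decomposition $\sigma(g,V_h^+)+\lambda_1(h)$, and finally trade $V_{gh}^{\pm}$ for $V_g^+,V_h^<$ inside the logarithms using the $1$-Lipschitz property of $\delta$. The bookkeeping and error orders you predict match those in the paper.
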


\begin{remark}
According to lemma~\ref{lemma:genericite_loxodromie}, there are many pairs of elements $g$ and $h$ that satisfy the assumption of the lemma in the support of $\rho^{\ast n}$.
\end{remark}

\begin{proof}
We take at first $c_1 = 1/4$ and $c_2= 1$.

First, according to lemma~\ref{lemma:product_SVD}, we have that
\[
\kappa_{1,2}(gh) \leqslant \frac{ \kappa_{1,2}(g)\kappa_{1,2}(h)}{\varepsilon^2},\quad d(X_{gh}^M,X_g^M) \leqslant \frac{ \kappa_{1,2}(g)}{\varepsilon} \text{ and }d(Y_{gh}^m,Y_h^m) \leqslant \frac{ \kappa_{1,2}(h)}{\varepsilon}
\]
and so,
\[
\kappa_{1,2}(gh) \leqslant \varepsilon^4 \text{ and } \delta(X_{gh}^M, Y_{gh}^m) \geqslant 2 \varepsilon (1-\varepsilon) \geqslant \frac 3 2 \varepsilon
\]
We note $\varepsilon' = \frac 3 4 \varepsilon$ and so we have that $d(X_{gh}^M, Y_{gh}^m) \geqslant 2 \varepsilon'$ and $\kappa_{1,2}(gh) \leqslant \frac 1 4\left(\frac 4 3\right)^3 \varepsilon'^3 \leqslant \varepsilon'^3$.

Thus, using lemma~\ref{lemma:quantification_proximale}, $gh$ is proximal and
\[
d(V_{gh}^+, V_g^+) \leqslant d(V_{gh}^+, X_{gh}^M) + d(X_{gh}^M, X_g^M) + d(X_g^M, V_g^+) \leqslant  \frac{ \kappa_{1,2}(gh)}{\varepsilon'} +  \frac{2\kappa_{1,2}(g)} \varepsilon 
\]
In the same way we get
\begin{align*}
d(V_{gh}^<, V_h^<) &\leqslant d(V_{gh}^<,Y_{gh}^m) + d(Y_{gh}^m, Y_h^m) + d(Y_h^m, V_h^<) \leqslant \frac{ \kappa_{1,2}(gh)}{\varepsilon'} + 2\frac{\kappa_{1,2}(h)} \varepsilon \\
& \leqslant \frac 4 3 \varepsilon^3 + 2 \varepsilon^2 \leqslant 3 \varepsilon^2
\end{align*}
and so,
\[
\delta(V_h^+, V_{gh}^<) \geqslant 2 \varepsilon - 3 \varepsilon^2 = 2 \varepsilon (1 - 3 \varepsilon/2) \geqslant \frac \varepsilon 2
\]
So we note $\varepsilon'' = \frac \varepsilon 4$ and then, $\delta(V_{h}^+, V_{gh}^<) \geqslant 2 \varepsilon''$ and $\kappa_{1,2}(gh) \leqslant 64 c_1 \varepsilon''^3$ so, assuming that $c_1 \leqslant \frac 1{64}$, according to lemma~\ref{lemma:contraction_proximale}, we have that
\[
\left|\sigma(gh, V_h^+)  - \lambda_1(gh) - \ln \frac{\delta(V_h^+,V_{gh}^<)}{\delta(V_{gh}^+,V_{gh}^<)} \right| \leqslant 2  \frac{ \kappa_{1,2}(gh)}{ \varepsilon''^3} \leqslant 2 \frac{ \kappa_{1,2}(h) \varepsilon}{\varepsilon''^3}
\]
Moreover, using the cocycle relation and the fact that, by definition of $V_h^+$, $\sigma(h,V_h^+) = \lambda_1(h)$, we get that
\[
\sigma(gh,V_h^+) = \sigma(g,hV_h^+)+ \sigma(h,V_h^+) = \sigma(g,V_h^+) + \lambda_1(h)
\]
and finally, we also have, still with lemma~\ref{lemma:contraction_proximale}, that
\[
\left|\sigma(g,V_h^+) - \lambda_1(g) -  \ln \frac{\delta(V_h^+,V_{g}^<) }{\delta(V_g^+, V_g^<) } \right| \leqslant  2 \frac{ \kappa_{1,2}(g)}{\varepsilon^3}
\]
This leads to
\[
\left|\lambda_1(g) + \lambda_1(h) - \lambda_1(gh) - \ln \frac{\delta(V_h^+, V_{gh}^<) \delta(V_g^+, V_g^<)}{\delta(V_{gh}^+, V_{gh}^<) \delta(V_h^+, V_g^<)} \right| \leqslant 2 \frac{ \kappa_{1,2}(g)}{\varepsilon^3} + 2^7 \frac{ \kappa_{1,2}(h) }{\varepsilon^2}
\]
To conclude, remark that
\[
\left|\delta(V_h^+, V_{gh}^<) - \delta(V_h^+, V_h^<) \right| \leqslant d(V_h^<, V_{gh}^<) \leqslant 3 \varepsilon^2
\text{ and }
\delta(V_h^+, V_{gh}^<) \geqslant\frac  \varepsilon 2
\]
so for $\varepsilon$ small enough, we have that for some constant $c_3$,
\[
\left| \ln \frac{\delta(V_h^+, V_{gh}^<)}{\delta(V_h^+, V_h^<)} \right| \leqslant  \frac{c_3}\varepsilon d(V_h^<, V_{gh}^<) \leqslant \frac{c_3}\varepsilon \left( \frac{ \kappa_{1,2}(gh)}{\varepsilon'} + 2\frac{\kappa_{1,2}(h)} \varepsilon \right)
\]
We can control $|\ln \delta(V_{gh}^+, V_{gh}^<) - \ln \delta(V_g^+, V_h^<)|$ with the same kind of ideas and get the expected result.
\end{proof}

In the sequel, we will need a result about the continuity of the Cartan decomposition. This will be the following
\begin{lemma} \label{lemma:SVD_puissance}
Let $g\in \G$.

Then, for any $h\in \G$,
\[
d\left(X_h^M, X_g^M\right) \leqslant  \left(2\|g-h\| + \kappa_{1,2}(g) \right), \text{ and } d \left(Y_h^m ,Y_g^m\right) \leqslant \left(2\|g-h\| + \kappa_{1,2}(g) \right)
\]
Moreover, there are constants $c_1,c_2 \in \R_+^\ast$ such that for any $p\in \N^\ast$, any $\varepsilon \in \left]0,c_1\right]$, any $g$ in $\G$ with $\kappa_{1,2}(g) \leqslant  \varepsilon^3$ and $\delta(X_g^M, Y_g^m) \geqslant 2\varepsilon$, any $r\in \R_+^\ast$ with $r\leqslant \varepsilon^2 \left( \frac{ \kappa_{1,2}(g)}{\varepsilon^2}\right)^p$ and any element $f\in B(g^p, r)$,
\[
\delta(X_f^M, Y_f^m) \geqslant \varepsilon,\quad d(X_f^M, X_g^M) \leqslant 2\frac {\kappa_{1,2}(g)}\varepsilon, \quad d(Y_f^m, Y_g^m) \leqslant 2\frac {\kappa_{1,2}(g)} \varepsilon
\]
\[
\quad \kappa_1(f) \geqslant \frac 1 2 \varepsilon^{p-1} \kappa_1(g)^p , \quad \kappa_{1,2}(f) \leqslant \frac{ 16}{ \varepsilon^{2(p-1)}} \kappa_{1,2}(g)^p 
\]
\[
d(V_f^+, V_{g}^+)  \leqslant c_2 \frac{ \kappa_{1,2}(g)^p}{\varepsilon^{2p-1}} \text{ et }d(V_f^<, V_{g}^<) \leqslant c_2 \frac{ \kappa_{1,2}(g)^p}{\varepsilon^{2p-1}}
\]
\end{lemma}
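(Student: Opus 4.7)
The proof splits naturally into two steps: first the general continuity statement, then the iterated/perturbed statement.

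\emph{Step 1 (the first pair of inequalities).} I would argue as follows: the direction $X_h^M = \R(k_h e_1)$ coincides with $\R(h e_0)$, where $e_0 = l_h^{-1}e_1$ is the unit vector realising $\|h e_0\| = \|h\|$. Since $\|(g-h)e_0\| \leq \|g-h\|$, a direct antisymmetric-tensor computation yields
\[
d(\R(h e_0), \R(g e_0)) = \frac{\|h e_0 \wedge (g-h)e_0\|}{\|h e_0\|\|g e_0\|} \leq \frac{\|g-h\|}{\|g e_0\|},
\]
which is controlled by $\|g-h\|/\|g\|$ up to a constant as long as $\|g-h\| \leq \|g\|/2$ (otherwise the claimed inequality is trivial since $d \leq 1$). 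Then item~3 of Lemma~\ref{lemma:controles_r_prox} applied to $\R e_0$ gives $d(\R(g e_0), X_g^M) \leq \kappa_{1,2}(g)/\delta(\R e_0, Y_g^m)$, while item~1 of the same lemma yields the lower bound $\delta(\R e_0, Y_g^m) \geq \|g e_0\|/\|g\| - \kappa_{1,2}(g)$, which is close to $1$ once $\|g-h\|/\|g\|$ and $\kappa_{1,2}(g)$ are small. The triangle inequality closes the first estimate, and the dual bound on $d(Y_h^m, Y_g^m)$ is obtained by the same argument in $\V^\ast$ applied to ${}^t g, {}^t h$.

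\emph{Step 2 (the iterated product).} I would apply Lemma~\ref{lemma:product_SVD} with $g_1 = \cdots = g_p = g$; the compatibility conditions reduce exactly to the standing hypotheses $\kappa_{1,2}(g) \leq \varepsilon^3$ and $\delta(X_g^M, Y_g^m) \geq 2\varepsilon$. The lemma furnishes at once the bounds on $\kappa_1(g^p)$, $\kappa_{1,2}(g^p)$, and $d(X_{g^p}^M, X_g^M), d(Y_{g^p}^m, Y_g^m) \leq \kappa_{1,2}(g)/\varepsilon$. To transfer these to $f \in B(g^p, r)$, I would use operator-norm continuity: $\kappa_1(f) = \|f\| \geq \|g^p\| - r$ gives the lower bound on $\kappa_1(f)$, and
\[
\|\wedge^2 f - \wedge^2 g^p\| \leq 2\max(\|f\|, \|g^p\|)\, \|f - g^p\|
\]
controls $\kappa_1(f)\kappa_2(f)$ from above; dividing by $\kappa_1(f)^2$ and plugging in the assumed bound on $r$ yields the claimed estimate on $\kappa_{1,2}(f)$. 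For the direction data, Step~1 applied to the pair $(g^p, f)$ gives $d(X_f^M, X_{g^p}^M)$ and $d(Y_f^m, Y_{g^p}^m)$ of order $r/\|g^p\| + \kappa_{1,2}(g^p)$, and combining with the previous bounds (the stated hypothesis $r \leq \varepsilon^2(\kappa_{1,2}(g)/\varepsilon^2)^p$ is precisely tuned so that both these error terms are dominated by $\kappa_{1,2}(g)/\varepsilon$) produces the desired $2\kappa_{1,2}(g)/\varepsilon$. The lower bound $\delta(X_f^M, Y_f^m) \geq \varepsilon$ follows by the triangle inequality from $\delta(X_g^M, Y_g^m) \geq 2\varepsilon$ together with the just-obtained direction estimates.

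Finally, the bounds on $V_f^+$ and $V_f^<$ come from Lemma~\ref{lemma:quantification_proximale} applied to $f$: the verified estimates $\kappa_{1,2}(f) \leq 16\,\kappa_{1,2}(g)^p/\varepsilon^{2(p-1)}$ and $\delta(X_f^M, Y_f^m) \geq \varepsilon$ satisfy its hypotheses for $c_1$ small enough, yielding proximality of $f$ together with $d(V_f^+, X_f^M), d(V_f^<, Y_f^m) \leq 2\kappa_{1,2}(f)/\varepsilon$. Combining this with $d(X_f^M, X_g^M) \leq 2\kappa_{1,2}(g)/\varepsilon$ and $d(X_g^M, V_g^+) \leq \kappa_{1,2}(g)/\varepsilon$ (Lemma~\ref{lemma:quantification_proximale} applied to $g$) produces the claim via the triangle inequality. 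The main technical obstacle is the bookkeeping of the powers of $\varepsilon$ across the two steps: one must verify that the perturbation error from Step~1 (which involves $r/\|g^p\|$ and thus carries a factor $\varepsilon^{-(3p-3)}$) is dominated by the dominant term $\kappa_{1,2}(g)/\varepsilon$ from Lemma~\ref{lemma:product_SVD} uniformly in $p$, and this dictates the exact shape of the assumption on $r$.
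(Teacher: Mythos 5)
Your proposal follows essentially the same route as the paper. For the iterated part you apply Lemma~\ref{lemma:product_SVD} with $g_1=\cdots=g_p=g$, transfer the singular-value and Cartan-direction data from $g^p$ to $f\in B(g^p,r)$ via operator-norm continuity and the first part of the statement, and finally invoke Lemma~\ref{lemma:quantification_proximale} on $f$ to get proximality and the bounds on $V_f^+, V_f^<$, closing with the triangle inequality and $V_{g^p}^+=V_g^+$. That is exactly the paper's argument, and your check that the $r$-hypothesis makes the perturbation error $r/\|g^p\|$ subordinate to $\kappa_{1,2}(g)/\varepsilon$ is the same bookkeeping the paper carries out.

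The one place where you diverge slightly is the first pair of inequalities. The paper decomposes $g\,{}^t l_h e_1$ directly against the Cartan frame of $g$, writing it as $\kappa_1(g)\langle {}^tl_h e_1, {}^tl_g e_1\rangle x_g^M$ plus an orthogonal term of norm $\leqslant \kappa_2(g)$, and compares with $\kappa_1(h)x_h^M = h\,{}^t l_h e_1$; wedging with $x_g^M$ and using $\kappa_1(g)=\|g\|\geqslant 1$ yields the stated constants verbatim. You instead pass through $\R(ge_0)$ with $e_0 = {}^t l_h e_1$ and then apply items~1 and~3 of Lemma~\ref{lemma:controles_r_prox}, which forces you to lower-bound $\|ge_0\|$ and $\delta(\R e_0, Y_g^m)$; that is sound (and trivial once $\|g-h\|$ is not small), but it produces the inequality only up to absolute constants rather than with the exact coefficients displayed. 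Since the lemma is only ever invoked up to constants, this is immaterial; still, the direct Cartan-frame computation is cleaner and is what gives the clean $2\|g-h\|+\kappa_{1,2}(g)$ bound.
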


\begin{proof}
We are going to evaluate $g ^t l_h e_1$ in two ways.

First,
\begin{align*}
\kappa_1(h) x_h^M &= h ^t l_h e_1 = g ^tl_h e_1 + (h-g) ^tl_h e_1  \\
&= \kappa_1(g) \langle ^t l_h e_1, ^t l_g e_1\rangle x_g^M + u+  (h-g) ^tl_h e_1
\end{align*}
for some $u$ such that $\|u\| \leqslant \kappa_2(g)$.

So,
\begin{align*}
\kappa_1(h)\|x_h^M -\langle ^t l_h e_1, ^t l_g e_1\rangle x_g^M\| &\leqslant |\kappa_1(g) - \kappa_1(h)| + \|u\| + \|g-h\| \\
&\leqslant \kappa_2(g) + 2\|g-h\|
\end{align*}
and as $\|x_h^M\|=1= \|x_g^M\|$, we can deduce that
\[
d(X_g^M, X_h^M) = \|x_g^M \wedge x_h^M\| = \left\| \left(x_h^M - \langle ^tl_h e_1, ^tl_g e_1 \rangle x_g^M \right) \wedge x_g^M \right\| \leqslant \kappa_{1,2}(g) + 2 \frac{ \|g-h\|}{\|g\|}
\]
This proves the first part of the lemma since for any $g\in \G$, $\|g\|=\kappa_1(g)\geqslant 1$.

To get the control of $d(Y_g^m ,Y_h^m)$, we do the same computations in the dual space.

\medskip
To prove the end of the lemma, we note that according to lemma~\ref{lemma:product_SVD}, we have that
\[
\kappa_1(g^p) \geqslant \varepsilon^{p-1} \kappa_1(g)^p, \quad \kappa_{1,2}(g^p) \leqslant \frac 1 { \varepsilon^{2(p-1)}} \kappa_{1,2}(g)^p
\]
and
\[
d(X_{g^p}^M, X_g^M) \leqslant \frac{ \kappa_{1,2}(g)}\varepsilon, \quad d(Y_{g^p}^m, Y_g^m) \leqslant \frac{ \kappa_{1,2}(g)} \varepsilon
\]

Thus, for $f\in B(g^p, r)$,
\begin{align*}
\delta(X_f^M, Y_f^m) &\geqslant 2 \varepsilon - d(X_{g^p}^M, X_g^M) - d(Y_{g^p}^m, Y_g^m)- d(X_f^M, X_{g^p}^M) - d(Y_f^m, Y_{g^p}^m) \\
&\geqslant 2\varepsilon -2 \frac{ \kappa_{1,2}(g)}{\varepsilon} -2 \left(2r + \kappa_{1,2}(g^p) \right) \\
& \geqslant \varepsilon(2 - 6\varepsilon )
\end{align*}
and so, for maybe some smaller $c_1$, we have that
\[
\delta(X_f^M, Y_f^m) \geqslant  \varepsilon
\]
Moreover,
\[
\kappa_1(f) \geqslant \kappa_1(g^p) - r  \geqslant \varepsilon^{p-1} \kappa_1(g)^p - \varepsilon^{p+2} \geqslant  \varepsilon^{p-1} \kappa_1(g)^p \left( 1- \frac{ \varepsilon^3 }{\kappa_1(g)^p}\right)
\]
And using that $\kappa_1(g)\geqslant 1$ and $\varepsilon \leqslant c_1$, we get that if $c_1$ is small enough,
\[
\kappa_1(f) \geqslant  \varepsilon^{p-1} \kappa_1(g)^p 
\]
Moreover, using the equality $\kappa_1(g) \kappa_2(g) = \kappa_1(\wedge ^2 g)$, we have that
\[
\kappa_1(f) \kappa_2(f) \leqslant \kappa_1(g^p) \kappa_2(g^p) + \| \wedge^2 g^p - \wedge^2 f\| \leqslant \kappa_1(g^p) \kappa_2(g^p) + (\kappa_1(g^p) + \kappa_1(f))\|f-g^p\|
\]
so,
\[
\kappa_{1,2}(f) \leqslant \frac{ \kappa_1(g^p)\kappa_2(g^p)}{\kappa_1(f)^2} + \frac{ \kappa_1(g^p) + \kappa_1(f)}{\kappa_1(f)^2}\|g^p-f\| \leqslant \frac{16}{ \varepsilon^{2(p-1)}} \kappa_{1,2}(g)^p \leqslant 16 \varepsilon^{p+2}
\]
Therefore, is $c_1$ is small enough, we get that $\kappa_{1,2}(f) \leqslant (\varepsilon/2)^3$, and so, using to lemma~\ref{lemma:quantification_proximale} we find that for any $f\in B(g^p,r)$, $f$ is proximal and
\[
d(V_f^+, X_f^M) \leqslant 2 \frac{ \kappa_{1,2}(f)} \varepsilon \text{ et }d(V_f^<, Y_f^m) \leqslant  2 \frac{ \kappa_{1,2}(f)} \varepsilon
\]
Thus, using that $V_{g^p}^+= V_g^+$, we find that
\[
d(V_f^+, V_g^+) \leqslant d(V_f^+,X_f^M) + d(X_f^M, X_{g^p}^M) + d(X_{g^p}^M, V_g^+) \leqslant c_2 \frac{ \kappa_{1,2}(g)^p}{\varepsilon^{2p-1}}
\]
for some universal constant $c_2$.

Working in the same way in the dual space, we obtain the inequalities for $d(V_f^<, V_g^<)$ and finish the proof of the lemma.
\end{proof}

We are now ready to evaluate the difference between the logarithms of the spectral radii of well chosen elements of $\G$. We would like to take two proximal elements $g$ and $h$ and study the elements $g^p gh$ on side and $g^p$ and $gh$ on the other side (as in~\cite{Qu05}). However, as we don't want to study only purely atomic measures on $\mathrm{SL}_d(\R)$, we have to take, not $g^p$ but an element $f$ in a small neighbourhood. This is what we do in next

\begin{lemma} \label{lemma:erreur_produit_fgh}
There are constants $c_1, c_2,c_3$ such that for any $p\in \N^\ast$, any $\varepsilon \in \left]0,c_1\right]$, any $g \in \G$ with $\kappa_{1,2}(g) \leqslant  \varepsilon^5$ and $\delta(X_g^M, Y_g^m) \geqslant 2 \varepsilon$ we have that for any $h \in \G$ with $\kappa_{1,2}(h) \leqslant \varepsilon^3 $, $\delta(X_h^M, Y_h^m) \geqslant 2\varepsilon$, $\delta(X_h^M, Y_g^m) \geqslant 2\varepsilon$, $\delta(X_g^M, Y_h^m) \geqslant2 \varepsilon$ and any $f\in \G$ such that $\|g^p-f\| \leqslant \varepsilon^2 \left( \frac {\kappa_{1,2}(g)}\varepsilon\right)^{p} $, we have that
\[
\left|\lambda_1(fgh) - \lambda_1(f) - \lambda_1(gh) - \ln \frac{\delta(V_g^+ , V_g^<) \delta( g V_h^+, V_h^<)}{\delta(V_g^+, V_h^<) \delta (g V_h^+, V_g^<)} \right| \leqslant c_2\left( \frac{\kappa_{1,2}(g)^p}{\varepsilon^{2p}} + \frac{ \kappa_{1,2}(h)} {\varepsilon} \right)
\]
Moreover, we note $\pi_g$ the projection onto $V_g^<$ parallel to $V_g^+$ and we have that if $X_h^M \not=V_g^+$, $d(g\pi_g X_h^M, Y_h^m) \geqslant 2\varepsilon$, $d(X_g^M, X_h^M) \geqslant 2\varepsilon$ and $\kappa_{1,2}(h) \kappa_{1}(g) \leqslant \frac 1 2 \varepsilon^3$, then
\[
\frac{ \varepsilon^3}{c_3 \kappa_1(g)^d} \leqslant\left|\ln \frac{\delta(V_g^+ , V_g^<) \delta( g V_h^+, V_h^<)}{\delta(V_g^+, V_h^<) \delta (g V_h^+, V_g^<)}\right| \leqslant c_3 \frac{ \kappa_{1,2}(g)} {\varepsilon^5}
\]
\end{lemma}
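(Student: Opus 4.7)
The plan is to apply Lemma~\ref{lemma:erreur_rayon_spectral} to the pair $(f, gh)$ and then push the intermediate quantities $V_f^{\pm}, V_{gh}^{\pm}$ back onto the ``input'' data $V_g^{\pm}, V_h^{\pm}$ and $gV_h^{+}$ using Lemma~\ref{lemma:SVD_puissance} and Lemma~\ref{lemma:product_SVD}. Concretely, the proximity $\|f - g^p\| \leqslant \varepsilon^2(\kappa_{1,2}(g)/\varepsilon)^p$ together with Lemma~\ref{lemma:SVD_puissance} gives us $\kappa_{1,2}(f) \lesssim \kappa_{1,2}(g)^p/\varepsilon^{2(p-1)}$, $\delta(X_f^M, Y_f^m)\geqslant \varepsilon$, and $d(V_f^{+}, V_g^{+}), d(V_f^{<}, V_g^{<}) \lesssim \kappa_{1,2}(g)^p/\varepsilon^{2p-1}$. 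Symmetrically, Lemma~\ref{lemma:product_SVD} applied to $gh$ gives $\kappa_{1,2}(gh) \lesssim \kappa_{1,2}(g)\kappa_{1,2}(h)/\varepsilon^2$, $d(X_{gh}^{M}, X_g^{M}) \lesssim \kappa_{1,2}(g)/\varepsilon$ and $d(Y_{gh}^{m}, Y_h^{m}) \lesssim \kappa_{1,2}(h)/\varepsilon$, and via Lemma~\ref{lemma:quantification_proximale} we get $d(V_{gh}^{<}, V_h^{<}) \lesssim \kappa_{1,2}(h)/\varepsilon$. The genericity hypotheses between $f$ and $gh$ required by Lemma~\ref{lemma:erreur_rayon_spectral} then follow from the hypotheses between $g$ and $h$ (absorbing constants into a slightly smaller $\varepsilon$), so the lemma applies and yields the identity with $V_f^{\pm}, V_{gh}^{\pm}$ in place.

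Next I will replace $V_{gh}^{+}$ by $gV_h^{+}$. The key observation is that $V_{gh}^{+} = g V_{hg}^{+}$ (by the conjugation $gh = g(hg)g^{-1}$), and applying Lemma~\ref{lemma:product_SVD} / Lemma~\ref{lemma:quantification_proximale} to $hg$ (where $h$ is the more proximal factor, since $\kappa_{1,2}(h) \leqslant \varepsilon^3 \leqslant \varepsilon^4 \cdot \varepsilon^{-1}$) produces $d(V_{hg}^{+}, V_h^{+}) \lesssim \kappa_{1,2}(h)/\varepsilon$, which after pushing forward by $g$ (a Lipschitz map on $\prob(\R^d)$ with constant $\lesssim \kappa_1(g)^{2d}$, or more sharply on the relevant neighbourhood via Lemma~\ref{lemma:contraction_proximale}) gives $d(V_{gh}^{+}, g V_h^{+})$ small. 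Now using the Lipschitz property $|\delta(X,Y) - \delta(X',Y')| \leqslant d(X,X') + d(Y,Y')$, together with the lower bounds $\delta(V_g^{+}, V_g^{<})\geqslant \tfrac{3}{2}\varepsilon$ etc.\ (coming from the hypotheses and Lemma~\ref{lemma:quantification_proximale}), the substitution errors in each $\delta$ factor are at most a constant times $\kappa_{1,2}(g)^p/\varepsilon^{2p-1} + \kappa_{1,2}(h)/\varepsilon^2$, and dividing by lower bounds of order $\varepsilon$ gives a logarithmic error of the advertised size $c_2(\kappa_{1,2}(g)^p/\varepsilon^{2p} + \kappa_{1,2}(h)/\varepsilon)$.

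For the upper bound on the log expression, use Lemma~\ref{lemma:contraction_proximale}: since $\delta(V_h^{+}, V_g^{<}) \geqslant 2\varepsilon - O(\kappa_{1,2}(h)/\varepsilon)\geqslant\varepsilon$, one has $d(gV_h^{+}, V_g^{+}) \leqslant \kappa_{1,2}(g)/(4\varepsilon^4)$, so both $\delta(gV_h^{+}, V_h^{<})$ and $\delta(gV_h^{+}, V_g^{<})$ differ from $\delta(V_g^{+}, V_h^{<})$ and $\delta(V_g^{+}, V_g^{<})$ respectively by at most $\kappa_{1,2}(g)/(4\varepsilon^4)$; dividing by the lower bound $\varepsilon$ and taking $\ln(1+x)\approx x$ gives the bound $c_3 \kappa_{1,2}(g)/\varepsilon^5$. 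The main obstacle is the \emph{lower} bound: the ratio must be shown to be \emph{bounded away} from $1$. For this I write $v_h^{+} = \alpha v_g^{+} + w$ with $w = \pi_g v_h^{+} \in V_g^{<}$ and $\alpha = \varphi_g^{<}(v_h^{+})/\varphi_g^{<}(v_g^{+})$, so that $g v_h^{+} = \alpha\varepsilon_1(g)e^{\lambda_1(g)} v_g^{+} + gw$ with $\|gw\|\leqslant 2\kappa_2(g)\|w\|/\varepsilon$. A direct computation then expresses the four $\delta$'s in terms of $|\alpha|$, $\|v_h^{+}\|$, $\|gv_h^{+}\|$ and the norms $\|gw\|$, and the ratio collapses (up to controlled error) to $\|v_h^{+}\|\cdot e^{\lambda_1(g)}/\|gv_h^{+}\|$; the normalisation $\|gv_h^{+}\|/\|v_h^{+}\| \leqslant \kappa_1(g)$ together with the generic-position conditions $d(X_g^M,X_h^M)\geqslant 2\varepsilon$, $d(g\pi_g X_h^M, Y_h^m)\geqslant 2\varepsilon$ (which forces $gw$ to remain transverse to $Y_h^m$, so that $gV_h^{+}$ is genuinely moved away from $V_g^{+}$ relative to $V_h^{<}$) and $\kappa_{1,2}(h)\kappa_1(g)\leqslant \tfrac12\varepsilon^3$ (which keeps the error from $gw$ below the main term) will produce a multiplicative deviation from $1$ of size at least $\varepsilon^3/\kappa_1(g)^d$, yielding the stated lower bound after taking logarithms. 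This last step is the most delicate because one must simultaneously track how much $g$ contracts $V_h^{+}$ towards $V_g^{+}$ and how much slack the three hypotheses leave before contraction becomes total.
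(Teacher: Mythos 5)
Your high-level structure matches the paper's: apply Lemma~\ref{lemma:erreur_rayon_spectral} to the pair $(f,gh)$, control the geometric data of $f$ via Lemma~\ref{lemma:SVD_puissance} and of $gh$ via Lemma~\ref{lemma:product_SVD}, then substitute $V_f^\pm\mapsto V_g^\pm$, $V_{gh}^<\mapsto V_h^<$, $V_{gh}^+\mapsto gV_h^+$. There is, however, a genuine quantitative gap in the last substitution. You propose to use the conjugation $V_{gh}^+=gV_{hg}^+$, bound $d(V_{hg}^+,V_h^+)\lesssim\kappa_{1,2}(h)/\varepsilon$, and then push forward by $g$ via Lemma~\ref{lemma:contraction_proximale}. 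But that lemma, as stated, gives the \emph{absolute} bound $d(gX,gY)\leqslant\kappa_{1,2}(g)/(4\varepsilon^4)$ for $X,Y$ transverse to $V_g^<$ — there is no factor $d(X,Y)$ on the right. So your route yields $d(V_{gh}^+,gV_h^+)\lesssim\kappa_{1,2}(g)/\varepsilon^4$, which involves only $\kappa_{1,2}(g)$ and cannot be absorbed into the target error $c_2\bigl(\kappa_{1,2}(g)^p/\varepsilon^{2p}+\kappa_{1,2}(h)/\varepsilon\bigr)$: for $p$ large, $\kappa_{1,2}(g)^p/\varepsilon^{2p}$ is far smaller than $\kappa_{1,2}(g)/\varepsilon^4$, and $\kappa_{1,2}(h)/\varepsilon$ can be arbitrarily small independently of $g$. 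The paper avoids this by writing $d(V_{gh}^+,gV_h^+)=d(ghV_{gh}^+,ghV_h^+)$ (using that $V_{gh}^+$ is fixed by $gh$) and applying Lemma~\ref{lemma:contraction_proximale} to $gh$ rather than to $g$; since $\kappa_{1,2}(gh)\lesssim\kappa_{1,2}(g)\kappa_{1,2}(h)/\varepsilon^2$ and $\kappa_{1,2}(g)\leqslant\varepsilon^5$, this does produce a bound $\lesssim\kappa_{1,2}(h)/\varepsilon$. Your conjugation idea can be salvaged, but only by re-deriving a sharpened contraction estimate of the form $d(gX,gY)\lesssim(\kappa_{1,2}(g)/\varepsilon^4)\,d(X,Y)$ — which is implicit in the proof of Lemma~\ref{lemma:contraction_proximale} but not in its statement. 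As written, this step fails.

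For the second displayed estimate, your upper bound (perturb $gV_h^+$ to $V_g^+$ in both $\delta$'s using $d(gV_h^+,V_g^+)\leqslant\kappa_{1,2}(g)/(4\varepsilon^4)$, then divide by $\varepsilon$ and take logarithms) is valid and is a cleaner alternative to the paper's algebraic expansion. For the lower bound, you correctly identify the decomposition $v_h^+=\alpha v_g^+ + \pi_g v_h^+$ with $\alpha=\varphi_g^<(v_h^+)/\varphi_g^<(v_g^+)$ and the role of the three extra hypotheses, but the key algebraic identity is left implicit and the intermediate claim that the ratio ``collapses to $\|v_h^+\|e^{\lambda_1(g)}/\|gv_h^+\|$'' is not correct: one must show instead that the ratio equals
\[
\left|\,1+\varepsilon_1(g)\,e^{-\lambda_1(g)}\frac{\varphi_g^<(v_g^+)\,\varphi_h^<(g\pi_g v_h^+)}{\varphi_h^<(v_g^+)\,\varphi_g^<(v_h^+)}\,\right|
\]
and then \emph{lower-bound the deviation from $1$}, i.e.\ lower-bound $e^{-\lambda_1(g)}|\varphi_g^<(v_g^+)\varphi_h^<(g\pi_g v_h^+)/(\varphi_h^<(v_g^+)\varphi_g^<(v_h^+))|$ in terms of $\delta(V_g^+,V_g^<)$, $\delta(g\pi_g V_h^+,V_h^<)$, $d(V_g^+,V_h^+)$ and $\|g^{-1}\|^{-1}\geqslant\kappa_1(g)^{1-d}$. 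The quantity $\|v_h^+\|e^{\lambda_1(g)}/\|gv_h^+\|$ is $O(1)$ and does not capture this deviation. This is the delicate step of the whole lemma and needs the explicit computation; ``a direct computation'' followed by ``will produce'' is not enough here.
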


\begin{proof}
We want to apply lemma~\ref{lemma:erreur_rayon_spectral} to$f$ and $gh$. To do so, we are going to prove at first that $gh$ is proximal.

According to lemma~\ref{lemma:product_SVD}, we have that
\[
\kappa_{1,2}(gh) \leqslant \frac{\kappa_{1,2}(g) \kappa_{1,2}(h)}{\varepsilon^2} \leqslant \varepsilon^4
\]
and
\[
d(X_{gh}^M, X_g^M) \leqslant \frac{  \kappa_{1,2}(g)}\varepsilon \text{ and }d(Y_{gh}^m,Y_h^m) \leqslant \frac{  \kappa_{1,2}(h)}\varepsilon 
\]
This proves that
\[
\delta(X_{gh}^M , Y_{gh}^m) \geqslant \delta(X_g^M, Y_h^m) - d(X_{gh}^M, X_g^M) - d(Y_h^m, Y_{gh}^m) \geqslant 2\varepsilon (1- \varepsilon) \geqslant \frac 3 2 \varepsilon
\]
So, for $c_1$ small enough, $gh$ satisfy the assumptions of lemma~\ref{lemma:quantification_proximale} and~\ref{lemma:contraction_proximale} with $\varepsilon'= \varepsilon/2$. So, $gh$ is proximal and
\[
d(V_{gh}^+, g V_h^+) = d(gh V_{gh}^+, gh V_h^+) \leqslant \frac{ \kappa_{1,2}(gh)}{4 \varepsilon'^4} \leqslant c_2 \frac{ \kappa_{1,2}(h)}{\varepsilon}
\]
and
\[
d(V_{gh}^<, V_h^<) \leqslant d(V_{gh}^<,Y_{gh}^m) + d(Y_{gh}^m, Y_h^m) + d(V_{h}^<, Y_h^m) \leqslant 2\frac{ \kappa_{1,2}(h)}{\varepsilon} + \frac{ \kappa_{1,2}(gh)}\varepsilon 
\]

Moreover, according to lemma~\ref{lemma:SVD_puissance}, for $f \in \G$ such that $\| g^p - f\| \leqslant \varepsilon^2 \left( \frac {\kappa_{1,2}(g)}\varepsilon\right)^{p} $, we have that
\[
\delta(X_f^M, Y_f^m) \geqslant \varepsilon,\quad d(X_f^M, X_g^M) \leqslant 2\frac {\kappa_{1,2}(g)}\varepsilon, \quad d(Y_f^m, Y_g^m) \leqslant 2\frac {\kappa_{1,2}(g)} \varepsilon
\]
Moreover,
\[
 \kappa_1(f) \geqslant \frac 12 \varepsilon^{p-1} \kappa_1(g)^p , \quad \kappa_{1,2}(f) \leqslant \frac{ 16}{\varepsilon^{2(p-1)}} \kappa_{1,2}(g)^p 
\]
and
\[
d(V_f^+, V_{g}^+)  \leqslant c_2 \frac{ \kappa_{1,2}(g)^p}{\varepsilon^{2p-1}} \text{ et }d(V_f^<, V_{g}^<) \leqslant c_2 \frac{ \kappa_{1,2}(g)^p}{\varepsilon^{2p-1}}
\]
Finally,
\begin{align*}
\delta(X_f^M, Y_{gh}^m) &\geqslant \delta(X_g^M, Y_h^m) - d(X_f^M, X_g^M) - d(Y_h^m, Y_{gh}^m) \\
& \geqslant \varepsilon\left( 2  - 2 \frac{\kappa_{1,2}(g)}{\varepsilon} - \frac{\kappa_{1,2}(h)}{\varepsilon} \right) \\
& \geqslant \varepsilon\left( 2 - 3 \varepsilon^2\right)
\end{align*}
So, is $c_1$ is small enough, we have that $\delta(X_f^M, Y_{gh}^m) \geqslant \varepsilon $ and also, $\delta(X_{gh}^m , Y_g^m) \geqslant \varepsilon$. Thus, according to lemma~\ref{lemma:erreur_rayon_spectral},
\[
\left|\lambda_1(f) + \lambda_1(gh) - \lambda_1(fgh) - \ln \frac{\delta(V_{gh}^+, V_{gh}^<) \delta(V_f^+, V_f^<)}{\delta(V_f^+, V_{gh}^<) \delta(V_{gh}^+, V_f^<)} \right| \leqslant c_2 \left( \frac{\kappa_{1,2}(f)}{\varepsilon^2} + \frac{ \kappa_{1,2}(gh)}{\varepsilon^2} \right) 
\]
This finishes the proof of the first part of the lemma since we also have seen controls on $d(V_{gh}^+, gV_h^+), d(V_f^+, V_g^+), d(V_f^<,V_g^<)$ and $ d(V_{gh}^<, V_h^<)$.

To prove the second part, we use the equality
\begin{flalign}
\frac{\delta(V_g^+ , V_g^<) \delta( g V_h^+, V_h^<)}{\delta(V_g^+, V_h^<) \delta (g V_h^+, V_g^<)} &= \left|\frac{\varphi_g^<(v_g^+)\varphi_h^<(gv_h^+)}{\varphi_h^<(v_g^+)\varphi_g^<(gv_h^+)} \right| \notag\\&=\left| 1 + \frac{\varphi_g^<(v_g^+) \varphi_h^<(gv_h^+) - \varphi_h^<(v_g^+) \varphi_g^<(g v_h^+)} { \varphi_h^<(v_g^+) \varphi_g^<(gv_h^+)} \right| \label{equation:erreur_produit_fgh_1}
\end{flalign}
And, as
\[
v_h^+= \frac{\varphi_g^<(v_h^+)}{\varphi_g^<(v_g^+)} v_g^+ + v_h^+ - \frac{\varphi_g^<(v_h^+)}{\varphi_g^<(v_g^+)} v_g^+ = \frac{\varphi_g^<(v_h^+)}{\varphi_g^<(v_g^+)} v_g^+ + \pi_g(v_h^+)
\]
where we noted $\pi_g$ the projection on $V_g^<$ parallel to $V_g^+$, we have that
\[
\varphi_g^<(g v_h^+) = \varepsilon_1(g) e^{\lambda_1(g)} \varphi_g^<(v_h^+)
\]
and
\[
\varphi_h^<(g v_h^+) = \varepsilon_1(g)  e^{\lambda_1(g)} \frac{ \varphi_g^<(v_h^+)}{\varphi_g^<(v_g^+) } \varphi_h^<(v_g^+) + \varphi_h^< \left( g \pi_g(v_h^+)\right)
\]
This proves that
\[
\varphi_g^<( v_g^+) \varphi_h^<(g v_h^+) - \varphi_h^<(v_g^+) \varphi_g^<(g v_h^+) = \varphi_g^<(v_g^+) \varphi_h^<(g\pi_g v_h^+)
\]
And so, replacing in equation~\eqref{equation:erreur_produit_fgh_1}, we get that
\[
\frac{\delta(V_g^+ , V_g^<) \delta( g V_h^+, V_h^<)}{\delta(V_g^+, V_h^<) \delta (g V_h^+, V_g^<)} = \left|1 +\varepsilon_1(g) e^{-\lambda_1(g)}\frac{\varphi_g^<(v_g^+) \varphi_h^<(g\pi_g v_h^+)}{ \varphi_h^<(v_g^+) \varphi_g^<(v_h^+)} \right| 
\]
Moreover, we use that $\pi_g v_h^+ \in V_g^<$, to get, with lemma~\ref{lemma:quantification_proximale} that
\[
\| g \pi_g v_h^+\| \leqslant \frac{\kappa_2(g)} \varepsilon \|\pi_g\| \| v_h^+\| \leqslant \frac{\kappa_2(g)} \varepsilon \frac{ 2}{\delta(V_g^+, V_g^<)}\|v_h^+\| \leqslant \frac{ 2 \kappa_2(g)}{\varepsilon^2} \|v_h^+\|
\]
and so, using that $e^{\lambda_1(g)} \geqslant 2 \kappa_1(g) \varepsilon$, we find
\[
\left| e^{-\lambda_1(g)}\frac{\varphi_g^<(v_g^+) \varphi_h^<(g\pi_g v_h^+)}{ \varphi_h^<(v_g^+) \varphi_g^<(v_h^+)} \right| \leqslant \frac{ \kappa_{1,2}(g) }{\varepsilon^3}\frac{\|\varphi_g^<\| \|v_g^+\| \|\varphi_h^<\|  \|v_h^+\|}{ |\varphi_h^<(v_g^+) \varphi_g^<(v_h^+)|} \leqslant \frac{ \kappa_{1,2}(g)}{5\varepsilon^4}
\]

We can also compute
\begin{flalign}
\left| \frac{\varphi_g^<(v_g^+) \varphi_h^<(g\pi_g v_h^+)}{ \varphi_h^<(v_g^+) \varphi_g^<(v_h^+)} \right| & =  \delta(V_g^+, V_g^<) \delta(g \pi_g V_h^+, V_h^<) \frac{ \|\varphi_g^<\| \|v_g^+\| \|\varphi_h^<\|\|g\pi_g v_h^+\|}{|\varphi_h^<(v_g^+) \varphi_g^<(v_h^+)|} \notag \\
& \geqslant \delta(V_g^+, V_g^<) \delta(g\pi_g V_h^+, V_h^<)  \frac{ \|g\pi_g v_h^+\|}{\|v_h^+\|} \notag \\
& \geqslant \delta(V_g^+, V_g^<) \delta(g\pi_g V_h^+, V_h^<) \frac 1 {\|g^{-1}\|} \frac{  \|\pi_g v_h^+\|}{\|v_h^+\|} \label{equation:erreur_produit_fgh_2}
\end{flalign}
And finally, as $v_g^+ \wedge v_h^+ = v_g^+ \wedge \pi_g v_h^+$, we get that
\[
d(V_g^+, V_h^+) = \frac{\|v_g^+ \wedge v_h^+\|}{\|v_g^+\| \|v_h^+\|} = \frac{ \|v_g^+ \wedge \pi_g v_h^+\|}{\|v_g^+\| \|v_h^+\|} \leqslant \frac{ \|\pi_g v_h^+\|}{\|v_h^+\|}
\]
and so, using inequality~\eqref{equation:erreur_produit_fgh_2}, this proves that
\[
\left| \frac{\varphi_g^<(v_g^+) \varphi_h^<(g\pi_g v_h^+)}{ \varphi_h^<(v_g^+) \varphi_g^<(v_h^+)} \right| \geqslant \frac{ 1 }{\|g^{-1}\|} \delta(V_g^+, V_g^<) \delta(g\pi_g V_h^+, V_h^<) d(V_g^+, V_h^+)
\]
To conclude, note that $\kappa_d(g) \dots \kappa_1(g) = \det(g)=1$ and $\kappa_d(g) = \|g^{-1}\|^{-1}$ and so $\|g^{-1} \|^{-1} \geqslant \kappa_1(g)^{1-d}$.

Moreover,
\begin{align*}
\delta(g\pi_g V_h^+, V_h^<) &\geqslant \delta(g \pi_g x_h^M, y_h^m) - d(g\pi_g V_h^+, g\pi_g x_h^M) - d(V_h^<, y_h^m) \\&\geqslant 2\varepsilon - \|g\| \|\pi_g\| d(V_h^+, x_h^M) - d(V_h^<, y_h^m) \\&\geqslant 2\varepsilon - 2 \frac{\kappa_{1}(g)}{\varepsilon} \frac{ \kappa_{1,2}(h)}{\varepsilon} \geqslant 2\varepsilon \left( 1- \frac{ \kappa_1(g) \kappa_{1,2}(h)}{\varepsilon^3} \right) \geqslant \varepsilon
\end{align*}
And
\[
d(V_g^+, V_h^+) \geqslant d(X_g^M, X_h^M) - d(V_g^+, X_g^M) - d(V_h^+, X_h^M) \geqslant 2\varepsilon - \frac{ \kappa_{1,2}(g)}{\varepsilon} - \frac{ \kappa_{1,2}(h)}{\varepsilon} \geqslant \varepsilon
\]
and so, using the fact that $e^{-\lambda_1(g)} \geqslant \kappa_1(g)^{-1}$, we find that
\[
\left| e^{-\lambda_1(g)}\frac{\varphi_g^<(v_g^+) \varphi_h^<(g\pi_g v_h^+)}{ \varphi_h^<(v_g^+) \varphi_g^<(v_h^+)} \right| \geqslant \frac{\varepsilon^3 }{\kappa_1(g)^{d}}
\]
And this is what we intended to prove.
\end{proof}

Lemma~\ref{lemma:erreur_produit_fgh} proved that if we make good assumptions of proximality and transversality on the elements $g$ and $h$ in $\G$, then we have a good control on $\lambda_1(fgh) - \lambda_1(f) - \lambda_1(gh)$ for $f$ in a small neighbourhood of $g^p$. Using lemma~\ref{lemma:genericite_loxodromie} we will get that those elements $g$ and $h$ are generic and this leads to the following
\begin{lemma} \label{lemma:controle_difference_rayons_spectraux}
Let $\rho$ be a borelian probability measure on $\mathrm{SL}_d(\R)$ having an exponential moment and whose support generates a strongly irreducible and proximal subgroup.

Let $\nu$ be the unique borelian stationary measure on $\prob(\R^d)$.

Then, there are $n_0,p \in \N$ et $c_1,c_2 \in \R_+^\ast$ such that for any $\alpha_2 \in \R_+^\ast$, there are $\Delta,c_3 \in \R_+^\ast$ such that for any $n\in \N$ with $n\geqslant n_0$,
\[
\rho^{\ast pn} \otimes \rho^{\ast pn} \left( \left\{\begin{array}{c|c} \multirow{3}*{$g,h$} & g,h \text{ and }gh\text{ are proximal, }  V_g^+,V_h^+ \text{ and }V_{gh}^+\text{ are } \\
&\Delta-\nu\text{-regular at scale }{e^{-\alpha_2 p n} }\text{ and } \\ & e^{-c_1 n} \leqslant |\lambda_1(gh) - \lambda_1(g) - \lambda_1(h)| \leqslant e^{-c_2 n}  \end{array}\right\}\right) \geqslant e^{-c_3 n}
\]
\end{lemma}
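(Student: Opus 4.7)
The plan is to combine Lemma~\ref{lemma:erreur_produit_fgh} with the genericity results of Lemma~\ref{lemma:genericite_loxodromie}, Lemma~\ref{lemma:convolution_mesures_SLd} and Corollary~\ref{corollary:regularite_inferieure_espace_projectif}. Fix a small $\varepsilon' \in \mathopen{]}0, (\lambda_1 - \lambda_2)/10\mathclose{[}$ and set $\varepsilon = e^{-\varepsilon' n}$; the integer $p$ will be chosen large at the end. Decompose $\rho^{\ast pn} = \rho^{\ast n} \ast \rho^{\ast(p-1)n}$ to write the second sampled element $H \sim \rho^{\ast pn}$ as $H = ab$ with $a \sim \rho^{\ast n}$ and $b \sim \rho^{\ast(p-1)n}$ independent. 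The first element $G \sim \rho^{\ast pn}$ will be forced into a neighbourhood of $a^p$ so that Lemma~\ref{lemma:erreur_produit_fgh} applies with $(g, h, f) \leftarrow (a, b, G)$.

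Let $\mathcal{A}_n \subset \G$ be the good set of elements $a$ satisfying simultaneously: (i) $\kappa_{1,2}(a) \leqslant e^{-(\lambda_1 - \lambda_2 - \varepsilon')n} \leqslant \varepsilon^5$ and $\delta(X_a^M, Y_a^m) \geqslant 2\varepsilon$ (Lemma~\ref{lemma:genericite_loxodromie}); (ii) $\rho^{\ast n}(B(a, r_a')) \geqslant e^{-t_3 n}$, where $r_a' := \varepsilon^2(\kappa_{1,2}(a)/\varepsilon)^p/(p\kappa_1(a)^{p-1})$ (Lemma~\ref{lemma:convolution_mesures_SLd}); (iii) $V_a^+$ is $\Delta$-$\nu$-regular at scale $e^{-\alpha_2 p n}$ for a suitable $\Delta$ (Corollary~\ref{corollary:regularite_inferieure_espace_projectif}). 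All three conditions together have $\rho^{\ast n}$-probability $\geqslant 1 - e^{-tn}$ for $n$ large. On the event $\{a \in \mathcal{A}_n\}$, apply Lemma~\ref{lemma:genericite_loxodromie} to the $(p-1)n$-fold convolution (with $V_a^+$, $V_a^<$ treated as fixed reference points in $\prob(\R^d)$ and $\prob((\R^d)^\ast)$): with $\rho^{\ast(p-1)n}$-probability $\geqslant 1 - e^{-t(p-1)n}$, the element $b$ satisfies the remaining hypotheses of Lemma~\ref{lemma:erreur_produit_fgh}, namely $\kappa_{1,2}(b) \leqslant \varepsilon^3$, $\delta(X_b^M, Y_b^m) \geqslant 2\varepsilon$, the cross-transversalities $\delta(X_b^M, Y_a^m), \delta(X_a^M, Y_b^m) \geqslant 2\varepsilon$, the non-degeneracy conditions $d(a\pi_a X_b^M, Y_b^m) \geqslant 2\varepsilon$, $d(X_a^M, X_b^M) \geqslant 2\varepsilon$, and $\kappa_{1,2}(b)\kappa_1(a) \leqslant \frac{1}{2}\varepsilon^3$; again by Corollary~\ref{corollary:regularite_inferieure_espace_projectif}, $V_b^+$ is $\Delta$-$\nu$-regular at scale $e^{-\alpha_2 pn}$.

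Writing $G = g_1 \cdots g_p$ with $g_i \sim \rho^{\ast n}$ iid, the event $\bigcap_i\{g_i \in B(a, r_a')\}$ implies $G \in B(a^p, \varepsilon^2(\kappa_{1,2}(a)/\varepsilon)^p)$ by the triangular inequality $\|a^p - g_1 \cdots g_p\| \leqslant p\kappa_1(a)^{p-1} \max_i \|a - g_i\|$. By (ii) this conditional probability is $\geqslant e^{-t_3 p n}$. Integrating over $a$ and $b$ therefore gives a $(\rho^{\ast pn} \otimes \rho^{\ast pn})$-measure of the joint good configuration at least $e^{-c_3 n}$ with $c_3 := t_3 p + O(1)$. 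The regularity of $V_{GH}^+$ is obtained by applying Corollary~\ref{corollary:regularite_inferieure_espace_projectif} to $\rho^{\ast 2pn}$ (since $GH \sim \rho^{\ast 2pn}$), which removes only a set of exponentially small measure; the proximality of $G$, $H$ and $GH$ comes from Lemma~\ref{lemma:quantification_proximale} combined with Lemma~\ref{lemma:product_SVD} and Lemma~\ref{lemma:SVD_puissance}.

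On this good event, Lemma~\ref{lemma:erreur_produit_fgh} yields the two-sided estimate
\[\frac{\varepsilon^3}{c_3\kappa_1(a)^d} \leqslant |\ln R(a,b)| \leqslant c_3\frac{\kappa_{1,2}(a)}{\varepsilon^5},\]
together with $|\lambda_1(GH) - \lambda_1(G) - \lambda_1(H) - \ln R(a,b)| \leqslant c_2(\kappa_{1,2}(a)^p/\varepsilon^{2p} + \kappa_{1,2}(b)/\varepsilon)$. For $a$, $b$ as above this gives $|\ln R(a,b)| \in [e^{-(d\lambda_1 + 4\varepsilon')n}, e^{-(\lambda_1 - \lambda_2 - 6\varepsilon')n}]$ and an error $\lesssim e^{-p(\lambda_1 - \lambda_2 - 2\varepsilon')n} + e^{-(\lambda_1 - \lambda_2 - 2\varepsilon')n}$. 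Choosing $p$ large enough so that $p(\lambda_1 - \lambda_2 - 2\varepsilon') > d\lambda_1 + 5\varepsilon'$ makes the first error asymptotically negligible compared to the lower bound on $|\ln R(a,b)|$, while the second error is comparable to (and hence absorbed in) the upper bound. We conclude $e^{-c_1 n} \leqslant |\lambda_1(GH) - \lambda_1(G) - \lambda_1(H)| \leqslant e^{-c_2 n}$ with $c_1 := d\lambda_1 + 5\varepsilon'$ and $c_2 := \lambda_1 - \lambda_2 - 7\varepsilon'$. The main obstacle is this calibration of $p$ and $\varepsilon'$: $p$ must be taken large enough to beat the loss $\kappa_1(a)^{-d}$ in the lower cross-ratio bound, while the resulting probability cost $e^{-t_3 p n}$ and the regularity exponent $\Delta$ must be absorbed into the free constants $c_3, \Delta$ appearing in the conclusion, so that all the independent genericity conditions hold simultaneously on a set of comparable (exponentially small but controlled) measure.
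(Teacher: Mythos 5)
Your high-level route is the same as the paper's: decompose the second $\rho^{\ast pn}$-sample as $H = ab$ with $a\sim\rho^{\ast n}$, $b\sim\rho^{\ast(p-1)n}$, force the first sample $G$ into a neighbourhood of $a^p$, and invoke Lemma~\ref{lemma:erreur_produit_fgh} with $(g,h,f)\leftarrow(a,b,G)$, using Lemmas~\ref{lemma:genericite_loxodromie} and~\ref{lemma:convolution_mesures_SLd} to put mass $\geqslant e^{-c_3 n}$ on the good configurations. However, the calibration of the lower bound $e^{-c_1 n}\leqslant|\lambda_1(GH)-\lambda_1(G)-\lambda_1(H)|$ does not close as written. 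Lemma~\ref{lemma:erreur_produit_fgh} gives $|\ln R(a,b)|\geqslant \varepsilon^3/(c_3\kappa_1(a)^d)\asymp e^{-(d\lambda_1+O(\varepsilon'))n}$ and an additive error containing the term $c_2\,\kappa_{1,2}(b)/\varepsilon$. With the condition $\kappa_{1,2}(b)\leqslant\varepsilon^3$ that you actually impose, this term is $\lesssim e^{-2\varepsilon' n}$; even with the implicit bound you use in the line ``$e^{-(\lambda_1-\lambda_2-2\varepsilon')n}$'' it is still of order $e^{-(\lambda_1-\lambda_2+O(\varepsilon'))n}$, which is always \emph{much larger} than the lower bound $e^{-(d\lambda_1+O(\varepsilon'))n}$, since $\lambda_1-\lambda_2 < d\lambda_1$ for every $d\geqslant 2$. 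So $\ln R$ and the error can cancel, and the claimed lower bound does not follow. The paper avoids this by requiring $b\in G_{(p-1)n}$, i.e.\ imposing the Cartan estimates appropriate to a generic element of $\rho^{\ast(p-1)n}$, which gives $\kappa_{1,2}(b)\lesssim e^{-(\lambda_1-\lambda_2-2\eta)(p-1)n}$; then $\kappa_{1,2}(b)/\varepsilon$ becomes negligible in front of $e^{-(d\lambda_1+O(\eta))n}$ once $p$ is chosen large enough. This extra factor $(p-1)$ in the exponent is exactly what your choice of $p$ is meant to exploit, but you only used it on the $\kappa_{1,2}(a)^p$ term, not on $b$.

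There is a second gap in the regularity step. You propose to get the $\Delta$-$\nu$-regularity of $V_{GH}^+$ by applying Corollary~\ref{corollary:regularite_inferieure_espace_projectif} to $\rho^{\ast 2pn}$, arguing that ``this removes only a set of exponentially small measure.'' But at that point you are already restricted to an event of measure $\asymp e^{-c_3 n}$, and $c_3$ is determined by $t_3$ from Lemma~\ref{lemma:convolution_mesures_SLd}, which you do not control; the Corollary's bad set, of measure $e^{-t'\cdot 2pn}$ with $t'$ fixed by $M=\alpha_2/2$, could be larger than your good event, leaving nothing. The paper instead derives the regularity of $V_{gh}^+$, $V_f^+$ and $V_{fgh}^+$ \emph{deterministically} on the good event, via the Cartan-data proximity $d(V_{fgh}^+,V_a^+)\lesssim\kappa_{1,2}(a)/\varepsilon$ (Lemmas~\ref{lemma:product_SVD}, \ref{lemma:erreur_rayon_spectral}, \ref{lemma:SVD_puissance}) combined with Proposition~\ref{proposition:regularite_inferieure_espace_projectif} and the lower bound $\rho^{\ast pn}(B(gh,r))\geqslant\rho^{\ast n}(B(g,\cdot))\,\rho^{\ast(p-1)n}(B(h,\cdot))$; this makes the regularity hold on the \emph{whole} of the measured event, not just up to an uncontrolled exceptional set.
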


\begin{proof}
We note, for $n\in \N$ and $\eta,t_2  \in \R_+^\ast$,
\[
G_n = \left\{ \begin{array}{c|l} \multirow{2}{*}{$g\in \G $}& \rho^{\ast n} (B(g,e^{-t_2 n})) \geqslant e^{-t_3 n} ,\;\delta(X_g^M, Y_g^m) \geqslant 2e^{-\eta n} \\ & \forall i\in \{1,2\} \left|\frac 1 n \kappa_i(g) - \lambda_i \right| \leqslant \eta \end{array}\right\}
\]
Then, for any $g\in G_n$,
\[
\kappa_{1,2}(g) = \frac{ \kappa_2(g)}{\kappa_1(g)} \leqslant e^{-(\lambda_1 - \lambda_2 - 2 \eta)n}
\]
And so, noting $\varepsilon = e^{-\eta n}$, we have that if $7\eta< \lambda_1 - \lambda_2$, for $n$ large enough, $g$ satisfy the assumptions of lemma~\ref{lemma:erreur_produit_fgh}. Moreover, for $g\in G_n$ and $p \in \N^\ast$, we note
\[
H_n^p(g) = \left\{ \begin{array}{c|l} \multirow{2}{*}{$ h\in G_{pn}$} & \delta(X_h^M, Y_g^m) \geqslant 2e^{-\eta n},\; \delta(X_g^M, Y_h^m) \geqslant 2e^{-\eta n} \\ & d(g\pi_g X_h^M, Y_h^m) \geqslant 2e^{-\eta n}, \;d(X_g^M, X_h^M) \geqslant 2 e^{-\eta n} \end{array}\right\}
\]
where $\pi_g$ is the projection to $V_g^<$ parallel to $V_g^+$.

If $p$ is such that $(p-1)(\lambda_1 - \lambda_2 + \eta)>\lambda_1 + \eta$, then for any $g\in G_n$ and any $h\in H_n^p(g)$, the pair $(g,h)$ satisfy the assumptions of lemma~\ref{lemma:erreur_produit_fgh} and so we have that for any $f\in B(g^p, e^{ - p(\lambda_1 - \lambda_2 - \eta) n})$,
\begin{align*}
\left|\lambda_1(fgh) - \lambda_1(f) - \lambda_1(gh)-\ln\frac{d(V_g^+, V_g^<) d(gV_h^+, V_h^<)}{d(V_g^+, V_h^<) d(gV_h^+, V_g^<)}\right| & \leqslant 2c_2 e^{-p(\lambda_1 - \lambda_2 - 4 \eta)n} 
\end{align*}
and
\[
\frac{e^{-(d(\lambda_1+\eta) + 5 \eta)n}}{c_3}  \leqslant \left|\ln\frac{d(V_g^+, V_g^<) d(gV_h^+, V_h^<)}{d(V_g^+, V_h^<) d(gV_h^+, V_g^<)} \right|\leqslant c_3 e^{-(\lambda_1 - \lambda_2 - 7 \eta) n}
\]

Moreover, according to lemma~\ref{lemma:product_SVD}, we also have that
\[
\kappa_{1,2}(gh)\leqslant \frac{\kappa_{1,2}(g) \kappa_{1,2}(h)}{\varepsilon^2} \leqslant e^{-(p+1)(\lambda_1 - \lambda_2 - 4 \eta) n}
\]
and
\begin{align*}
\delta(X_{gh}^M, Y_{gh}^m) &\geqslant \delta(X_g^M, Y_h^m) - d(X_{gh}^M, X_g^M) - d(Y_{gh}^M, Y_h^M) \\
&\geqslant 2 e^{-\eta n} - 2e^{-(\lambda_1 -\lambda_2- 3\eta)n}
\end{align*}
So, according to proposition~\ref{proposition:regularite_inferieure_espace_projectif}, choosing $m$ such that $m(\lambda_1 - \lambda_2 - 4 \eta)\geqslant\alpha_2$, we get that
\[
\nu(B(V_{gh}^+, e^{- p \alpha_2 n}) \geqslant \nu(B(V_{gh}^+, (\kappa_{1,2}(gh)/\varepsilon^4)^m) \geqslant \frac 1 2 \left(\rho^{\ast pn} (B(gh, r)) \right)^m
\]
With $r= e^{-(\lambda_1 + \eta)(d+1) pn}$.

Moreover, if $g_1 \in B(g,r_1)$ and $h_1 \in B(h,r_2)$, then
\[
\|gh - g_1 h_1 \| \leqslant r_1 \|h\| + r_2 \|g_1\|
\]
and so,
\[
\rho^{\ast pn} (B(gh,r)) \geqslant \rho^{\ast n} (B(g,e^{-2(\lambda_1 + \eta)(d+1)n})) \rho^{\ast (p-1)n}(B(h,e^{-2(\lambda_1 + \eta)(d+1)n}))
\]
Thus, taking $t_2 = 2(\lambda_1 + \eta)(d+1)$, this proves that $V_{gh}^+$ is $\Delta-\nu$-regular at scale $e^{-\alpha_2p n}$ for some $\Delta \in \R_+$.

Doing the same for $V_f^+$ and $V_{fgh}^+$ (using lemma~\ref{lemma:erreur_rayon_spectral} and lemma~\ref{lemma:product_SVD}), we get that $V_f^+$ and $V_{fgh}^+$ are also $\Delta-\nu$-regular at scale $e^{-\alpha_2 pn}$ (maybe for some different $\Delta$ but there is one that works for $V_{gh}^+$, $V_f^+$ and $V_{fgh}^+$).

\medskip
We just proved that for $p$ such that $p(\lambda_1 - \lambda_2 - 4 \eta) > d\lambda_1 + 5 \eta$,
\begin{align*}
\rho^{\ast pn} \otimes \rho^{\ast pn} &\left( \left\{\begin{array}{c|c} \multirow{3}*{$g,h$} & g,h \text{ and }gh\text{ are proximal, }  V_g^+,V_h^+ \text{ and }V_{gh}^+\text{ are } \\
&\Delta-\nu\text{-regular at scale }{e^{-\alpha_2 p n} }\text{ and } \\ & e^{-c_1 n} \leqslant |\lambda_1(gh) - \lambda_1(g) - \lambda_1(h)| \leqslant e^{-c_2 n}  \end{array}\right\}\right) \\
& \geqslant \int_\G \un_{G_n}(g) \rho^{\ast (p-1)n} (H_n^{p-1})(g) \rho^{\ast pn}(B(g^p, e^{ - p(\lambda_1 - \lambda_2 +\eta) n})) \di\rho^{\ast n}(g)
\end{align*}
But, according to lemma~\ref{lemma:genericite_loxodromie}, we have that any $h \in H_n^{p-1}(g)$ except a set of exponentially small measure (to get the lower bound on $\delta(g\pi_g X_h^M, Y_h^m)$, we do as in the proof of inequality 13.38 in lemma 13.13 in~\cite{BQred} to get that $X_h^M$ and $Y_h^M$ are essentially independent and then guarantee that $X_h^M\not \in V_g^+$ and $d(V_g^<,Y_h^m) \geqslant 2e^{-\eta n}$). So, we only need to find a lower bound on
\[
\int_\G \un_{G_n}(g)\rho^{\ast pn}(B(g^p, e^{ - p(\lambda_1 - \lambda_2 +\eta) n})) \di\rho^{\ast n}(g)
\]
But, doing the same as previously, we find that, maybe for some other constant $t_2$, there is a constant $c_3$ such that for any $g\in G_n$,
\[
\rho^{\ast pn}(B(g^p, e^{ - p(\lambda_1 - \lambda_2+\eta) n})) \geqslant e^{-c_3 n}
\]
And finally, using lemma~\ref{lemma:convolution_mesures_SLd} and lemma~\ref{lemma:genericite_loxodromie}, we have that every $g$ belong to $G_n$ except a set of exponentially small measure and this finishes the proof of the lemma.
\end{proof}

We can now state an ``integrated version'' of lemma~\ref{lemma:controle_difference_rayons_spectraux}.
\begin{lemma} \label{lemma:controle_diff_rayons_spectraux_integre}
Let $\rho$ be a strongly irreducible and proximal borelian probability measure on $\mathrm{SL}_d(\R)$ having an exponential moment.

There are $\alpha, \beta \in \R_+^\ast$ and $p\in \N^\ast$ such that for any $\alpha_2 \in \R_+^\ast$, there is $\Delta \in \R_+^\ast$ such that
\[
\liminf_{b\to \pm\infty} |b|^\alpha \int_{G^{\Delta,2}(b^{-\alpha_2})} \left| e^{i b (\lambda_1(gh) - \lambda_1(g) - \lambda_1(h))} - 1 \right|^2 \di\rho^{\ast p n(\beta,b)}(g) \di\rho^{\ast p n(\beta,b)}(h) >0
\]
Where we noted, $n(\beta,b) = \lfloor \beta \ln |b| \rfloor$ and, for $r\in \R_+^\ast$,
\[
G^{\Delta,2} (r) := \left\{ \begin{array}{c|l} \multirow{2}*{$ (g,h)\in \G^2 $}& g,h \text{ and }gh\text{ are proximal and }\\&V_g^+, V_h^+, V_{gh}^+ \text{ are }\Delta-\nu\text{-regular at scale }r\end{array}\right\}
\]
\end{lemma}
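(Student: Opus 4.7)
The plan is to derive the lower bound directly from Lemma~\ref{lemma:controle_difference_rayons_spectraux} by exploiting the linear regime $|e^{it}-1|\asymp |t|$ for small $t$, adjusting the parameter that controls the scale of regularity so that the scale of Lemma~\ref{lemma:controle_difference_rayons_spectraux} matches the prescribed scale $|b|^{-\alpha_2}$ of the set $G^{\Delta,2}(|b|^{-\alpha_2})$.

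Fix once and for all the integer $p$ and the constants $c_1,c_2\in\R_+^\ast$ provided by Lemma~\ref{lemma:controle_difference_rayons_spectraux}; note that $c_1,c_2,p$ do not depend on the parameter called $\alpha_2$ there. Choose $\beta>1/c_2$, still independent of $\alpha_2$, and set $n=n(\beta,b)=\lfloor\beta\ln|b|\rfloor$. Given $\alpha_2>0$ in the statement of the present lemma, apply Lemma~\ref{lemma:controle_difference_rayons_spectraux} with parameter $\alpha_2':=\alpha_2/(p\beta)$ (increased by a bounded multiplicative factor to absorb the floor function): this yields constants $\Delta,c_3\in\R_+^\ast$ and, for all $n$ large enough, a measurable set $A_n\subset\G\times\G$ with
\[
(\rho^{\ast pn}\otimes\rho^{\ast pn})(A_n)\geqslant e^{-c_3 n},
\]
such that on $A_n$ the elements $g,h,gh$ are proximal, $V_g^+,V_h^+,V_{gh}^+$ are $\Delta$-$\nu$-regular at scale $e^{-\alpha_2'pn}\leqslant|b|^{-\alpha_2}$, and the quantity $x:=\lambda_1(gh)-\lambda_1(g)-\lambda_1(h)$ satisfies $e^{-c_1 n}\leqslant|x|\leqslant e^{-c_2 n}$. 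In particular $A_n\subset G^{\Delta,2}(|b|^{-\alpha_2})$.

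On $A_n$ one has $|bx|\in[|b|^{1-c_1\beta},|b|^{1-c_2\beta}]$; since $\beta c_2>1$, the upper bound tends to $0$, so for $|b|$ large enough $|bx|\leqslant\pi$ and
\[
|e^{ibx}-1|^2\geqslant (2/\pi)^2|bx|^2\geqslant C|b|^{2(1-c_1\beta)}.
\]
Integrating over $A_n\subset G^{\Delta,2}(|b|^{-\alpha_2})$ and using the measure bound gives
\[
\int_{G^{\Delta,2}(|b|^{-\alpha_2})}|e^{ibx}-1|^2\,d\rho^{\ast pn}(g)\,d\rho^{\ast pn}(h)\geqslant C|b|^{2(1-c_1\beta)}\cdot e^{-c_3 n}\geqslant C|b|^{2-(2c_1+c_3)\beta}.
\]
Multiplying by $|b|^\alpha$ with $\alpha:=(2c_1+c_3)\beta-2$ yields a positive liminf as $|b|\to\infty$.

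The main subtlety is that the statement requires $\alpha$ (and $\beta$, $p$) to be chosen once and for all, before $\alpha_2$ is given, whereas Lemma~\ref{lemma:controle_difference_rayons_spectraux} allows $c_3$ to depend on $\alpha_2$. Inspection of the proof of that lemma shows that $c_3$ ultimately comes from an application of Lemma~\ref{lemma:convolution_mesures_SLd} at a scale of the form $e^{-2(\lambda_1+\eta)(d+1)n}$ that does not involve $\alpha_2$; the entire $\alpha_2$-dependence is absorbed into the regularity exponent $\Delta$ via Proposition~\ref{proposition:regularite_inferieure_espace_projectif} (through the choice $m\approx\alpha_2/(\lambda_1-\lambda_2-4\eta)$). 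Consequently $c_3$ may be taken independent of $\alpha_2$ and the value $\alpha=(2c_1+c_3)\beta-2$ serves uniformly for all $\alpha_2>0$, completing the proof.
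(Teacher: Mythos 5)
Your proof follows the paper's own approach: restrict the integral to the good set produced by Lemma~\ref{lemma:controle_difference_rayons_spectraux}, use the linear estimate $|e^{ix}-1|\asymp |x|$ on the window $e^{-c_1 n}\leqslant |\lambda_1(gh)-\lambda_1(g)-\lambda_1(h)|\leqslant e^{-c_2 n}$ after choosing $\beta>1/c_2$ (so the argument $|b|\cdot|\lambda_1(gh)-\lambda_1(g)-\lambda_1(h)|\leqslant |b|^{1-c_2\beta}$ tends to $0$), and combine the pointwise lower bound $|b|^{2(1-c_1\beta)}$ with the measure lower bound $|b|^{-c_3\beta}$.

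Your remark about the order of quantifiers is both correct and, in fact, needed: as stated, Lemma~\ref{lemma:controle_difference_rayons_spectraux} only produces $c_3$ after $\alpha_2$ is given, so the paper's final choice of $\alpha$ with $\alpha-c_3\beta+2(1-c_1\beta)>0$ would seem to depend on $\alpha_2$, which is out of scope in the present lemma. You correctly trace $c_3$ in the proof of Lemma~\ref{lemma:controle_difference_rayons_spectraux} to an application of Lemma~\ref{lemma:convolution_mesures_SLd} at a scale $e^{-2(\lambda_1+\eta)(d+1)n}$ that does not see $\alpha_2$, the $\alpha_2$-dependence being entirely absorbed into $\Delta$ via the iteration parameter $m$ in Proposition~\ref{proposition:regularite_inferieure_espace_projectif}; the paper leaves this implicit. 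One small imprecision: $\Delta$-regularity at the scale $e^{-\alpha_2'pn}\leqslant |b|^{-\alpha_2}$ gives, by monotonicity of measures, only $s\Delta$-regularity at the larger scale $|b|^{-\alpha_2}$ for the bounded ratio $s=\alpha_2'pn/(\alpha_2\ln|b|)>1$, not $\Delta$-regularity; since the lemma merely asserts the existence of \emph{some} $\Delta$ this is harmless (replace $\Delta$ by $s\Delta$ in the conclusion), but you should say so — note the paper's proof silently performs the same identification of $G_n^2$ (scale $e^{-\alpha_2 pn}$) with $G^{\Delta,2}(b^{-\alpha_2})$.
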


\begin{proof}
We note, with the same notations as in the proof of the previous lemma, for any $n\in \N$,
\[
G_n^2 := \left\{\begin{array}{c|c} \multirow{3}*{$g,h$} & g,h \text{ and }gh\text{ are proximal, }  V_g^+,V_h^+ \text{ and }V_{gh}^+\text{ are } \\
&\Delta-\nu\text{-regular at scale }{e^{-\alpha_2 p n} }\text{ and } \\ & e^{-c_1 n} \leqslant |\lambda_1(gh) - \lambda_1(g) - \lambda_1(h)| \leqslant e^{-c_2 n}  \end{array}\right\}
\]
We choose $\alpha, \beta \in \R_+^\ast$ and we will fix their value later. We note $p$ the parameter given by the previous lemma. For $b\in \R$, we note $n = \lfloor \beta \ln |b| \rfloor$.

Then, for $b$ large enough and any $(g,h) \in G_n^2$,
\[
|b|^{1-c_1 \beta} \leqslant |b| |\lambda_1(g h) - \lambda_1(g) - \lambda_1(g)| \leqslant |b|^{1-c_2 \beta}
\]
So, if $\beta >1/c_2$, we use the inequalities
\[
0< \inf_{x\in [-1,1]}\frac{\left| e^{ix} - 1\right|}{|x|} \leqslant \sup_{x\in [-1,1]}\frac{\left| e^{ix} - 1\right|}{|x|} <+\infty
\]
to get that for large enough $b$ and any $(g,h) \in G_n$,
\[
\left|e^{ib( \lambda_1(g h) - \lambda_1(g) - \lambda_1(g))} - 1 \right| \asymp |b| |\lambda_1(g h) - \lambda_1(g) - \lambda_1(g)| \geqslant |b|^{1-c_1\beta }
\]
So, for $b$ large enough and uniformly for any $(g,h) \in G_n^2$,
\[
\left|e^{ib( \lambda_1(g h) - \lambda_1(g) - \lambda_1(g))} - 1 \right|\geqslant |b|^{1-c_1 \beta}
\]
Moreover, according to lemma~\ref{lemma:controle_difference_rayons_spectraux},
\[
\rho^{\ast p n(\beta, b)} \otimes \rho^{\ast pn(\beta,b)} (G_n^2) \geqslant e^{-c_3 n(\beta,b)} \geqslant |b|^{-c_3 \beta}
\]
So, if $\alpha$ is such that $\alpha - c_3 \beta +2(1 - c_1 \beta) >0$, then
\[
\liminf_{b\to \pm\infty} |b|^\alpha \int_{G^{\Delta,2}(b^{-\alpha_2})}\left| e^{i b (\lambda_1(gh) - \lambda_1(g) - \lambda_1(h))} - 1 \right|^2 \di\rho^{\ast p n(\beta,b)}(g) \di\rho^{\ast p n(\beta,b)}(h) >0
\]
and this is what we intended to prove.
\end{proof}

We can finally prove the control on the logarithm of the spectral radii of proximal elements of $\mathrm{SL}_d(\R)$.
\begin{proposition} \label{proposition:rayons_spectraux_diophantiens}
Let $\rho$ be a strongly irreducible and proximal borelian probability measure on $\mathrm{SL}_d(\R)$ having an exponential moment.

Then, there are $\alpha, \beta \in \R_+^\ast$ and $p\in \N^\ast$ such that for any $\alpha_2 \in \R_+^\ast$, there is $\Delta \in \R_+$ such that
\[
\liminf_{b\to \pm\infty} |b|^\alpha \int_\G \un_{\left\{\substack{g\text{ is proximal and}\\V_g^+\text{ is }\Delta-\nu\text{-regular at scale }b^{-\alpha_2}}\right\}} \left| e^{ib \lambda_1(g)} - 1 \right|^2 \di\rho^{\ast pn(\beta,b)}(g) >0
\]
where we noted $\lambda_1(g)$ the spectral radius of $g$ and $n(\beta,b) = \lfloor \beta \ln |b| \rfloor$.
\end{proposition}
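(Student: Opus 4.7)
The plan is to deduce this from Lemma~\ref{lemma:controle_diff_rayons_spectraux_integre}, which provides the analogous lower bound for the two-variable integrand $|e^{ib(\lambda_1(gh) - \lambda_1(g) - \lambda_1(h))} - 1|^2$. The bridge between the two statements is the pointwise identity
\[
e^{ib(\lambda_1(gh) - \lambda_1(g) - \lambda_1(h))} - 1 = e^{-ib(\lambda_1(g) + \lambda_1(h))}\bigl(e^{ib\lambda_1(gh)} - e^{ib\lambda_1(g)}e^{ib\lambda_1(h)}\bigr),
\]
from which the triangle inequality (using that all three exponentials have modulus $1$) yields
\[
\bigl|e^{ib(\lambda_1(gh) - \lambda_1(g) - \lambda_1(h))} - 1\bigr| \leqslant \bigl|e^{ib\lambda_1(gh)} - 1\bigr| + \bigl|e^{ib\lambda_1(g)} - 1\bigr| + \bigl|e^{ib\lambda_1(h)} - 1\bigr|,
\]
and therefore, after squaring,
\[
\bigl|e^{ib(\lambda_1(gh) - \lambda_1(g) - \lambda_1(h))} - 1\bigr|^2 \leqslant 3\bigl(|e^{ib\lambda_1(gh)}-1|^2 + |e^{ib\lambda_1(g)}-1|^2 + |e^{ib\lambda_1(h)}-1|^2\bigr).
\]

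I would then integrate this bound over $G^{\Delta,2}(b^{-\alpha_2})$ against $\rho^{\ast pn(\beta,b)} \otimes \rho^{\ast pn(\beta,b)}$. By Lemma~\ref{lemma:controle_diff_rayons_spectraux_integre} the left-hand side is at least $c|b|^{-\alpha}$ for some $c>0$ and every $|b|$ large. For the $g$-term on the right, the indicator of $G^{\Delta,2}(b^{-\alpha_2})$ forces in particular that $g$ is proximal and that $V_g^+$ is $\Delta-\nu$-regular at scale $b^{-\alpha_2}$; dropping the constraint on $h$ (an upper bound) produces exactly the integral appearing in the proposition, with $\rho^{\ast pn(\beta,b)}$. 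The $h$-term is symmetric. For the $gh$-term, I would use the pushforward of $\rho^{\ast pn(\beta,b)} \otimes \rho^{\ast pn(\beta,b)}$ under multiplication, which is $\rho^{\ast 2pn(\beta,b)}$, together with the fact that the indicator of $G^{\Delta,2}(b^{-\alpha_2})$ also forces $gh$ to be proximal with $V_{gh}^+$ being $\Delta-\nu$-regular at scale $b^{-\alpha_2}$; this yields the same integrand but against $\rho^{\ast 2pn(\beta,b)}$.

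At this stage the estimate reads $I_{pn(\beta,b)} + I_{2pn(\beta,b)} \gtrsim |b|^{-\alpha}$, where $I_N$ denotes the proposition's integral with $\rho^{\ast N}$ instead of $\rho^{\ast pn(\beta,b)}$. The main obstacle is extracting a single-power lower bound from the sum of two. I expect to resolve this by applying the whole argument to the lazy random walk $\rho_\e = \tfrac12\delta_\e + \tfrac12\rho$, which inherits strong irreducibility, proximality and the exponential moment of $\rho$ and for which the contraction assumptions persist by Lemma~\ref{lemma:contraction_marche_ralentie}. Using the expansion $\rho_\e^{\ast N} = 2^{-N}\sum_k \binom{N}{k}\rho^{\ast k}$, a single convolution power of $\rho_\e$ of order comparable to $n(\beta,b)$ simultaneously dominates both $\rho^{\ast pn(\beta,b)}$ and $\rho^{\ast 2pn(\beta,b)}$, with a weight that is at worst exponential in $n(\beta,b)$ and hence polynomial in $|b|$; this degrades the exponent $\alpha$ to some $\alpha'$ but delivers the single-power form required. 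Since Theorem~\ref{theorem:controle_resolvante_SL_d} applies the proposition precisely to $\rho_\e$, this is enough for the intended use.
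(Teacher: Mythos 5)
Your decomposition via the triangle identity is exactly the paper's: the paper's proof also expands $e^{ib(\lambda_1(gh)-\lambda_1(g)-\lambda_1(h))}-1$ into three unimodular-weighted differences, and also observes that pushing forward the $gh$-term under multiplication yields an integral against $\rho^{\ast 2p n(\beta,b)}$ while the $g$- and $h$-terms give $\rho^{\ast p n(\beta,b)}$; the paper uses Minkowski's inequality in $L^2$ where you square the pointwise triangle inequality, but these produce the same bound $J_{p n} + J_{2 p n} \gtrsim |b|^{-\alpha}$ up to constants, so the outlines coincide.

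Where you diverge is in closing the argument. The paper argues by contradiction: assuming the conclusion fails, it deduces $\liminf |b|^\alpha J_{p n} = 0$ and $\liminf |b|^\alpha J_{2 p n} = 0$, then asserts $\liminf |b|^\alpha I_b = 0$, contradicting Lemma~\ref{lemma:controle_diff_rayons_spectraux_integre}. That last step is not immediate: the two liminfs may be realized along disjoint subsequences of $b$, in which case the liminf of the sum need not vanish. You correctly single out exactly this difficulty ("extracting a single-power lower bound from the sum of two"). Your lazy-walk fix — using $\rho_\e^{\ast 2 p n} = 2^{-2 p n}\sum_{k}\binom{2 p n}{k}\rho^{\ast k}$ to dominate both $J_{p n}$ and $J_{2 p n}$ simultaneously by a single $\rho_\e^{\ast 2 p n}$-integral, at the cost of a factor $2^{-2 p n} \approx |b|^{-2 p \beta \ln 2}$ which is polynomial in $|b|$ — neatly sidesteps the liminf issue because the bound holds for every large $b$, not just along a subsequence. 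This is a genuine improvement in rigor over the paper's contradiction argument.

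The one caveat is that what you end up with is a variant of the statement: the conclusion is for $\rho_\e^{\ast p' n(\beta,b)}$ rather than $\rho^{\ast p n(\beta,b)}$. You are right that this is adequate for the only place the proposition is invoked — Theorem~\ref{theorem:controle_resolvante_SL_d} calls it with the measure $\rho_\e$ anyway, and the $P$-invariant measure $\nu$ (hence the regularity sets $G^\Delta$) are the same for $\rho$ and $\rho_\e$. If you wanted to recover the statement literally for an arbitrary strongly irreducible proximal $\rho$, you would need to argue separately, but for the paper's purposes your variant is what is actually used, so the proof stands.
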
 

\begin{proof}
Note, for $r\in \R_+^\ast$,
\[
G^\Delta (r) := \left\{  g\in \G \middle| g\text{ is proximal and }V_g^+\text{ is }\Delta-\nu\text{-regular at scale }r\right\}
\]
Suppose that there are no such $\alpha,\beta,p$. Then, for any $\alpha, \beta,p$,
\[
\liminf_{b\to \pm\infty} |b|^\alpha \int_\G \un_{G^\Delta(b^{-\alpha_2})}(g) \left| e^{ib \lambda_1(g)} - 1 \right|^2 \di\rho^{\ast pn(\beta,b)}(g) =0
\]
In particular, for any $\alpha, \beta\in \R_+^\ast$ and any $p\in \N$,
\begin{align*}
\liminf_{b\to \pm\infty} |b|^\alpha &\int_{\G^2} \un_{G^\Delta(b^{-\alpha_2})}(gh) \left| e^{ib \lambda_1(gh)} - 1 \right|^2 \di\rho^{\ast pn(\beta,b)}(g) \di\rho^{\ast pn(\beta,b)}(h) \\
& \retrait=  \liminf_{b\to \pm\infty} |b|^\alpha \int_{\G} \left| e^{ib \lambda_1(g)} - 1 \right|^2 \di\rho^{\ast 2pn(\beta,b)}(g) =0
\end{align*}
But, using the triangular inequality and keeping the notation $G_2^\Delta$ of the previous lemma, we obtain that
\begin{align*}
I_b(\beta):&=\left(\int_{G^{\Delta,2}(b^{-\alpha_2})} \left| e^{ib (\lambda_1(gh) - \lambda_1(g) - \lambda_1(h))} - 1 \right|^2 \di\rho^{\ast pn(\beta,b)}(g) \di\rho^{\ast pn(\beta,b)}(h) \right)^{1/2}\\ & \leqslant \left(\int_\G \un_{G^\Delta(b^{-\alpha_2})}(g)\left| e^{ib\lambda_1(g)}-1 \right|^2 \di\rho^{\ast 2pn(\beta,b)}(g) \right)^{1/2} \\ & \retrait+ 2\left(\int_\G \un_{G^\Delta(b^{-\alpha_2})}(g) \left| e^{ib\lambda_1(g)}-1 \right|^2\di\rho^{\ast pn(\beta,b)}(g)  \right)^{1/2}
\end{align*}
So, for any $\alpha, \beta$,
\[
\liminf_{b\to \pm\infty} |b|^\alpha I_b (\beta)= 0
\]
and this contradicts lemma~\ref{lemma:controle_diff_rayons_spectraux_integre}.
\end{proof}
\section{The renewal theorem} \label{section:renouvellement} 

\begin{miniabstract}
In this section, we link, as we said in the introduction, the rate of convergence for the renewal theorem to the control of $(I_d-P(it))^{-1}$ in some Banach space.

The aim is to prove theorem~\ref{theorem:renouvellement_general} that we need to establish theorem~\ref{theorem:renouvellement_Rd}.
\end{miniabstract}

\subsection{Preliminaries}
Given a second countable locally compact group $\G$ acting continuously on a metric space $\X$ and a cocycle $\sigma : \G\times \X\to \R$ (see definition~\ref{definition:cocycle}), we can define an action of $\G$ on $\X\times \R$ by setting
\[
g.(x,t) = (gx, t+\sigma(g,x))
\]

If $\rho$ is a borelian probability measure on $\G$, this defines a Markov chain on $\X\times \R$ whose associated operator is the one defined for any continuous function $f$ on $\X\times \R$ and any $(x,t) \in \X\times \R$ by
\[
Pf(x,t) = \int_\G f(g.(x,t)) \di\rho(g) = \int_\G f(gx, t+\sigma(g,x)) \di\rho(g)
\]
This operator commutes to translations on $\R$ and this implies that for any $f\in \mathrm{L}^\infty(\X\times \R)$ and any $g\in \mathrm{L}^1(\R)$,
\[
(Pf) \ast g = P(f\ast g)
\]
where we noted, for $f\in \mathrm{L}^\infty(\X\times \R)$, $g\in \cal L^1(\R)$ and $(x,t) \in \X\times \R$,
\[
f\ast g(x,t) = \int_\R f(x,u) g(t-u) \di u
\]

\medskip
We call renewal kernel the operator $G= \sum_{n=0}^{+\infty} P^n$ when it is defined.

\medskip
Kesten studied in~\cite{Ke74} the properties of $G$ in a very general setting and Guivarc'h and Le Page used his result in~\cite{GuLe15} to get the renewal theorem in $\R^d$ associated to a borelian probability measure on $\mathrm{SL}_d(\R)$ : this is theorem~\ref{theorem:guivarch_lepage} that we stated in the introduction.

\medskip
In this chapter, we would like to study the rate of convergence in Kesten's theorem but we will not do it in his general setting but directly in the case of a group contracting some compact metric space.

So, in the sequel, we fix a second countable locally compact group $\G$ and a borelian probability measure $\rho$ on $\G$.

Let $(\X,d)$ be a compact metric $\G-$space endowed with an action of a finite group $\Hb$ that commutes to the $\G-$action and such that $\X/\Hb$ is $(\rho,\gamma,M,N)-$contracted over a finite $\G-$set $\Abf$ on which the random walk defined by $\rho$ is irreducible and aperiodic (see section~\ref{section:Dolgopyat_Markov}). To simplify notations, we simply note $\pi_\Abf$ the two projections of $\X$ to $\Abf$ and of $\X/\Hb$ to $\Abf$.

We recall that for us, $\G= \mathrm{SL}_d(\R)$, $\rho$ is a probability measure on $\mathrm{SL}_d(\Z)$, $\X = \mathbb{S}^{d-1} \times \Abf$, $\X/\Hb = \prob(\R^d) \times \Abf$ and $\Abf$ is finite $\Gamma_\rho-$set.

\medskip
For technical reasons that we will justify in section~\ref{section:renouvellement_holder}, we introduce the function $\omega : (\X\times \R)^2 \to \R_+$ defined by
\[
\omega((x,t),(x',t')) =\left\{\begin{array}{cl} e^{-\frac{|t|+|t'|}2} \sqrt{d(x,x')^2 + \left(e^{(t-t')/2} - e^{(t'-t)/2}\right)^2}& \text{if }\pi_\Abf  (x) = \pi_\Abf (x')\\ 1 & \text{if not}  \end{array} \right.
\]
and we set, for $\gamma \in \left]0,1\right]$,
\[
\cal C^{0,\gamma}_\omega = \left\{ f\in \cal C^0(\X\times \R) \middle| \|f\|_{\gamma,\omega}:= \sup_{\substack{(x,t),(x',t') \in \X\times \R \\ (x,t)\not=(x',t')}} \frac{|f(x,t) - f(x',t')|}{\omega((x,t),(x',t'))^\gamma} \text{ is finite}\right\}
\]
In the same way, we note,
\[
\omega_0((x,t),(x',t'))  =   \left\{\begin{array}{cl} \frac{\sqrt{|t-t'|^2+  d(x,x')^2} }{(1+|t'|)(1+|t|)} & \text{si }\pi_\Abf  (x) = \pi_\Abf (x')\\ 1 & \text{si non}  \end{array} \right.
\]
and we define $\cal C^{0,\gamma}_{\omega_0} (\X\times \R)$ like $\cal C^{0,\gamma}_\omega(\X\times \R)$.

We will see in section~\ref{section:renouvellement_holder} that for any $f$ in $\cal C^{0,\gamma}_\omega$, there are functions $p^+(f), p^-(f)$ on $\cal \Abf$ such that for any $x\in \X$,
\[
p^-(f)(\pi_\Abf(x))= \lim_{t\to -\infty} f(x,t) \text{ and }p^+(f)(\pi_\Abf(x))= \lim_{t\to +\infty} f(x,t) 
\]

We recall that for a cocycle $\sigma$ on $\X$ and $t\in \R$, we note $P(it) $ the operator defined for any continuous function $f$ on $\X$ and any $x\in \X$ by
\[
P(it)f(x) = \int_\G e^{-it\sigma(g,x)} f(gx) \di\rho(g)
\]
We refer to section~\ref{section:Dolgopyat_Markov} and more specifically to paragraph~\ref{subsubsection:pertubation_cocycles} for more details.

The main result of this section is the following
\begin{theorem}\label{theorem:renouvellement_general}
Let $\G$ be a second countable locally compact group, $N:\G \to [1,+\infty[$ a submultiplicative function on $\G$ and $\rho$ a borelian probability measure on $\G$.

Let $\X$ be a compact metric $\G-$space endowed with an action of a finite group $\Hb$ that commutes to the $\G$ action and such that $\X/\Hb$ is $(\rho,\gamma_0,M,N)-$contracted over a finite $\G-$set $\Abf$ on which the random walk defined by $\rho$ is irreducible and aperiodic.

Let $\sigma\in \cal Z^M(\X/\Hb)$ and $\sigma_\rho = \int_\G \int_{\X/\Hb} \sigma(g,x) \di\nu(x) \di\rho(g)$ where $\nu$ is the unique $P-$invariant probability measure on $\X/\Hb$ given by proposition~\ref{proposition:unique_mesure_invariante_contraction}. We assume that $\sigma_\rho>0$.

We also assume that there is $\gamma_0 \in \left]0,1\right]$ such that for any $\gamma \in \left]0,\gamma_0\right]$ and $t_0 \in \R_+^\ast$, there are $C_0,L$ such that for any $t\in \R$ with $|t|\geqslant t_0$,
\[
\|(I_d-P(it))^{-1} \|_{\cal C^{0,\gamma}(\X)} \leqslant C_0|t|^L
\]
We note $\Pi_0$ the operator define for any $f\in \cal C^\gamma_\omega(\X\times \R)$ such that $p^+(f)=0$ par
\[
\Pi_0 f(x,t) = \int_t^{+\infty}N_0f(x,u) \di u
\]
where $N_0$ is the projector on the space of $P-$invariant function in $\cal C^{0}(\X)$ and we make the abuse of notations $N_0 f(x,u) = N_0 f_u(x)$ with $f_u = f(.,u)$.

Then, for any $\gamma>0$ small enough, there are $\alpha,C \in \R_+^\ast$ such that for any $f\in \cal C^\gamma_\omega(\X\times \R)$ with $p^+(f)=0=\sum_{a\in\Abf} p^-(f)(a)$ and for any $x\in \X$,
\[
\lim_{t\to -\infty} \left(G-\frac 1 {\sigma_\rho}\Pi_0\right)f(x,t) = \sum_{n\in\N} P^n p^-(f)(\pi_\Abf(x))
\]
Moreover, $(G-\frac 1 {\sigma_\rho}\Pi_0)f \in \cal C^\alpha_{\omega_0} (\X\times \R)$ and
\[
\left\|\left(G-\frac 1 {\sigma_\rho}\Pi_0\right)f\right\|_{\alpha,\omega_0} \leqslant C \|f\|_{\gamma,\omega}
\]
\end{theorem}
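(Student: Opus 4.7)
The strategy is Fourier analytic: since $P$ commutes with translations in the $\R$-variable, taking the partial Fourier transform $\widehat f(x,\xi)=\int_\R f(x,u)e^{-i\xi u}\,du$ intertwines $P$ with the family $P(i\xi)$ studied in section~\ref{section:Dolgopyat_Markov}. Formally one writes
\[
Gf(x,t)=\frac{1}{2\pi}\int_\R (I_d-P(i\xi))^{-1}\widehat f(\cdot,\xi)(x)\,e^{i\xi t}\,d\xi,
\]
and the plan is to extract the singularity of the resolvent at $\xi=0$ (which carries the non-integrable ``drift'' information giving $\tfrac{1}{\sigma_\rho}\Pi_0f$) and to bound the remainder using the polynomial control $\|(I_d-P(i\xi))^{-1}\|_{\cal C^{0,\gamma}(\X)}\leqslant C|\xi|^L$ at large $|\xi|$. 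To make this rigorous, I would first regularise $f$ by convolution in $t$ with a smooth approximation of the identity, carry out the computation for the smoothed function, and then pass to the limit using dominated convergence controlled by the weight $\omega$.

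The key step is the perturbation analysis near $\xi=0$. Proposition~\ref{proposition:spectral_gap_P0} gives a spectral gap for $P=P(0)$ in $\cal C^{0,\gamma}(\X/\Hb)$, so standard analytic perturbation theory applied to the holomorphic family $P(i\xi)$ yields, for $|\xi|\leqslant\xi_0$, a unique dominant eigenvalue $\lambda(\xi)$ with $\lambda(0)=1$ and $\lambda'(0)=-i\sigma_\rho$, and a rank-one spectral projector $N(\xi)$ analytic in $\xi$ with $N(0)=N_0$. Writing
\[
(I_d-P(i\xi))^{-1}=\frac{N(\xi)}{1-\lambda(\xi)}+R(\xi),\qquad \frac{1}{1-\lambda(\xi)}=\frac{1}{i\sigma_\rho\xi}+h(\xi),
\]
with $R$ and $h$ holomorphic near $0$, the residue $\frac{N_0}{i\sigma_\rho\xi}$ contributes, via the inverse Fourier transform of $\xi\mapsto 1/(i\xi)$, exactly the primitive $\tfrac{1}{\sigma_\rho}\int_t^{+\infty}N_0f(\cdot,u)\,du=\tfrac{1}{\sigma_\rho}\Pi_0f$, up to a constant. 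This constant at $-\infty$ is precisely $\sum_{n\in\N}P^n p^-(f)$: the assumption $\sum_ap^-(f)(a)=0$ combined with the spectral gap on $\Abf$ (lemma~\ref{lemma:markov_chains_finite_space_state}) makes this series absolutely convergent and it matches the limiting value produced by the Fourier representation.

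Once the main singular term has been subtracted, the remainder involves integrating an operator that is holomorphic near $0$ (so harmless on $|\xi|\leqslant\xi_0$) and, for $|\xi|\geqslant\xi_0$, satisfies the polynomial bound from theorem~\ref{theorem:controle_resolvante_SL_d}. The integrability at infinity is obtained by exploiting Hölder regularity of $f$ in the $t$-variable: standard arguments give $\|\widehat f(\cdot,\xi)\|_\gamma=O(|\xi|^{-\gamma'})$ for some $\gamma'>0$ proportional to $\gamma$, so taking $\gamma$ small enough forces the ``oscillatory integral'' to converge (this is exactly where the hypothesis ``$\gamma>0$ small enough'' enters). For the Hölder regularity in the $\omega_0$-norm, I would estimate the difference $Gf(x,t)-Gf(x',t')$ by writing the corresponding integrals and using the Lipschitz estimates on $\xi\mapsto P(i\xi)$, the $\gamma$-Hölder estimate on $f$ encoded by $\omega$, and the polynomial control on the resolvent; the $1/(1+|t|)(1+|t'|)$ factor in $\omega_0$ arises naturally from the weight $e^{-(|t|+|t'|)/2}$ in $\omega$ after the Fourier integration.

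The main technical obstacle is the quantitative control of the Fourier transform $\widehat f(\cdot,\xi)$ in the Hölder norm, because $f$ is only Hölder in $t$ (not smooth), so $\widehat f$ has only $O(|\xi|^{-\gamma})$ decay. Balancing this against the polynomial growth $|\xi|^L$ of the resolvent is what forces $\gamma$ to be small relative to $L$, and a careful decomposition of $\widehat f$ on dyadic shells $|\xi|\asymp 2^k$, possibly combined with a Littlewood--Paley-style analysis adapted to the weight $\omega$, will be needed to obtain the final Hölder estimate with norm $\omega_0$.
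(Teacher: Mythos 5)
The overall architecture you outline — partial Fourier transform intertwining $P$ with $P(i\xi)$, spectral perturbation near $\xi=0$ producing the $\tfrac{1}{i\sigma_\rho\xi}N_0$ singularity, and polynomial resolvent bounds at large $|\xi|$ — matches what the paper does in proposition~\ref{proposition:controle_inverse_perturbe}, proposition~\ref{proposition:representation_renouvellement_fonctions_regulieres} and corollary~\ref{corollary:vitesse_renouvellement_fonctions_regulieres}. But the step where you pass from regular functions to functions that are only $\omega$-H\"older contains a genuine gap, and the mechanism you propose would not close it.

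You note yourself that $\widehat f(\cdot,\xi)$ decays only like $|\xi|^{-\gamma}$ when $f$ is $\gamma$-H\"older in $t$, while the resolvent $U(-i\xi)$ grows like $|\xi|^{L}$; the product then grows like $|\xi|^{L-\gamma}$, which fails to be integrable for any $\gamma\leqslant 1$ once $L\geqslant 0$. Crucially, decreasing $\gamma$ makes this \emph{worse}, not better: the hypothesis ``$\gamma$ small enough'' is there so that the contraction estimates and resolvent bounds hold in $\cal C^{0,\gamma}(\X)$, not to make the Fourier integral converge. So the balancing you describe, and its fallback ``pass to the limit by dominated convergence'', cannot produce the oscillatory integral for a merely H\"older $f$. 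The paper's fix is of a different nature: it is tauberian. One first proves corollary~\ref{corollary:vitesse_renouvellement_fonctions_regulieres} for $f\in\cal E^{\gamma,K}$, then shows that the regularising kernel $\varphi_{k+1}(t)=t^{k+1}e^{-t}\un_{\R_+}(t)$ maps $\cal E^{\gamma,0}$ to $\cal E^{\gamma,k}$ (lemma~\ref{lemma:regularizing}), so that $\varphi_{k+1}\ast(G-\tfrac{1}{\sigma_\rho}\Pi_0)f$ inherits the $\omega_0$-H\"older bound. Separately, a direct computation using the contraction (not the Fourier representation) shows that $(G-\tfrac{1}{\sigma_\rho}\Pi_0)f$ is H\"older in $t$ with modulus $(e^{|t-t'|}-1)^\gamma$. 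Frennemo's tauberian theorem~\ref{theorem:frennemo} then upgrades the bound on the smoothed function to a bound on $f$ itself. This two-sided information — regularity of the convolution plus a crude a priori modulus for the function — is exactly what is missing from the Fourier-only argument, and it is what a Littlewood--Paley decomposition alone would not supply.

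A smaller point: the representation $Gf(x,t)=\tfrac{1}{2\pi}\int_\R(I_d-P(i\xi))^{-1}\widehat f(\cdot,\xi)\,e^{i\xi t}\,d\xi$ is obtained in the paper only after an additional regularisation $P\rightsquigarrow P_s$ ($s>0$) followed by $s\to 0^+$, because the series defining $G$ does not commute with the Fourier inversion without it. This is the content of the first half of the proof of proposition~\ref{proposition:representation_renouvellement_fonctions_regulieres}, and it is a separate issue from the smoothness of $f$; your sketch conflates the two.
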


\begin{remark}
The function $Gf$ is well defined under the assumptions of the theorem since the convergence in the series is uniform on any compact subset of $\X\times \R$.
\end{remark}

\begin{proof}
To prove the theorem, we use the decomposition given in lemma~\ref{lemma:decomposition_fonctions_bord}, the corollary~\ref{corollaire:renouvellement_fonctions_nulles_bord} and lemma~\ref{lemma:renouvellement_fonctions_bord}
\end{proof}

\subsection{Non-unitary perturbations by cocycles}

\begin{miniabstract}
In this paragraph, we study the inverse of the operator $I_d-P(z)$ and we prove proposition~\ref{proposition:controle_inverse_perturbe} that shows that a control of the growth of the norm on the imaginary axis gives a control of the norm of the operator and it's derivatives on a neighbourhood with a nice shape.
\end{miniabstract}

Let $\G$ be a second countable locally compact group acting on a compact metric space $(\X,d)$ and $\rho$ a borelian probability measure on $\G$.

For a cocycle $\sigma:\G\times \X\to \R$ and $g\in \G$, we recall that we noted
\[
\sigma_{\mathrm{sup}}(g) = \sup_{x\in \X} |\sigma(g,x)| \text{ and }\sigma_{\mathrm{Lip}}(g) = \sup_{\substack{x,y\in \X\\ \pi_\Abf(x) = \pi_\Abf(y) \\ x\not=y}} \frac{|\sigma(g,x) - \sigma(g,y)|}{d(x,y)}
\]
And, for $M\in \R_+$ and $N:\G \to [1, +\infty[$ a submultiplicative function on $\G$,
\[
\cal Z^{M}_N(\X) = \left\{\sigma\text{ is a cocycle on }\X
\middle|  \sup_{g\in \G}\frac{\sigma_{\mathrm{Lip}}(g)}{N(g)^M } \text{ and }\sup_{g\in \G}\frac{e^{\sigma_{\mathrm{sup}}(g)}}{N(g)^M } \text{ are finite}\right\}
\]
Finally, for $\sigma \in \cal Z^{M}_N(\X)$, we set
\[
\Msig = \sup_{g\in \G} \frac{\sigma_{\mathrm{Lip}}(g)}{N(g)^M}  \text{ and }\Isig = \sup_{g\in \G}\frac{e^{\sigma_{\mathrm{sup}}(g)}}{N(g)^M } 
\]

We note $\C_\gamma := \{z\in \C||\Re(z)|<\gamma\}$.
For $z \in \C_{\gamma}$ and $\sigma\in \cal Z^M_N(\X/\Hb)$, we define the operator $P(z)$ on $\cal C^0(\X)$ by
\[
P(z) f(x) = \int_\G e^{-z\sigma(g,x)} f(gx) \di\rho(g)
\]
This is a continuous operator since for any continuous function $f$ on $\X$, any $z\in \C_\gamma$ and any $x\in \X$,
\[
|P(z) f(x)| \leqslant \|f\|_\infty \int_\G e^{-\Re(z) \sigma(g,x) }\di\rho(g) \leqslant \|f\|_\infty \int_\G \Isig^\gamma N(g)^{\gamma M} \di\rho(g)
\]
and $\int_\G N(g)^{\gamma M} \di\rho(g)$ is finite since the action is contracting.

In the sequel, we will note, for $\eta\in \R_+^\ast$, $\C_\eta:= \{z\in \C | |\Re(z)|<\eta\}$.

\medskip
We group the main results of this paragraph in the next
\begin{proposition} \label{proposition:controle_inverse_perturbe}
Under the assumptions and notations of theorem~\ref{theorem:renouvellement_general}.

For any $\gamma >0$ small enough, there are $\eta, C,L,t\in \R_+^\ast$ such that $(z\mapsto P(z))$ is an analytic function from $\C_\eta$ to the space of continuous operators on $\cal C^{0,\gamma}(\X)$. Moreover, for any $z\in[0,\eta[ \oplus i\R$ and any $n\in \N$,
\[
\|P(z)^n\|_\gamma\leqslant C(1+|z|)e^{-t \Re(z) n}
\]

Finally, noting
\[
U(z) = (I_d-P(z))^{-1} - \frac 1 {\sigma_\rho z} N_0,
\]
we have that $(z\mapsto U(z))$ (which is definite a priori on $i\R \setminus\{0\}$) can be extended to an analytic function onto the space of continuous operators on $\cal C^{0,\gamma}(\X)$ and defined on
\[
\cal D_{\eta, C,L}: = \left\{ z \in \C \middle| \frac{-1}{C(1+|\Im z|)^{L+1}} < \Re(z) < \eta \right\}
\]
and for any $n\in \N$ and any $z\in\cal D_{\eta, C,L} $
\[
\|U^{(n)}(z) \|_\gamma\leqslant n!  C^{n+1} (1+|\Im z|)^{(L+1)(n+1)}
\]
\end{proposition}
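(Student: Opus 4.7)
The plan has four stages, corresponding to the four statements of the proposition.

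\textbf{Stage 1: analyticity of $z\mapsto P(z)$ on a strip.} I would expand the kernel as
\[
e^{-z\sigma(g,x)} = \sum_{k=0}^{+\infty} \frac{(-z)^k}{k!} \sigma(g,x)^k,
\]
and show that for any $\gamma>0$ small enough, the series $\sum_k \frac{z^k}{k!} P_k$, where $P_k f(x) = \int_\G (-\sigma(g,x))^k f(gx)\di\rho(g)$, converges absolutely in $\cal C^{0,\gamma}(\X)$. Since $\sigma \in \cal Z^M_N(\X/\Hb)$, one has $|\sigma(g,x)|\leqslant\sigma_{\mathrm{sup}}(g)\leqslant \ln (\Isig N(g)^M)$, so $e^{|z|\sigma_{\mathrm{sup}}(g)}\leqslant \Isig^{|z|} N(g)^{M|z|}$, which is $\rho$-integrable for $|z|<\gamma$ thanks to the exponential moment (\ref{equation:exponential_moment}). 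The same estimate together with the Lipschitz control on $\sigma$ gives a uniform bound in $\cal C^{0,\gamma}(\X)$ via the same kind of computation done in the proof of Proposition~\ref{proposition:essential_spectral_radius}. This fixes some $\eta>0$.

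\textbf{Stage 2: bound on $\|P(z)^n\|_\gamma$ for $z\in [0,\eta[\oplus i\R$.} For $z=\sigma\in [0,\eta[$ real, $P(\sigma)$ is a small real analytic perturbation of $P=P(0)$, which by Proposition~\ref{proposition:spectral_gap_P0} has a spectral gap, with $1$ a simple eigenvalue and $N_0$ the associated projector. Standard analytic perturbation theory produces a simple eigenvalue $\lambda(\sigma)$ with $\lambda(0)=1$ and
\[
\lambda'(0) = -\int_\G\int_{\X/\Hb}\sigma(g,x)\di\nu(x)\di\rho(g) = -\sigma_\rho<0,
\]
so $\lambda(\sigma)\leqslant e^{-t\sigma}$ for $\sigma\in[0,\eta[$ after possibly shrinking $\eta$. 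Combining this with Proposition~\ref{proposition:essential_spectral_radius} (which applies to $P(z)$ with $z=\sigma+it$ since the essential spectral radius is controlled independently of $t$), I obtain the required bound $\|P(z)^n\|_\gamma \leqslant C(1+|z|)e^{-t\Re(z) n}$, where the factor $(1+|z|)$ accounts for the $|t|$ in the Lipschitz estimate of Proposition~\ref{proposition:essential_spectral_radius}.

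\textbf{Stage 3: analytic continuation of $U(z)$ near $0$.} On a small disk around $0$, analytic perturbation theory gives a decomposition $P(z) = \lambda(z) N(z) + Q(z)$, where $\lambda$ is analytic with $\lambda(0)=1$, $\lambda'(0) = -\sigma_\rho$, $N(z)$ is an analytic family of rank-one projectors with $N(0)=N_0$, and $(I_d-Q(z))$ is invertible with analytic inverse. Then
\[
(I_d-P(z))^{-1} = \frac{N(z)}{1-\lambda(z)} + (I_d-Q(z))^{-1}(I_d - N(z)).
\]
Writing $1-\lambda(z) = \sigma_\rho z(1+z h(z))$ with $h$ analytic, one sees $\frac{N(z)}{1-\lambda(z)} - \frac{N_0}{\sigma_\rho z}$ is analytic on a neighbourhood of $0$. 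Thus $U(z)$ extends holomorphically across $0$, and Stage~2 ensures the series defining $(I_d-P(z))^{-1}$ converges on any right half-strip $\Re(z)\in ]0,\eta[\,\oplus i\R$.

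\textbf{Stage 4: extension to $\cal D_{\eta,C,L}$ and derivative bounds.} On $i\R\setminus\{0\}$ the hypothesis gives $\|(I_d-P(it))^{-1}\|_\gamma\leqslant C_0 |t|^L$ for $|t|\geqslant t_0$. By Stage~1, $P$ is holomorphic with a derivative bounded in $\cal C^{0,\gamma}(\X)$ by a constant $K$ on the strip $\C_\eta$, so
\[
\|P(z)-P(it)\|_\gamma \leqslant K |z-it|.
\]
Using the Neumann series
\[
(I_d-P(z))^{-1} = (I_d-P(it))^{-1}\sum_{k=0}^{+\infty} \bigl[(P(z)-P(it))(I_d-P(it))^{-1}\bigr]^k,
\]
the resolvent extends to the disk of radius $\frac{1}{2K C_0 (1+|t|)^L}$ around $it$ with bound twice $C_0|t|^L$. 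The union of these disks (together with the neighbourhood of $0$ from Stage~3 and the right half-strip from Stage~2) contains a set of the form $\cal D_{\eta,C,L}$ after adjusting constants. The same bound passes to $U$ up to the $\frac{N_0}{\sigma_\rho z}$ term, which is $O(|z|^{-1})$ and hence dominated by $(1+|\Im z|)^{L+1}$ on $\cal D_{\eta,C,L}$. Finally, Cauchy's formula on the disk of radius $\frac{1}{2C(1+|\Im z|)^{L+1}}$ around $z$ yields
\[
\|U^{(n)}(z)\|_\gamma \leqslant \frac{n!}{\rho^n}\sup_{|w-z|=\rho}\|U(w)\|_\gamma \leqslant n! \, C^{n+1}(1+|\Im z|)^{(L+1)(n+1)},
\]
which is the claim.

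The main obstacle will be Stage~4: making the glueing between the various local descriptions of $(I_d-P(z))^{-1}$ (the perturbative picture near $0$, the Neumann extension from $i\R\setminus\{0\}$, and the spectral gap picture on $\Re(z)>0$) quantitative enough to yield the polynomially-thinning domain $\cal D_{\eta,C,L}$ with explicit constants compatible with the derivative estimate.
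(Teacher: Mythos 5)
Your Stages 1--3 follow essentially the same path as the paper (power-series argument for analyticity, spectral-gap perturbation to get the decay on the right half-strip, analytic perturbation theory for the Laurent-type decomposition near $0$); the paper cites lemma 10.16--10.17 of \cite{BQred} and lemma 3.2 of \cite{BL85} for these but the content is the same. Stage 4 takes a slightly different route: you extend across the imaginary axis by a direct Neumann series, whereas the paper lower-bounds the injectivity modulus $\inf_f \|(I_d-P(z))f\|_\gamma/\|f\|_\gamma$ and then invokes quasi-compactness (essential spectral radius $<1$, hence Fredholm of index $0$) to upgrade injectivity-with-bounded-inverse to full invertibility. Both are legitimate; the Neumann series is cleaner, the injectivity-modulus argument avoids writing down the series explicitly.

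However, Stage 4 contains a concrete gap: you assert that $P$ ``is holomorphic with a derivative bounded in $\cal C^{0,\gamma}(\X)$ by a constant $K$ on the strip $\C_\eta$.'' This is false. Since $P'(z)f(x)=-\int_\G\sigma(g,x)e^{-z\sigma(g,x)}f(gx)\,\di\rho(g)$, estimating $m_\gamma(P'(z)f)$ produces, exactly as in the proof of proposition~\ref{proposition:essential_spectral_radius}, a term proportional to $|z|\,\|f\|_\infty$ coming from $|e^{-z\sigma(g,x)}-e^{-z\sigma(g,y)}|\lesssim |z|\,N(g)^{M}d(x,y)$, so the correct bound is $\|P'(z)\|_\gamma\leqslant C_1(1+|z|)$. (Indeed, even $\|P(z)\|_\gamma$ itself grows like $1+|z|$, so a uniform bound on the derivative cannot hold.) This linear growth is precisely what causes the domain exponent to be $L+1$ rather than $L$: the perturbation radius around $it$ is $\sim 1/\bigl(C_0(1+|t|)^L\cdot C_1(1+|t|)\bigr)\sim 1/(1+|t|)^{L+1}$. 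Your ``after adjusting constants'' cannot account for a change in the exponent, and as written your argument would actually prove invertibility on a strictly larger domain (exponent $L$), which is not what the estimates support. You also need to be a little more careful in the Cauchy-formula step: the circle of radius comparable to the distance to the boundary may leave $\cal D_{\eta,C,L}$, so one first proves the $\|U(z)\|_\gamma$ bound on a slightly wider region $\cal D_{\eta,C',L}$ and applies Cauchy on circles contained in it; this is the Gelfand--Shilov computation the paper cites.
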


\begin{remark}
This proposition generalises the situation in $\R$ when $P(z)$ is the Fourier-Laplace transform of the measure $\rho$. In this case, the same estimations can be obtained under the ``non-lattice of type $p$'' assumption used by Carlsson in~\cite{Car83}. 
\end{remark}

Before we prove each of the assertions of the proposition, we draw the zone $\cal D_{\eta,C,L}$.

\begin{figure}[H]
\centering
\begin{tikzpicture}
\fill[pattern = north west lines] (1,-2) -- (1,2) -- plot [domain=2:-2,variable=\t,samples=200] ({-0.8/(1+(abs(\t)^2)},\t) -- cycle ;
\draw [very thin,->] (-2,0) -- (2,0) ;
\draw [very thin,->] (0,-2.2) -- (0,2.2) ;
\draw (-0.2,0) node[below]{$O$} ;
\draw (1.2,0.2) node[right]{$\eta$} ;
\draw (-0.8,0.25) node[left]{$-1/C$} ;
\draw (3,0) node[right]{ $\left\{ z \in \C \middle| \frac{-1}{C(1+|\Im z|)^{L+1}} < \Re(z) < \eta \right\}$} ;
\draw [ultra thick] (1,-2) -- (1,2) ;
\draw [ultra thick] plot [domain=-2:2,variable=\t,samples=200] ({-0.8/(1+(abs(\t)^2)},\t); 
\end{tikzpicture}
\caption{Shape of the zone~$\cal D_{\eta,C,L}$}
\end{figure}
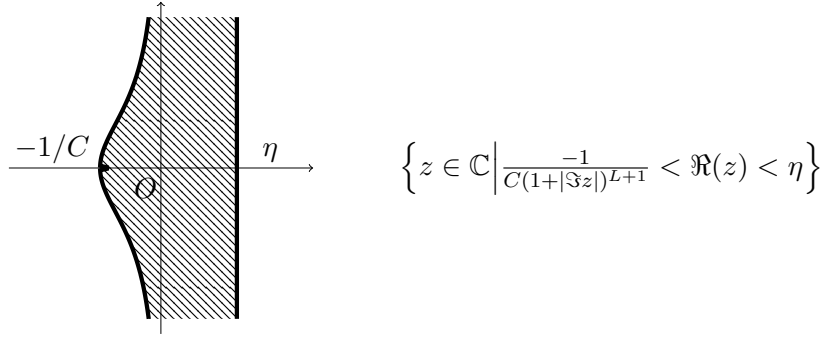

\begin{lemma} \label{lemma:drift}
Under the assumptions of proposition~\ref{proposition:controle_inverse_perturbe}, if $\sigma_\rho>0$ then there are $\eta,t,C \in \R_+^\ast$ such that for any $s\in[0,\eta]$ and any $n\in \N$,
\[
\sup_{x\in \X} \int_\G e^{-s\sigma(g,x)} \di\rho^{\ast n}(g) \leqslant Ce^{-ts n}
\]
\end{lemma}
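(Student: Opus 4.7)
The plan is to bound $\sup_{x}\int_\G e^{-s\sigma(g,x)}\di\rho^{*n}(g)$ by recognising it as $\|P(s)^n\un\|_\infty$ and then using an analytic perturbation of $P$ around $s=0$ together with the spectral gap of $P$ given by proposition~\ref{proposition:spectral_gap_P0}. The function $(x,g)\mapsto\sigma(g,x)$ is $\Hb$-invariant, so the integral descends to a function on $\X/\Hb$ and we can work on this contracted space.

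First, I would check that $z\mapsto P(z)$ extends to an analytic map from some strip $\C_\eta=\{|\Re z|<\eta\}$ into the bounded operators on $\cal C^{0,\gamma}(\X/\Hb)$. Boundedness on $\cal C^0$ follows from the exponential moment assumption $\int_\G N(g)^{\gamma M}\di\rho(g)<\infty$ applied through $\Isig$; the control of $m_\gamma$ is obtained exactly as in the proof of proposition~\ref{proposition:essential_spectral_radius}, with $e^{-it\sigma}$ replaced by $e^{-z\sigma}$ (the estimate $|e^{-z\sigma(g,x)}-e^{-z\sigma(g,y)}|\leqslant |z|\,\Isig^{|\Re z|}\,\sigma_{\mathrm{Lip}}(g)\,d(x,y)\,e^{\sigma_{\mathrm{sup}}(g)|\Re z|}$ suffices). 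Analyticity in $z$ follows by dominated convergence.

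Next, at $z=0$ the operator $P(0)=P$ has, by proposition~\ref{proposition:spectral_gap_P0}, a simple isolated eigenvalue $1$ (eigenfunction $\un$, eigenprojector $\Pi_\nu$) and the rest of the spectrum inside a disk of radius $e^{-\kappa}<1$. By Kato's analytic perturbation theory, after shrinking $\eta$, there exist analytic maps $z\mapsto\lambda(z)\in\C$ and $z\mapsto\Pi(z)\in\mathrm{End}(\cal C^{0,\gamma}(\X/\Hb))$ with $\lambda(0)=1$, $\Pi(0)=\Pi_\nu$, rank-one projectors $\Pi(z)$, and a decomposition $P(z)=\lambda(z)\Pi(z)+Q(z)$ with $\Pi(z)Q(z)=Q(z)\Pi(z)=0$ and $\|Q(z)^n\|_\gamma\leqslant C_1\kappa_1^n$ for some $\kappa_1<1$, uniformly on some smaller strip. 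Differentiating the eigenvalue relation $P(z)f_z=\lambda(z)f_z$ at $z=0$ against $\nu$ and using $P^*\nu=\nu$ gives the standard formula
\[
\lambda'(0)=\int_\G\int_{\X/\Hb}\!\!\bigl(-\sigma(g,x)\bigr)\di\nu(x)\di\rho(g)=-\sigma_\rho<0.
\]

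It then follows, since $\lambda$ is real on real $z$, that there exist $\eta_1>0$ and $t_1>0$ with $0\leqslant\lambda(s)\leqslant e^{-t_1 s}$ for $s\in[0,\eta_1]$ (take e.g.\ $t_1=\sigma_\rho/2$). Applying the decomposition to $\un$,
\[
\|P(s)^n\un\|_\infty\leqslant \|P(s)^n\un\|_\gamma\leqslant \lambda(s)^n\|\Pi(s)\un\|_\gamma+\|Q(s)^n\un\|_\gamma\leqslant C_2\bigl(e^{-t_1 sn}+\kappa_1^n\bigr).
\]
Shrinking $\eta$ once more so that $\eta\leqslant |\ln\kappa_1|/t_1$ ensures $\kappa_1^n\leqslant e^{-t_1 s n}$ for all $s\in[0,\eta]$ and $n\in\N$, which gives $\|P(s)^n\un\|_\infty\leqslant C e^{-t_1 s n}$ as required.

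The only non-routine step is the perturbation argument, since it requires invoking the analytic perturbation of a spectral gap; but the spectral gap at $s=0$ is provided by proposition~\ref{proposition:spectral_gap_P0} and analyticity of $z\mapsto P(z)$ is an immediate consequence of the exponential moment hypothesis on $\rho$. The computation of $\lambda'(0)=-\sigma_\rho$ is standard and uses only $P^*\nu=\nu$ and $P\un=\un$.
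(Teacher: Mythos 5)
Your proof is correct, but it takes a slightly different route from the paper's, and the contrast is instructive. The paper's proof is shorter and avoids having to track the spectral decomposition uniformly in $s$: it applies Jensen's inequality with the concave map $y\mapsto y^{s/\eta}$ to the probability measure $\rho^{\ast n}$, getting
\[
\int_\G e^{-s\sigma(g,x)}\di\rho^{\ast n}(g)\leqslant\Bigl(\int_\G e^{-\eta\sigma(g,x)}\di\rho^{\ast n}(g)\Bigr)^{s/\eta}=\bigl(P(\eta)^n\un(x)\bigr)^{s/\eta},
\]
so the whole family of bounds in $s$ collapses to a single exponential decay of $P(\eta)^n\un$ at the fixed point $s=\eta$, which the paper then imports from lemma~10.17 of~\cite{BQred} using only $\lambda'(0)=-\sigma_\rho<0$. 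Your argument instead runs the Kato perturbation machinery directly on the whole segment $[0,\eta]$: you decompose $P(s)=\lambda(s)\Pi(s)+Q(s)$, bound $\lambda(s)\leqslant e^{-t_1s}$ via Taylor expansion, control $Q(s)^n$ uniformly, and then shrink $\eta$ so that $\kappa_1^n\leqslant e^{-t_1sn}$. The two proofs share the essential ingredient, $\lambda'(0)=-\sigma_\rho$, but your version is self-contained at the cost of maintaining uniformity in $s$ of the rank-one projector and the spectral gap; the Jensen trick lets the paper sidestep all of that uniformity bookkeeping and cite a scalar decay statement at a single value of $s$. Both are correct; the Jensen reduction is the shorter path and worth adding to your toolbox, since exactly this interpolation of moment bounds is a recurring device in the transfer-operator literature.
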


\begin{proof}
First of all, according to Jensen's inequality, for any $0\leqslant s\leqslant \eta$,
\[
\int_\G e^{-s\sigma(g,x)} \di\rho^{\ast n}(g)\leqslant \left( \int_\G e^{-\eta\sigma(g,x)} \di\rho^{\ast n}(g) \right)^{s/\eta} = \left(P(\eta)^n 1(x) \right)^{s/\eta}
\]
Moreover, as $\sigma_\rho>0$, lemma 10.17 in~\cite{BQred} and the fact that (with their notations but reminding that their operator $P_\vartheta$ is what we called $P(-i\vartheta)$) and their equality 10.32,
\[
\lambda'(0) = \int_\X P'(0) 1 \di\nu(x) = - \sigma_\rho<0
\]
prove that there are $\eta,t,C\in \R_+^\ast$ such that \[
\sup_x P(\eta)^n 1(x) \leqslant Ce^{-tn}
\]
and this is what we intended to prove.
\end{proof}

\begin{lemma}\label{lemme:controle_P_z}
Under the assumptions of proposition~\ref{proposition:controle_inverse_perturbe}, for any $\gamma>0$ small enough there is $\eta\in \R_+^\ast$ such that the function $(z\mapsto P(z))$ is analytic from $\cal C_{\eta}$ to the space of continuous operators on $\cal C^{0,\gamma}(\X)$ and there are $t\in \R_+^\ast$ and $C\in \R$ such that for any $z\in \C_\eta$ with $\Re(z) \geqslant 0$, any function $f\in \cal C^{0,\gamma}(\X)$ and any $n\in \N$,
\[
\| P(z)^n f\|_\gamma \leqslant C\left(e^{-t n} m_\gamma(f) + (1+|z|) \|f\|_\infty\right)
\]
\end{lemma}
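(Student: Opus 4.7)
The plan is to establish two things in sequence: analyticity of $z\mapsto P(z)$ on a strip $\C_\eta$ with values in $\mathcal{L}(\cal C^{0,\gamma}(\X))$, and a Doeblin--Fortet inequality of Lasota--Yorke type adapted from the unimodular case already treated in proposition~\ref{proposition:essential_spectral_radius}.

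For analyticity, I would use the termwise expansion
\[
e^{-z\sigma(g,x)} = \sum_{k\geqslant 0} \frac{(-z)^k}{k!}\sigma(g,x)^k,
\]
which writes $P(z) = \sum_k \frac{(-z)^k}{k!}A_k$ where $A_k f(x):=\int_\G \sigma(g,x)^k f(gx)\,\di\rho(g)$. Bounding $|\sigma(g,x)|^k\leqslant \sigma_{\mathrm{sup}}(g)^k \leqslant (\ln(\Isig N(g)^M))^k$ and its Lipschitz constant via $\sigma\in\cal Z^M_N(\X/\Hb)$, and using~\eqref{equation:exponential_moment} (after possibly shrinking $\gamma$) to ensure $\int_\G N(g)^{M(\gamma+\varepsilon)}\,\di\rho(g)$ is finite for some $\varepsilon>0$, one gets $\|A_k\|_\gamma\leqslant C^{k+1}k!$ with $C$ independent of $k$, so the series converges normally on any disk of radius $<1/C$; taking $\eta$ small enough makes $P$ analytic on $\C_\eta$ with values in $\mathcal{L}(\cal C^{0,\gamma}(\X))$.

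For the inequality, the sup-norm part is immediate: when $\Re(z)\geqslant 0$,
\[
\|P(z)^n f\|_\infty \leqslant \|f\|_\infty \sup_{x\in \X}\int_\G e^{-\Re(z)\sigma(g,x)}\,\di\rho^{\ast n}(g) \leqslant C e^{-t\Re(z) n}\|f\|_\infty \leqslant C\|f\|_\infty
\]
by lemma~\ref{lemma:drift}. For the Hölder seminorm, I would fix $x,y\in\X$ with $x\neq y$ and $\pi_\Abf(x)=\pi_\Abf(y)$ and split
\begin{align*}
P(z)^n f(x) - P(z)^n f(y) &= \int_\G e^{-z\sigma(g,x)}\bigl(f(gx)-f(gy)\bigr)\,\di\rho^{\ast n}(g)\\
&\quad + \int_\G \bigl(e^{-z\sigma(g,x)}-e^{-z\sigma(g,y)}\bigr)f(gy)\,\di\rho^{\ast n}(g).
\end{align*}
For the first integral, the factor $e^{-\Re(z)\sigma(g,x)}\leqslant (\Isig N(g)^M)^\eta$ is absorbed into the existing $N(g)^{M\gamma}$ machinery (at the cost of raising the exponent of $N(g)$), and the splitting by whether $d(gx,gy)\leqslant\varepsilon d_0$ from the proof of proposition~\ref{proposition:essential_spectral_radius} still applies thanks to equation~\eqref{equation:contraction}. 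For the second integral, I would interpolate between
\[
\bigl|e^{-z\sigma(g,x)}{-}e^{-z\sigma(g,y)}\bigr|\leqslant 2 e^{\Re(z)\sigma_{\mathrm{sup}}(g)},\quad \bigl|e^{-z\sigma(g,x)}{-}e^{-z\sigma(g,y)}\bigr|\leqslant |z|\, e^{\Re(z)\sigma_{\mathrm{sup}}(g)}\,|\sigma(g,x){-}\sigma(g,y)|
\]
taking powers $1-\gamma$ and $\gamma$ respectively to get
\[
\bigl|e^{-z\sigma(g,x)}-e^{-z\sigma(g,y)}\bigr|\leqslant 2^{1-\gamma}|z|^\gamma\,e^{\Re(z)\sigma_{\mathrm{sup}}(g)}\,\Msig^\gamma N(g)^{M\gamma}\,d(x,y)^\gamma,
\]
which integrates to a bound $C(1+|z|)\|f\|_\infty d(x,y)^\gamma$ using $|z|^\gamma\leqslant 1+|z|$ and~\eqref{equation:exponential_moment}.

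Following the template of proposition~\ref{proposition:essential_spectral_radius}, I would then pick $n_0$ and $\varepsilon>0$ so that the combined bound reads $m_\gamma(P(z)^{n_0}f)\leqslant \tfrac12 m_\gamma(f) + C(1+|z|)\|f\|_\infty$ uniformly for $0\leqslant \Re(z)<\eta$. Iterating this contraction on $n_0$-blocks yields $m_\gamma(P(z)^n f)\leqslant C e^{-tn} m_\gamma(f) + C(1+|z|)\|f\|_\infty$, which combined with the sup-norm bound gives the stated estimate. The main obstacle is the bookkeeping of the non-unimodular factor $e^{-\Re(z)\sigma(g,x)}$: it is uniformly integrable but not uniformly bounded, so the Hölder/split argument from proposition~\ref{proposition:essential_spectral_radius} must be redone with slightly raised exponents of $N(g)$, forcing both $\gamma$ strictly smaller than $\gamma_0$ and $\eta$ small relative to the exponential moment of $N$ under $\rho$, which is precisely why the statement requires ``$\gamma>0$ small enough''.
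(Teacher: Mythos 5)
Your decomposition of $P(z)^n f(x)-P(z)^n f(y)$ into two integrals, the sup-norm treatment via lemma~\ref{lemma:drift}, the interpolation bound on $|e^{-z\sigma(g,x)}-e^{-z\sigma(g,y)}|$, and the iteration of a one-step contraction are all in line with the paper's proof (the paper cites~\cite{BQred} for analyticity rather than running a power-series argument, but that is a matter of taste). However, there is a genuine gap in your treatment of the \emph{first} integral. After bounding $e^{-\Re(z)\sigma(g,x)}\leqslant \Isig^{\eta}N(g)^{M\eta}$, the small-$d(gx,gy)$ part of the split reduces to controlling
\[
\int_\G N(g)^{M\eta}\,\frac{d(g\pi_\Hb x,g\pi_\Hb y)^\gamma}{d(\pi_\Hb x,\pi_\Hb y)^\gamma}\,\di\rho^{\ast n}(g),
\]
and you claim that ``the splitting from the proof of proposition~\ref{proposition:essential_spectral_radius} still applies thanks to equation~\eqref{equation:contraction}.'' It does not: equation~\eqref{equation:contraction} gives decay for $\int d(gx,gy)^\gamma/d(x,y)^\gamma\,\di\rho^{\ast n}$, but once the unbounded weight $N(g)^{M\eta}$ is present there is no reason for the weighted integral to stay small — pointwise absorption into $N(g)^{M\gamma}$ would give $\int N(g)^{M(\eta+\gamma)}\,\di\rho^{\ast n}$, which \emph{grows} geometrically in $n$ rather than decaying.

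The missing ingredient is a decoupling step: the paper applies Cauchy--Schwarz, bounding the displayed integral by
\[
\Bigl(\int_\G N(g)^{2M\eta}\,\di\rho^{\ast n}(g)\Bigr)^{1/2}\Bigl(\int_\G \tfrac{d(g\pi_\Hb x,g\pi_\Hb y)^{2\gamma}}{d(\pi_\Hb x,\pi_\Hb y)^{2\gamma}}\,\di\rho^{\ast n}(g)\Bigr)^{1/2},
\]
and then fixes $n$ so large that the $(2\gamma)$-contraction factor $\sqrt{C_{2\gamma}}\,e^{-\delta_{2\gamma}n/4}$ wins, before choosing $\eta$ so small that $e^{-\delta_{2\gamma}n/4}\int N(g)^{M\eta}\,\di\rho(g)\leqslant 1$. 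This is the step you are missing, and it also changes the smallness constraint on $\gamma$: one needs $2\gamma\leqslant\gamma_0$ (so that the $(2\gamma)$-contraction is available), not merely $\gamma<\gamma_0$ as you state. Without this Cauchy--Schwarz (or an equivalent H\"older interpolation between the weight and the contraction ratio), the one-step Doeblin--Fortet estimate $m_\gamma(P(z)^{n_0}f)\leqslant\tfrac12 m_\gamma(f)+C(1+|z|)\|f\|_\infty$ does not follow.
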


\begin{remark}
This proof is very close to the one of proposition~\ref{proposition:essential_spectral_radius}. The difficulty here is that the perturbation no longer have modulus $1$.
\end{remark}

\begin{proof}
To see that $(P(z))$ is an analytic family of operators, we refer to lemma 10.16 de~\cite{BQred}.

Let $\gamma>0$ small enough and $\eta \in \R_+^\ast$.

Compute, for $z\in \C_{\eta}$, $f\in \cal C^{0,\gamma}(\X)$, $x,y \in \X$ with $\pi_\Abf \circ\pi_\Hb(x) = \pi_\Abf\circ \pi_\Hb(y)$ and $n\in \N$,
\begin{align}
 | P(z)^n f(x) &- P(z)^n f(y) |   \notag\\ &\leqslant \int_\G \left| e^{- z \sigma(g,x)} f(gx) - e^{-z \sigma(g,y)} f(gy) \right| \di\rho^{\ast n}(g) \notag \\
& \leqslant \int_\G e^{-\Re(z)\sigma (g,x)} | f(gx)- f(gy)| + \|f\|_\infty  \left|e^{-z\sigma(g,x)} - e^{-z \sigma(g,y)} \right| \di\rho^{\ast n}(g) \label{equation:controle_P_z_1}
\end{align}
But, using the definition of $\Msig$ and $\Isig$ (cf. equation~\eqref{equation:Msig_I_sig}), we have that for any $\varepsilon \in \R_+^\ast$
\begin{align*}
\left| e^{-z\sigma(g,x)} - e^{-z \sigma(g,y)} \right| & \leqslant 2^{1-\gamma} |z| e^{\Re(z) \sigma_{\mathrm{sup}}(g)} (\sigma_{\mathrm{sup}}(g))^{1-\gamma}  |\sigma(g,x)- \sigma(g,y)|^\gamma \\
&\leqslant 2^{1-\gamma}|z| \Isig^{\eta} N(g)^{M (\gamma+|\Re(z)|)} \left(\ln (\Isig N(g)^M) \right)^{1-\gamma} \Msig^\gamma  d(x,y)^\gamma \\
& \leqslant C_\varepsilon  |z| N(g)^{M(\gamma + \eta+ \varepsilon)} \Isig \Msig d(x,y)^\gamma
\end{align*}
where we noted $C_\varepsilon$ such that for any $x\in [1,+\infty[$, $x\leqslant C_\varepsilon e^{\varepsilon x}/2^{1-\gamma}$.

And so,
\begin{equation} \label{equation:controle_P_z_2}
\int_\G \left| e^{-z\sigma(g,x)} - e^{-z \sigma(g,y)} \right| \di\rho^{\ast n}(g) \leqslant C_\varepsilon |z|\Isig \Msig d(x,y)^\gamma \int_\G  N(g)^{M(\gamma+\eta + \varepsilon)} \di\rho^{\ast n}(g)
\end{equation}
Moreover,
\begin{equation} \label{equation:controle_P_z_3}
\int_\G e^{-\Re(z)\sigma (g,x)} | f(gx)- f(gy)| \di\rho^{\ast n}(g) \leqslant m_\gamma(f) \Isig^\eta \int_\G N(g)^{M\eta} d(gx,gy)^\gamma \di\rho^{\ast n}(g)
\end{equation}
Let $d_0\in \R_+^\ast$ be such that if $d(x,y) \leqslant d_0$ then $d(x,y) = d(\pi_\Hb x,\pi_\Hb  y)$.
Then, for any $\varepsilon' \in ]0,1]$ and any $x,y$ with $0<d(x,y)\leqslant \varepsilon' d_0$, we have that
\begin{align}
I_n(x,y):&=\int_\G N(g)^{M\eta}  d(gx,gy)^\gamma \di\rho^{\ast n}(g) \notag \\
& \leqslant \int_\G N(g)^{M\eta} \un_{d(gx,gy) \leqslant d_0}d(gx,gy)^\gamma \di\rho^{\ast n}(g)\notag \\& \retrait+ \int_\G N(g)^{M\eta} \un_{d(gx,gy) \geqslant d_0} d(gx,gy)^\gamma \di\rho^{\ast n}(g) \notag \\
& \leqslant \int_\G N(g)^{M\eta}d(g\pi_\Hb x,g\pi_\Hb y)^\gamma \di\rho^{\ast n}(g)  \notag\\& \retrait+ \int_\G N(g)^{M\eta} \un_{MN(g)^M \geqslant 1/\varepsilon} MN(g)^{M\gamma} d(x,y)^\gamma \di\rho^{\ast n}(g)\notag \\
& \leqslant d(x,y)^\gamma \left(\int_\G N(g)^{M\eta}\frac{d(g\pi_\Hb x,g\pi_\Hb y)^\gamma}{d(\pi_\Hb x,\pi_\Hb y)^\gamma} \di\rho^{\ast n}(g) \right. \notag \\& \retrait\retrait\left.+ M\int_\G N(g)^{M(\eta+\gamma)} \un_{MN(g)^M \geqslant 1/\varepsilon}  \di\rho^{\ast n}(g) \right) \label{equation:controle_P_z_5}
\end{align}
But, using Cauchy-Schwartz's inequality and the contraction of $\X/\Hb$, we have that
\begin{align}
J_n(x,y) :& =\int_\G N(g)^{M\eta}\frac{d(g\pi_\Hb x,g\pi_\Hb y)^\gamma}{d(\pi_\Hb x,\pi_\Hb y)^\gamma} \di\rho^{\ast n}(g) \notag \\&\leqslant \left(\int_\G N(g)^{2M\eta} \di\rho^{\ast n}(g) \int_\G \frac{ d(g\pi_\Hb x,g\pi_\Hb y)^{2\gamma}}{ d(x,y)^{2\gamma}}\di\rho^{\ast n}(g) \right)^{1/2} \notag\\
& \leqslant \sqrt{C_{2\gamma}} e^{-\delta_{2\gamma}n/2}  \left(\int_\G N(g)^{2\eta M} \di\rho(g) \right)^{n/2} \label{equation:controle_P_z_4}
\end{align}
We can now choose $n$ such that $\sqrt{C_{2\gamma}} e^{-\delta_{2\gamma}n/4} \leqslant 1/{4 \Isig}$ and then, for $\eta$ such that
\[
e^{-\delta_{2\gamma} n /4} \int_\G N(g)^{M\eta} \di\rho(g) \leqslant 1,
\]
we find, with equation~\eqref{equation:controle_P_z_4}, that
\[
J_n(x,y) = \int_\G N(g)^{M\eta}\frac{d(g\pi_\Hb x,g\pi_\Hb y)^\gamma}{d(\pi_\Hb x,\pi_\Hb y)^\gamma} \di\rho^{\ast n}(g) \leqslant \frac 1 {4\Isig}
\]
Moreover, for this fixed $n$ we can choose $\varepsilon'\in \R_+^\ast$ such that
\[
M\int_\G N(g)^{M(\eta + \gamma)} \un_{MN(g)^M \geqslant 1/\varepsilon'} \di\rho^{\ast n}(g) \leqslant 1/{4\Isig}
\]
and this proves, with equations~\eqref{equation:controle_P_z_1}, \eqref{equation:controle_P_z_2}, \eqref{equation:controle_P_z_3} and \eqref{equation:controle_P_z_5}, that for any $f\in \cal C^{0,\gamma}(\X)$, any $x,y \in \X$ with $\pi_\Abf \circ \pi_\Hb(x) = \pi_\Abf \circ \pi_\Hb(y)$ and $d(x,y)\leqslant \varepsilon'd_0$,
\[
\frac{|P(z)^nf(x) - P(z)^n f(y) | }{d(x,y)^\gamma}\leqslant \frac 1 2 m_\gamma(f) + 2\|f\|_\infty |z| \Isig \Msig \int_\G N(g)^{M(\gamma + \eta + \varepsilon)} \di\rho^{\ast n}(g)
\]
Moreover, if $d(x,y) \geqslant \varepsilon' d_0$, then
\[
\left|P^n(z)f(x) - P^n (z)f(y)\right| \leqslant 2 \frac{d(x,y)^\gamma}{(\varepsilon' d_0)^\gamma} \|f\|_\infty \Isig \int_\G N(g)^{M\eta} \di\rho^{\ast n}(g)
\]
So, what we proved is that for any $\gamma,\eta>0$ small enough there are $n\in \N^\ast$ and a constant $C$ (depending on $n, \sigma$ and $\rho$) such that for any $z\in \C_{\eta}$ and any $f\in \cal C^{0,\gamma}(\X)$,
\[
m_\gamma(P(z)^n f) \leqslant \frac 1 2 m_\gamma(f) + Ce^{Cn}(1+|z|) \|f\|_\infty
\]
But, we also have, according to lemma~\ref{lemma:drift}, that for $\Re(z) \geqslant 0$,
\[
\|P(z)^n f\|_\infty \leqslant \|f\|_\infty \sup_{x\in \X}\int_\G e^{-\Re(z) \sigma(g,x)} \di\rho^{\ast n}(g) \leqslant C\|f\|_\infty e^{-\Re(z) tn}
\]
So, we obtain the expected inequalities by iterating this relations and we refer to~\cite{ITM50} for a proof that we can choose a constant $C$ that doesn't depend on $n$ nor on $z$.
\end{proof}

An analytic family of operators $(P(z))$ is said to be meromorphic at $z_0$ if there is $N \in \N$ such that the family $((z-z_0)^N P(z))$ is analytic on a neighbourhood of $z_0$.

We are now going to use a version of the analytic Fredholm theorem that holds for quasi-compacts operators and that we state in next
\begin{theorem} \label{theorem:Fredholm_analytique_quasi_compact}
Let $(\cal B,\|\,.\,\|_{\cal B})$ be a Banach space.

Let $\|\,.\,\|$ be a norm on $\cal B$ such that the unit ball in $\cal B$ for $\|\,.\,\|_{\cal B}$ is relatively compact for $\|\,.\,\|$.

Let $\cal U $ be a connected open subset of $\C$.

Let $(P(z))_{z\in \cal U}$ be an analytic family of operators defined on $\cal U$ such that there are $r\in [0,1[$ and a real valued function $(z\mapsto R(z))$ such that for any $f\in \cal B$ and any $z\in \cal U$,
\[
\|P(z) f\|_{\cal B} \leqslant r \|f\|_{\cal B} + R(z) \|f\| 
\]

Then, we have the following alternative :
\begin{itemize}
\item The operator $I_d-P(z)$ is invertible for no $z\in \cal U$.
\item The function $(z\mapsto (I_d-P(z))^{-1})$ is meromorphic on $\cal U$.
\end{itemize}
\end{theorem}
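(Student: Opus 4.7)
The plan is to reduce to the classical analytic Fredholm theorem by extracting, locally near each $z_0 \in \cal U$, a finite-dimensional spectral part of $P(z)$ via a Riesz projection coming from a contour that separates the essential spectrum from the rest. First, iterating the hypothesis gives $\|P(z)^n f\|_{\cal B} \leqslant r^n \|f\|_{\cal B} + R_n(z) \|f\|$ for every $n$; together with the compactness assumption on the identity map $(\cal B,\|\,.\,\|_{\cal B}) \to (\cal B,\|\,.\,\|)$, theorem~\ref{theorem:ITM} yields that the essential spectral radius of $P(z)$ is at most $r$ at every $z \in \cal U$. Consequently, outside the closed disk $\{|\lambda| \leqslant r\}$ the spectrum of $P(z)$ consists of isolated eigenvalues of finite algebraic multiplicity.

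Fix $z_0 \in \cal U$. Since only finitely many eigenvalues of $P(z_0)$ lie in $\{|\lambda| \geqslant 1\}$, one can choose $\rho \in (r,1)$ and $\rho' > 1$ such that no eigenvalue of $P(z_0)$ lies on the circles $|\lambda|=\rho$ or $|\lambda|=\rho'$, and then set
\[
\Pi(z_0) = \frac{1}{2\pi i} \oint_\Gamma (\lambda I_d - P(z_0))^{-1} \, d\lambda,
\]
where $\Gamma$ is the union of these two circles (traversed with appropriate orientations so as to enclose the finite portion of the spectrum lying in the annulus $\rho < |\lambda| < \rho'$ along with any eigenvalue at $\lambda = 1$ if present). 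Since $(\lambda I_d - P(z))^{-1}$ depends analytically on $(z,\lambda)$ wherever it exists, the same formula defines an analytic family of projections $\Pi(z)$ for $z$ in a disk $D(z_0,\delta)$. Each $\Pi(z)$ commutes with $P(z)$, its range $E(z) := \Pi(z)\cal B$ is finite dimensional of constant dimension, and the restriction of $P(z)$ to the complementary invariant subspace $F(z) := (I_d - \Pi(z))\cal B$ has spectral radius bounded by $\rho < 1$ uniformly in $z$ on $D(z_0,\delta)$ (after possibly shrinking $\delta$).

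On $F(z)$, $I_d - P(z)|_{F(z)}$ is invertible with inverse given by a Neumann series that converges uniformly in $z$, hence analytic on $D(z_0,\delta)$. On $E(z)$, transport an analytic basis of $E(z_0)$ to an analytic frame $(e_1(z), \dots, e_k(z))$ of $E(z)$ by the standard Sz.-Nagy type formula built from $\Pi(z)$ and $\Pi(z_0)$; then $I_d - P(z)|_{E(z)}$ is represented by a matrix $A(z) \in M_k(\C)$ with analytic entries. Therefore $(I_d - P(z))^{-1}$ exists on $D(z_0,\delta)$ precisely when $\det A(z) \neq 0$ and is, in that case, the direct sum of the analytic inverse on $F(z)$ and $A(z)^{-1}$ (composed with the appropriate projections); this shows $(I_d - P(z))^{-1}$ is meromorphic on $D(z_0,\delta)$, with poles contained in the zero set of the scalar analytic function $\det A(z)$.

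Finally, let $V \subset \cal U$ be the set of $z$ admitting a neighborhood on which $I_d - P(z')$ is invertible for no $z'$; equivalently, on which the local $\det A$ vanishes identically. By the previous step both $V$ and $\cal U \setminus V$ are open, and $\cal U$ is connected, so one of them is empty. Either $I_d - P(z)$ is invertible for no $z \in \cal U$, or else at every $z_0$ the local $\det A$ is not identically zero, so $(I_d - P(z))^{-1}$ is meromorphic near $z_0$; the local meromorphic pieces agree on overlaps by uniqueness of the inverse and glue to a meromorphic function on $\cal U$. The principal difficulty is the construction in the second paragraph of an analytic family of projections $\Pi(z)$ together with an analytic frame of $E(z)$: this is where quasi-compactness (rather than mere compactness) is used, via the fact that $(\lambda I_d - P(z))^{-1}$ exists and depends analytically on $(z,\lambda)$ on the separating contour $\Gamma$, a point that follows from the Ionescu-Tulcea-Marinescu bound combined with analyticity of $z \mapsto P(z)$ in the operator norm on $\cal B$.
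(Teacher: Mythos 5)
Your strategy matches the paper's exactly: the paper's proof is a one-line remark that Ionescu-Tulcea--Marinescu gives a uniform bound $r<1$ on the essential spectral radius, after which one runs the classical analytic Fredholm alternative; you have simply spelled out what ``the classic case'' means, via a local Riesz projection, a finite-dimensional determinant, and an open/closed connectedness argument.

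One step as written does not quite go through. With $\Gamma$ the union of the circles $|\lambda|=\rho$ and $|\lambda|=\rho'$, the range of $I_d-\Pi(z_0)$ carries the part of the spectrum of $P(z_0)$ lying in $\{|\lambda|\leqslant\rho\}\cup\{|\lambda|\geqslant\rho'\}$, so you cannot conclude that $P(z)|_{F(z)}$ has spectral radius at most $\rho$, and the Neumann series for $(I_d-P(z)|_{F(z)})^{-1}$ need not converge. Two easy repairs: either take $\rho'>\sup_{z\in D(z_0,\delta)}\|P(z)\|_{\cal B}$ so that the outer circle is outside the spectrum altogether (equivalently, use the single circle $|\lambda|=\rho$ with $r<\rho<1$ not meeting the spectrum of $P(z_0)$, and let $\Pi(z)$ be the Riesz projection onto the finitely many eigenvalues of modulus $>\rho$); or keep your contour but replace the Neumann-series justification by the observation that $1\notin\sigma\bigl(P(z)|_{F(z)}\bigr)$ by construction, so $(I_d-P(z)|_{F(z)})^{-1}$ exists and is analytic in $z$ because the resolvent $(\lambda I_d-P(z))^{-1}$ is jointly analytic on a neighbourhood of $D(z_0,\delta)\times\Gamma$. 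With either fix the rest of your argument (analytic frame of $E(z)$, determinant $\det A(z)$, open/closed dichotomy on the connected set $\cal U$, and gluing of local meromorphic inverses) is correct and completes the proof.
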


\begin{proof}
The proof is the same as in the classic case when we remark that, according to Ionescu-Tulcea and Marinescu's theorem (that we recalled in theorem~\ref{theorem:ITM}) we have a control on the essential spectral radius of $P(z)$ that is uniform in $z$.
\end{proof}

\begin{lemma} \label{lemma:U_holo}
Under the assumptions of proposition~\ref{proposition:controle_inverse_perturbe}, the family $((I_d-P(z))^{-1})_{z \in \C_{\eta}}$ is a meromorphic family of operators that is analytic on  $]0,\eta[ \oplus i\R$.

Moreover, we can choose $\eta$ such that there are $C,t\in \R_+^\ast$ such that for any $z\in\left]0,\eta\right[ \oplus i\R$ and any $n\in \N$,
\[
\|P(z)^n\|_\gamma\leqslant C(1+|z|)e^{-t \Re(z) n}
\]

Finally, if $\sigma_\rho \not=0$ and if $\eta'$ is small enough, we can write, for $z \in B(0,\eta') \setminus\{0\}$,
\[
(I_d-P(z))^{-1} = \frac 1 {\sigma_\rho z} N_0 + U(z)
\]
where $N_0$ is the operator of projection on to the space of $P-$invariant functions and $(U(z))$ is an analytic family of continuous operators on $\cal C^{0,\gamma}(\X)$ and defined on $B(0,\eta')$. 
\end{lemma}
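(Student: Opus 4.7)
The plan is to treat the three assertions in order: first establish the norm bound on $P(z)^n$ for $\Re(z) \geqslant 0$, then use it to get analyticity of $(I_d-P(z))^{-1}$ on the right half-strip and to invoke the analytic Fredholm theorem, and finally identify the pole at $0$ by classical analytic perturbation theory.

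For the norm bound, I would combine the two estimates already at hand from lemma~\ref{lemme:controle_P_z} and lemma~\ref{lemma:drift}. Lemma~\ref{lemma:drift} gives, for $\Re(z)\in[0,\eta]$, $\|P(z)^n f\|_\infty \leqslant C\|f\|_\infty e^{-t\Re(z)n}$, while lemma~\ref{lemme:controle_P_z} yields $\|P(z)^n f\|_\gamma \leqslant C(e^{-tn}m_\gamma(f) + (1+|z|)\|f\|_\infty)$. Iterating once, $P(z)^{2n} = P(z)^n P(z)^n$, and inserting the $\mathrm{L}^\infty$ bound into the second factor converts the $\|f\|_\infty$ term into $(1+|z|)e^{-t\Re(z)n}\|f\|_\infty$; using $e^{-tn}\leqslant e^{-t\Re(z)n/\eta}$ for $\Re(z)\leqslant\eta$ finishes step one, giving $\|P(z)^n\|_\gamma \leqslant C(1+|z|)e^{-t'\Re(z)n}$ for some $t'>0$ (relabelled $t$ in the statement).

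Once this bound holds on $]0,\eta[\oplus i\R$, the Neumann series $\sum_{n\geqslant 0} P(z)^n$ converges in operator norm and equals $(I_d-P(z))^{-1}$, which is therefore analytic there. To extend meromorphically to $\C_\eta$, I apply theorem~\ref{theorem:Fredholm_analytique_quasi_compact} with $\cal U=\C_\eta$: analyticity of $(P(z))$ and the Ionescu--Tulcea--Marinescu inequality from lemma~\ref{lemme:controle_P_z} give uniform control of the essential spectral radius, and the invertibility on $]0,\eta[\oplus i\R$ just obtained shows that the second alternative of the theorem is the one that holds.

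The last step is the residue calculation at $z=0$, and this is the part I expect to require the most care. At $z=0$ proposition~\ref{proposition:spectral_gap_P0} tells us that $P=P(0)$ has $1$ as a simple isolated eigenvalue with associated spectral projector $N_0$, the rest of the spectrum lying in a disk of radius strictly smaller than $1$. Standard analytic perturbation theory (Kato) then provides, on a small disk $B(0,\eta')$, a simple eigenvalue $\lambda(z)$ of $P(z)$ depending analytically on $z$ with $\lambda(0)=1$ and an associated rank-one analytic spectral projector $N(z)$ with $N(0)=N_0$, while on the complementary spectral subspace $I_d-P(z)$ stays invertible with analytic inverse. This yields the decomposition
\[
(I_d-P(z))^{-1} = \frac{1}{1-\lambda(z)}N(z) + R(z),
\]
with $R(z)$ analytic on $B(0,\eta')$. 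The derivative $\lambda'(0)$ was computed in the proof of lemma~\ref{lemma:drift} to equal $-\sigma_\rho$, so $1-\lambda(z) = \sigma_\rho z(1+O(z))$ near $0$; writing $\frac{N(z)}{1-\lambda(z)} = \frac{N_0}{\sigma_\rho z} + (\text{analytic})$ and absorbing the analytic remainder into $U(z)$ gives the announced expansion. The main obstacle is precisely to justify the analytic eigenprojector and simplicity of $\lambda(z)$ in the Banach space $\cal C^{0,\gamma}(\X)$, which follows from the quasi-compactness established in lemma~\ref{lemme:controle_P_z} together with the isolation of $1$ in the spectrum of $P$.
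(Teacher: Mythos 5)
Your proposal is correct and follows essentially the same route as the paper: you combine the $\mathrm{L}^\infty$ bound from lemma~\ref{lemma:drift} with the Lasota--Yorke-type inequality of lemma~\ref{lemme:controle_P_z} by iterating once to get $\|P(z)^n\|_\gamma \leqslant C(1+|z|)e^{-t\Re(z)n}$, you invoke theorem~\ref{theorem:Fredholm_analytique_quasi_compact} for the meromorphic extension (with the uniform essential-spectral-radius bound and the already-established invertibility on $]0,\eta[\oplus i\R$ supplying the needed hypotheses), and you extract the simple pole at $0$ via analytic spectral perturbation theory, using $\lambda'(0)=-\sigma_\rho$ from the proof of lemma~\ref{lemma:drift}. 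The only cosmetic difference is that you appeal to Kato's perturbation theory directly, while the paper points to lemma~3.2 of Bougerol--Lacroix and lemma~10.17 of Benoist--Quint for the local expansion $(I_d-P(z))^{-1}=\frac{1}{1-\lambda(z)}N(z)+U_1(z)$; the mathematical content is the same, and your remark that quasi-compactness plus the isolation of the eigenvalue $1$ in $\mathcal C^{0,\gamma}(\X)$ is what makes the holomorphic functional calculus available is precisely the point those references make.
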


\begin{proof}
For any $z\in [0,\eta]\oplus i\R$, any $n\in \N$ and any $f\in \cal C^0(\X)$, we have, according to lemma~\ref{lemma:drift}, that
\begin{align*}
\|P(z)^n f \|_\infty &\leqslant  \|f\|_\infty \sup_{x\in \X} \int_\G e^{-\Re(z) \sigma(g,x)} \di\rho^{\ast n}(g) \leqslant C\|f\|_\infty e^{-t \Re(z) n}
\end{align*}
And so, for any $f\in \cal C^{0,\gamma}(\X)$ and any $n\in \N$, (we can assume without any loss of generality that the constants $t$ given in lemma~\ref{lemme:controle_P_z} and lemma~\ref{lemma:drift} are equal and the same thing for the constants $C$), according to lemma~\ref{lemme:controle_P_z} applied to $P^n(z)(P^n(z) f)$ and then to $P^n(z) f$, we have that
\begin{align*}
\|P(z)^{2n} f\|_\gamma &\leqslant C \left( e^{- tn} \|P^n(z) f\|_\gamma + (1+|z|) \|P^n(z) f\|_\infty \right) \\
&\leqslant C \left( e^{-tn}C( m_\gamma(f) + (1+|z|)e^{-tn}\|f\|_\infty) + Ce^{-t\Re(z) n}(1+|z|)\|f\|_\infty \right) \\
& \leqslant C^2(1+|z|) \|f\|_\gamma \left(e^{-tn} + e^{-t \Re(z) n} \right)
\end{align*}
And this is what we intended to prove.

Moreover, lemma~\ref{lemme:controle_P_z} shows that the essential spectral radius of $P(z)$ is uniformly bounded by $e^{-t}$ on $\C_\eta$ and so we can apply theorem~\ref{theorem:Fredholm_analytique_quasi_compact} to show that the family $(I_d-P(z))^{-1}$ is meromorphic on $\C_\eta$ since we just proved that $(I_d- P(z))^{-1}$ is well defined on $]0,\eta] \oplus i\R$.

Moreover, this proves that $(I_d-P(z))^{-1} $ has no pole in $]0,\eta[ \oplus i\R$.

\medskip
Finally, we refer to lemma~3.2 de~\cite{BL85} or to lemma~10.17 in~\cite{BQred} for the expansion of $(I_d-P(z))^{-1}$ on the neighbourhood of $0$. Indeed, adapting their argument, we find that there are two analytic families of continuous operators $(N(z)), (U_1(z))$ defined on the neighbourhood of $0$ and an analytic function $\lambda$ such that for any non zero $z$ in the considered neighbourhood of $0$,
\[
(I_d-P(z))^{-1} = \frac 1 {1 - \lambda(z)} N(z) + U_1(z)
\]
This finishes the proof of the lemma since $\lambda$ and $N$ are analytic, $\lambda(0)=1$, $\lambda'(0) =- \sigma_\rho\not=0$ and $N(0)$ is the projector on $\ker(I_d-P)$.
\end{proof}

\begin{lemma}
Under the assumptions and notations of proposition~\ref{proposition:controle_inverse_perturbe}

We note
\[
U(z) = (I_d-P(z))^{-1} - \frac 1 {\sigma_\rho z} N_0
\]
Then, for any $\gamma,\eta>0$ small enough, there are $C,L$ such that for any $z\in \C$ with
\[
\frac {-1}  {C (1+|\Im z|)^L} < \Re(z) <\eta
\]
and any $n \in \N$, we have that
\[
\| U^{(n)}(z) \|_\gamma\leqslant C^{n+1} n! (1+| z|)^{(L+1)(n+1)}
\]
\end{lemma}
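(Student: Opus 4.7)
The plan is to deduce the derivative estimates for $U$ from a Cauchy integral formula once analyticity and a polynomial bound are established on the whole region $\cal D_{\eta,C,L}$.

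The main step is to extend $(I_d-P(z))^{-1}$ from the imaginary axis into a horizontal strip of width of order $(1+|\Im z|)^{-(L+1)}$. For $t\in\R$ with $|t|\geqslant t_0$, the hypothesis of proposition~\ref{proposition:controle_inverse_perturbe} gives $\|(I_d-P(it))^{-1}\|_\gamma \leqslant C_0 |t|^L$, and the analyticity of $(z\mapsto P(z))$ on $\C_\eta$ established in lemma~\ref{lemme:controle_P_z} combined with a Cauchy estimate for operator-valued analytic functions yields $\|P(z)-P(it)\|_\gamma \leqslant C_1 (1+|z|)|z-it|$ on bounded subsets. The Neumann series
\[
(I_d-P(z))^{-1} = \sum_{k\geqslant 0}\bigl[(P(z)-P(it))(I_d-P(it))^{-1}\bigr]^k (I_d-P(it))^{-1}
\]
then converges as soon as $|z-it|\leqslant 1/(2C_0 C_1(1+|t|)^{L+1})$, giving simultaneously analyticity of the resolvent and the bound $\|(I_d-P(z))^{-1}\|_\gamma \leqslant 2C_0|t|^L$ throughout this strip.

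To combine this extension with the behaviour near $0$, I invoke lemma~\ref{lemma:U_holo}: on a fixed disk $B(0,\eta')$, $U$ is already analytic with a uniform bound, and on the compact leftover region $\{|\Re z|\leqslant\eta',\ \eta'/2\leqslant|\Im z|\leqslant t_0\}$ analyticity and boundedness follow directly. Gluing these pieces produces an analytic function $U$ on the full region $\cal D_{\eta,C,L}$ for suitably adjusted constants, with $\|U(z)\|_\gamma \leqslant C(1+|\Im z|)^L$. The bound on derivatives now follows from Cauchy's formula applied on the circle $\partial B(z,r)$ with $r = 1/(2C(1+|\Im z|)^{L+1})$, which, after possibly enlarging $C$, lies inside the analyticity region:
\[
\|U^{(n)}(z)\|_\gamma \leqslant \frac{n!}{r^n}\sup_{|w-z|=r}\|U(w)\|_\gamma \leqslant C^{n+1}\, n!\, (1+|z|)^{(L+1)(n+1)}.
\]

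The main technical obstacle is matching the width of the analyticity region with the radius used in Cauchy's estimate: the exponent $L+1$ in the definition of $\cal D_{\eta,C,L}$ and in the derivative bound arises because extending the resolvent across the imaginary axis costs one additional power of $(1+|t|)$ through the operator-norm estimate for $P(z)-P(it)$, and this same exponent controls the radius of the disk one can afford to use when applying Cauchy's formula.
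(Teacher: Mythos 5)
Your argument is essentially the paper's, modulo bookkeeping. The paper extends the resolvent across the imaginary axis via the lower bound
\[
\|(I_d-P(z))f\|_\gamma \geqslant \|(I_d-P(\Im z))f\|_\gamma - |\Re z|\,\sup\|P'\|_\gamma\,\|f\|_\gamma,
\]
using $\|P'(z)\|_\gamma\leqslant C_1(1+|z|)$; your Neumann-series expansion around $it$ with the same Lipschitz estimate on $P$ is exactly the dual form of this calculation and produces the same strip width $\asymp (1+|t|)^{-(L+1)}$ and the same resolvent bound $\asymp (1+|t|)^L$. For the derivative estimates the paper simply refers to Gelfand--Shilov, which is precisely the Cauchy-formula-on-a-shrinking-circle argument you spell out; so that step is also the same.

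One small imprecision in your gluing: the region $\cal D_{\eta,C,L}$ extends all the way to $\Re z = \eta$ on the right, while the strip produced by your Neumann series around $it$ has width $\asymp (1+|t|)^{-(L+1)}$, which for large $|t|$ is much narrower than $\eta$. The right-hand portion $\{0<\Re z<\eta,\ |\Im z|>t_0\}$ is not reached by your strips nor by your compact leftover rectangle. That region is covered by lemma~\ref{lemma:U_holo}, which gives analyticity of $(I_d-P(z))^{-1}$ on all of $]0,\eta[\oplus i\R$ together with the geometric-series bound $\|(I_d-P(z))^{-1}\|_\gamma\leqslant C(1+|z|)/(t\,\Re z)\leqslant C^2(1+|z|)^{L+2}/t$ once $\Re z\geqslant 1/(C(1+|z|)^{L+1})$; you invoked that lemma only for the disk $B(0,\eta')$. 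Including it gives a slightly worse uniform bound ($(1+|z|)^{L+2}$ rather than $(1+|\Im z|)^L$), but the final Cauchy estimate still lands inside the claimed $(1+|z|)^{(L+1)(n+1)}$ after the standard absorption of constants, so this does not affect the conclusion.
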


\begin{proof}
First of all, we note that, according to the previous lemma, $U(z)$ is well defined on a neighbourhood $B(0,\eta')$ of $0$.

Moreover, for any hölder-continuous function $f$, any $z\in \C_\eta$ and any $x\in \X$, we have that
\[
P'(z) f(x) = \int_\G \sigma(g,x) e^{-z \sigma(g,x)} f(gx) \di\rho(g)
\]
And so, doing the same kind of computations than in the proof of lemma~\ref{lemme:controle_P_z}, we find that for some constant $C_1\in \R_+$ and any $z \in \C_\eta$,
\[
\|P'(z)\|_{\gamma} \leqslant C_1(1+|z|)
\]
So, for any $f\in \cal C^{0,\gamma}(\X)$,
\begin{align*}
\|(I_d -P(z))f\|_\gamma &\geqslant \|(I_d-P(\Im z))f\|_\gamma -  |\Re(z)| \sup_{z'} \|P'(z)\|_\gamma \|f\|_\gamma\\& \geqslant \|(I_d-P(\Im z))f\|_\gamma  - C_1|\Re(z)|(1+|z|)\|f\|_\gamma
\end{align*}
Thus,
\[
\inf_{f\in \cal C^{0,\gamma}(\X)\setminus\{0\}} \frac{\|(I_d -P(z))f\|_\gamma}{\|f\|_\gamma} \geqslant \inf_{f\in \cal C^{0,\gamma}(\X)\setminus\{0\}} \frac{\|(I_d -P(\Im z))f\|_\gamma}{\|f\|_\gamma} - C_1|\Re(z)|(1+|z|)
\]
But, by assumption, for $z \in \C$ with $|z| \geqslant \eta'$,
\[
 \inf_{f\in \cal C^{0,\gamma}(\X)\setminus\{0\}} \frac{\|(I_d -P(\Im z))f\|_\gamma}{\|f\|_\gamma} = \frac{ 1 }{\|(I_d-P(\Im z))^{-1} \|_\gamma} \geqslant \frac 1 {C_0 (1+| z|)^L}
\]
So, for any $z \in \C_\eta$ with $|z| \geqslant \eta'$ and
\[
|\Re(z)| \leqslant \frac{ C_0}{C_1 (1+|z|)^{L+1}},
\]
we have that
\[
\inf_{f\in \cal C^{0,\gamma}(\X)\setminus\{0\}} \frac{\|(I_d -P(z))f\|_\gamma}{\|f\|_\gamma} \geqslant \frac 1 {2C_0 (1+|z|)^L}
\]
This proves, using that $P(z)$ has an essential spectral radius strictly smaller than $1$, that $I_d-P(z)$ is invertible and that
\[
\|(I_d-P(z))^{-1} \|_\gamma \leqslant 2C_0 (1+|z|)^L
\]
We just proved that there is a constant $C$ such that for any $z\in \C_\eta$ with
\[
|\Re(z)| \leqslant \frac 1 {C (1+|z|)^{L+1}}
\]
we have that $U$ is analytic at $z$ and
\[
\|U(z)\|_\gamma \leqslant C (1+|z|)^L
\]
Moreover, according to lemma~\ref{lemma:U_holo}, if $z\in \C_\eta$ with $\Re(z) \geqslant 1/C(1+|z|)^{L+1}$, then, we have that for any $n\in \N$,
\[
\|P(z)^n \|_\gamma\leqslant C(1+|z|)e^{-t{\Re(z)}n}
\]
So,
\[
\|(I_d-P(z))^{-1} \|_\gamma \leqslant \frac{C(1+|z|)}{1-e^{-t_0{\Re(z)}}} \leqslant  \frac{C(1+|z|)}{t \Re(z)} \leqslant \frac{ C^2 (1+|z|)^{L+2}}{t}
\]
And this proves that the function $U$ is analytic on $]0,\eta[\oplus i\R$.

We just proved that for $\eta$ small enough and any $z\in \C$ with
\[
\frac  {-1} {C(1+|z|)^{L+1}} < \Re(z) < \eta
\]
we have that
\[
\|U(z)\|_\gamma \leqslant C' (1+|z|)^{L+2}
\]
for some constant $C'$.

To conclude, we do the same kind of computations than Gelfand and Shilov in the proof of theorem~15 in~\cite{GC64} to get the control of the derivatives of $U$ on the domain $D_{\eta,C'',L+1}$.
\end{proof}

\subsection{The renewal theorem for regular functions} \subsectionmark{Regular functions}

\begin{miniabstract}
In this paragraph, we prove a result of representation of the renewal kernel and we deduce the rate of convergence in the renewal theorem for regular functions.
\end{miniabstract}

Let $\gamma \in \R_+^\ast$. For $f\in \cal C^0(\X\times \R)$, we note
\[
m_{\gamma, \cal E} (f) =  \sup_{t\in \R} \sup_{\substack{x,x' \in \X\\x\not=x'\\ \pi_\Abf(x) = \pi_\Abf(x')}} e^{\gamma|t|} \frac{|f(x,t) - f(x',t)|}{d(x,x')^\gamma}
\]
and
\[
\|f\|_{\gamma,\infty} = \sup_{x\in \X} \sup_{t\in \R} e^{\gamma|t|} |f(x,t)|
\]
Moreover, we note
\[
\|f\|_{\gamma,\cal E} = \|f\|_{\gamma,\infty}  + m_{\gamma, \cal E} (f)
\]
And finally, we set
\begin{equation} \label{equation:E_gamma_zero}
\cal E^{\gamma,k}(\X\times \R) = \left\{ f\in \cal C^k(\R, \cal C^0(\X)) \middle| \forall m\in\lib 0,k \rib \|f^{(m)}\|_{\gamma,\cal E}  \text{ is finite} \right\}
\end{equation} 
where we noted
\[
f^{(k)}(x,t) = \frac{ \partial ^k f}{\partial t^k}(x,t)
\]
And, for $f\in \cal E^{\gamma,k}(\X\times \R)$,
\[
\|f\|_{\gamma,k} = \max_{m\in\lib 0,k \rib} \|f^{(m)}\|_{\gamma,\cal E}
\]

If $f\in \cal E^{\gamma,0}(\X\times \R)$, then, we note for $x\in \X$ and $\xi \in \R$,
\[
\widehat f(x,\xi) = \int_\R e^{-i\xi t} f(x,t) \di t
\]
It is clear that, for fixed $\xi$ the function $\widehat f(.,\xi)$ is hölder-continuous on $\X$.

Moreover,
\[
\frac{ \partial^l \widehat f}{\partial \xi^l}(x,\xi) = \int_\R (-it)^l e^{-i\xi t} f(x,t) \di t
\]
And integrating by parts, we find that for any $\xi\in\R$ and any $m\in \N$,
\[
\xi^m \widehat f(x,\xi) =(-i)^m \int_\R e^{-i\xi t} f^{(m)}(x,t) \di t 
\]
So, if $f\in \cal E^{\gamma,k}(\X\times \R)$, then
\[
(1+|\xi|^k) |\widehat{f}(x,\xi)| \leqslant \int_\R |f(x,t)| \di t + \int_\R |f^{(k)}(x,t)| \di t \leqslant 2\|f\|_{\gamma,k} \int_\R e^{-\gamma|t|} \di t
\]
In the same way, if $\pi_\Abf \circ \pi_\Hb (x) = \pi_\Abf \circ \pi_\Hb(x')$, then
\[
(1+|\xi|^k)|\widehat f(x,\xi) - \widehat f(x',\xi)| \leqslant 2\|f\|_{\gamma,k}d(x,x')^\gamma \int_\R e^{-\gamma|t|} \di t
\]
So, as we can do the same with the functions $\frac{ \partial^l \widehat f}{\partial \xi^l}$, we just prove the following
\begin{lemma} \label{lemma:fourier_transform_vanish} Let $k,l \in \N$. There is a constant $C$ such that for any $f\in \cal E^{\gamma,k}(\X\times \R)$ and any $\xi \in \R$, we have that
\[
\left\| \frac{ \partial^l \widehat f}{\partial \xi^l}(\,.\,,\xi)\right\|_\gamma  \leqslant C \frac{\|f\|_{\gamma,k}}{1+|\xi|^k}
\]
\end{lemma}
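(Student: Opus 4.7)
The plan is essentially to extend the computations already carried out immediately before the statement to general $l \in \N$ and to include the hölder seminorm. First, I would justify differentiation under the integral sign: since $f \in \cal E^{\gamma,k}(\X \times \R)$ satisfies $|f(x,t)| \leqslant \|f\|_{\gamma,0}\, e^{-\gamma|t|}$, the function $t \mapsto t^l f(x,t)$ is integrable and bounds $\frac{\partial^l}{\partial \xi^l}(e^{-i\xi t} f(x,t))$ uniformly in $\xi$, which yields
\[
\frac{\partial^l \widehat f}{\partial \xi^l}(x,\xi) = \int_\R (-it)^l e^{-i\xi t} f(x,t) \di t.
\]

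Next, I would integrate by parts $k$ times in the $t$ variable; the boundary terms vanish thanks to the exponential decay of each derivative $f^{(j)}$ for $j \leqslant k$. Expanding the $k$-th derivative of the product $t^l f(x,t)$ via Leibniz's rule gives
\[
\xi^k \frac{\partial^l \widehat f}{\partial \xi^l}(x,\xi) = (-1)^l (-i)^{k-l} \sum_{j=0}^{\min(k,l)} \binom{k}{j} \frac{l!}{(l-j)!} \int_\R t^{l-j} e^{-i\xi t} f^{(k-j)}(x,t) \di t.
\]
Using $|f^{(p)}(x,t)| \leqslant \|f\|_{\gamma,k}\, e^{-\gamma|t|}$ for $p \leqslant k$, each integral is bounded by $\|f\|_{\gamma,k} \int_\R |t|^{l-j} e^{-\gamma|t|}\di t$, a finite constant depending only on $\gamma$, $k$ and $l$. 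Combining this with the analogous estimate at $\xi = 0$ (the case without integration by parts, already sketched in the text), I obtain a bound of the form $\bigl\|\tfrac{\partial^l \widehat f}{\partial \xi^l}(\,.\,,\xi)\bigr\|_\infty \leqslant C \|f\|_{\gamma,k}/(1+|\xi|^k)$.

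The control of the hölder seminorm $m_\gamma$ is obtained by running exactly the same argument on the difference $f(x,t)-f(x',t)$ for $\pi_\Abf(x)= \pi_\Abf(x')$, using $|f^{(p)}(x,t)-f^{(p)}(x',t)| \leqslant \|f\|_{\gamma,k}\, e^{-\gamma|t|} d(x,x')^\gamma$, which is precisely the content of $m_{\gamma,\cal E}(f^{(p)}) \leqslant \|f\|_{\gamma,k}$ for $p \leqslant k$. Adding both estimates gives the stated inequality in the norm $\|\,.\,\|_\gamma$.

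There is no genuine obstacle in this proof: it is a routine consequence of exponential decay plus integration by parts. The only mild technicality is tracking the combinatorial constants from Leibniz's rule and ensuring that all integrals $\int_\R |t|^j e^{-\gamma|t|}\di t$ are absorbed into a single constant $C = C(\gamma,k,l)$ independent of $f$ and $\xi$.
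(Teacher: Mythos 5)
Your proof is correct and follows the same route as the paper: differentiate under the integral sign, integrate by parts $k$ times, and combine the resulting bound for large $|\xi|$ with the direct bound for small $|\xi|$ to produce the factor $(1+|\xi|^k)^{-1}$, then repeat on differences $f(x,\cdot)-f(x',\cdot)$ to control $m_\gamma$. The paper leaves the case $l\geqslant 1$ to the reader (``we can do the same with $\partial^l\widehat f/\partial\xi^l$''), and your Leibniz expansion, together with the observation that $|f^{(p)}(x,t)|\leqslant\|f\|_{\gamma,k}e^{-\gamma|t|}$ makes every integral $\int_\R|t|^{l-j}e^{-\gamma|t|}\,\di t$ finite, is exactly the omitted step.
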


\begin{remark}
This lemma shows that we recover the classical properties of the Fourier transform that exchanges regularity of functions and decay at infinity.
\end{remark}

We shall now prove that convolution with functions of $\cal E^{\gamma,k}$ regularises functions of $\cal E^{\gamma,0}$. As we will not need this result in full generality, we just prove it for some particular function in $\cal E^{\gamma,k}$.
\begin{lemma} \label{lemma:regularizing}
Let, for $k \in \N$, $\varphi_k$ be the function defined for $t\in \R$ by
\[
\varphi_k(t) = t^k e^{-t} \un_{\R_+}(t)
\]
Then for any $\gamma \in \left]0,1\right[$ and any $k\in \N$, there is a constant $C_k$ such that for any $f\in \cal E^{\gamma,0}(\X\times \R)$,
\[
\varphi_{k+1} \ast f \in \cal E^{\gamma,k}(\X\times \R)
\]
and
\[
\|\varphi_{k+1} \ast f\|_{\gamma,k} \leqslant C_k \|f\|_{\gamma,0}
\]
\end{lemma}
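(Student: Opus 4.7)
The plan is to derive a recursion for the derivatives of $\varphi_{k+1} \ast f$ and then estimate each piece in the norm $\|\,\cdot\,\|_{\gamma,\cal E}$.

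First I would verify that $\varphi_{k+1}$ is continuous and vanishes at $0$ (since $k+1 \geq 1$), so that the identity
\[
(\varphi_{j+1} \ast f)'(x,t) = (j+1)\,\varphi_j \ast f(x,t) - \varphi_{j+1} \ast f(x,t) \quad (j\geq 0)
\]
holds in the classical pointwise sense. Indeed, writing $\varphi_{j+1}\ast f(x,t) = \int_{-\infty}^t (t-r)^{j+1} e^{-(t-r)} f(x,r)\,\di r$ and differentiating in $t$, the boundary contribution at $r=t$ is zero because $\varphi_{j+1}(0)=0$, and differentiation under the integral is justified by exponential decay of the integrand. Iterating this, for $0 \leq m \leq k$,
\[
(\varphi_{k+1} \ast f)^{(m)} = \sum_{j=k+1-m}^{k+1} c_{m,j}\, \varphi_j \ast f
\]
with explicit universal constants $c_{m,j}$, and every index $j$ appearing satisfies $j \geq 1$.

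Second, I would prove the key pointwise estimate: for $\gamma \in \mathopen{]}0,1\mathclose{[}$ and $j \in \N$, there is $A_{j,\gamma}$ such that for every $t \in \R$,
\[
e^{\gamma|t|} \int_0^{+\infty} u^j e^{-u} e^{-\gamma|t-u|} \di u \leqslant A_{j,\gamma}.
\]
The proof splits on the sign of $t$. For $t<0$, $|t-u|=u-t$ for all $u\geq 0$ and one gets $\int_0^\infty u^j e^{-(1+\gamma)u}\di u = j!/(1+\gamma)^{j+1}$. For $t\geq 0$, split at $u=t$: on $[0,t]$ the integrand $e^{\gamma t}u^j e^{-u} e^{-\gamma(t-u)} = u^j e^{-(1-\gamma)u}$ has integral bounded by $j!/(1-\gamma)^{j+1}$; on $[t,+\infty[$, after the substitution $v=u-t$, one obtains $e^{(\gamma-1)t}\int_0^\infty (v+t)^j e^{-(1+\gamma)v}\di v$, and the polynomial factor in $t$ is absorbed by the decaying exponential $e^{(\gamma-1)t}$ since $\gamma<1$.

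Third, using this estimate, both
\[
e^{\gamma |t|} |\varphi_j \ast f(x,t)| \leqslant \|f\|_{\gamma,\infty}\, e^{\gamma|t|}\!\int_0^\infty u^j e^{-u} e^{-\gamma|t-u|}\di u \leqslant A_{j,\gamma} \|f\|_{\gamma,\infty}
\]
and, when $\pi_\Abf(x) = \pi_\Abf(x')$,
\[
e^{\gamma|t|}\,\frac{|\varphi_j\ast f(x,t) - \varphi_j\ast f(x',t)|}{d(x,x')^\gamma} \leqslant m_{\gamma,\cal E}(f)\, A_{j,\gamma}
\]
follow immediately, giving $\|\varphi_j \ast f\|_{\gamma,\cal E} \leqslant 2 A_{j,\gamma}\|f\|_{\gamma,0}$. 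Combining with the recursion and setting $C_k = \sum_{m=0}^{k}\sum_{j=k+1-m}^{k+1} 2|c_{m,j}| A_{j,\gamma}$ yields the claimed bound $\|\varphi_{k+1}\ast f\|_{\gamma,k} \leqslant C_k\|f\|_{\gamma,0}$.

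The main obstacle is the bookkeeping in the integral estimate for $t\geq 0$ large, where one has to see that the factor $e^{(\gamma-1)t}$ coming from the tail $u > t$ dominates the polynomial blow-up produced by expanding $(v+t)^j$; everything else is essentially the classical smoothing property of convolution with an integrable, exponentially decaying kernel.
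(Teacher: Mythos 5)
Your proof is correct and takes essentially the same approach as the paper: differentiate the convolution to reduce to a sum of $\varphi_j\ast f$ with $j\geqslant 1$, then show that $e^{\gamma|t|}\int_0^{+\infty}u^j e^{-u}e^{-\gamma|t-u|}\,\di u$ is bounded uniformly in $t$. The paper writes $\varphi_{k+1}^{(m)}$ out directly by Leibniz's rule rather than iterating your first-order recursion, and obtains the integral bound in one line from $e^{-\gamma|t-u|}\leqslant e^{-\gamma|t|}\,e^{\gamma u}$, which makes your three-case split unnecessary; these are cosmetic differences.
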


\begin{proof}
Thus usual properties of the convolution product prove that, as $\varphi_{k+1} \in \cal C^{k}(\R)$, the function $f\ast \varphi_{k+1}$ is $k-$times differentiable and it $k-$th derivative is continuous (since $f$ is continuous) and for any $x\in \X$, any $t\in \R$ and any $m\in\lib 0, k\rib$
\[
\left(\varphi_{k+1} \ast f \right)^{(m)}(x,t) = \varphi_{k+1}^{(m)} \ast f(x,t)
\]
But,
\[
\varphi_{k+1}^{(m)}(t) = \sum_{l=0}^{m} \binom m l  (-1)^{m-l}\frac{(k+1)!}{(k+1-l)!} t^{k+1-l} e^{-t} \un_{\R_+}(t)
\]
So,
\[
\left(\varphi_{k+1} \ast f \right)^{(m)}(x,t) = \sum_{l=0}^{m} \binom m l  (-1)^{m-l}\frac{(k+1)!}{(k+1-l)!} \int_{\R_+} u^{k+1-l} f(x,t-u) e^{-u} \di u
\]
and so, if $x,x' \in \X$ are such that $\pi_\Abf \circ \pi _\Hb(x) = \pi_\Abf \circ \pi_\Hb(x')$,
\begin{align*}
I_{k,m,x,x',t}&:=\left|\left(\varphi_{k+1} \ast f \right)^{(m)}(x,t) - \left(\varphi_{k+1} \ast f \right)^{(m)}(x',t) \right| \\
& \leqslant \sum_{l=0}^{m} \binom m l  \frac{(k+1)!}{(k+1-l)!}\int_{\R_+} u^{k+1-l} e^{-\gamma|t-u|} d(x,x')^\gamma \|f\|_{\gamma,0} e^{-u} \di u \\
& \leqslant e^{-\gamma|t|}d(x,x')^\gamma \|f\|_{\gamma,0}  \sum_{l=0}^{m} \binom m l \frac{(k+1)!}{(k+1-l)!}\int_{\R_+} e^{\gamma|u|} u^{k+1-m} e^{-u} \di u
\end{align*}
where we used that for any $v,w\in \R$,
\[
e^{|v|-|v+w|} \leqslant e^{|w|}
\]
In the same way, we get that
\begin{align*}
J_{k,m,x,t} &= \left|\left(\varphi_{k+1} \ast f \right)^{(m)}(x,t) \right| \leqslant C_{k,m} \|f\|_{\gamma,0} e^{-\gamma|t|}
\end{align*}
for some constant $C_{k,m}$ and this finishes the proof of the lemma.
\end{proof}

We can now state our proposition about representation of the renewal kernel in next
\begin{proposition} \label{proposition:representation_renouvellement_fonctions_regulieres}
Under the assumption of theorem~\ref{theorem:renouvellement_general}, for any $\gamma>0$ small enough, there is $K\in \N$ such that for any $f\in \cal E^{\gamma,K}(\X\times \R)$, any $x\in \X$ and any $t\in \R$,
\[
\sum_{n=0}^{+\infty} P^n f(x,t) =\frac 1 {\sigma_\rho}\Pi_0 f(x,t)+ \frac 1 {2\pi} \int_\R e^{i\xi t} U(-i\xi)\widehat f(x,\xi) \di \xi
\]
Where $U$ is the operator defined in proposition~\ref{proposition:controle_inverse_perturbe} and we made the abuse of notations $U(-i\xi) \widehat{f}(x,\xi) = U(-i\xi) \widehat f_{\xi}(x)$ with $\widehat f_\xi = \widehat f(.,\xi)$.
\end{proposition}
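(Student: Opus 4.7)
The plan is to express each iterate $P^n f$ via Fourier inversion in the $t$-variable, and then to handle the divergent Neumann series $\sum_n P(-i\xi)^n$ by a contour shift into the region $\Re(-i\xi)>0$, where the series converges exponentially fast thanks to proposition~\ref{proposition:controle_inverse_perturbe}. First I would choose $K\in\N$ large (depending on the exponent $L$ from proposition~\ref{proposition:controle_inverse_perturbe}). For $f\in\cal E^{\gamma,K}$, the exponential decay of $f$ in $t$ gives that $\widehat f_\xi(x):=\widehat f(x,\xi)$ admits a holomorphic extension in $\xi$ to the strip $|\Im\xi|<\gamma$, with a bound
\[
\|\widehat f_\xi\|_\gamma \leq C\|f\|_{\gamma,K}/(1+|\xi|)^K
\]
extending lemma~\ref{lemma:fourier_transform_vanish} to the strip. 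A direct computation from the definition of $P$ (change of variable $s=t+\sigma(g,x)$ under the inner integral) gives $\widehat{Pf}(x,\xi)=P(-i\xi)\widehat f_\xi(x)$, so iteration and Fourier inversion yield
\[
P^n f(x,t)=\frac{1}{2\pi}\int_\R e^{i\xi t}P(-i\xi)^n\widehat f_\xi(x)\,d\xi,
\]
and the integrand is holomorphic in $\xi$ in the strip by the analyticity of $z\mapsto P(z)$.

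I would then shift the contour from $\R$ to $\Gamma_\epsilon:=\R+i\epsilon$ for a small $\epsilon\in(0,\eta)$. On $\Gamma_\epsilon$ one has $\Re(-i\xi)=\epsilon>0$, so proposition~\ref{proposition:controle_inverse_perturbe} supplies $\|P(-i\xi)^n\|_\gamma\leq C(1+|\xi|)e^{-t\epsilon n}$; combined with the decay $\|\widehat f_\xi\|_\gamma\leq C(1+|\xi|)^{-K}$ and the choice $K>L+2$, this legitimates a Fubini exchange of the series and the integral and produces
\[
Gf(x,t)=\frac{1}{2\pi}\int_{\Gamma_\epsilon}e^{i\xi t}(I_d-P(-i\xi))^{-1}\widehat f_\xi(x)\,d\xi.
\]
I would then decompose $(I_d-P(-i\xi))^{-1}=U(-i\xi)+\frac{iN_0}{\sigma_\rho\xi}$. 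The function $U(-i\xi)$ is analytic on a neighbourhood of $\R$ with at most polynomial growth $\|U(-i\xi)\|_\gamma=O(|\xi|^{L+1})$ (again by proposition~\ref{proposition:controle_inverse_perturbe}), so the $U$-contribution can be deformed back to $\R$, producing exactly the integral in the statement. For the singular part, I recognise $1/(\tau+i\epsilon)$ as the Fourier transform of $-i\,\un_{u<0}e^{\epsilon u}$; using that $N_0$ commutes with the Fourier transform in $t$ (since it acts only in the $x$-variable), the convolution theorem evaluates the contour integral explicitly to $\frac{1}{\sigma_\rho}\int_t^{+\infty}N_0f(x,u)\,du=\frac{1}{\sigma_\rho}\Pi_0 f(x,t)$, independently of $\epsilon$.

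The main obstacle will be juggling enough regularity on $f$ (via the parameter $K$) to make every Fubini application and every contour shift valid simultaneously, since the polynomial growth of $(I_d-P(-i\xi))^{-1}$ given by proposition~\ref{proposition:controle_inverse_perturbe} has to be compensated by the polynomial decay of $\widehat f_\xi$; this forces a quantitative relation between $K$ and $L$. A secondary technical point is the rigorous verification that the identity $\widehat{P^n f}(x,\xi)=P(-i\xi)^n\widehat f_\xi(x)$ persists in the whole strip, which follows from Morera's theorem together with the integrability estimates above.
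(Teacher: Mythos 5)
Your proposal is correct and reaches the same final formula by the same conceptual route --- write $P^n f$ via Fourier inversion in $t$, regularize so that the Neumann series $\sum_n P(z)^n$ converges, identify the resulting resolvent, and split it into the singular $\frac{1}{\sigma_\rho z}N_0$ piece (evaluated explicitly, giving $\Pi_0$) plus the bounded $U(z)$ piece. The technical difference is the regularization device: the paper introduces the auxiliary operator $P_s f(x,t)=\int_\G e^{-s\sigma(g,x)}f(g.(x,t))\,d\rho(g)$, so that $\widehat{P_s^nf}(x,\xi)=P(s-i\xi)^n\widehat f(x,\xi)$ with $\widehat f$ still evaluated on the real axis, and then lets $s\to 0^+$ using monotone/dominated convergence; you instead keep $P$ untouched and shift the contour of $\xi$-integration to $\R+i\epsilon$, which requires (and you correctly invoke) the holomorphic extension of $\widehat f_\xi$ to a horizontal strip --- an extension available precisely because functions in $\cal E^{\gamma,K}$ decay like $e^{-\gamma|t|}$. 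The two are essentially Fourier-dual manoeuvres; your version has the minor advantage of avoiding the paper's sign-split argument for justifying $\sum P^n f=\lim_{s\to 0^+}\sum P_s^n f$ (which in the paper is only explicitly carried out for nonnegative $f$), while the paper's version avoids needing $\widehat f_\xi$ off the real axis at all. Your computation of the singular part is consistent with the paper's (the paper uses $\frac{1}{s-i\xi}=\int_0^\infty e^{-(s-i\xi)u}\,du$ rather than your $\frac{1}{\tau+i\epsilon}$ Fourier pair, but the outcome $\int_t^{+\infty}N_0 f(x,u)\,du$ agrees and is $\epsilon$-independent). The quantitative relationship you flag between $K$ and $L$ is indeed what the paper also needs (it takes $K=L+3$); any $K>L+2$ suffices to control $\|U^{(m)}(-i\xi)\|_\gamma\,\|\widehat f_\xi\|_\gamma$ integrably.
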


\begin{proof}
For $s\in \R_+^\ast$, we note $P_s$ the operator defined by
\[
P_s f(x,t) = \int_\G e^{-s \sigma(g,x)} f(gx,t+\sigma(g,x)) \di\rho(g)
\]
We are first going to prove that if $f$ is non negative,
\[
\sum_{n=0}^{+\infty} P^nf(x,t) =  \lim_{s\to 0^+} \sum_{n=0}^{+\infty} P_s^n f(x,t)
\]
And then that
\[
\sum_{n=0}^{+\infty} P^n_s f(x,t) = \frac 1{2\pi} \int_\R e^{i\xi t} (I_d-P(s-i\xi))^{-1} \widehat f(x,\xi) \di \xi 
\]
To prove the first equality, note that
\[
\int_\G e^{-s\sigma(g,x)} f(g.(x,t))\di\rho^{\ast n}(g) = \int_\G e^{-s\sigma(g,x)}(\un_{\sigma(g,x)\leqslant 0} + \un_{\sigma(g,x)>0})f(g.(x,t)) \di\rho^{\ast n}(g)
\]
And the monotone convergence theorem proves that
\[
\lim_{s\to 0^+} \sum_{n=0}^{+\infty} \int_\G e^{-s\sigma(g,x)} \un_{\sigma(g,x)>0} f(g.(x,t)) \di\rho^{\ast n}(g) = \sum_{n=0}^{+\infty} \int_\G  \un_{\sigma(g,x)>0} f(g.(x,t)) \di\rho^{\ast n}(g)
\]
Moreover, lemma~\ref{lemma:drift} and Bienaymé-Tchebytchev's inequality prove that for any $x\in \X$,
\[
\rho^{\ast n}(g\in \G|\sigma(g,x) \leqslant 0) \leqslant \int_\G e^{-\eta\sigma(g,x)} \di\rho^{\ast n}(g) \leqslant C e^{-t \eta n}
\]
so the dominated convergence theorem proves that
\[
\lim_{s\to 0^+} \sum_{n=0}^{+\infty} \int_\G e^{-s\sigma(g,x)} \un_{\sigma(g,x)\leqslant 0} f(g.(x,t)) \di\rho^{\ast n}(g) = \sum_{n=0}^{+\infty} \int_\G  \un_{\sigma(g,x)\leqslant 0} f(g.(x, t)) \di\rho^{\ast n}(g)
\]
And this finishes the proof of the first equality.

Moreover,
\begin{align*}
I_s(t,x) :&=\sum_{n=0}^{+\infty} P^n_s f(t,x)  = \sum_{n=0}^{+\infty} \int_\G e^{-s\sigma(g,x)} f(gx,t+\sigma(g,x)) \di\rho^{\ast n}(g) \\
&= \frac 1{2\pi} \sum_{n=0}^{+\infty} \int_\G e^{-s\sigma(g,x)} \int_\R e^{i\xi(t+\sigma(g,x)} \widehat f(gx,\xi) \di \xi \di\rho^{\ast n}(g) \\
&= \frac 1{2\pi} \int_\R e^{i\xi t} \sum_{n=0}^{+\infty} \int_\G e^{-(s-i\xi) \sigma(g,x)} \widehat f(gx ,\xi) \di\rho^{\ast n}(g) \di \xi \\
&= \frac 1{2\pi} \int_\R e^{i\xi t} \sum_{n=0}^{+\infty} P(s-i\xi)^n \widehat f(x,\xi) \di \xi = \frac 1{2\pi} \int_\R e^{i\xi t} (I_d-P(s-i\xi))^{-1} \widehat f(x,\xi) \di \xi
\end{align*}
and this computation proves the second equality.

Finally, we noted $U(z)$ the family of operators defined by
\[
(I_d-P(z))^{-1}= \frac 1 {\sigma_\rho z} N_0 +U(z)
\]
and we saw in proposition~\ref{proposition:controle_inverse_perturbe} that $(U(z))$ is an analytic family of continuous operators on $\cal C^{0,\gamma}(\X)$.

And so,
\[
\int_\R e^{i\xi t} (I_d-P(s-i\xi))^{-1} \widehat f(x,\xi) \di \xi = \int_\R  \frac{e^{i\xi t}} {s-i\xi}N_0  \widehat f(x,\xi) \frac{\di \xi }{\sigma_\rho}+ \int_\R e^{i\xi t} U(s-i\xi) \widehat f(x,\xi) \di \xi
\]
But, for any continuous function $f$ on $\X$, using lemma~\ref{lemma:loi_grands_nombres_localement_contracte}, we can write
\[
N_0 f(x) = \sum_{i=1}^r p_i(x) \int f\di\nu_i
\]
where the $p_i$ are $P-$invariant functions and the $\nu_i$ are stationary probability measures on $\X$.
And so,
\begin{align*}
\frac 1 {2\pi}\int_\R  \frac{e^{i\xi t}}{s-i\xi} N_0 \widehat f(x,\xi) \di \xi &=\frac 1 {2\pi}\sum_{i=1}^r p_i(x) \int_\X \int_\R \frac{e^{i\xi t}}{s-i\xi} \widehat f(y,\xi) \di \xi \di\nu_i(y) \\
&= \sum_{i=1}^r p_i(x) \int_\X \int_{0}^{+\infty} f(y,t+u) e^{-su} \di u \di\nu_i(y) \\
&= \int_{0}^{+\infty} N_0 f(x,t+u) e^{-su} \di u
\end{align*}
where we used that
\[
\frac 1{s-i\xi} = \int_{0}^{+\infty} e^{-(s-i\xi) u} \di u
\]
So, for any $f\in \mathrm{L}^1(\R)$ such that $\widehat f \in \mathrm{L}^1(\R)$,
\[
\int_\R \frac{ e^{i\xi t}}{s-i\xi} \widehat f(\xi) \di \xi =\int_{0}^{+\infty} e^{-su}  \int_\R e^{i\xi (t+u)} \widehat f(\xi) \di \xi \di u = 2\pi \int_{0}^{+\infty} e^{-su} f(t+u) \di u
\]
This proves, using the definition of $\Pi_0$, that
\[
\lim_{s\to 0^+} \frac 1 {2\pi} \int_\R \frac{e^{i\xi t}}{s-i\xi} N_0 \widehat f(x,\xi) \di \xi = \int_t^{+\infty} N_0 f(x,u)  \di u = \Pi_0 f(x,t)
\]
Thus
\[
Gf(x,t) = \sum_{n=0}^{+\infty} P^n f(x,t) =\frac 1 {\sigma_\rho} \Pi_0 f(x,t)+ \lim_{s\to 0^+} \frac 1 {2\pi} \int_\R e^{i\xi t} U(s-i\xi) \widehat f(x,\xi) \di \xi
\]
and, as, for fixed $\xi$ we have, according to proposition~\ref{proposition:controle_inverse_perturbe}, that
\[
\|U(s-i\xi) \widehat f(x,\xi)\|_\infty \leqslant \|U(s-i\xi) \|_\gamma \|\widehat f(x,\xi)\|_\gamma \leqslant C(1+|\xi|)^{L+1} \|\widehat f(x,\xi)\|_\gamma 
\]
and as $f\in \cal E^{\gamma,K}(\X\times \R)$, we can conclude with the dominated convergence theorem~\ref{lemma:fourier_transform_vanish} taking $K=L+3$.
\end{proof}

\begin{corollary}\label{corollary:vitesse_renouvellement_fonctions_regulieres}
Under the assumption and notations of proposition~\ref{proposition:controle_inverse_perturbe}, for any $\gamma>0$ small enough there are constants $C,K$ such that for any $f\in \cal E^{\gamma,K}$, any $x,x' \in \X$ and any $t,t' \in \R$ we have that
\[
\left|\left(G-\frac 1{\sigma_\rho} \Pi_0\right)f(x,t) - \left(G-\frac 1{\sigma_\rho} \Pi_0\right) f(x',t')\right| \leqslant C \|f\|_{\gamma,K} \omega_0((x,t),(x',t'))^\gamma 
\]
where,
\[
\omega_0((x,t),(x',t')) =  \left\{\begin{array}{cl} \frac{\sqrt{|t-t'|^2+  d(x,x')^2} }{(1+|t'|)(1+|t|)} & \text{if }\pi_\Abf  (x) = \pi_\Abf (x')\\ 1 & \text{otherwise}  \end{array} \right.
\]
\end{corollary}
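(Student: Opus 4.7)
The starting point is the spectral representation from Proposition~\ref{proposition:representation_renouvellement_fonctions_regulieres}:
\[
\Phi(x,t):=\left(G-\tfrac{1}{\sigma_\rho}\Pi_0\right)f(x,t)=\frac{1}{2\pi}\int_\R e^{i\xi t}\,U(-i\xi)\widehat f_\xi(x)\,d\xi,
\]
where $\widehat f_\xi(x):=\widehat f(x,\xi)$. The strategy is to derive complementary estimates on $\Phi$ from this representation and combine them by a short case analysis.

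First, termwise estimation of the integrand using $|e^{i\xi t}-e^{i\xi t'}|\leqslant 2^{1-\gamma}|\xi|^\gamma|t-t'|^\gamma$, the operator bound $\|U(-i\xi)\|_\gamma\leqslant C(1+|\xi|)^{L+1}$ from Proposition~\ref{proposition:controle_inverse_perturbe}, and the Fourier decay $\|\widehat f_\xi\|_\gamma\leqslant C\|f\|_{\gamma,K}(1+|\xi|)^{-K}$ from Lemma~\ref{lemma:fourier_transform_vanish}, yields, once $K$ is large enough that the $\xi$-integrals converge,
\[
|\Phi(x,t)-\Phi(x,t')|\leqslant C\|f\|_{\gamma,K}|t-t'|^\gamma,\quad |\Phi(x,t)-\Phi(x',t)|\leqslant C\|f\|_{\gamma,K}d(x,x')^\gamma,
\]
the second requiring $\pi_\Abf(x)=\pi_\Abf(x')$.

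Second, $n$-fold integration by parts in $\xi$ produces a prefactor $(it)^{-n}$. Combining the derivative bounds $\|U^{(k)}(-i\xi)\|_\gamma\leqslant k!\,C^{k+1}(1+|\xi|)^{(L+1)(k+1)}$ from Proposition~\ref{proposition:controle_inverse_perturbe} with Lemma~\ref{lemma:fourier_transform_vanish} applied to $\partial_\xi^{n-k}\widehat f$ via the Leibniz rule, and applying the same reasoning to $\Phi^{(1)}(x,t)=\frac{1}{2\pi}\int i\xi e^{i\xi t}U(-i\xi)\widehat f_\xi(x)d\xi$ and to $\Psi(x,x',t):=\Phi(x,t)-\Phi(x',t)$, one obtains, for $K$ large enough in terms of $L$ and $n$,
\[
|\Phi(x,t)|+|\Phi^{(1)}(x,t)|\leqslant C_n\|f\|_{\gamma,K}(1+|t|)^{-n},\quad |\Psi(x,x',t)|\leqslant C_n\|f\|_{\gamma,K}\frac{d(x,x')^\gamma}{(1+|t|)^n}.
\]
The fundamental theorem of calculus then upgrades the estimate on $\Phi^{(1)}$ to a refined $t$-Hölder bound: when $t,t'$ have the same sign, $|\Phi(x,t)-\Phi(x,t')|\leqslant C_n\|f\|_{\gamma,K}|t-t'|(1+\min(|t|,|t'|))^{-n}$.

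To conclude, write $\Phi(x,t)-\Phi(x',t')=[\Phi(x,t)-\Phi(x,t')]+[\Phi(x,t')-\Phi(x',t')]$, choosing the splitting (through $(x,t')$ or through $(x',t)$) so that the $x$-variation term carries the larger of $|t|,|t'|$. When $\pi_\Abf(x)\neq\pi_\Abf(x')$ one has $\omega_0=1$ and uniform boundedness of $\Phi$ suffices. Otherwise, the $x$-variation term is $\leqslant C\|f\|_{\gamma,K}d(x,x')^\gamma(1+\max(|t|,|t'|))^{-2\gamma}\leqslant C\|f\|_{\gamma,K}\omega_0^\gamma$ upon choosing $n\geqslant 2\gamma$. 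The $t$-variation term is handled by case analysis on the relative sizes of $|t|,|t'|,|t-t'|$: in the regime $|t|\asymp|t'|\geqslant 1$ with $|t-t'|\leqslant|t|$, the refined Hölder bound $C|t-t'|(1+|t|)^{-n}$ is $\leqslant C|t-t'|^\gamma(1+|t|)^{-2\gamma}$ provided $n\geqslant 1+\gamma$; in the regime where $|t-t'|$ is comparable to $\max(|t|,|t'|)$ the pure decay bound suffices; and when both $|t|$ and $|t'|$ are bounded, the plain Hölder bound matches $\omega_0^\gamma$ directly.

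The main obstacle is precisely this last case analysis: the target $\omega_0^\gamma\asymp(|t-t'|^\gamma+d(x,x')^\gamma)/((1+|t|)^\gamma(1+|t'|)^\gamma)$ features a \emph{product} of decay factors in $|t|$ and $|t'|$, whereas each individual bound above produces only a single decay factor or a sum of two; reconciling them forces the partition of the parameter space. The integer $K=K(L,\gamma)$ is fixed at the very end so that all the integrals above converge and yield a single universal constant $C$.
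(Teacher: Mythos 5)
Your proposal is correct and rests on the same ingredients as the paper's proof: the spectral representation from Proposition~\ref{proposition:representation_renouvellement_fonctions_regulieres}, integration by parts in $\xi$, and the estimates on $\|U^{(m)}(-i\xi)\|_\gamma$ and $\|\partial_\xi^{l}\widehat f(\cdot,\xi)\|_\gamma$ from Proposition~\ref{proposition:controle_inverse_perturbe} and Lemma~\ref{lemma:fourier_transform_vanish}; even the split $\Phi(x,t)-\Phi(x',t')=[\Phi(x,t)-\Phi(x,t')]+[\Phi(x,t')-\Phi(x',t')]$ through the intermediate point coincides with the paper's. Where you diverge is in how the $t$-variation term is controlled. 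The paper integrates by parts exactly twice and, for $|t'|\geqslant|t|\geqslant 1$, applies the single elementary inequality
\[
\left|\frac{e^{i\xi t}}{t^2}-\frac{e^{i\xi t'}}{t'^2}\right|\leqslant\frac{|t^2-t'^2|}{t^2 t'^2}+|\xi|\,\frac{|t-t'|}{t'^2}\leqslant\frac{|t-t'|}{|t|\,|t'|}\,(2+|\xi|),
\]
which directly produces the product $(1+|t|)(1+|t'|)$ in the denominator; the remaining strip $|t|\leqslant 1$ is then absorbed by the translation trick $Gf_s(x,t+s)=Gf(x,t)$ with $f_s(\cdot,u)=f(\cdot,u-s)$. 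You instead accumulate several separate estimates (plain Hölder in each variable, $n$-fold-integration-by-parts decay, and a Lipschitz-with-decay estimate for $\Phi^{(1)}$ via the fundamental theorem of calculus) and assemble the target by a case analysis on the relative sizes of $|t|$, $|t'|$ and $|t-t'|$. Both routes are sound and end up requiring a comparable choice of $K$ (of order $L$); the paper's version is more economical because the one elementary inequality replaces both your refined $t$-Hölder bound and the case analysis, and the translation argument spares you the bounded-time cases.
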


\begin{remark}
Letting $t'$ go to $+\infty$ and using Riemann-Lebesgue's lemma and proposition~\ref{proposition:representation_renouvellement_fonctions_regulieres} to get that under the assumption of the corollary,
\[
\lim_{t'\to \pm\infty } (G-\frac 1{\sigma_\rho} \Pi_0)f(x,t') = 0
\]
we find that for any $f\in \cal E^{\gamma,K}(\X\times \R)$, tout $x\in \X$ and any $t\in \R$,
\[
\left| Gf(x,t) - \frac 1{\sigma_\rho} \Pi_0 f(x,t)\right| \leqslant \frac{C}{(1+|t|)^{\gamma}} \|f\|_{\gamma,K}
\]
\end{remark}

\begin{proof}
According to proposition~\ref{proposition:representation_renouvellement_fonctions_regulieres},
\[
\left(G-\frac 1{\sigma_\rho} \Pi_0\right)f(x,t) = \frac 1 {2\pi} \int_\R e^{i\xi t} U(-i\xi) \widehat f(x,\xi) \di \xi
\]
So, integrating by parts and noting $\psi(x,\xi) = U(-i\xi) \widehat f(x,\xi)$, we find that for any $t\in \R^\ast$,
\[
(G-\frac 1{\sigma_\rho} \Pi_0) f(x,t) = \frac 1 {2\pi} \int_\R \frac{e^{i\xi t}}{-t^2} \psi''(x,\xi) \di \xi
\]
So, for any $x,x'\in \X$ such that $\pi_\Abf \circ \pi _\Hb(x) = \pi_\Abf \circ \pi_\Hb(x')$ and any $t,t' \in \R^\ast$,
\begin{align*}
I(x,t,x',t')&:= (G-\frac 1{\sigma_\rho} \Pi_0) f(x,t) - (G-\frac 1{\sigma_\rho} \Pi_0) f(x',t') \\&= \frac 1 {2\pi} \int_\R \frac{e^{i\xi t}}{-t^2} \psi''(x,\xi) \di \xi - \frac 1 {2\pi}\int_\R \frac{e^{i\xi t'}}{-t'^2} \psi''(x',\xi) \di \xi \\
&= \int_\R \left(\frac{e^{i\xi t}}{-t^2} -\frac{e^{i\xi t'}}{-t'^2} \right) \psi''(x,\xi) -\int_\R \frac{e^{i\xi t'}}{t'^2} \left(\psi''(x,\xi) -\psi''(x',\xi)\right) \frac {\di \xi} {2\pi} 
\end{align*}
So,
\[
\left|I(x,t,x',t') \right| \leqslant \int_\R \left|\frac{e^{i\xi t}}{t^2} -\frac{e^{i\xi t'}}{t'^2} \right| |\psi''(x,\xi)| \di \xi + \frac 1 {|t'|^2} \int_\R \left|\psi''(x,\xi) -\psi''(x',\xi)\right| \di \xi 
\]
But, assuming that $|t'| \geqslant |t|\geqslant 1$, we find that
\[
\left|\frac{e^{i\xi t}}{t^2} -\frac{e^{i\xi t'}}{t'^2} \right| \leqslant \frac{ |t^2 - t'^2|}{t^2 t'^2} + |\xi| \frac{ |t-t'|}{t'^2} \leqslant \frac{|t-t'|}{|t| |t'|} \left(2 + |\xi| \right)
\]
And, as we also have for $t\in \R$ with $|t| \geqslant 1$, that
\[
\frac 1 {|t|} \leqslant \frac 2 {1+|t|},
\]
what we get is that
\[
 \int_\R \left|\frac{e^{i\xi t}}{t^2} -\frac{e^{i\xi t'}}{t'^2} \right| |\psi''(x,\xi)| \di \xi \leqslant \frac{ 4 |t-t'| }{(1+|t|)(1+|t'|)} \int_\R (2+|\xi|) |\psi''(x,\xi)| \di\xi
 \]
Moreover,
\[
\frac 1 {|t'|^2} \int_\R \left|\psi''(x,\xi) -\psi''(x',\xi)\right| \di \xi  \leqslant \frac{ 4 d(x,y)^\gamma}{(1+|t|)(1+|t'|)} \int_\R m_\gamma(\psi''(.,\xi)) \di\xi 
\]
So we only need to check the integrability of $ \|\psi''(\,.\,,\xi)\|_\gamma$. But,
\[
\psi''(x,\xi) =-U''(-i\xi) \widehat f(x,\xi) -2i U'(-i\xi) \widehat f'(x,\xi) + U(-i\xi) \widehat f''(x,\xi)
\]
and according to proposition~\ref{proposition:controle_inverse_perturbe} there is a constant $C$ such that for any $m\in\{0,1,2\}$,
\[
\|U^{(m)}(-i\xi) \|_\gamma \leqslant C^{m+1} m! (1+|\xi|)^{(L+1)(m+1)}
\]
And, as we also have, according to lemma~\ref{lemma:fourier_transform_vanish} that for any $k\in \N$, there is a constant $C$ such that for any $l\in \{0,1,2\}$,
\[
\left\|\frac{ \partial^l \widehat f}{\partial \xi^l}(x,\xi) \right\|_\gamma \leqslant C\frac{\|f\|_{\gamma,k}}{1+|\xi|^k}
\]
we get that for some constants $C,K$ and any function $f\in \cal E^{\gamma,K}_0(\X\times \R)$, any $x,x' \in \X$ with $\pi_\Abf \circ \pi _\Hb(x) = \pi_\Abf \circ \pi_\Hb(x')$ and any $t,t' \in \R^\ast$ with $|t|,|t'| \geqslant 1$,
\begin{equation} \label{equation:lemma:renouvellement_regulier}
\left|I(x,t,x',t')\right| \leqslant C\|f\|_{\gamma,K} \frac{|t-t'|+ d(x,x')^\gamma}{(1+|t|)(1+|t'|)}
\end{equation}

We can now use the fact that for some constants $C_\gamma,C$, we have that for any $t,t' \in \R$ and any $x,x' \in \X$,
\[
\frac{|t-t'|+ d(x,x')^\gamma}{(1+|t|)(1+|t'|)} \leqslant C_\gamma \left( \frac{ |t-t'| + d(x,x')}{(1+|t|)(1+|t'|)} \right)^\gamma \leqslant C C_\gamma  \left( \frac{ \sqrt{|t-t'|^2 + d(x,x')^2}}{(1+|t|)(1+|t'|)} \right)^\gamma 
\]
To prove that inequality~\eqref{equation:lemma:renouvellement_regulier} still holds for $|t| \leqslant 1$, we are going to use the fact that the objects we consider behave well with the translations on $\R$.

Indeed, if $f\in \cal E^{\gamma, k}(\X\times \R)$ and if we note, for $s\in \R$, $f_s(t) = f(t-s)$ then for any $x\in \X$ and any $t\in \R$,
\[
(G-\frac 1{\sigma_\rho} \Pi_0) f(x,t) = (G-\frac 1{\sigma_\rho} \Pi_0) f_s(x,t+s)
\]
and
\[
\|f_s\|_{\gamma,k} \leqslant e^{\gamma|s|} \|f\|_{\gamma,k}
\]
So, if $|t| \leqslant 1$, we take $s\in [-10,10]$ such that $1 \leqslant |t+s| \leqslant |t'+s|$ and we get that for some other constant $C$,
\[
|I(x,t,x',t')| \leqslant 4C e^{10\gamma} \|f\|_{\gamma,k}  \left( \frac{ \sqrt{|t-t'|^2 + d(x,x')^2}}{(1+|t|)(1+|t'|)} \right)^\gamma 
\]
and this is finally what we intended to prove.
\end{proof}

\subsection{Renewal theorem for hölder-continuous functions} \subsectionmark{Hölder-continuous functions}
\label{section:renouvellement_holder}
 
Until now, we proved the renewal theorem only for regular functions when we are interested in functions that will just be hölder-continuous on $\R^d$.

To do so, we are going to regularize our functions by convolving them to regular ones and then use tauberian theorems to get the result.

This method is the one used for instance in~\cite{BDP15} and we will use, as they do, a tauberian theorem by Frennemo (see~\cite{Fre65} and appendix~\ref{annexe:frennemo}).

\medskip
We are going to define a new class of functions on $\X\times \R$ for which it is easy to work for the rate of convergence in the renewal theorem.

\begin{example}On $\R^d$, we study functions having (a power of)
\[
\omega'(x,y) = \frac{\|x-y\|}{(1+\|x\|)(1+\|y\|)}
\]
as continuity modulus. Using the application $\Phi:\Sb^d \times \R \to \R^d\setminus\{0\}$ that maps $(x,t)$ onto $e^t x$ and identifies $\R^d \setminus\{0\}$ and $\Sb^d\times \R$, our function $\omega$ becomes
\[
\omega'((x,t),(x',t') ) = \frac{\|e^t x - e^{t'} x'\|}{(1+e^t)(1+e^{t'})}
\]
But,
\begin{align*}
\|e^t x - e^{t'} x'\|^2 &= e^{2t} +e^{2t'} - 2 e^{t+t'} \langle x,x'\rangle = \left( e^t - e^{t'} \right)^2 + e^{t+t'} \|x-x'\|^2 \\
&= e^{t+t'} \left( \left( e^{(t-t')/2} - e^{(t'-t)/2} \right)^2 + \|x-x'\|^2 \right)
\end{align*}
So
\[
\omega'((x,t),(x',t')) = \frac{e^{t/2}}{1+e^t} \frac{e^{t'/2}}{1+e^{t'}} \sqrt{d(x,x')^2 + \left(e^{(t-t')/2} - e^{(t'-t)/2} \right)^2}
\]
\end{example}

This leads to the following definition when $(\X,d)$ is some compact metric space and $(x,t),(x',t') \in \X\times \R$, 
\[
\omega((x,t),(x',t')) = \left\{\begin{array}{cl} e^{-\frac{|t|+|t'|}2} \sqrt{d(x,x')^2 + \left(e^{(t-t')/2} - e^{(t'-t)/2}\right)^2}& \text{si }\pi_\Abf  (x) = \pi_\Abf (x')\\ 1 & \text{si non}  \end{array} \right.
\]
And we note
\[
\cal C^\gamma_\omega(\X\times \R):= \left\{ f\in \cal C^{0}(\X\times \R) \middle|  \sup_{\substack{(x,t),(x',t') \in \X\times \R \\ (x,t) \not= (x',t')}} \frac{|f(x,t) - f(x',t')|}{\omega((x,t),(x',t'))^\gamma} \text{ is finite}\right\}
\]
The following lemma proves that functions in $\cal C^\gamma_\omega$ can be extended to continuous functions on $\X\times \overline{\R}$.
\begin{lemma}
Let $f\in \cal C^\gamma_\omega(\X\times \R)$. Then, there are $p^+,p^- \in \mathrm{L}^{\infty}(\Abf)$ such that for any $x\in \X$ and any $t\in \R$,
\[
|f(x,t) - p^+(f)(\pi_\Abf (x))| \leqslant 2e^{\gamma t} \|f\|_{\gamma,\omega}\text{ et }|f(x,t) - p^-(f)(\pi_\Abf(x))| \leqslant 2e^{-\gamma t} \|f\|_{\gamma, \omega}
\]
(we recall our abuse of notations : we keep the function $\pi_\Abf$ for the function $\pi_\Abf \circ \pi_\Hb$)
\end{lemma}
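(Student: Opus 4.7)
The plan is to reduce, for fixed $x$, everything to a one-dimensional Cauchy argument that comes from an algebraic simplification of $\omega((x,t),(x,t'))$, together with the observation that $\omega$ vanishes asymptotically in the $t$-direction along a fiber of $\pi_\Abf$.

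First I would compute: since $d(x,x)=0$ and $\pi_\Abf(x)=\pi_\Abf(x)$, the definition of $\omega$ gives
\[
\omega((x,t),(x,t')) = e^{-(|t|+|t'|)/2}\,\bigl|e^{(t-t')/2}-e^{(t'-t)/2}\bigr|.
\]
A short case analysis on the signs of $t$ and $t'$ simplifies this to $|e^{-|t|}-e^{-|t'|}|$ when $t,t'$ have the same sign, and to a quantity bounded by $1$ otherwise. In particular, for $t,t'$ both going to $+\infty$ (resp.\ to $-\infty$), $\omega((x,t),(x,t'))\leq e^{-\min(|t|,|t'|)}$, which furnishes the Cauchy estimate
\[
|f(x,t)-f(x,t')|\leq \|f\|_{\gamma,\omega}\,e^{-\gamma \min(|t|,|t'|)}.
\]
Thus both $\lim_{t\to +\infty}f(x,t)$ and $\lim_{t\to -\infty}f(x,t)$ exist.

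Next I would show these limits depend only on $\pi_\Abf(x)$. For $x'$ with $\pi_\Abf(x')=\pi_\Abf(x)$, one has $\omega((x,t),(x',t)) = e^{-|t|}\,d(x,x')$, which tends to $0$ as $|t|\to\infty$. Hence $|f(x,t)-f(x',t)|\to 0$, so the limits at $\pm\infty$ coincide on each $\pi_\Abf$-fiber. I would then define $p^-(f)(a)$ (resp.\ $p^+(f)(a)$) to be the common limit as $t\to -\infty$ (resp.\ $t\to +\infty$) for any $x$ with $\pi_\Abf(x)=a$. These are bounded on the finite set $\Abf$: for $x,x'$ in distinct fibers one has $\omega((x,t),(x',t'))=1$, so the oscillation of $f$ between fibers is at most $\|f\|_{\gamma,\omega}$.

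Finally, for the quantitative estimate, in the ``good'' regime (say $t\leq 0$ for the $p^-$ inequality), letting $t'\to -\infty$ in the Cauchy bound above yields
\[
|f(x,t)-p^-(f)(\pi_\Abf(x))|\leq \|f\|_{\gamma,\omega}\,e^{\gamma t},
\]
and analogously $|f(x,t)-p^+(f)(\pi_\Abf(x))|\leq \|f\|_{\gamma,\omega}\,e^{-\gamma t}$ for $t\geq 0$. In the complementary regime the mixed-sign bound $\omega\leq 1$ only gives $|f(x,t)-p^\pm(f)(\pi_\Abf(x))|\leq \|f\|_{\gamma,\omega}$, but since the target bound has an exponential factor larger than $1$ there, the factor $2$ comfortably absorbs the slack and one obtains the stated inequalities on all of $\R$. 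The whole argument is elementary; the only genuine content is the algebraic identity in the first step and the off-fiber triviality $\omega=1$, which both makes $p^\pm(f)$ well-defined bounded functions on $\Abf$ and reduces the cross-fiber analysis to a single observation.
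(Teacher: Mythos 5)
Your proof is correct and takes essentially the same route as the paper's: you algebraically simplify $\omega$ along a $\pi_\Abf$-fiber, invoke the Cauchy criterion at $\pm\infty$, and let $t'\to\mp\infty$ in the Hölder estimate, and in fact your treatment of the wrong-sign half-line via the mixed-sign bound $\omega\leqslant 1$ is cleaner than the paper's detour through $\|f\|_\infty\leqslant\|f\|_{\gamma,\omega}$ (a seminorm comparison that fails for nonzero constants, though the paper's own intermediate estimate $|f(x,t)-p^-(f)(x')|\leqslant e^{\gamma(t-|t|)/2}\|f\|_{\gamma,\omega}$ already suffices). Note also that the lemma as printed has the exponents attached to the wrong limits: you, like the paper's own proof, establish $|f(x,t)-p^-(f)(\pi_\Abf(x))|\leqslant 2e^{\gamma t}\|f\|_{\gamma,\omega}$ with $p^-$ the limit at $-\infty$, not $2e^{-\gamma t}$ as stated.
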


\begin{proof}
We only do the proof for $p^-$ because it is the same idea for $p^+$.

For any $x\in \X$ ans any $t,t' \in \R$, we have that
\[
|f(x,t) - f(x,t')| \leqslant e^{-\frac{|t|+|t'|}2}  \left|e^{(t-t')/2} - e^{(t'-t)/2}\right| \|f\|_{\gamma, \omega}
\]
So, according Cauchy's criterion for functions, for any $x\in \X$, $(t\mapsto f(x,t))$ has a finite limit at $-\infty$.

So we can note $p^-(f)(x) = \lim_{t\to-\infty} f(x,t)$. And then, we have, by definition of $\cal C^\gamma_\omega$, that for any $(x,t),(x',t')\in \X\times \R$,
\begin{align*}
|f(x,t) - p^-(f)(x')|& \leqslant |f(x,t) - f(x',t')| + |f(x',t') - p^-(f)(x')|\\& \leqslant \|f\|_{\gamma,\omega} \omega((x,t), (x',t'))^\gamma + |f(x',t') - p^-(f)(x')|
\end{align*}
So, letting $t'$ go to $-\infty$, we find that for any $t\in \R$ and any $x,x' \in \X$ with $\pi_\Abf \circ \pi_\Hb (x)  =\pi_\Abf \circ \pi_\Hb (x')$,
\[
|f(x,t) - p^-(f)(x')| \leqslant e^{\gamma (t-|t|)/2} \|f\|_{\gamma,\omega}
\]
And this proves that $p^-(f)(x)= p^-(f)(x')$ and that, for any $t\in \R_-$,
\[
|f(x,t) - p^-(f)(x)| \leqslant e^{\gamma t}\|f\|_{\gamma,\omega}
\]
For $t\in \R_+$, we have that
\[
|f(x,t) - p^-(f)(x)| \leqslant 2 \|f\|_\infty \leqslant 2 e^{\gamma t} \|f\|_{\gamma, \omega}
\]
and this ends the proof of the lemma.
\end{proof}

In the sequel, we will use a regular function $\psi$ on $\R$ such that
\[
\lim_{t\to -\infty} \psi(t) = 1 \text{ and }\lim_{t\to +\infty} \psi(t) =0
\]
The space of functions in $\cal C^\gamma_\omega(\X\times \R)$ such that $p^-(f) = 0= p^+(f)$ has a finite codimension and this function $\psi$ will allow us to make the projections onto this space and it's supplementary explicit.

So, from now on, we note
\begin{equation} \label{equation:fonction_psi}
\psi(t) = \frac{ 1 }{\sqrt{2\pi}} \int_t^{+\infty} e^{-u^2/2} \di u
\end{equation}
The choice of this particular function is arbitrary but will simplify the computations to come.

In next lemma, we prove that the projection of a function in $\cal C^\gamma_\omega(\X\times \R)$ to the space of functions such that $p^+(f) = 0=p^-(f)$ has image in the space $\cal E^{\gamma,0}(\X\times \R)$ that we defined in equation~\eqref{equation:E_gamma_zero}.
\begin{lemma} \label{lemma:decomposition_fonctions_bord}
For any function $f\in \cal C^{0,\gamma}_\omega(\X\times \R)$, let
\[
\varphi(x,t) = f(x,t) - p^-(f) (x) \psi(t) - p^+(f)(x) (1-\psi(t))
\]
where $\psi$ is the function defined in equation~\eqref{equation:fonction_psi}.

Then, for any $\gamma \in \left]0,1\right]$ there is a constant $C$ depending only on $\gamma$ such that for all $f\in \cal C^{0,\gamma}_\omega(\X\times \R)$, we have that $\varphi \in \cal E^{\gamma,0}(\X\times \R)$ and
\[
\|\varphi\|_{\gamma,0} \leqslant C \|f\|_{\gamma,\omega}
\]
\end{lemma}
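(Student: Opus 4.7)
The plan is to decompose $\varphi$ in a way that makes both the exponential decay and the Hölder control visible. First I would rewrite
\[
\varphi(x,t) = \bigl[f(x,t) - p^-(f)(\pi_\Abf(x))\bigr]\psi(t) + \bigl[f(x,t) - p^+(f)(\pi_\Abf(x))\bigr]\bigl(1-\psi(t)\bigr),
\]
using $\psi(t) + (1-\psi(t)) = 1$. The key observation making the Hölder seminorm trivial is that $p^\pm(f)$ are functions on $\Abf$ only: whenever $\pi_\Abf(x) = \pi_\Abf(x')$, we have $p^-(f)(x) = p^-(f)(x')$ and similarly for $p^+$. Consequently, for such pairs, $\varphi(x,t) - \varphi(x',t) = f(x,t)-f(x',t)$, and since $\omega((x,t),(x',t)) = e^{-|t|}d(x,x')$ one reads off
\[
e^{\gamma|t|}\frac{|\varphi(x,t) - \varphi(x',t)|}{d(x,x')^\gamma} \leqslant \|f\|_{\gamma,\omega},
\]
giving $m_{\gamma,\cal E}(\varphi) \leqslant \|f\|_{\gamma,\omega}$.

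For the uniform-norm bound $\|\varphi\|_{\gamma,\infty}$, I would split into $t\geqslant 0$ and $t\leqslant 0$ and treat both terms of the decomposition separately. The previous lemma provides $|f(x,t)-p^-(f)(\pi_\Abf(x))| \leqslant 2e^{\gamma t}\|f\|_{\gamma,\omega}$ for $t\leqslant 0$ (and trivially $\leqslant 2\|f\|_{\gamma,\omega}$ for $t\geqslant 0$), and symmetrically $|f(x,t)-p^+(f)(\pi_\Abf(x))| \leqslant 2e^{-\gamma t}\|f\|_{\gamma,\omega}$ for $t\geqslant 0$. The Gaussian definition of $\psi$ in \eqref{equation:fonction_psi} gives the much stronger decay $\psi(t) \leqslant C_\gamma e^{-\gamma|t|}$ for $t\geqslant 0$ and $1-\psi(t) \leqslant C_\gamma e^{-\gamma|t|}$ for $t\leqslant 0$, where $C_\gamma$ depends only on $\gamma$. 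On $t\geqslant 0$, the first summand is bounded by $2\psi(t)\|f\|_{\gamma,\omega} \leqslant 2C_\gamma e^{-\gamma t}\|f\|_{\gamma,\omega}$ and the second by $2e^{-\gamma t}\|f\|_{\gamma,\omega}$; on $t\leqslant 0$ the roles are symmetric. This yields $e^{\gamma|t|}|\varphi(x,t)| \leqslant C\|f\|_{\gamma,\omega}$ for some $C = C(\gamma)$.

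Combining the two bounds gives $\|\varphi\|_{\gamma,\cal E} = \|\varphi\|_{\gamma,\infty} + m_{\gamma,\cal E}(\varphi) \leqslant C\|f\|_{\gamma,\omega}$, which is exactly the claim $\|\varphi\|_{\gamma,0} \leqslant C\|f\|_{\gamma,\omega}$. There is no real obstacle here; the only delicate point is bookkeeping the signs of $t$ against the sides of the decomposition, and to observe that the Gaussian tail absorbs the growth of $|f-p^\mp|$ on the ``wrong'' half-line so that the combined bound decays like $e^{-\gamma|t|}$ on all of $\R$.
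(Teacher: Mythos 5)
Your proof is correct and takes essentially the same route as the paper: the same decomposition of $\varphi$ into $[f-p^-(f)]\psi + [f-p^+(f)](1-\psi)$, the same use of the preceding lemma's exponential bounds on $|f-p^\pm(f)|$ absorbed by the Gaussian tail of $\psi$ for the sup-norm estimate, and the same observation that $p^\pm(f)$ depend only on $\pi_\Abf$ so that the Hölder seminorm of $\varphi$ reduces to that of $f$. The paper merely packages the sup bound into a single line, $e^{\gamma|t|}|\varphi(x,t)|\leqslant \|f\|_{\gamma,\omega}\bigl(e^{\gamma(|t|+t)}\psi(t)+e^{\gamma(|t|-t)}(1-\psi(t))\bigr)$, where the Gaussian decay of $\psi$ makes the right-hand side bounded, instead of splitting explicitly into $t\geqslant 0$ and $t\leqslant 0$ as you do.
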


\begin{proof}
It is clear that $\varphi$ is continuous on $\X\times \R$.

Moreover, using that
\[
\frac1{\sqrt{2\pi}} \int_\R e^{-u^2/2} \di u = 1,
\]
we find that for any $t\in \R$ and any $x\in \X$,
\begin{align*}
|\varphi(x,t)| &\leqslant |f(x,t)-p^-(f)(x)| \psi(t) + |f(x,t) - p^{+}(f)(x)|(1-\psi(t)) \\
& \leqslant \|f\|_{\gamma,\omega} \left(e^{\gamma t} \psi(t) + e^{-\gamma t} (1-\psi(t)) \right)
\end{align*}
So,
\[
e^{\gamma|t|} |\varphi(x,t)| \leqslant \|f\|_{\gamma,\omega} \left( e^{\gamma(|t|+t)}\psi(t) + e^{\gamma(|t|- t)} (1-\psi(t)) \right)
\]
This proves that there exists some $C$ depending only on $\gamma$ such that
\[
\sup_t e^{\gamma|t|} |\varphi(x,t)| \leqslant C\|f\|_{\gamma,\omega}
\]
Moreover, for any $x,x'$ such that $\pi_\Abf \circ\pi_\Hb (x) = \pi_\Abf \circ\pi_\Hb (x')$,
\[
|\varphi(x,t) - \varphi(x',t)| = |f(x,t) - f(x',t)| \leqslant e^{-\gamma|t|} d(x,x')^\gamma \|f\|_{\gamma,\omega}
\]
And this finishes the proof of the lemma. 
\end{proof}

\begin{lemma}
Under the assumptions of theorem~\ref{theorem:renouvellement_general}, for any $f\in \cal C^{0,\gamma}_\omega(\X\times \R)$, any $x \in \X$ and any $t,t' \in \R$,
\[
\left|\sum_{n=0}^{+\infty} P^nf(x,t) - \sum_{n=0}^{+\infty} P^n f(x,t') \right| \leqslant C \|f\|_{\omega,\gamma} \left( e^{|t'-t|} -1\right)^\gamma
\]
\end{lemma}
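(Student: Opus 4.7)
Set $s=t'-t$ and assume without loss of generality that $s\geqslant 0$. Since $P$ commutes with translations in the $\R$-variable, letting $\tilde f(y,u):=f(y,u+s)$ and $h:=f-\tilde f$ gives the formal identity
\[
\sum_{n\geqslant 0} P^n f(x,t)-\sum_{n\geqslant 0} P^n f(x,t')\;=\;\sum_{n\geqslant 0} P^n h(x,t).
\]
Because $p^\pm(h)\equiv 0$, Lemma~\ref{lemma:decomposition_fonctions_bord} places $h$ in $\cal E^{\gamma,0}$, which makes the right-hand sum absolutely convergent and gives the left-hand side an unambiguous meaning even when the two individual series diverge.

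For a pointwise bound on $h$, I use the $\omega$-Hölder estimate. Writing $R=u+s/2$ and using the elementary identity $|u|+|u+s|=2\max(|R|,s/2)$, a direct computation gives $\omega((y,u),(y,u+s))=(1-e^{-s})\,e^{-(|R|-s/2)^+}$, so
\[
|h(y,u)|\;\leqslant\; \|f\|_{\gamma,\omega}(1-e^{-s})^\gamma e^{-\gamma(|R|-s/2)^+}.
\]
Writing $S_n=\sigma(g_n\cdots g_1,x)$ and $\tau=(t+t')/2$, evaluation at $u=t+S_n$ gives $R=R_n:=\tau+S_n$, hence
\[
\Bigl|\sum_{n\geqslant 0} P^n h(x,t)\Bigr|\;\leqslant\; \|f\|_{\gamma,\omega}(1-e^{-s})^\gamma\sum_{n\geqslant 0}\int_\G e^{-\gamma(|R_n|-s/2)^+}\di\rho^{\otimes n}.
\]

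The heart of the proof is to show that $\sum_n\esp[e^{-\gamma(|R_n|-s/2)^+}]$ is dominated by $C(1+s)+C'e^{\gamma s/2}$ uniformly in $x,\tau,s$. Splitting along $\{|R_n|\leqslant s/2\}$ versus its complement, the first part contributes $\sum_n\prob(|R_n|\leqslant s/2)$, to be bounded by $C(1+s)$; the second is dominated by $e^{\gamma s/2}\sum_n\esp[e^{-\gamma|R_n|}]$, to be bounded uniformly in $\tau$ by decomposing $\R$ into unit intervals and weighting each by the exponential factor. Both estimates reduce to the Stone-type uniform renewal-density bound
\[
\sup_{x,a,\tau}\sum_{n\geqslant 0}\prob_x\bigl(\tau+S_n\in[a,a+1]\bigr)\;\leqslant\; C_0,
\]
which is the principal technical obstacle; I would derive it from Lemma~\ref{lemma:drift} (which yields exponential decay of the left tail of $S_n$ uniformly in $x$) together with the contracting/aperiodic structure and the polynomial resolvent control supplied by Theorem~\ref{theorem:controle_resolvante_SL_d}, via Fourier inversion of a smoothed indicator of the interval $[a,a+1]$.

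Combining these estimates and using the identities $(1-e^{-s})^\gamma e^{\gamma s/2}=e^{-\gamma s/2}(e^s-1)^\gamma\leqslant(e^s-1)^\gamma$ and $(1-e^{-s})^\gamma(1+s)\leqslant C_\gamma(e^s-1)^\gamma$ (the latter because $s\mapsto e^{\gamma s}/(1+s)$ has a strictly positive infimum on $[0,\infty)$), I obtain
$\bigl|\sum_n P^n f(x,t)-\sum_n P^n f(x,t')\bigr|\leqslant C\|f\|_{\gamma,\omega}(e^{|t-t'|}-1)^\gamma$, as claimed.
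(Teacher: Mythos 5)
Your opening reduction and the $\omega$-computation are correct — the identity $\omega((y,u),(y,u+s))=(1-e^{-s})\,e^{-(|u+s/2|-s/2)^+}$ checks out, as do the two elementary inequalities at the end. The problem is the step you yourself flag as the ``principal technical obstacle'': the Stone-type uniform renewal-density bound $\sup_{x,a,\tau}\sum_{n}\prob_x(\tau+S_n\in[a,a+1])\leqslant C_0$ is asserted, not proved. Deriving it ``from Lemma~\ref{lemma:drift} \ldots\ the polynomial resolvent control \ldots\ via Fourier inversion of a smoothed indicator'' is essentially as hard as the renewal estimates of Section~\ref{section:renouvellement} themselves, and nothing earlier in the paper supplies it in this fibered Markov-additive setting. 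As written the argument has a real gap at its centre.

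The paper avoids the Stone bound by being cruder at the integrand level: from the $\omega$-estimate one has $e^{-\gamma(|t+\sigma|+|t'+\sigma|)/2}$ under the renewal sum, and the paper simply drops the factor $e^{-\gamma|t'+\sigma|/2}\leqslant 1$, absorbing all $s$-dependence into the prefactor $|e^{s/2}-e^{-s/2}|^\gamma\leqslant(e^{s}-1)^\gamma$. What is left is $\sum_n P^n\varphi(x,t)$ with $\varphi(x,t)=e^{-\gamma|t|/2}$, and this is bounded uniformly in $(x,t)$ by dominating $\varphi$ by the two smooth, exponentially decaying functions $\varphi_1,\varphi_2$ and applying the already-established Corollary~\ref{corollary:vitesse_renouvellement_fonctions_regulieres} together with Riemann--Lebesgue. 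Note that once that uniform bound is known, your Stone bound actually follows from $e^{-\gamma|u|/2}\geqslant e^{-\gamma/2}\un_{[-1,1]}(u)$ — so going through it the other way is at best circular. The sharper pointwise decomposition $|h(y,u)|\leqslant\|f\|_{\gamma,\omega}(1-e^{-s})^\gamma e^{-\gamma(|R|-s/2)^+}$ buys nothing here: the cruder route is both shorter and self-contained given the preceding subsection.
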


\begin{proof}
Let $f\in \cal C^{0,\gamma}_\omega(\X\times \R)$, $x \in \X$ and $t,t' \in \R$.

Then,
\begin{align*}
I(t,t',x) : &=\left|\sum_{n=0}^{+\infty} P^nf(x,t) - \sum_{n=0}^{+\infty} P^n f(x,t') \right| \\
& \leqslant \sum_{n=0}^{+\infty} \int_\G |f(gx, t+\sigma(g,x)) - f(gx,t'+\sigma(g,x))| \di\rho^{\ast n}(g) \\
& \leqslant \|f\|_{\omega,\gamma} \sum_{n=0}^{+\infty} \int_\G \omega(gx,t+\sigma(g,x),gx,t'+\sigma(g,x))^\gamma \di\rho^{\ast n}(g) \\
& \leqslant \|f\|_{\omega,\gamma}\left| e^{(t-t')/2} - e^{(t'-t)/2} \right|^\gamma \sum_{n=0}^{+\infty} \int_\G e^{-\gamma|t+\sigma(g,x)|/2 - \gamma |t'+ \sigma(g,x)|/2} \di\rho^{\ast n}(g)  
\end{align*}
Moreover,
\[
|e^{(t-t')/2} - e^{(t'-t)/2}| = |e^{|t-t'|/2} - e^{-|t-t'|/2}| = e^{-|t-t'|/2} \left| e^{|t-t'|}-1 \right| \leqslant e^{|t-t'|} -1
\]
So,
\begin{align*}
I(t,t',x) &\leqslant \|f\|_{\omega,\gamma}  \left| e^{|t-t'|}-1 \right|^\gamma \sum_{n=0}^{+\infty} \int_\G e^{-\gamma |t+\sigma(g,x)|/2} \di\rho^{\ast n}(g) \\
& \leqslant \|f\|_{\omega,\gamma}  \left| e^{|t-t'|}-1 \right|^\gamma \sum_{n=0}^{+\infty} P^n \varphi(x,t)
\end{align*}
with $\varphi(x,t) = e^{-\gamma|t|/2}$.

To prove the lemma, we only need to check that the series is bounded uniformly in $(x,t)$. We would like to apply the renewal theorem (that would prove that the considered sum has finite limits at $\pm\infty$ and so is bounded) to the function $\varphi$ but we can't do it so early because this function is not regular. This is why we use two regular functions that dominate $\varphi$ :
\begin{align*}
\sum_{n=0}^{+\infty} \int_\G  & e^{-\gamma|t+\sigma(g,x)|} \di\rho^{\ast n}(g) \leqslant \sum_{n=0}^{+\infty}P^n \varphi_1(x,t) + \sum_{n=0}^{+\infty} P^n \varphi_2(x,t)
\end{align*}
where we noted
\[\varphi_1(t) = \frac{2}{\sqrt{2\pi}} e^{\gamma t} \int_{t}^{+\infty} e^{-u^2/2}\di u \text{ et } \varphi_2(t) =\frac2{\sqrt{2\pi}}e^{-\gamma t} \int_{-\infty}^t e^{-u^2/2} \di u
\] 
to get that
\[
\varphi_1(t) \geqslant \un_{\R_-} e^{\gamma t} \text{ et } \varphi_2(t)  \geqslant e^{-\gamma t} \un_{\R_+}(t)
\]
Then, the renewal theorem that we proved for regular functions (corollary~\ref{corollary:vitesse_renouvellement_fonctions_regulieres}) and Riemann Lebesgue's lemma, prove that
\[
\lim_{t\to +\infty} \sum_{n=0}^{+\infty} P^n \varphi_1(x,t) =0 \text{ and }\lim_{t\to-\infty} \sum_{n=0}^{+\infty} P^n \varphi_1(x,t) =N_0 1 \int_\R \varphi_1(t) \di t
\]
So, $\sum_{n=0}^{+\infty} P^n \varphi_1$ is bounded on $\X\times \R$. We treat $\sum_{n=0}^{+\infty} P^n \varphi_2$ in the same way and this finises the proof of the lemma.
\end{proof}

For $x,x'\in \X$ such that $\pi_\Abf \circ \pi_\Hb (x) = \pi_\Abf \circ \pi_\Hb(x')$ and $t,t' \in \R$, we note
\[
\omega_0 ((x,t), (x',t')) = \frac{ \sqrt{|t-t'|^2 + d(x,x')^2}}{(1+|t|)(1+|t'|)} 
\]

\begin{corollary} \label{corollaire:renouvellement_fonctions_nulles_bord}
Under the assumptions of theorem~\ref{theorem:renouvellement_general}, there are constants $C,K, \alpha \in \R_+^\ast$ such that for any $f\in \cal C^{0,\gamma}_\omega(\X\times \R) $ with $p^+(f)= 0 = p^-(f)$ any $x,x' \in \X$ and any $t,t'\in\R$,
\[
\left|(G-\frac 1{\sigma_\rho} \Pi_0)f(x,t) - (G-\frac 1{\sigma_\rho} \Pi_0) f(x',t') \right| \leqslant C \|f\|_{\gamma,\omega} \omega_0 ((x,t),(x',t'))^{\alpha}
\]
Moreover, for any $x\in \X$,
\[
\lim_{t \to\pm\infty} (G-\frac 1{\sigma_\rho} \Pi_0)f(x,t) = 0
\]
\end{corollary}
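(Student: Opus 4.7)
The plan is to apply a tauberian-style approximation argument (à la Frennemo) that will transfer the rate from Corollary~\ref{corollary:vitesse_renouvellement_fonctions_regulieres} for functions in $\mathcal{E}^{\gamma,K}$ to the class $\mathcal{C}^{0,\gamma}_\omega$, trading smoothness against the continuity modulus through convolution in the $t$-variable.

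Since $p^+(f) = p^-(f) = 0$, Lemma~\ref{lemma:decomposition_fonctions_bord} will give $f \in \mathcal{E}^{\gamma,0}(\X\times\R)$ with $\|f\|_{\gamma,0} \leqslant C\|f\|_{\gamma,\omega}$. For a small parameter $\eta > 0$, I will regularize by setting
\[
\tilde\varphi_\eta(s) := \frac{1}{\eta\,(K+1)!}\,\varphi_{K+1}(s/\eta),
\]
where $\varphi_{K+1}$ is the kernel from Lemma~\ref{lemma:regularizing}, so that $\int\tilde\varphi_\eta = 1$ and $\tilde\varphi_\eta$ concentrates in $\R_+$ at scale $\eta$. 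Setting $f_\eta := f \ast \tilde\varphi_\eta$, a scaling computation based on Lemma~\ref{lemma:regularizing} yields $f_\eta \in \mathcal{E}^{\gamma,K}(\X\times\R)$ with $\|f_\eta\|_{\gamma,K} \leqslant C\eta^{-K}\|f\|_{\gamma,\omega}$, whence Corollary~\ref{corollary:vitesse_renouvellement_fonctions_regulieres} provides
\[
\left|\left(G - \tfrac{1}{\sigma_\rho}\Pi_0\right)f_\eta(x,t) - \left(G - \tfrac{1}{\sigma_\rho}\Pi_0\right)f_\eta(x',t')\right| \leqslant C\eta^{-K}\|f\|_{\gamma,\omega}\,\omega_0((x,t),(x',t'))^\gamma.
\]

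The approximation error will be controlled uniformly. Translations on $\R$ commute with $P$, so $G(f \ast \tilde\varphi_\eta) = (Gf) \ast \tilde\varphi_\eta$, and Fubini (together with the change of variable $v = u-s$) gives $\Pi_0(f \ast \tilde\varphi_\eta) = (\Pi_0 f) \ast \tilde\varphi_\eta$. Using the preceding lemma, namely $|Gf(x,t) - Gf(x,t-s)| \leqslant C\|f\|_{\gamma,\omega}(e^{|s|}-1)^\gamma$, and integrating against $\tilde\varphi_\eta$ will yield $\|G(f - f_\eta)\|_\infty \leqslant C\eta^\gamma\|f\|_{\gamma,\omega}$; the elementary estimate $|\Pi_0 f(x,t) - \Pi_0 f(x,t-s)| \leqslant C|s|\,\|f\|_{\gamma,\omega}$ (which follows from $|N_0 f(x,u)| \leqslant Ce^{-\gamma|u|}\|f\|_{\gamma,\omega}$) gives $\|\Pi_0(f - f_\eta)\|_\infty \leqslant C\eta\|f\|_{\gamma,\omega}$. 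The triangle inequality then delivers
\[
\left|\left(G-\tfrac{1}{\sigma_\rho}\Pi_0\right)f(x,t) - \left(G-\tfrac{1}{\sigma_\rho}\Pi_0\right)f(x',t')\right| \leqslant C\|f\|_{\gamma,\omega}\bigl(\eta^\gamma + \eta^{-K}\omega_0((x,t),(x',t'))^\gamma\bigr),
\]
and optimizing $\eta := \omega_0((x,t),(x',t'))^{\gamma/(\gamma+K)}$ (using the trivial bound when $\omega_0 \geqslant 1$) will produce the Hölder estimate with $\alpha := \gamma^2/(\gamma+K)$.

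For the limit as $t \to \pm\infty$, I will re-use the same decomposition: for fixed $\eta > 0$, the remark following Corollary~\ref{corollary:vitesse_renouvellement_fonctions_regulieres} (obtained via Riemann--Lebesgue applied to the integral representation of Proposition~\ref{proposition:representation_renouvellement_fonctions_regulieres}) gives $\lim_{t\to\pm\infty}(G-\tfrac{1}{\sigma_\rho}\Pi_0)f_\eta(x,t) = 0$, while the error term is uniformly $O(\eta^\gamma)$ in $t$; letting $\eta \to 0$ then closes the argument. The main technical obstacle will be the scaling estimate $\|f\ast\tilde\varphi_\eta\|_{\gamma,K} \leqslant C\eta^{-K}\|f\|_{\gamma,0}$: one must check that the exponential weight $e^{\gamma|t|}$ appearing in the norm is preserved under convolution, which works because $\tilde\varphi_\eta$ is supported in $\R_+$ and its $m$-th derivative is a linear combination of terms decaying like $\eta^{-m-1}e^{-s/\eta}$, so that $\int e^{\gamma|s|}|\tilde\varphi_\eta^{(m)}(s)|\,ds \leqslant C_m \eta^{-m}$ as soon as $\gamma\eta < 1/2$.
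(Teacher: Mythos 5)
Your proof is correct, and it follows a genuinely different (though closely related) route from the one in the paper. The paper applies a \emph{fixed} convolution kernel $\varphi_{K+1}$ to obtain $\varphi_{K+1}\ast(G-\tfrac1{\sigma_\rho}\Pi_0)f=(G-\tfrac1{\sigma_\rho}\Pi_0)(f\ast\varphi_{K+1})$, controls this smoothed function via Corollary~\ref{corollary:vitesse_renouvellement_fonctions_regulieres}, records the $t$-modulus $(e^{|t-t'|}-1)^\gamma$ of $(G-\tfrac1{\sigma_\rho}\Pi_0)f$, and then invokes Frennemo's tauberian theorem (Appendix~\ref{annexe:frennemo}) as a black box to recover the pointwise Hölder estimate. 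You instead scale the kernel to $\tilde\varphi_\eta$, obtain the \emph{quantitative} cost $\eta^{-K}$ in the $\mathcal{E}^{\gamma,K}$-norm and the approximation errors $\|G(f-f_\eta)\|_\infty=O(\eta^\gamma)$ and $\|\Pi_0(f-f_\eta)\|_\infty=O(\eta)$, and optimize over $\eta$ by hand. Conceptually the two arguments are the same (a scale parameter is also hidden inside Frennemo's proof, via the Fejér kernel and the variable $V$), but your version is self-contained and makes the exponent $\alpha=\gamma^2/(\gamma+K)$ explicit, whereas the paper's version is more modular since the tauberian step is isolated in the appendix and could be reused verbatim for any $\varphi_k$-smoothed estimate. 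Your treatment of the limit at $\pm\infty$ is also slightly cleaner: you use the uniform-in-$t$ error $O(\eta^\gamma)$ directly, whereas the paper leaves this step implicit.

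Two small points worth being careful about, both of which you essentially anticipate. First, the rescaled estimate $\|f\ast\tilde\varphi_\eta\|_{\gamma,K}\leqslant C\eta^{-K}\|f\|_{\gamma,0}$ does not follow from Lemma~\ref{lemma:regularizing} by a verbal appeal to scaling; one must redo the computation with the weight $e^{\gamma|t|}$, which (as you note at the end) works because $\int_0^{\infty}e^{\gamma\eta u}|\varphi_{K+1}^{(m)}(u)|\,\di u$ stays bounded once $\gamma\eta\leqslant 1/2$; this constraint on $\eta$ must be absorbed into the "trivial bound for $\omega_0$ above a fixed threshold'' branch of your optimization, which is fine since $\omega_0$ is bounded on $\X\times\R$. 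Second, the claim $\|G(f-f_\eta)\|_\infty\leqslant C\eta^\gamma\|f\|_{\gamma,\omega}$ relies on the unnumbered lemma stated just before Corollary~\ref{corollaire:renouvellement_fonctions_nulles_bord} for the $t$-modulus of $Gf$; the corresponding bound for $\Pi_0f$ needs $|N_0f(x,u)|\leqslant Ce^{-\gamma|u|}\|f\|_{\gamma,\omega}$, which holds because $N_0$ is a bounded projector on $\mathcal{C}^0(\X)$ and $\|f(\cdot,u)\|_\infty\leqslant Ce^{-\gamma|u|}\|f\|_{\gamma,\omega}$ by Lemma~\ref{lemma:decomposition_fonctions_bord}. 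With those details spelled out the argument is complete.
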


\begin{proof}
Let, for $k\in \N$ and $t\in \R$, $\varphi_k(t) = t^k e^{-t} \un_{\R_+}(t)$.

We already saw in lemma~\ref{lemma:regularizing} that there is a constant $C_k$ such that for any $f\in \cal E^{\gamma,0}(\X\times \R)$, $f\ast \varphi_{k+1} \in \cal E^{\gamma,k}(\X\times \R)$ and
\[
\|f\ast \varphi_{k+1}\|_{\gamma,k} \leqslant C_k \|f\|_{\gamma,0}
\]
In particular, for any $f\in \cal C^{0,\gamma}_\omega$, according to lemma~\ref{lemma:decomposition_fonctions_bord},
\[
\|f\ast \varphi_{k+1}\|_{\gamma,k} \leqslant C_k \|f\|_{\gamma,\omega}
\]

Moreover,
\[
\varphi_{k+1} \ast (G-\frac 1{\sigma_\rho} \Pi_0)f(x,t) = (G-\frac 1{\sigma_\rho} \Pi_0)(f\ast \varphi_{k+1})(x,t)
\]
and so, taking $k$ large enough, corollary~\ref{corollary:vitesse_renouvellement_fonctions_regulieres} proves that
\begin{align*}
I(k,x,t,x',t',f):&=\left| \varphi_{k+1} \ast(G-\frac 1{\sigma_\rho} \Pi_0)f(x,t) - \varphi_{k+1} \ast (G-\frac 1{\sigma_\rho} \Pi_0) f(x',t') \right| \\& \leqslant CC_k \omega_0((x,t),(x',t'))^\gamma \|f\|_{\gamma,\omega}
\end{align*}
And, as we also have that
\[
|(G-\frac 1{\sigma_\rho} \Pi_0) f(x,t) - (G-\frac 1{\sigma_\rho} \Pi_0) f(x,t')| \leqslant C\|f\|_{\gamma,\omega} \left(e^{|t-t'|} -1 \right)^\gamma
\]
we can conclude with the result by Frennemo stated in the appendix~\ref{annexe:frennemo}.
\end{proof}

We can now treat functions in $\cal C^{\gamma}_\omega(\X\times \R)$ that vanish at infinity. However, we would like to study functions on $\R^d\times \Abf$ that doesn't vanish on $\{0\}\times \Abf$ but such that $\sum_{a\in \Abf} f(a,0) = 0$. This is why we prove the

\begin{lemma} \label{lemma:renouvellement_fonctions_bord}
Under the assumptions of~\ref{theorem:renouvellement_general}, there is a constant $C$ such that for any $p\in \mathrm{L}^\infty(\Abf)$ with $\sum_{a\in \Abf} p(a) = 0$, for any $x,x'\in \X$ and any $t,t'\in \R$,
\[
\left|(G-\frac 1{\sigma_\rho} \Pi_0) f(x,t) - (G-\frac 1{\sigma_\rho} \Pi_0)f(x',t')\right| \leqslant C\|p\|_\infty \omega_0 ((x,t),(x',t'))
\]
where we noted
\[
f(x,t) = p(\pi_\Abf \circ \pi_\Hb (x)) \psi(t)
\]
and $\psi$ the function we defined in equation~\eqref{equation:fonction_psi}.

Moreover, if $\sum_{a\in \Abf} p(a) = 0$, then for any $x\in \X$,
\[
\lim_{t \to -\infty}\left(G- \frac 1{\sigma_\rho} \Pi_0\right)f(x,t)= \sum_{n=0}^{+\infty} P^n p(\pi_\Abf(x))
\]
\end{lemma}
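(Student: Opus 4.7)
The plan is to reduce this "boundary" case to the regularized setting already handled by Corollary~\ref{corollary:vitesse_renouvellement_fonctions_regulieres} via a Poisson-equation trick on the finite set $\Abf$. Since the random walk on $\Abf$ is irreducible and aperiodic and $\sum_{a \in \Abf} p(a) = 0$, Lemma~\ref{lemma:markov_chains_finite_space_state} provides a unique zero-average solution $q \in \mathrm{L}^\infty(\Abf)$ of $(I - P_\Abf) q = p$, with $\|q\|_\infty \leq C \|p\|_\infty$. Setting $\tilde p(x) := p(\pi_\Abf(x))$ and $\tilde q(x) := q(\pi_\Abf(x))$, the $\G$-equivariance of $\pi_\Abf$ lifts the Poisson relation to $(I - P)\tilde q = \tilde p$ on $\X$, and the normalisation $\sum_a q(a) = 0$ guarantees $\int \tilde q\, d\nu_i = \int q\, d\nu_\Abf = 0$ for every ergodic stationary measure $\nu_i$ (since $(\pi_\Abf)_* \nu_i$ is $P_\Abf$-invariant and hence equal to $\nu_\Abf$ by Lemma~\ref{lemma:markov_chains_finite_space_state}).

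I then introduce the auxiliary function $h(x,t) := \tilde q(x)\psi(t)$ and compute $(I-P) h = f - r$, where
\[
r(x,t) := \int_\G \tilde q(gx)\bigl[\psi(t+\sigma(g,x)) - \psi(t)\bigr]\,d\rho(g).
\]
Telescoping $P^n f = (P^n h - P^{n+1} h) + P^n r$ and using $P^N h(x,t) \to 0$ (dominated convergence combined with $S_N := \sigma(g_N\cdots g_1,x) \to +\infty$ almost surely by the positivity of the Lyapunov exponent, which forces $\psi(t + S_N) \to 0$) yields the key identity $Gf = h + Gr$. A direct check shows $\Pi_0 f = 0$, for $N_0 f(x,u) = \psi(u) \sum_i p_i(x) \int \tilde p\, d\nu_i = 0$. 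Hence
\[
(G - \tfrac{1}{\sigma_\rho}\Pi_0) f = \tilde q\psi + (G - \tfrac{1}{\sigma_\rho}\Pi_0) r + \tfrac{1}{\sigma_\rho}\Pi_0 r.
\]

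Next, I would verify that $r \in \mathcal E^{\gamma, K}(\X \times \R)$ for $\gamma > 0$ small and every prescribed $K$, with $\|r\|_{\gamma,K} \leq C\|p\|_\infty$, exploiting the Schwartz-class decay of every $\psi^{(k)}$ (polynomial times Gaussian), the exponential moment of $N$, and the cocycle estimate $\sigma \in \mathcal Z^M_N$; a Hölder interpolation between the uniform bound $|\psi^{(k)}(a)-\psi^{(k)}(b)| \leq 2\|\psi^{(k)}\|_\infty$ and the Lipschitz bound $\leq \|\psi^{(k+1)}\|_\infty|a-b|$ transfers the $x$-regularity through $\sigma$ while preserving the $e^{-\gamma|t|}$ decay. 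Applying Corollary~\ref{corollary:vitesse_renouvellement_fonctions_regulieres} with $K$ large enough then yields both the $\omega_0^\gamma$-Hölder bound on $(G-\tfrac{1}{\sigma_\rho}\Pi_0)r$ and its vanishing at $\pm\infty$. The piece $\tilde q\psi$ is $\omega_0$-Lipschitz because for $\pi_\Abf(x)=\pi_\Abf(x')$ one has $\tilde q(x)=\tilde q(x')$, so the difference reduces to $\tilde q(x)(\psi(t)-\psi(t'))$, and the Gaussian decay of $\psi'$ comfortably dominates the factor $(1+|t|)(1+|t'|)$ in the denominator of $\omega_0$. For $\tfrac{1}{\sigma_\rho}\Pi_0 r(x,t) = \tfrac{1}{\sigma_\rho}\sum_i p_i(x) \int_t^\infty \alpha_i(u)\,du$ with $\alpha_i(u) := \int Ph(y,u)\,d\nu_i(y)$, I combine the Hölder regularity of the $P$-invariant functions $p_i$ (which follows from the spectral gap of $P$ on $\mathcal C^{0,\gamma}(\X/\Hb)$ established in Proposition~\ref{proposition:quasi_compact_localement_contracte}) with the exponential decay $|\alpha_i(u)| \leq C\|p\|_\infty e^{-c|u|}$ (inherited from the Gaussian tails of $\psi$ and $1-\psi$ together with the exponential moment of $\sigma$) to obtain an $\omega_0^\alpha$-Hölder bound for some $\alpha > 0$.

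Finally, passing to $t \to -\infty$ gives
\[
\lim_{t\to-\infty}(G - \tfrac{1}{\sigma_\rho}\Pi_0) f(x,t) = \tilde q(x) + \tfrac{1}{\sigma_\rho}\lim_{t\to-\infty}\Pi_0 r(x,t),
\]
and since $\tilde q(x) = \sum_{n\geq 0} P^n p(\pi_\Abf(x))$, the claim reduces to $\lim_{t\to-\infty}\Pi_0 r(x,t) = 0$, equivalently to $\int_\R \alpha_i(u)\,du = 0$ for each $i$. This identification is the main obstacle. The cleanest route is to establish the explicit evaluation $\int_\R [\psi(u+\sigma) - \psi(u)]\,du = -\sigma$ (which follows from the symmetry $\psi(u)+\psi(-u)=1$ via symmetric limits, and is rigorous because $\psi(u+\sigma)-\psi(u)$ is absolutely integrable with $L^1$-norm at most $|\sigma|$); this reduces the claim to the orthogonality relation $E_{\rho,\nu_i}[\tilde q(gy)\sigma(g,y)] = 0$, which I expect to prove by exploiting the representation $E[\tilde q(gy)\sigma(g,y)] = \sum_{n\geq 0} E[\sigma(g,y)\,(P^n\tilde p)(gy)]$ (valid because $q = \sum_n P_\Abf^n p$ converges geometrically by the spectral gap of $P_\Abf$), then iterating the cocycle identity $\sigma(T_{n+1},y) = \sigma(g_1,y) + \sigma(T_n, g_1y)$ and the $P$-stationarity of $\nu_i$ to show that the would-be linear-in-$n$ divergence in $E[\tilde q(T_ny)\sigma(T_n,y)]$ cancels term-by-term against this sum, forcing the requisite vanishing.
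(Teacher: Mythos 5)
Your proposal takes a genuinely different route from the paper's.  The paper differentiates $f=\tilde p\,\psi$ in $t$ (which lands in $\cal E^{\gamma,K}$ because $\psi'$ is Gaussian), applies Proposition~\ref{proposition:representation_renouvellement_fonctions_regulieres} to $f'$, integrates back, and identifies the term $N_1\tilde p$ with $\sum_n P^n p$ via the expansion $U(z)=N_1+zV(z)$.  You instead solve a Poisson equation on $\Abf$, set $h=\tilde q\,\psi$, and reduce to the regularized case via the algebraic identity $(I-P)h=f-r$, which gives $Gf=h+Gr$ with $r\in\cal E^{\gamma,K}$.  That decomposition is correct, as is the observation $\Pi_0 f=0$ (since $(\pi_\Abf)_*\nu_i=\nu_\Abf$), and the $\omega_0$-estimate for $h$ is fine because $\tilde q$ is $\pi_\Abf$-measurable and $\psi'$ has Gaussian tails.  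The architecture is sound and conceptually appealing.

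However, the last step has a genuine gap.  You reduce the limit as $t\to-\infty$ to the orthogonality relation $I_i:=\int_\G\int_\X\tilde q(gy)\,\sigma(g,y)\,d\nu_i(y)\,d\rho(g)=0$, and propose to prove it by telescoping $E[\tilde q(T_ny)S_n]$ with the cocycle identity.  But that telescoping only shows, writing $a_n=E[\tilde q(T_ny)S_n]$ and $b_n=E[\tilde p(T_ny)S_n]$, that $a_{n+1}=a_n-b_n+I_i$; since $b_n\to I_i$ exponentially, one gets $a_N\to I_i+\sum_m mJ_m$ for some absolutely convergent series — the ``linear-in-$n$ divergence'' does cancel, but the limit stays an undetermined constant and nothing forces $I_i=0$.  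The cleanest ways to close the gap are: (a) in the $\mathrm{SL}_d$ application, note that the unique $P$-stationary measure on $\prob(\R^d)\times\Abf$ is the product $\nu_{\prob}\otimes\nu_\Abf$ (the product is $P$-invariant because each $g$ permutes $\Abf$ and preserves $\nu_\Abf$, and Proposition~\ref{proposition:unique_mesure_invariante_contraction} gives uniqueness), so $I_i$ factors as $\big(\int q(ga)\,d\nu_\Abf(a)\big)\big(\int\sigma(g,z)\,d\nu_{\prob}(z)\big)=0$; or (b) in the generality of Theorem~\ref{theorem:renouvellement_general}, observe that $N_0\tilde p=0$ makes $z\mapsto(I-P(z))^{-1}\tilde p$ analytic at $0$, and differentiating $(I-P(z))\,Q(z)=\tilde p$ at $z=0$ shows that $P'(0)\tilde q$ lies in the range of $I-P$, i.e.\ $N_0P'(0)\tilde q=0$, which is exactly $\sum_i p_i(x)I_i=0$.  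Route (b) is what implicitly makes the paper's own proof work; without one of these, your argument is circular on the orthogonality step.
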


\begin{proof}
As $P$ commutes to the derivation,
\[
\sum_{n=0}^{+\infty} P^n f(x,t) = -\int_t^{+\infty} \sum_{n=0}^{+\infty} P^n f'(x,u) \di u
\]

But, according to proposition~\ref{proposition:representation_renouvellement_fonctions_regulieres},
\[
\sum_{n=0}^{+\infty} P^n f'(x,u) = \frac 1 {\sigma_\rho}\int_{u}^{+\infty} N_0 f'(x,s) \di s + \frac 1 {2\pi} \int_\R e^{i\xi u} U(-i\xi) p(x) \widehat {f'}(\xi) \di \xi 
\]
But
\[
\widehat{f'}(\xi) = -e^{-\xi^2/2}
\text{ and }\int_u^{+\infty} N_0 f'(x,s) \di s = -N_0 f(x,u)
\]
So,
\[
\sum_{n=0}^{+\infty} P^n f'(x,u) = \frac{-1}{\sigma_\rho}N_0 f(x,u) + \frac{1}{2\pi}\int_\R e^{i\xi u} U(-i\xi) p(x) e^{-\xi^2/2} \di \xi
\]
This proves that
\[
\sum_{n=0}^{+\infty} P^n f(x,t) = \frac{ 1 }{\sigma_\rho}\int_t^{+\infty} N_0 f(x,u) \di u - \frac 1 {2\pi}\int_{t}^{+\infty} \int_\R e^{i\xi u} U(-i\xi) p(x) e^{-\xi^2/2} \di \xi \di u
\]
Moreover, noting $N_1 = U(0)$ and $V(z)$ such that $U(z)= N_1 + z V(z)$, we have that the second term (that only depend on $\pi_\Abf \circ \pi_\Hb (x)$) is the sum of
\[
\frac 1 {2\pi}  N_1 p(x)\int_t^{+\infty} \int_\R e^{i\xi u} e^{-\xi^2/2} \di \xi \di u = -N_1 p(x) \varphi(t)
\]
and of
\[
\frac 1 {2\pi} \int_{t}^{+\infty} \int_\R e^{i\xi u} (-i\xi)V(-i\xi) p(x) e^{-\xi^2/2} \di \xi \di u = \frac 1 {2\pi} \int_\R e^{i\xi t} V(-i\xi) p(x) e^{-\xi^2/2} \di \xi
\]
(to get this equality, we derive two times). And so, finally, we get that
\[
\sum_{n=0}^{+\infty} P^n f(x,t) = \frac 1 {\sigma_\rho}\int_t^{+\infty} N_0 f(x,u) \di u + N_1 f(x,t) - \frac 1 {2\pi} \int_\R e^{i\xi t} V(-i\xi) p(x) e^{-\xi^2/2} \di \xi
\]
So, according to Riemann-Lebesgue's lemma and using the definition of $\Pi_0$,
\[
\lim_{t \to -\infty} (G-\frac 1{\sigma_\rho} \Pi_0) f(x,t) = \lim_{t\to -\infty} N_1 f(x,t) = N_1 f(x,-\infty)
\]
Now, we need to prove that
\[
N_1 f(x,-\infty) = \sum_{n=0}^{+\infty} P^n f(\pi_\Abf \circ\pi_\Hb(x))
\]
To do so, note that these two functions are solutions of the equation $g-Pg=f$ and, as the random walk on $\Abf$ is irreducible and aperiodic, there is a constant $C\in \R$ such that for any $a\in \Abf$,
\[
N_1 p(a) = \sum_{n=0}^{+\infty} P^n p(a) + C
\]
But,
\[
\sum_{a\in \Abf} N_1 p (a) = \sum_{a\in \Abf}\sum_{n=0}^{+\infty} P^n p(a) + |\Abf| C = |\Abf| C
\]
And so, $C=0$ since $\sum_{a\in \Abf} N_1 p (a)= 0=\sum_{a\in \Abf}\sum_{n=0}^{+\infty} P^n p(a)$.

\medskip
Finally, according to proposition~\ref{proposition:controle_inverse_perturbe},
\[
\left| \int_\R e^{i\xi t} V(-i\xi) p(x) e^{-\xi^2/2} \di \xi\right| \leqslant \|p\|_\infty \int_\R C (1+|\xi|)^L e^{-\xi^2/2} \di \xi
\]
and
\[
|t|^2\left| \int_\R e^{i\xi t} V(-i\xi) p(x) e^{-\xi^2/2} \di \xi\right| \leqslant \|p\|_\infty C \int_\R (1+|\xi|)^{3L} e^{-\xi^2/2} \di \xi
\]
This finishes the proof of the lemma (because $p(x)$ only depend on the projection of $x$ onto $\Abf$).
\end{proof}

\appendix
\section{Remainder terms in Wiener's tauberian theorem}\label{annexe:frennemo}

\begin{miniabstract}
In this appendix, we state Frennemo's result about remainder terms in Wiener's tauberian theorem.
\end{miniabstract}
The aim of this section is the study of the following problem : given two functions $f,\varphi$ on $\R$ for whoch we now the rate of convergence to $0$ of $|f\ast \varphi|$ at infinity, can we get the one of $|f|$ ?

\medskip
The first answer to this question is a corollary of a tauberian theorem of Wiener that says that if $f$ is bounded, $\varphi$ is integrable, the Fourier-Laplace transform of $\varphi$ doesn't vanish on $i\R$ and if $f\ast \varphi$ converges to $0$ at $\pm\infty$, then, for any integrable function $g $ on $\R$, $f\ast g$ also converge to $0$ at $\pm\infty$.

\medskip
This kind of result is interesting for us in the study of the renewal theorem because to make our method work, we have to assume that our test functions are regular so we will have to regularize them by convoling them with regular functions and these remainder terms estimates will allow us to keep the rate of convergence to $0$.

\begin{definition_annexe}
Let $f$ be a uniformly continuous function on $\R$.

We say that a non decreasing function $\omega:\R_+\to\R_+$, that is continuous at $0$ and such that $\omega(0)=0$ is a modulus of uniform continuity for $f$ if for any $x,y\in \R$,
\[
|f(x)-f(y) | \leqslant \omega(|x-y|)
\]
\end{definition_annexe}

\medskip
The following theorem is an adaptation of the second theorem of Frennemo in~\cite{Fre65}.

\begin{theorem_annexe} \label{theorem:frennemo}
Let $k\in \N^\ast$.

Let $\varphi_k$ be the function defined on $\R$ by $\varphi_k(x)= x^{k} e^{-x} \un_{\R_+}(x)$

Then, there is a constant $C$ depending only on $k$ such that for any uniformly continuous and bounded function $f$ on $\R$ and any $x\in \R$,
\[
|f(t)| \leqslant C \inf_{V \in \R_+^\ast}\left( \omega_f\left(\frac 1 V \right) + \frac { \|f\|_\infty } V + (1+V)^k \sup_{t'\in \R }  e^{-|t'|} |\varphi_k \ast f(t-t')|\right)
\]
where $\omega_f$ is a modulus of uniform continuity for $f$.
%
%
\end{theorem_annexe}

%
%
%
\begin{lemma_annexe}
There is a constant $C$ such that for any integrable any uniformly continuous function $f$ on $\R$,
\[
\sup_{x\in \R}|f(x)| \leqslant C\inf_{V \in \R_+^\ast} \omega_f\left(\frac 1 V \right) + \sup_{\tau} \left| \int_{-V}^V e^{i\xi \tau} \left( 1 - \frac{|\xi|}V \right) \widehat f(\xi) \di \xi \right|
\]
where $\omega_f$ is a modulus of uniform continuity for $f$.
\end{lemma_annexe}

\begin{proof}
%
%
This lemma is a corollary of lemma~1 in~\cite{Fre65} when we remark that if $f$ is uniformly continuous then
\[
-\inf_{x\leqslant t \leqslant x+1/V} f(t) - f(x)  \leqslant \sup_{x\leqslant t \leqslant x+1/V} \omega_f(|x-t|) \leqslant \omega_f(1/V) \qedhere
\]
\end{proof}

\begin{proof}[Proof of theorem~\ref{theorem:frennemo}]

For any $s\in \R$, let $u_s$ be the function defined for $t\in \R$ by $u_s(t)= f(t) e^{-\frac 1 2(t-s)^2}$.

Then, for any $t,t'\in \R$,
\begin{align*}
|u_s(t)-u_s(t') |&= \left|e^{-\frac 1 2(t-s)^2} (f(t)-f(t')) + f(t') (e^{-\frac 1 2(t-s)^2} - e^{-\frac 1 2(t'-s)^2}) \right|\\
& \leqslant \omega_f(|t-t'|) + \|f\|_\infty |t-t'| \sup_{u\in \R} |u| e^{-u^2/2} 
\end{align*}
So, the function $u_s$ is uniformly continuous. As it is integrable, we have, according to the lemma, that for any  $V\in \R_+^\ast$ and any $t,s\in \R$,
\begin{align*}
|f(t)|&= |u_s(t)|\\& \leqslant C \left(\omega_f\left(\frac 1 V\right) +  \frac{ \|f\|_\infty} V \sup_{u\in \R} |u|e^{-u^2/2} + \sup_{\tau} \left|\int_{-V}^V e^{i\xi\tau} \left(1 - \frac{|\xi|} V \right) \widehat {u_s}(\xi) \di \xi \right| \right)
\end{align*}
Moreover, Frennemo proves that
\[
\sup_{\tau} \left| \int_{-V}^V e^{i\xi \tau} \left(1 - \frac{|\xi|} V \right) \widehat{u_s}(\xi) \di \xi \right| \leqslant C (1+V)^k \sup_{t'\in \R }  e^{-|t'|} |\varphi_k \ast f(t-t')|
\]
This is what we intended to prove.
\end{proof}

Let $(\X,d)$ be a compact metric space and $\gamma \in \left]0,1\right]$. For any $(x,t),(x',t') \in \X\times \R$, we note
\[
\omega_0((x,t),(x',t')) =  \frac{\sqrt{|t-t'|^2+ d(x,x')^2}}{(1+|t'|)(1+|t|)}
\]

\begin{corollary_annexe}
Let $(\X,d)$ be a compact metric space and $\gamma \in \left]0,1\right]$.

For any $k\in \N$, there is a constant $C_k \in \R_+$ and $\alpha \in \R_+^\ast$ such that for any continuous bounded function $f$ on $\X\times \R$ such that
\[
|f(x,t) - f(x,t')| \leqslant \left( e^{|t-t'|}-1 \right)^\gamma C(f), \;\|f\|_\infty \leqslant C(f)
\]
and
\[
|\varphi_k \ast f(x,t) - \varphi_k \ast f(x',t') |\leqslant C(f) \omega((x,t),(x',t'))
\]
We have that
\[
|f(x,t) - f(x',t')| \leqslant C_k C(f) \omega((x,t),(x',t'))^\alpha
\]
\end{corollary_annexe}

\begin{proof}
For any $x,x'\in \X$ and $s\in \R$, we note $f_{x,x',s}$ the function defined for any $t\in \R$ by
\[
f_{x,x',s}(t) = f(x,t) - f(x',t+s)
\]
Then, for any $t,t' \in \R$,
\begin{align*}
|f_{x,x',s}(t)  - f_{x,x',s}(t') | &= |f(x,t) - f(x',t+s) - f(x,t') + f(x',t'+s)| \\&\leqslant 2 C_0 \left( e^{|t-t'|} -1 \right)^\gamma
\end{align*}
and
\[
|\varphi_k \ast f_{x,x',s} (t)| = |\varphi_k \ast f(x,t) - \varphi_k \ast f(x',t+s)| \leqslant C(f) \omega((x,t),(x',t+s))
\]
And so, according to Frennemo's theorem,
\begin{align*}
|f_{x,x',s}(t)| &\leqslant C C(f) \left(\inf_{ V\in \R_+^\ast} 2 (e^{1/V}-1)^\gamma  + \frac{2}{V} \right. \\
& \retrait \left.+(1+V)^k \sup_{t'\in \R} e^{-|t'|} \omega((x,t-t'),(x',t-t'+s)) \right)
\end{align*}
But, for any $t,t'\in \R$, 
\[
\frac 1 {1+|t-t'|} \leqslant \frac{1+|t'|}{1+|t|}
\]
This proves that
\[
\sup_{t'\in\R}e^{-|t'|} \omega((x,t-t'),(x',t-t'+s)) \leqslant \omega((x,t),(x',t+s)) \sup_{t'\in \R} e^{-|t'|}(1+|t'|)
\]
Thus, for an other constant $C$ that doesn't depend on $f$,
\[
|f_{x,x',s}(t)| \leqslant C C(f) \inf_{V\in [1, +\infty[} \frac 1 {V^\gamma} + (1+V)^k \omega((x,t),(x',t+s))
\]
Noting now, for some $\delta \in \R_+^\ast$ that we will choose later,
\[
V =\omega((x,t),(x',t+s))^{-\delta}
\]
we find that for some constant $C$ that doesn't depend on $f$,
\[
|f(x,x',s)(t)| \leqslant C C(f) \left( \omega^{\gamma \delta}+ (1+\omega^{-\gamma \delta})^k \omega \right) \leqslant C C(f) \omega^{\alpha}
\]
for $\delta$ small enough and some $\alpha$ (where we used that $\omega$ is bounded on $\X\times \R$).

This ends the proof of the corollary.
\end{proof}

\bibliographystyle{amsalpha}
\bibliography{biblio}

\end{document}